\documentclass[a4paper, 10pt]{amsart} \usepackage[utf8]{inputenc}
\usepackage{amsmath, amsthm, amssymb, fancyhdr, enumerate, scalerel, tikz,  color, accents,
  fancybox, rotating, comment, colortbl, stmaryrd, hyperref,  cleveref, setspace, tikz-3dplot} \usepackage[margin=3cm]{geometry}

\title{Infinitesimal \texorpdfstring{$\mathfrak{\lowercase{sl}}_2$}{sl(2)}-symmetries on the equivariant skein lasagna module}

\author{You Qi}
\address{Y.~Q.: Department of Mathematics, University of Virginia,
  Charlottesville, VA 22904, USA}
\email{\href{mailto:yq2dw@virginia.edu}{yq2dw@virginia.edu}}
\author{Louis-Hadrien Robert}
 \address{L.-H.~R.: Université Clermont Auvergne, LMBP, Campus des Cézeaux, 3 place Vasarely, TSA 60026, CS 60026, 63178 Aubière Cedex, France}
 \email{\href{mailto:louis-hadrien.robert@uca.fr}{louis-hadrien.robert@uca.fr}}
 \author{Joshua Sussan}
 \address{J.~S.: Mathematics Program, 
 The Graduate Center, CUNY, New York, NY 10016, USA {\newline} Department of Mathematics, CUNY Medgar Evers, Brooklyn, NY,
   11225, USA} 
  \email{\href{mailto:jsussan@mec.cuny.ed}{jsussan@mec.cuny.edu}}
 \author{Emmanuel Wagner}
 \address{E.~W.: Univ Paris Cit\'e, IMJ-PRG, Univ Paris Sorbonne, UMR 7586 CNRS,
   F-75013, Paris, France} 
 \email{\href{mailto:emmanuel.wagner@imj-prg.fr}{emmanuel.wagner@imj-prg.fr}}
 \author{Paul Wedrich}
 \address{P.~W.: Universität Hamburg, Fachbereich Mathematik, Bundesstraße 55, 20146 Hamburg, Germany} 
 \email{\href{mailto:paul.wedrich@uni-hamburg.de}{paul.wedrich@uni-hamburg.de}}

 \hypersetup{
    pdftoolbar=true,        
    pdfmenubar=true,        
    pdffitwindow=false,     
    pdfstartview={FitH},    
    pdftitle={\shorttitle},    
    pdfauthor={\shortauthors},     
    pdfsubject={\shorttitle},   
    pdfcreator={\shortauthors},   
    pdfproducer={\shortauthors}, 
    pdfkeywords={}, 
    pdfnewwindow=true,      
    colorlinks=true,       
    linkcolor=darkgray,          
    citecolor=teal,        
    filecolor=magenta,      
    urlcolor=violet,          
    linkbordercolor=red,
    citebordercolor=teal,
    urlbordercolor=violet,  
    linktocpage=true
    }
\date{\today}

\usetikzlibrary{arrows, decorations.markings, decorations, patterns,
positioning, decorations.pathreplacing, decorations.pathmorphing,
intersections, fit,   spath3,
  hobby}
\tikzset{->-/.style={decoration={markings, mark=at position .5 with {\arrow{>}}},postaction={decorate}}}
\tikzset{-<-/.style={decoration={markings, mark=at position .5 with {\arrow{<}}},postaction={decorate}}}

\newcounter{res}[section]
\numberwithin{res}{section}
\newtheorem{thm}[res]{Theorem}

\newtheorem{lem}[res]{Lemma}
\newtheorem{lem-dfn}[res]{Lemma-Definition}

\newtheorem{prop}[res]{Proposition}
\newtheorem{cor}[res]{Corollary}

\theoremstyle{definition}
\newtheorem{notation}[res]{Notation}
\newtheorem{dfn}[res]{Definition}
\newtheorem{rmk}[res]{Remark}

\newtheorem{cjc}[res]{Conjecture}

\def\co{\colon\thinspace}

\newcommand{\NB}[1]{\ensuremath{\vcenter{\hbox{#1}}}}

\newcommand{\NN}{\ensuremath{\mathbb{N}}}
\newcommand{\ZZ}{\ensuremath{\mathbb{Z}}}
\newcommand{\CC}{\ensuremath{\mathbb{C}}}
\newcommand{\CP}{\ensuremath{\mathbb{CP}}}

\newcommand{\RR}{\ensuremath{\mathbb{R}}}

\newcommand{\sphere}{\ensuremath{\mathbb{S}}}
\newcommand{\ball}{\ensuremath{\mathbb{B}}}

 \newcommand
{\Id}{\operatorname{Id}} \newcommand{\id}{\mathrm{Id}}

\newcommand{\Hom}{\ensuremath{\mathrm{Hom}}}
\newcommand{\End}{\ensuremath{\mathrm{End}}}
\newcommand{\HOM}{\ensuremath{\mathrm{HOM}}}

\newcommand{\gll}{\ensuremath{\mathfrak{gl}}}
\newcommand{\sll}{\ensuremath{\mathfrak{sl}}}
\newcommand{\dTL}{\mathop{\mathrm{dTL}}\nolimits}
\newcommand{\MdTL}{\mathop{\mathrm{MdTL}}\nolimits}
\newcommand{\colim}{\mathop{\mathrm{Colim}}\nolimits}

\renewcommand{\deg}[2][{}]{\ensuremath{\mathrm{deg}_{#1}(#2)}}

\newcommand{\Links}{\ensuremath{\mathsf{Links}}}

\newcommand{\dLinks}{\ensuremath{\mathsf{dLinks}}}

\newcommand{\qbinom}[2]{\ensuremath
\begin{bmatrix}
  #1 \\
  #2
\end{bmatrix}}

\newcommand{\mHH}{\mathrm{HH}}

 \newcommand{\Sym}{\ensuremath{\mathrm{Sym}}}

\newcommand{\R}{\ensuremath{R}}

\newcommand{\gFm}{\ensuremath{\mathsf{gFoam}}}
\newcommand{\ch}{\ensuremath{\mathrm{Ch}}}
\newcommand{\MGH}{\ensuremath{\mathrm{MGH}}}
\newcommand{\MGS}{\ensuremath{\mathrm{MGS}}}
\newcommand{\MGO}{\ensuremath{\mathrm{MG1}}}
\newcommand{\MGTW}{\ensuremath{\mathrm{MG2}}}
\newcommand{\MGTH}{\ensuremath{\mathrm{MG3}}}
\newcommand{\cab}{\ensuremath{\mathrm{cab}}}

\newcommand{\scalars}{\ensuremath{\Bbbk}}
\newcommand{\de}{\ensuremath{\mathbf{e}}}
\newcommand{\df}{\ensuremath{\mathbf{f}}}
\renewcommand{\dh}{\ensuremath{\mathbf{h}}}
\newcommand{\Le}{\ensuremath{\mathsf{e}}}
\newcommand{\Lf}{\ensuremath{\mathsf{f}}}
\newcommand{\Lh}{\ensuremath{\mathsf{h}}}

\newcommand{\dotnewtoni}[1][i]{\ensuremath{\textcolor[rgb]{0,0,0.6}{\spadesuit_{#1}}}}
\newcommand{\wdotnewtoni}[1][i]{\ensuremath{\textcolor[rgb]{0,0,0.6}{\widehat{\spadesuit}_{#1}}}}

\newcommand{\bracketN}[1]{\left\langle #1 \right\rangle_{\myN}}

\newcommand{\KN}{\ensuremath{\Bbbk_\myN}}

\newcommand{\mymovie}[3][]{
  \NB{
    \begin{tikzpicture}[#1]
\begin{scope}
  \draw[gray, thick] (0, -0.05) -- +(0, 3.1);
  \draw[gray, thick] (4, -0.05) -- +(0, 3.1);
  \draw[gray, thick] (8, -0.05) -- +(0, 3.1);
  \draw[gray, line width=1mm] (0,0) -- +(8,0);
  \draw[white, densely dotted, line width=0.6mm] (0,0) -- +(8,0);
  \draw[gray, line width=1mm] (0,3) -- +(8,0);
  \draw[white, densely dotted, line width=0.6mm] (0,3) -- +(8,0);
  \node (Frame1) at (2, 1.5) {#2};
  \node (Frame1) at (6, 1.5) {#3};
\end{scope}
    \end{tikzpicture}
    }
}

\newcommand{\tone}{{t_1}}
\newcommand{\ttwo}{{t_2}}

\newcommand{\tqftfunc}[1][]{\ensuremath{\mathcal{F}_\myN^{#1}}}
\newcommand{\statespaceN}[2][]{\ensuremath{\tqftfunc[#1]\left(#2\right)}}

\newcommand{\RN}{\ensuremath{\mathbb{Z}_\myN}}

\newcommand{\myN}{\ensuremath{N}}
\newcommand{\web}{\ensuremath{\Gamma}}
\newcommand{\foam}{\ensuremath{F}}
\newcommand{\degN}[1]{\ensuremath{\mathrm{deg}_\myN\left(#1\right)}}
\newcommand{\facet}{\ensuremath{f}}
\newcommand{\surface}{\ensuremath{\Sigma}}

\newcommand{\foamcat}[1][]{\ensuremath{\mathsf{Foam}_{#1}}}

\newcommand{\vectweb}[1]{\ensuremath{V\left(#1\right)}}

\def\mc{\mathcal}

\def\lra{{\longrightarrow}}
\def\dmod{{\mathrm{\mbox{\textrm{-}}mod}}}  
\newcommand{\KR}{\mathrm{KR}}

\newcommand{\Com}{\ensuremath{\mathrm{Com}}}

\def\gmod{{\mathrm{\mbox{-}gmod}}}

\def\Mod{\ensuremath{\mathrm{mod}}}

\newcommand{\gdot}{\ensuremath{\NB{\tikz[thin, green!50!black]{\draw (0,0) circle (0.5mm);}}}}

\newcommand{\gsoliddot}{\ensuremath{\NB{\tikz[thin, green!50!black]{\filldraw[draw= green!50!black, fill = green!70!black] (0,0) circle (0.5mm);}}}}

\newenvironment{nalign}{
    \begin{equation}
    \begin{aligned}
}{
    \end{aligned}
    \end{equation}
    \ignorespacesafterend
}

 \newcommand{\imagesfolder}{.}

\begin{document}
\begin{abstract}
  We construct an \texorpdfstring{$\mathfrak{sl}_2$}{sl(2)}-action on the equivariant
  skein lasagna module.
\end{abstract}

\maketitle

\setcounter{tocdepth}{1}
\tableofcontents

\section{Introduction}
\label{sec:intro}
The skein lasagna module of a smooth 4-manifold was introduced by Morrison, Walker, and Wedrich \cite{MWW1} based on earlier work of Morrison and Walker on blob homology.  A crucial ingredient of this invariant is the functoriality of Khovanov--Rozansky link homology \cite{ETW} and its extension to functoriality in $\sphere^3 \times [0,1]$. 

Manolescu and Neithalath gave a formula purely in terms of Khovanov--Rozansky homology for 2-han\-dle\-bo\-dies \cite{ManNeith}.
This was reinterpreted in terms of a Kirby-type color by Hogancamp, Rose, and Wedrich \cite{HRW}.
Formulas for more general handle attachments were discovered by Manolescu, Walker, and Wedrich \cite{ManWW}.  

The first concrete calculations of this invariant were given by Manolescu and Neithalath \cite{ManNeith} for $\ball^2 \times \sphere^2$ as well as some partial results for other disk bundles over $\sphere^2$.  In particular, they showed that skein lasagna modules for $\CP^2$ and $\overline{\CP^2}$ are different, proving that the invariant is sensitive to orientation reversal.

Sullivan and Zhang \cite{SZ} showed that the invariant vanishes for $\sphere^2 \times \sphere^2$.  This resolved a question posed by Manolescu asking if the invariant was zero or infinite-dimensional for this manifold motivated by the fact that connect summing with $\sphere^2 \times \sphere^2$ turns homeomorphic smooth 4-manifolds into diffeomorphic 4-manifolds.  

In a remarkable recent paper of Ren and Willis \cite{RenWillis}, the authors showed that the skein lasagna module detects new exotic smooth structures.  
Genus bounds for surfaces in 4-manifolds were also recently determined by Morrison, Walker, and Wedrich \cite{MWW2}.

In this work, we extend the $\mathfrak{sl}_2$-action on equivariant Khovanov--Rozansky homology \cite{QRSW3} to the equivariant skein lasagna.  We hope that this extra symmetry will lead to some structural results for the invariant.
The $\mathfrak{sl}_2$-action also allows for some truncation procedures (such as taking the Zuckerman functor) which in some sense can cut down on the infinite-dimensionality of the invariant.
We hope, for example, that these ideas could give rise to an invariant such that for a 1-handle with a link $\sphere^1$ cross 4 points in the boundary, one obtains something finite-dimensional in each tridegree (see \cite[Theorem 1.5]{ManWW}).
Note that working in the equivariant setting makes this calculation more challenging.
We would also like to mention recent work \cite{RSWWZ} which constructs a new skein lasagna module which is locally finite-dimensional.

Extending the $\sll_2$-action to skein lasagna modules requires  compatibility of this action with the equivalence relation at the heart of the definition of such skein modules. This compatibility is ensured by twisting the $\sll_2$-action on Khovanov--Rozansky homology appropriately. Twists are encoded by green-dots. While this may seem artificial, they should be thought of as analogues of grading shifts.

We expect that the $\mathfrak{sl}_2$-action constructed here extends to a partial Witt action using the work of Guerin and Roz \cite{GR}.

In Section \ref{homological:sec} we review the necessary background homological algebra that we use.
In Section \ref{link:sec} we recall notions about foams, link homology and $\mathfrak{sl}_2$-actions developed in \cite{QRSW2, QRSW3}.
Section \ref{skein:sec} contains the definition of the skein lasagna module along with $\mathfrak{sl}_2$-actions on it. We give a concrete realization and explain how the construction fits into a framework of skein lasagna modules with more general enriching categories.
In Section \ref{dTL:sec} the equivariant dotted Temperley--Lieb category is reviewed along with a family of $\mathfrak{sl}_2$-actions on it.  The connection between this category and the skein lasagna was elucidated in \cite{HRW} building on \cite{ManNeith}.  Finally in Section \ref{example:sec} we compute the equivariant skein lasagna for $N=2$ with complex coefficients as an $\mathfrak{sl}_2$-representation for the 4-ball and for $\ball^2 \times \sphere^2$.

\subsection{Conventions}
\label{sec:conventions}
For a ring with unity $\scalars$, and for $x \in \scalars$, we set $\bar{x}=1-x$.

Throughout most of the paper, we will fix a natural number $N$. The
algebras $\RN=\ZZ[X_1, \dots,X_\myN]^{S_\myN}$ and
$\KN = \scalars[X_1, \dots, X_\myN]^{S_\myN}$ of symmetric polynomials
will play central roles in this paper. They are non-negatively graded
by imposing that $\deg{X_i} =2$. The $i$th elementary, complete
homogeneous, and power sum symmetric polynomials in
$X_1,\dots, X_\myN$ are denoted by $E_i$, $H_i$ and $P_i$
respectively, so that
\[
  \RN=\ZZ[E_1, \dots, E_\myN] \qquad \text{and} \qquad \KN=
  \scalars[E_1, \dots, E_\myN].
\]
Throughout most of this paper, we suppose $2$ is invertible in the ground ring.

For $a \in \NN$, $\Sym_{a}$ denotes the ring of symmetric polynomials
in $a$ variables with $\ZZ$ coefficients, in particular
$\RN=\Sym_\myN$. When working in such a ring, we will
let $e_i, h_i$ and $p_i$ be the $i$th elementary, complete
homogeneous, and power sum symmetric polynomials respectively without
reference to the variables.
The ring $\Sym_{a}$ is graded by imposing that $e_i$ is homogeneous of
degree $2i$. With this setting, we have:
\[ \deg{e_i} = \deg{h_i} = \deg {p_i} = 2i.\]

For $n \in \ZZ$, let $[n]=\frac{q^n-q^{-n}}{q-q^{-1}}$, for $k \in
\NN$, we let $[k]!= \prod_{j=1}^k[j]$. Finally, for $m\in \ZZ$ and $a
\in \NN$,  define: 
\[
  \qbinom{m}{a}=\prod_{i=1}^a \frac{[m+1-i]}{[i]}.\]
Note that if $m$ is non-negative, one has $\qbinom{m}{a}= \frac{[m]!}{[a]![m-a]!}$.

For a $\mathbb{Z}$-graded vector space $V$, let $V_i$ denote the
subspace in degree $i$. Let $q^n V$ denote the $\mathbb{Z}$-graded
vector space where $(q^n V)_i=V_{i-n}$.

Complexes will be taken to be cohomologically graded.  For a complex
$C$, we let $t^i C$ denote the shifted complex whose piece in
cohomological degree $i+j$ is the piece of $C$ in cohomological degree
$j$, in other words, $(t^j C)_i = C_{i-j}$.

Foams are read from bottom to top.

\subsection{Acknowledgments}
\label{sec:acknoledgment}
We would like to thank Rostislav Akhmechet, Laura Marino, and Felix Roz for interesting and enlightening conversations.  We would also like to thank Mikhail Khovanov for comments on an earlier draft of the paper.

Part of this research took place at the SwissMAP Research Station and workshops ``Foam Evaluation'' and ``Diagrammatic Categorification" held at ICERM. 

Y.Q.{} is partially supported by the Simons Foundation Collaboration Grants for Mathematicians and the NSF grant DMS-2401376.
LH.R.{} is not supported by any project related funding.
J.S.{} is
partially supported by the the Simons Foundation Collaboration Grants for Mathematicians, NSF grant DMS-2401375 and PSC CUNY Award
64012-00 52.
P.W.{} acknowledges support from the Deutsche
Forschungsgemeinschaft (DFG, German Research Foundation) under Germany's
Excellence Strategy - EXC 2121 ``Quantum Universe'' - 390833306 and the
Collaborative Research Center - SFB 1624 ``Higher structures, moduli spaces and
integrability'' - 506632645.
Some figures are recycled from papers of various subsets of the authors
with or without other collaborators.

\section{Homological background}
\label{homological:sec}

\label{sec:homol-non-sense}

Throughout this work, we will study $H$-modules, where $H$ is some
cocommutative Hopf algebra. We will freely make use of Sweedler's
notation $\Delta(h)=\sum_h h_1\otimes h_2$ and $(\Delta\otimes
\id_H)\circ \Delta(h)=\sum_h h_1\otimes h_2\otimes h_3$, etc.  In the
main results of this paper, the Hopf algebra $H$ is the universal enveloping algebra of $\mathfrak{sl}_2$.

Let $A$ be an $H$-module algebra.  This means that $A$ is an algebra
object in the module category of the Hopf algebra $H$. In particular,
for $h$ in $H$ and $a$ and $b$ in $A$, one has:
\begin{equation}
h\cdot (ab)=\sum_h (h_1\cdot a)(h_2\cdot b), \qquad \text{and,} \qquad h\cdot 1_A = \epsilon(h) 1_A.
\end{equation}
We may then form the \emph{smash product algebra}  $A\# H$.
As an abelian group, $A\# H$ is isomorphic to
$A\otimes H$.  The multiplicative structure is determined by
\begin{equation}
  (a\otimes h)(b\otimes k)=\sum_h a(h_1\cdot b)\otimes h_2 k,
\end{equation}
for any $a,b\in A$ and $h,k\in H$.
Thus, for an $H$-module algebra $A$,  $A\otimes 1$ and $1\otimes H$ sit in
$A\# H$ as subalgebras by construction. 
We often refer to modules and morphisms
in $A\#H\dmod$ 
as \emph{$H$-equivariant} $A$-modules and morphisms.

Since $H$ is a cocommutative Hopf algebra, if $B$ is an $H$-module algebra, then so is its opposite algebra $B^{\mathrm{op}}$. Thus, given two $H$-module algebras $A$ and $B$, their tensor product $A\otimes B^{\mathrm{op}}$ is an $H$-module algebra with its natural $H$-module structure. Following the
definition given above, the multiplication on $A\otimes B^{\mathrm{op}}$ is given by:
\[
  (a_1\otimes b_1 \otimes h )(a_2\otimes b_2 \otimes k) = \sum_h a_1 (h_1\cdot a_2)
  \otimes (h_2\cdot b_1)b_2\otimes h_3k,
\]
for any $a_1,a_2\in A$, $b_1,b_2 \in B$ and $h,k\in H$. A complex in $\mc{C}((A\otimes B^{\mathrm{op}})\# H)$ is also referred to as a complex of $H$-equivariant $(A,B)$-bimodules, and morphisms between such complexes are called $H$-equivariant bimodules homomorphisms.

There is an exact forgetful functor between the usual homotopy
categories of chain complexes of graded modules
\begin{equation}\label{eqn-forgetful-functor}
  \mathrm{For}: \mc{C}(A\# H)\lra \mc{C}(A).
\end{equation}
An object $K_\bullet $ in $\mc{C}(A\# H)$ is annihilated by the forgetful functor if and only if, when forgetting the $H$-module structure
on each term of $K_\bullet$, the complex of graded $A$-modules
$\mathrm{For}(K_\bullet)$ is null-homotopic. The null-homotopy map on
$\mathrm{For}(K_\bullet)$ though, is not required to intertwine
$H$-actions. We refer to such complexes as \emph{relatively null-homotopic}. Also, for ease of notation, we usually just abbreviate the complex notation $K_\bullet$ as $K$ when no confusion can be caused.

\begin{dfn}\label{def-relative-homotopy-category}
Given an $H$-module algebra $A$,
  the \emph{relative homotopy
  category} is the Verdier quotient
  \[\mc{C}^H(A):=\dfrac{\mc{C}(A\#
    H)}{\mathrm{Ker}(\mathrm{For})}.\]
\end{dfn}
The superscript $H$ in the definition is there to remind the reader of the
$H$-module structures on the objects.

The usual homology functor factors through the kernel of $\mathrm{For}$, and induces well-defined homology functors on the relative homotopy category. The homology spaces of objects in $\mc{C}^H(A)$ carry natural actions by the smash product algebra $A\# H$. 

When $A=\Bbbk$ is a field, then the functor $\mathrm{For}:\mc{C}(H)\lra \mc{C}(\Bbbk)$ agrees with the usual total homology functor, since a complex of $H$-modules is acyclic if and only if it is null-homotopic over the ground field. It follows that, in this case $\mc{C}^H(\Bbbk)=\mc{D}(H)$, the usual derived category of $H$-modules. In this paper, we consider ground rings that are more general than fields.

The category $\mc{C}^H(A)$ is triangulated. By construction,
there is a factorization of the forgetful functor
\begin{gather}
  \NB{
    \tikz[xscale=3, yscale =0.8]{
      \node (AH) at (-1, 1) {$\mc{C}(A\# H)$};
      \node (A) at ( 1, 1) {$\mc{C}(A)$};
      \node (Adif) at ( 0, -1) {$\mc{C}^H(A)$};
      \draw[-to] (AH) -- (A) node[pos =0.5, above] {$\mathrm{For}$};
      \draw[-to] (AH) -- (Adif);
      \draw[-to] (Adif) -- (A);      
    }
  }  \ .
\end{gather}

Let us briefly comment on the triangulated structure of the relative homotopy category $\mc{C}^H(A)$. By construction, the homological shift functors, denoted $t^i$ for $i\in \mathbb{Z}$, are inherited from the shift functors on $\mc{C}(A\# H)$, which shift complexes steps to the left or right. 

For the usual homotopy category $\mc{C}(A)$ of an algebra, standard distinguished triangles arise from short exact sequences
  \[
 0 \lra M \stackrel{f}{\lra} N\stackrel{g}{\lra} L \lra 0
  \]
of $A$-modules that are termwise split exact. The class of distinguished triangles in $\mc{C}(A)$ are declared to be those that are isomorphic to standard ones.
For distinguished triangles in the relative homotopy category, similarly, we have the following construction.

\begin{lem} \cite[Lemma 2.3]{QRSW1} \label{lem-construction-of-triangle}
  A short exact sequence of chain complexes of $A\#H$-modules
  \[
 0 \lra M \stackrel{f}{\lra} N \stackrel{g}{\lra} L \lra 0
  \]
 that is termwise $A$-split exact gives rise to a distinguished triangle in $\mc{C}^H(A)$. Conversely, any distinguished triangle in $\mc{C}^H(A)$ is isomorphic to one that arises in this form.
\end{lem}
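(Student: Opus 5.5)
The plan is to compare any termwise $A$-split short exact sequence with the standard cone triangle in $\mc{C}(A\# H)$, and to check that the comparison map is an isomorphism in the Verdier quotient because its cone lies in $\mathrm{Ker}(\mathrm{For})$. Recall that distinguished triangles in the quotient $\mc{C}^H(A)$ are, by definition, the images of distinguished triangles of $\mc{C}(A\# H)$, up to isomorphism in $\mc{C}^H(A)$. Those of $\mc{C}(A\# H)$ are isomorphic to cone triangles
\[
M \xrightarrow{f} N \to \mathrm{Cone}(f) \to t^{-1}M ,
\]
with $f$ an $A\# H$-equivariant chain map. (The direction of the shift is to be matched with the paper's convention for $t$.)

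\emph{Forward direction.} Take the given sequence $0\to M\xrightarrow{f} N\xrightarrow{g} L\to 0$. Define the $A\#H$-equivariant chain map $\phi\colon \mathrm{Cone}(f)\to L$ by $(m,n)\mapsto g(n)$; it is equivariant and a chain map because $gf=0$. I will show that $\phi$ becomes an isomorphism in $\mc{C}^H(A)$. It suffices that $\mathrm{Cone}(\phi)$ lies in $\mathrm{Ker}(\mathrm{For})$, i.e.\ is null-homotopic over $A$. After applying $\mathrm{For}$, choose $A$-linear splittings termwise: $s\colon L\to N$ with $gs=\id$, and the retraction $r\colon N\to M$ with $rf=\id$ and $fr+sg=\id$. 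These need not commute with $d$ or $H$. The classical argument, as in the proof that termwise split sequences give triangles in $\mc{C}(A)$, then shows $\phi$ is a homotopy equivalence over $A$, with homotopy inverse $\ell\mapsto (\,r d s(\ell)\ \text{(up to sign)},\ s(\ell))$. So $\mathrm{For}(\mathrm{Cone}(\phi))$ is contractible. Hence the triangle $M\to N\to L\to t^{-1}M$, with connecting map $\phi^{-1}$ followed by the projection to $t^{-1}M$, is isomorphic in $\mc{C}^H(A)$ to the image of a cone triangle, and is therefore distinguished. The one subtle point is that $\phi^{-1}$ exists only in the quotient, which is precisely why the connecting morphism lives in $\mc{C}^H(A)$ and not in $\mc{C}(A\#H)$.

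\emph{Converse.} Any distinguished triangle in $\mc{C}^H(A)$ is isomorphic to the image of a cone triangle $M\xrightarrow{f}N\to\mathrm{Cone}(f)\to t^{-1}M$ with $f$ in $\mc{C}(A\#H)$. Morphisms in the Verdier quotient are roofs, so one could worry whether $f$ is a genuine equivariant chain map. However, a triangle in the quotient is by definition isomorphic to the image of one in $\mc{C}(A\#H)$, so it is. Now $0\to M\to \mathrm{Cone}(f)\to \ldots$ is not directly of the desired form, so I replace $N$ by the mapping cylinder $\mathrm{Cyl}(f)=M\oplus t^{-1}M\oplus N$ with its equivariant differential. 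Then $0\to M\to \mathrm{Cyl}(f)\to \mathrm{Cone}(f)\to 0$ is a short exact sequence of $A\#H$-complexes that is termwise split, even equivariantly. Moreover, $\mathrm{Cyl}(f)\simeq N$ compatibly with $f$; the standard homotopy equivalence is built from $H$-equivariant maps, so it holds already in $\mc{C}(A\#H)$. The triangle obtained from this sequence by the forward direction coincides with the cone triangle.

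\emph{Main obstacle.} The main obstacle is bookkeeping: checking that the connecting morphism produced in the forward direction agrees, up to the sign convention, with the one from the cone triangle. I would do this by computing directly that $\phi^{-1}$ composed with the projection $\mathrm{Cone}(f)\to t^{-1}M$ is represented over $A$ by $r\,d_N\,s$.
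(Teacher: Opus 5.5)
Your proposal is correct and follows the standard argument behind \cite[Lemma 2.3]{QRSW1}, which this paper cites without reproducing: the equivariant comparison map $\mathrm{Cone}(f)\to L$ has $A$-contractible cone, so it becomes invertible in $\mc{C}^H(A)$, and the converse is immediate from how triangles in a Verdier quotient are defined. The mapping-cylinder step and the sign bookkeeping you flag as the main obstacle are not actually needed: distinguished triangles in $\mc{C}(A\# H)$ are already, up to isomorphism, those of termwise $A\#H$-split sequences, which are in particular $A$-split, and your forward step itself supplies the required isomorphism of triangles $(\mathrm{Id}_M,\mathrm{Id}_N,\phi)$.
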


A morphism of $A\# H$-modules $f:M\lra N$ becomes an isomorphism in $\mc{C}^H(A)$ if and only if its cone $\mathrm{C}(f)$ in $\mc{C}(A\# H)$ is relatively null-homotopic. 
In the remainder of this section, we describe
an enriched structure on the relative homotopy category of an $H$-module algebra. The enriched hom spaces naturally carry structures of $H$-representations. As usual, the enriched hom are naturally right adjoint to tensor product of chain complexes.

Let $A$ be an $H$-module algebra, and $M$, $N$ be two (bounded) complexes of left $A\# H$-modules. 
We set $\HOM_A(M,N)$ to be the usual space of all $A$-linear homomorphisms
\begin{equation}
    \HOM_A(M,N)=\oplus_{i\in \ZZ} \Hom_A(M,t^i N),
\end{equation}
equipped with the natural differential
\begin{equation}
(df)(m)=d(f(m))-(-1)^if(dm)
\end{equation}
for any $f\in \Hom_A(M, t^iN)$.
Then the Hopf algebra $H$ acts on this morphism space by
\begin{equation}\label{eqn-H-action}
    (h \cdot f)(m):=\sum_h h_1 \cdot (f(S(h_2)\cdot m))
\end{equation}
for any $h\in H$, $f\in \HOM_A(M, N)$ and $m\in M$, where $S$ is the antipode of $H$.
The fact that $h \cdot f$ is indeed $A$-linear is non-trivial and follows from the cocomutativity of $H$, see \cite[Section 2]{QRSW3} for more details.

One can also show, as in \cite[Lemma 5.2]{QYHopf}, that, under this $H$-action, the space of $H$-invariants is equal to
\begin{equation}\label{eqn-H-inv-in-HOM}
    \HOM_A(M,N)^H \cong \HOM_{A\# H} (M, N).
\end{equation}

Notice that we restrict the objects inserted into the bifunctor $\HOM_A(\mbox{-},\mbox{-}) $ to those in $\mc{C}(A\# H)$, i.e., those chain complexes of $A$-modules that are $H$-equivariant. When $M$ is an $H$-equivariant complex of $(A,B)$-bimodules, where $B$ is a second $H$-module algebra, we obtain a functor on the usual homotopy categories:
\begin{equation}
\HOM_A(M,\mbox{-}): \mc{C}(A\#H) \lra \mc{C}(B\# H).
\end{equation}

On the other hand, if $M$ is an $H$-equivariant complex of $(A,B)$-bimodules and $K$ is a complex of $H$-equivariant $B$-modules, the natural tensor product
\begin{equation}
    M\otimes_B K= \oplus_{i \in \ZZ}(M\otimes_B K)_i,\quad \quad (M\otimes_B K)_i:=\oplus_{k\in \ZZ} M_k\otimes_B K_{i-k}.
\end{equation}
gives one an $H$-equivariant complex of $A$-modules.
Here, the differential acts by, for any $m\in M_i$ and $k\in K_j$,
\begin{equation}
    d(m\otimes k)=d(m)\otimes k + (-1)^{i}m\otimes d(k);
\end{equation}
while the Hopf algebra $H$ acts by
\begin{equation}
    h\cdot (m\otimes k)=\sum_h (h_1\cdot m)\otimes (h_2\otimes k).
\end{equation}
In this way, tensor product with $M$ over $B$ defines a functor
\begin{equation}
    M\otimes_B(\mbox{-}): \mc{C}(B\#H) \lra \mc{C}(A\#H)
\end{equation}

It is then an easy exercise to check that the usual tensor-hom adjunction preserves the action by the Hopf algebra $H$ (see, for instance, \cite[Lemma 8.5]{QYHopf}):
\begin{equation}
    \HOM_A(M\otimes_B K, N) \cong \HOM_{B}(K,\HOM_A(M,N)).
\end{equation}
Taking $H$-invariants on both sides gives an isomorphism of the graded hom spaces in complexes of $B\# H$-modules and $A\# H$-modules (equation \eqref{eqn-H-inv-in-HOM}). Then taking the zeroth cohomology with respect to the differentials gives us the adjunction in the usual homotopy category:
\begin{equation}\label{eqn-homotopy-adj}
    \Hom_{\mc{C}(A\#H)}(M\otimes_B K, N) \cong \Hom_{\mc{C}(B\#H)}(K,\HOM_A(M,N)).
\end{equation}

The tensor-hom adjunction descends to the relative homotopy category.

\begin{thm} \cite[Theorem 2.5]{QRSW3} \label{thm-tensor-hom-adj}
    Let $M$ be a complex of $H$-equivariant $(A,B)$-bimodules.  Then there is an isomorphism
    \[
    \Hom_{\mc{C}^H(A)}(M\otimes_B K, N) \cong \Hom_{\mc{C}^H(B)}( K, \HOM_A(M, N))
    \]
    functorial in $K$ and $N$.
\end{thm}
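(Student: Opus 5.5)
The plan is to show that the adjunction \eqref{eqn-homotopy-adj} on the usual homotopy categories $\mc{C}(B\#H)$ and $\mc{C}(A\#H)$ descends to the Verdier quotients. The standard criterion is this. Suppose $F\dashv G$ are exact functors between triangulated categories and $F$, $G$ preserve the thick subcategories being killed. Then they induce functors on the quotients, and the unit and counit descend, so the induced functors remain adjoint. We take $F=M\otimes_B(\mbox{-})$ and $G=\HOM_A(M,\mbox{-})$. Both are exact, being additive functors commuting with shifts and cones termwise. The adjunction \eqref{eqn-homotopy-adj} is natural in $K$ and $N$, since it comes from the natural isomorphism of $H$-equivariant hom complexes followed by taking $H$-invariants (\eqref{eqn-H-inv-in-HOM}) and zeroth cohomology.

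The key step is to check that $F$ and $G$ send relatively null-homotopic complexes to relatively null-homotopic complexes. Let $K\in\mc{C}(B\#H)$ with $\mathrm{For}(K)$ null-homotopic via a $B$-linear map $s$, not necessarily $H$-equivariant. Then $\id_M\otimes s$ is a well-defined $A$-linear null-homotopy of $\mathrm{For}(M\otimes_B K)$, so $F(K)\in\mathrm{Ker}(\mathrm{For})$.

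Similarly, let $N\in\mc{C}(A\#H)$ with $\mathrm{For}(N)$ null-homotopic via an $A$-linear $s$. Postcomposition $f\mapsto s\circ f$ gives a null-homotopy of $\HOM_A(M,N)$. This map is $B$-linear because $B$ acts on $\HOM_A(M,N)$ through the right $B$-action on $M$, which commutes with postcomposition. Signs are as usual. Hence $G(N)\in\mathrm{Ker}(\mathrm{For})$. So $F$ and $G$ induce exact functors $\bar F\colon \mc{C}^H(B)\to\mc{C}^H(A)$ and $\bar G\colon \mc{C}^H(A)\to\mc{C}^H(B)$, compatible with the localization functors.

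To finish, we write the unit $\eta\colon K\to GF(K)$ and the counit $\varepsilon\colon FG(N)\to N$ of the adjunction in $\mc{C}(\cdot\#H)$. Their images under localization are natural transformations $\id\to\bar G\bar F$ and $\bar F\bar G\to\id$, since the localization functors intertwine $F,G$ with $\bar F,\bar G$. The triangle identities hold in the quotient because they hold before localizing. The objects of $\mc{C}^H$ are the same as those of $\mc{C}(\cdot\#H)$, and the identities only involve images of morphisms. This gives the isomorphism of the theorem, natural in $K$ and $N$.

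The main obstacle is the point above: we must make sure the induced transformations are defined on all morphisms of the Verdier quotient, i.e.\ on roofs $K\leftarrow K'\to K''$ with cone of the left leg relatively null-homotopic, and not only on morphisms coming from $\mc{C}(\cdot\#H)$. This follows from the universal property of Verdier localization applied to $\bar F$ and $\bar G$. I would double-check that naturality of $\eta$ with respect to inverted maps is automatic. It is, since an inverse of a natural isomorphism component is still natural. Alternatively, one can verify directly that a map $K\to\HOM_A(M,N)$ in the quotient corresponds bijectively to a roof by applying $F$ and using exactness.
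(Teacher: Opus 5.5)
Your proposal is correct and matches the route the paper indicates. The paper only cites \cite[Theorem 2.5]{QRSW3}, after remarking that the adjunction \eqref{eqn-homotopy-adj} ``descends to the relative homotopy category''; that is exactly what you verify, by showing that $M\otimes_B(\mbox{-})$ and $\HOM_A(M,\mbox{-})$ preserve relatively null-homotopic complexes and that the unit and counit pass to the Verdier quotients.
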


\begin{cor} \cite[Corollary 2.6]{QRSW3} \label{cor-hom-as-derived-invariants}
    Let $M$ be a complex of $H$-equivariant $B$-modules. Then
    \[
    \Hom_{\mc{C}^{H}(\Bbbk)}(\Bbbk, \HOM_A(M,N)) \cong \Hom_{\mc{C}^H(A)}(M,N).
    \]
\end{cor}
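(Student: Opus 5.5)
The plan is to specialize Theorem~\ref{thm-tensor-hom-adj} to the case where the second algebra is the ground ring. Note that the statement as written says ``$H$-equivariant $B$-modules''; the intended reading must be that $M$ is a complex of $H$-equivariant $A$-modules. Such a complex is the same thing as an $H$-equivariant complex of $(A,\Bbbk)$-bimodules, since $\Bbbk$ is central, carries the trivial $H$-action ($h\cdot 1=\epsilon(h)1$), and $\Bbbk^{\mathrm{op}}=\Bbbk$. So we take $B=\Bbbk$, regarded as an $H$-module algebra via the counit.

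Next, take $K=\Bbbk$, the trivial $H$-module concentrated in cohomological degree zero. The first step is to identify $M\otimes_{\Bbbk}\Bbbk$ with $M$ as complexes of $A\# H$-modules. The underlying map $m\otimes 1\mapsto m$ is an isomorphism of complexes of $A$-modules. It is $H$-linear because
\[
h\cdot(m\otimes 1)=\sum_h (h_1\cdot m)\otimes (h_2\cdot 1)=\sum_h (h_1\cdot m)\,\epsilon(h_2)\otimes 1=(h\cdot m)\otimes 1
\]
by the counit axiom. This is an isomorphism already in $\mc{C}(A\# H)$, hence also in $\mc{C}^H(A)$.

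Applying Theorem~\ref{thm-tensor-hom-adj} then gives
\[
\Hom_{\mc{C}^H(A)}(M,N)\cong\Hom_{\mc{C}^H(A)}(M\otimes_{\Bbbk}\Bbbk,N)\cong \Hom_{\mc{C}^H(\Bbbk)}(\Bbbk,\HOM_A(M,N)),
\]
which is the claim, and it is natural in $N$ (and in $M$ by the same argument).

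The only point needing care is the bookkeeping: checking that $\HOM_A(M,N)$, with the action \eqref{eqn-H-action}, is the object of $\mc{C}(\Bbbk\# H)=\mc{C}(H)$ that Theorem~\ref{thm-tensor-hom-adj} produces when $B=\Bbbk$. Here the $B$-module structure on $\HOM_A(M,N)$ is just scalar multiplication, so nothing beyond the cited theorem is required. I expect no real obstacle. If one wanted to avoid the specialization, the alternative is to invoke \eqref{eqn-H-inv-in-HOM} and take zeroth cohomology, but the relative localization makes the direct route through Theorem~\ref{thm-tensor-hom-adj} cleaner.
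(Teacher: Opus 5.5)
Your proposal is correct and follows the intended argument: this corollary (cited from QRSW3) is Theorem~\ref{thm-tensor-hom-adj} specialized to $B=K=\Bbbk$ with the trivial $H$-action via $\epsilon$, combined with the identification $M\otimes_{\Bbbk}\Bbbk\cong M$ in $\mc{C}(A\# H)$, and you are right that ``$B$-modules'' in the statement should read ``$A$-modules''. Your aside claiming naturality in $M$ is not supplied by Theorem~\ref{thm-tensor-hom-adj} as stated, which is functorial only in $K$ and $N$, but the corollary does not require it.
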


Again, when $\Bbbk$ is a field, the left-hand side of Corollary \ref{cor-hom-as-derived-invariants} gives us a way of computing morphisms spaces in the relative homotopy category:
\[
  \Hom_{\mc{C}^H(A)}(M,N)  \cong  \Hom_{\mc{D}(H)}(\Bbbk, \HOM_A(M,N)).
\]
The latter space is the zeroth (hyper)cohomology of the complex of $H$-modules $\HOM_A(M,N)$.

\section{Symmetries of link homology}
\label{link:sec}
In this section we review a family of $\mathfrak{sl}_2$-actions  on foams and link homology constructed in \cite{QRSW2} and \cite{QRSW3}.

\subsection{Webs and foams}
\begin{dfn}
  Let $\surface$ be a surface. A \emph{closed web} or simply a
  \emph{web}
is a finite, oriented, trivalent graph
  $\web = (V(\Gamma), E(\Gamma))$ embedded in the interior of
  $\surface$ and endowed with a \emph{thickness function}
  $\ell\co E(\Gamma) \to \NN$ satisfying a flow condition: vertices
  and thicknesses of their adjacent edges must be one of these two types:
  \begin{equation} \label{mergesplit}
    \NB{\tikz[scale=0.6]{\begin{scope}[font=\tiny]
  \begin{scope}
    \draw [-<] (0,0) -- (-90:1) node[pos =1, below] {$a+b$};
    \draw [->] (0,0) -- (30:1) node[pos =1, above] {$b$};
    \draw [->] (0,0) -- (150:1) node[pos =1, above] {$a$};
  \end{scope}
\end{scope}
%%% Local Variables:
%%% mode: latex
%%% TeX-master: t
%%% End:
}} \qquad\text{or}\qquad \NB{\tikz[scale=0.6]{\begin{scope}[font=\tiny]
    \draw [->] (0,0) -- (90:1) node[pos =1, above] {$a+b$};
    \draw [-<] (0,0) -- (-30:1) node[pos =1, below] {$b$};
    \draw [-<] (0,0) -- (-150:1) node[pos =1, below] {$a$};
\end{scope}
%%% Local Variables:
%%% mode: latex
%%% TeX-master: t
%%% End:
}}.
  \end{equation}
  The first type is called a \emph{split} vertex, the second a \emph{merge}
  vertex. In each of these types, there is one \emph{thick} edge and
  two \emph{thin} edges. Oriented circles with non-negative thickness
  are regarded as edges without vertices and can be part of a web. The
  embedding of $\web$ in $\surface$ is smooth outside its vertices,
  and at the vertices should fit with the local models in \eqref{mergesplit}.
\end{dfn}

\begin{figure}
  \centering
  \NB{\tikz[font=\tiny]{\input{\imagesfolder/pf_exa_web}}}
    \caption{Example of a web in $\RR^2$.}
    \label{fig:exa-web}
\end{figure}

\begin{dfn}\label{def:foam}
  Let $M$ be an~oriented smooth 3-manifold with a~collared boundary.
  A~\emph{foam} $\foam \subset M$ is a~collection of \emph{facets},
  that are compact oriented surfaces labeled with non-negative integers
  and glued together along their boundary points, such that every
  point $p$ of $\foam$ has a~closed neighborhood homeomorphic to one
  of the~following:
  \begin{enumerate}
  \item a~disk, when $p$ belongs to a~unique facet,
  \item \label{it:Y}$Y \times [0,1]$, where $Y$ is the neighborhood of
    a~merge or split vertex of a web, when $p$ belongs to three facets, 
  \item the~cone over the~1-skeleton of a~tetrahedron with $p$ as
    the~vertex of the~cone (so that it belongs to six facets).
  \end{enumerate}
  See Figure~\ref{fig:foam-local-model} for a pictorial representation
  of these three cases. The set of points of the~second type is
  a~collection of curves called \emph{bindings} and the~points of
  the~third type are called \emph{singular vertices}.
  The~\emph{boundary} $\partial\foam$ of $\foam$ is the~closure of
  the~set of boundary points of facets that do not belong to
  a~binding. It is understood that $\foam$ coincides with
  $\partial\foam\times[0,1]$ on the~collar of $\partial M$. For each facet
  $\facet$ of $\foam$, we denote by $\ell(\facet)$ its~label, 
  called the \emph{thickness of $\facet$}. A~foam $\foam$ is
  \emph{decorated} if each facet $\facet$ of $\foam$ is assigned
  a~symmetric polynomial $P_f \in \Sym_{\ell(\facet)}$.  In the second local
  model (\ref{it:Y}) of Definition \ref{def:foam}, it is implicitly understood that thicknesses of
  the three facets are given by those of the edges in $Y$. In
  particular, they satisfy a flow condition and locally one has a thick
  facet and two thin ones. We also require that orientations of
  bindings are induced by those of the thin facets and by the opposite
  of the thick facet. Foams are regarded up to ambient isotopy relative to
  boundary. Foams without boundary are said to be \emph{closed}.
\end{dfn}

\begin{rmk}\label{rmk:deco-mult}
    Diagrammatically, decorations on facets are depicted by dots
      placed on facets 
      adorned with symmetric polynomials in the correct number of
      variables (the thickness of the facet they sit on). The
      decoration of a given facet is the product of all adornments of
      dots sitting on that facet, with the natural convention that an empty product is equal to $1$.
\end{rmk}

\begin{figure}[ht]
  \centering
  \NB{\tikz[]{\input{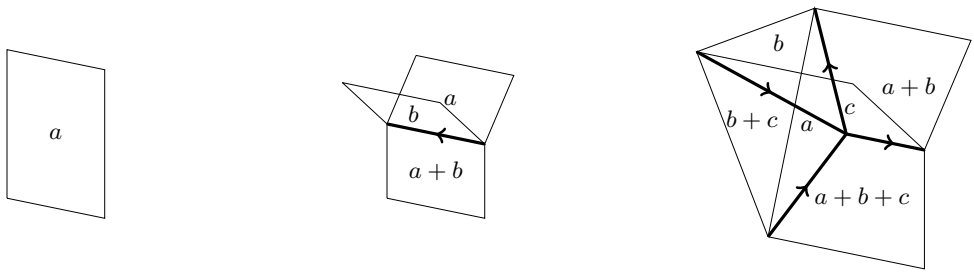}}}
  \caption{The three local models of a foam. Taking into account
    thicknesses, the model in the middle is denoted $Y^{(a,b)}$,
    and the model on the right is denoted $T^{(a,b,c)}$.}
  \label{fig:foam-local-model}
\end{figure}

\begin{notation}
  For a foam $\foam$, we write:
  \begin{itemize}
  \item $\foam^2$ for the~collection of facets of $\foam$,
  \item $\foam^1$ for the collection of bindings,
  \item $\foam^0$ for the set of singular vertices of $\foam$.
  \end{itemize}
  We partition $\foam^1$ as follows:
  $\foam^1= \foam^1_\circ\sqcup \foam^1_{-}$, where $\foam^1_\circ$ is
  the collection of circular bindings and $\foam^1_-$ is the
  collection of bindings diffeomorphic to intervals. If $s \in
  \foam^1_-$, any of its points has a neighborhood diffeomorphic to $Y^{(a,b)}$ for a given $a$ and $b$, and we set:
  \begin{equation}
    \degN{s} = ab + (a+b)(\myN-a-b).\label{eq:deg-binding}
  \end{equation}
  If $v \in
  \foam^0$, it has a neighborhood diffeomorphic to $T^{(a,b,c)}$ and we set:
  \begin{equation}
    \degN{v} = ab +bc + ac + (a+b+c)(\myN-a-b-c).\label{eq:deg-sing}
  \end{equation}
\end{notation}

\begin{dfn}\label{dfn:deg-foam}
  Let $\foam$ be a decorated foam and suppose that all decorations are
  homogeneous. For all $\myN$ in $\NN$, the \emph{$\myN$-degree of
    $\foam$} is the integer $\degN{\foam} \in \ZZ$ given by the
  following formula: 
  \begin{align}\label{eq:deg-foam}
    \degN{\foam}: = & \sum_{\facet \in \foam^2}
    \Big(\deg{P_\facet} -\ell(\facet)(\myN-\ell(\facet))\chi(\facet)
    \Big)  \\
     & + \sum_{s \in \foam^1_-} \degN{s} - \sum_{v \in \foam^0} \degN{v}. \nonumber
  \end{align}
\end{dfn}

The~boundary of a~foam $\foam\subset M$ is a~web in $\partial M$. In the
case $M = \surface\times[0,1]$ is a~thickened surface, a~generic
section $\foam_t := \foam \cap (\surface\times\{t\})$ is
a~web. The~bottom and top webs $\foam_0$ and $\foam_1$ are called the~\emph{input} and \emph{output} of $\foam$ respectively.

If $\surface$ is a surface, $\foamcat[\surface]$ is the category which
has webs in $\surface$ as objects and 
\begin{align*}
  {\Hom}_{\foamcat[\surface]}\left( \web_0,\web_1\right)
  = 
 \left\{
 \begin{array}{c}\text{decorated foams $\foam$ in $\surface\times [0,1]$}\\ \text{with $\foam_i =\web_i$ for $i\in \{0,1\}$}
 \end{array}
 \right\}.
\end{align*}
Composition is given by stacking foams on one another and
rescaling. Decorations behave multiplicatively as suggested by Remark~\ref{rmk:deco-mult}. The identity of $\web$
is $\web\times [0,1]$ decorated by the constant polynomial $1$ on
every facet. The $\myN$-degree of foams is additive under composition.
If $\web$ is a web in a surface $\surface$ and
$h:\surface\times[0,1] \to \surface$ is a smooth isotopy\footnote{For
  the sake of satisfying the collared condition, one should assume
  that $h_t=\id_\surface$ for $t\in [0,\epsilon[\cup]1-\epsilon, 1]$
  for an $\epsilon\in ]0,1]$.} of $\surface$, one can define the foam
$\foam(h)$ to be the trace of $h(\web)$ in $\surface\times[0,1]$: for
all $t \in [0,1]$, $\foam(h)_t=h_t(\web)$. Such foams are called
\emph{traces of isotopies}. They have degree $0$.

\begin{dfn}
 A foam in a surface $\surface\times[0,1]$ is \emph{basic} if it is a trace of isotopy or if it
 is equal to $\web\times[0,1]$ outside a cylinder $B\times[0,1]$, and where
 it is given inside by one of the local models given in Figure~\ref{fig:basic-foam}.  
 \begin{figure}[ht]
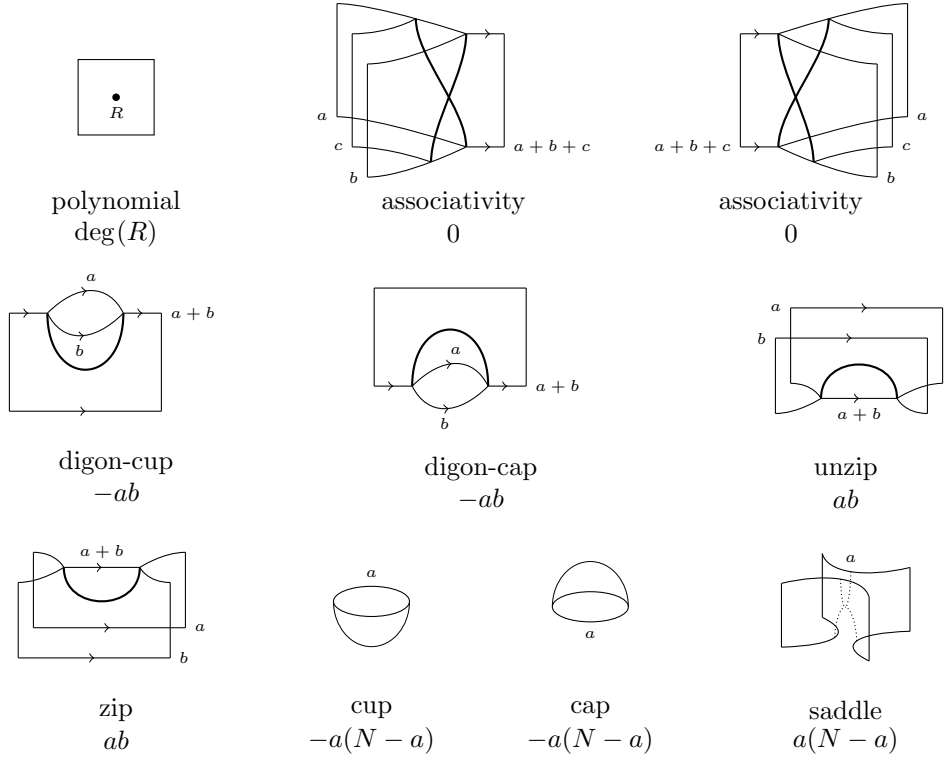

   \centering
   \begin{tikzpicture}[xscale=2.4, yscale =-3.2]
     \node (pol) at (0,0) {\NB{\tikz[font=\tiny]{\input{\imagesfolder/pf_foam-polR}}}};
     \node[yshift= -1.4cm] at (pol) {polynomial};
     \node[yshift = -1.8cm] at (pol) {$\deg{R}$};
     \node (asso) at(1.85,0) {\NB{\tikz[font=\tiny]{\input{\imagesfolder/pf_foam-assoc}}}};
     \node[yshift = -1.4cm] at (asso) {associativity};
     \node[yshift= -1.8cm] at (asso) {$0$};
     \node (coasso) at (3.7,0) {\NB{\tikz[font=\tiny]{\input{\imagesfolder/pf_foam-coassoc}}}};
     \node[yshift = -1.4cm] at (coasso) {associativity};
     \node[yshift= -1.8cm] at (coasso) {$0$};
     \node (digcup) at (0,1) {\NB{\tikz[font=\tiny]{\input{\imagesfolder/pf_foam-digon-cup}}}};
     \node[yshift = -1.64cm] at (digcup) {digon-cup};
     \node[yshift= -2.04cm] at (digcup) {$-ab$};
     \node (digcap) at (2,1.1) {\NB{\tikz[font=\tiny]{\input{\imagesfolder/pf_foam-digon-cap}}}};
     \node[yshift = -1.4cm] at (digcap) {digon-cap};
     \node[yshift= -1.8cm] at (digcap) {$-ab$};
     \node (zip) at (4,1.1) {\NB{\tikz[font=\tiny]{\input{\imagesfolder/pf_foam-unzip}}}};
     \node[yshift = -1.4cm] at (zip) {unzip};
     \node[yshift= -1.8cm] at (zip) {$ab$};
     \node (unzip) at (0,2.1) {\NB{\tikz[font=\tiny]{\input{\imagesfolder/pf_foam-zip}}}};
     \node[yshift = -1.4cm] at (unzip) {zip};
     \node[yshift= -1.8cm] at (unzip) {$ab$};
     \node (cup) at (1.4,2.1) {\NB{\tikz[font=\tiny]{\input{\imagesfolder/pf_foam-cupa}}}};
     \node[yshift = -1.4cm] at (cup) {cup};
     \node[yshift= -1.8cm] at (cup) {$-a(\myN-a)$};
     \node (cap) at (2.6,2.1) {\NB{\tikz[font=\tiny]{\input{\imagesfolder/pf_foam-capa}}}};
     \node[yshift = -1.4cm] at (cap) {cap};
     \node[yshift= -1.8cm] at (cap) {$-a(\myN-a)$};
     \node (saddle) at (4,2.1) {\NB{\tikz[font=\tiny]{\input{\imagesfolder/pf_foam-saddle}}}};
     \node[yshift = -1.4cm] at (saddle) {saddle};
     \node[yshift= -1.8cm] at (saddle) {$a(\myN-a)$};
   \end{tikzpicture}
   \caption{The degree of a basic foam is given below the name of each
     of the local models.}\label{fig:basic-foam}
 \end{figure}

 A foam in $\surface \times [0,1]$ is \emph{in good position} if it is
 a composition of basic foams. 

If $\web$ is a web in $\RR^2$, we denote by $\vectweb{\web}$ the free
$\mathbb{Z}$-module generated by foams in good position in $\RR^2\times
[0,1]$ with $\emptyset$ as input and $\web$ as output.
\end{dfn}

\begin{rmk}
  Every foam in $\surface \times [0,1]$ is isotopic to a foam in good
  position. 
\end{rmk}

In \cite{RW1}, a \emph{$\gll_\myN$-evaluation} of a foam $\foam$ was defined.  This evaluation
$\bracketN{\foam}$ is an element of the ring $\RN$. Its exact formula is not important for this paper and we refer to \cite{RW1} for details of the construction.

Decorations of foams that we consider are often be power sums, and so 
we introduce the following notation: 
\begin{equation}
  \NB{\tikz[scale=1.5, font=\small]{\begin{scope}
  \draw (0,0) rectangle (1,1) coordinate [midway] (A);
  % \fill (A) circle (0.5mm)
  \node at (A) {$\dotnewtoni$};
\end{scope}}} \ =\
  \NB{\tikz[scale=1.5, font=\small]{\begin{scope}
  \draw (0,0) rectangle (1,1) coordinate [midway] (A);
  \fill (A) circle (0.5mm) node[below] {$p_i$};
\end{scope}}}.
\end{equation}
Note that in particular, on a facet of thickness $a$,
$\dotnewtoni[0]=a$. It is convenient to introduce the following notation:
\begin{equation}
  \label{eq:18}
  \NB{\tikz[scale=1.5, font=\small]{\begin{scope}
  \draw (0,0) rectangle (1,1) coordinate [midway] (A);
  % \fill (A) circle (0.5mm)
  \node at (A) {$\wdotnewtoni$};
\end{scope}}} \ =\
  P_i\cdot\ \NB{\tikz[scale=1.5, font=\small]{\begin{scope}
  \draw (0,0) rectangle (1,1) coordinate [midway] (A);
\end{scope}}}\   -\ 
  \NB{\tikz[scale=1.5, font=\small]{}}.
\end{equation}
The symbol $\wdotnewtoni$ can be though of as decoration in an extended sense: using the $\RN$-module structure one can decorate a facets of thickness $\ell$ by elements of $\Sym_\ell\otimes \Sym_{N -\ell}$. For the reader familiar with \cite{RW1}, $\wdotnewtoni$ corresponds to the $i$th power sum in the variables which are not in the facet. 

\subsection{$\mathfrak{sl}_2$-actions} \label{sec:sl2action}
\begin{dfn}
Let $\mathfrak{sl}_2$ be the Lie algebra over $\mathbb{Z}$ generated by $\Le, \Lf, \Lh$ with relations
\[
[\Lh, \Le]=2 \Le, \quad
[\Lh, \Lf]=-2 \Lf, \quad
[\Le, \Lf]= \Lh  \ .
\]
\end{dfn}

We fix parameters $t_1, t_2 \in \scalars$ and define operators $\de$, $\df$ and $\dh$ below on basic foams and extend the action to satisfy
the Leibniz rule with respect to composition of foams. They map traces of isotopies to $0$. The operator $\de$ acts via 
$-\sum_i \frac{\partial}{\partial x_i} $ on
polynomials and by $0$ on any other basic foam. The operators $\dh$ and $\df$ are
defined as follows. 
\allowdisplaybreaks
\begin{gather}
\label{eq:h-act-pol} \dh\left(\NB{\tikz[scale=1.5, font=\small]{\begin{scope}
  \draw (0,0) rectangle (1,1) coordinate [midway] (A); %node[pos=0.2, font=\tiny] {$a$};
  \fill (A) circle (0.5mm) node[below] {$R$};
\end{scope}}}\right) 
  =-\deg{R}\cdot\  \NB{\tikz[scale=1.5, font=\small]{}} \\
\label{eq:h-act-assoc}  \dh\left(\NB{\tikz[scale=0.6, font=\tiny]{\begin{scope}[xscale = -1 ]
  \begin{scope}
    \coordinate (LL) at (0,0);
    \coordinate (L) at (0.5,0);
    \coordinate (R1) at (2.2,0.4);
    \coordinate (R2) at (2,0);
    \coordinate (R3) at (1.8, -0.4);
    \draw[-<-] (LL) -- (L);
    \draw (L) .. controls +(0,0) and +(-0.5, 0) .. (R1)
    coordinate[pos =0.5] (M);
    \draw (L) .. controls +(0,0) and +(-0.5, 0) .. (R3) ;
    \draw (M) .. controls +(0,0) and +(-0.5, 0) .. (R2) ;
  \end{scope}  
 \begin{scope}[yshift = -1.5cm]
    \coordinate (LLB) at (0,0);
    \coordinate (LB) at (0.5,0);
    \coordinate (R1B) at (2.2,0.4);
    \coordinate (R2B) at (2,0);
    \coordinate (R3B) at (1.8, -0.4);
    \draw[-<-] (LLB) -- (LB)  node[right, pos =0] {$a+b+c$};
    \draw (LB) .. controls +(0,0) and +(-0.5, 0) .. (R1B)    node[left] {$a$};
    \draw (LB) .. controls +(0,0) and +(-0.5, 0) .. (R3B)   node[left] {$b$}
    coordinate[pos =0.5] (MB);
    \draw (MB) .. controls +(0,0) and +(-0.5, 0) .. (R2B)    node[left] {$c$};
  \end{scope}  
  \draw (LL) -- (LLB);
  \draw (R1) -- (R1B);
  \draw (R2) -- (R2B);
  \draw (R3) -- (R3B);
  \draw[name path=path1, thick] (M) .. controls +(0,-0.5) and +(0,
  0.5) .. (LB);
  \draw[name path=path2, thick] (L) .. controls +(0,-0.5) and +(0,
  0.5) .. (MB);
  \path [name intersections={of=path1 and path2,by=O}];
  
\end{scope}}}\right)=
  \dh\left(\NB{\tikz[scale=0.6,font=\tiny]{\begin{scope}
  \begin{scope}
    \coordinate (LL) at (0,0);
    \coordinate (L) at (0.5,0);
    \coordinate (R1) at (2.2,0.4);
    \coordinate (R2) at (2,0);
    \coordinate (R3) at (1.8, -0.4);
    \draw[->-] (LL) -- (L);
    \draw (L) .. controls +(0,0) and +(-0.5, 0) .. (R1)
    coordinate[pos =0.5] (M);
    \draw (L) .. controls +(0,0) and +(-0.5, 0) .. (R3) ;
    \draw (M) .. controls +(0,0) and +(-0.5, 0) .. (R2) ;
  \end{scope}  
 \begin{scope}[yshift = -1.5cm]
    \coordinate (LLB) at (0,0);
    \coordinate (LB) at (0.5,0);
    \coordinate (R1B) at (2.2,0.4);
    \coordinate (R2B) at (2,0);
    \coordinate (R3B) at (1.8, -0.4);
    \draw[->-] (LLB) -- (LB)  node[left, pos =0] {$a+b+c$};
    \draw (LB) .. controls +(0,0) and +(-0.5, 0) .. (R1B)    node[right] {$a$};
    \draw (LB) .. controls +(0,0) and +(-0.5, 0) .. (R3B)   node[right] {$b$}
    coordinate[pos =0.5] (MB);
    \draw (MB) .. controls +(0,0) and +(-0.5, 0) .. (R2B)    node[right] {$c$};
  \end{scope}  
  \draw (LL) -- (LLB);
  \draw (R1) -- (R1B);
  \draw (R2) -- (R2B);
  \draw (R3) -- (R3B);
  \draw[name path=path1, thick] (M) .. controls +(0,-0.5) and +(0,
  0.5) .. (LB);
  \draw[name path=path2, thick] (L) .. controls +(0,-0.5) and +(0,
  0.5) .. (MB);
  \path [name intersections={of=path1 and path2,by=O}];  
\end{scope}
}} \right) =0 \\
\label{eq:h-act-dig-cup}  \dh\left( \NB{\tikz[font=\tiny]{\begin{scope}
  \begin{scope}
    \coordinate (L) at (0,0);
    \coordinate (R) at (2,0);
    \coordinate (ML) at (0.5, 0);
    \coordinate (MR) at (1.5, 0);
    \draw[->-] (L) -- (ML);
    \draw[->-] (MR) -- (R) node[right] {$a+b$};
    \draw[->-] (ML).. controls + (0.4, 0.4) and +(-0.2, 0.4) .. (MR)
    node[above, midway] {$a$};
    \draw[->-] (ML).. controls + (0.2, -0.4) and +(-0.4, -0.4) .. (MR)
    node[below, midway] {$b$};
  \end{scope}  
 \begin{scope}[yshift = -1.3cm]
    \coordinate (LB) at (0,0);
    \coordinate (RB) at (2,0);
    \draw[->-] (LB) -- (RB);
  \end{scope}  
  \draw (R) -- (RB);
  \draw (L) -- (LB);
  \draw[thick] (ML) .. controls +(0, -1) and +(0, -1) .. (MR);
\end{scope}}}\right)\ =\
  ab(\tone+\ttwo)\cdot\ \NB{\tikz[font=\tiny]{}} \\
\label{eq:h-act-dig-cap}  \dh\left( \NB{\tikz[font=\tiny]{\begin{scope}
  \begin{scope}
    \coordinate (L) at (0,0);
    \coordinate (R) at (2,0);
    \coordinate (ML) at (0.5, 0);
    \coordinate (MR) at (1.5, 0);
    \draw[->-] (L) -- (ML);
    \draw[->-] (MR) -- (R) node[right] {$a+b$};
    \draw[->-] (ML).. controls + (0.4, 0.4) and +(-0.2, 0.4) .. (MR)
    node[above, midway] {$a$};
    \draw[->-] (ML).. controls + (0.2, -0.4) and +(-0.4, -0.4) .. (MR)
    node[below, midway] {$b$};
  \end{scope}  
 \begin{scope}[yshift = 1.3cm]
    \coordinate (LB) at (0,0);
    \coordinate (RB) at (2,0);
    \draw (LB) -- (RB);
  \end{scope}  
  \draw (R) -- (RB);
  \draw (L) -- (LB);
  \draw[thick] (ML) .. controls +(0, 1) and +(0, 1) .. (MR);
\end{scope}}}\right)\ =\
 ab(\overline{\tone}+\overline{\ttwo})\cdot\  \NB{\tikz[font=\tiny]{}} \\
\label{eq:h-act-dig-zip}  \dh\left( \NB{\tikz[font=\tiny]{\begin{scope}
  \begin{scope}
    \coordinate (L1) at (0.2,0.4);
    \coordinate (L2) at (0,0);
    \coordinate (R1) at (2.2,0.4);
    \coordinate (R2) at (2,0);
    \coordinate (ML) at (0.6, 0.2);
    \coordinate (MR) at (1.6, 0.2);
    \draw[->-] (ML) -- (MR) node[above, midway] {$a+b$};
    \draw (MR) .. controls +(0, 0) and +(-0.3,0) .. (R1) ;
    \draw (MR) .. controls +(0, 0) and +(-0.3,0) .. (R2);
    \draw (L1) .. controls +( 0.3, 0) and +(0,0) .. (ML);
    \draw (L2) .. controls +( 0.3, 0) and +(0,0) .. (ML);
  \end{scope}  
 \begin{scope}[yshift = -1cm]
    \coordinate (L1B) at (0.2,0.4);
    \coordinate (L2B) at (0,0);
    \coordinate (R1B) at (2.2,0.4);
    \coordinate (R2B) at (2,0);
    \draw[->-] (L1B) .. controls +( 0, 0) and +(0,0) .. (R1B) node [right, pos
    = 1] {$a$};
    \draw[->-] (L2B) .. controls +( 0, 0) and +(0,0) .. (R2B) node [right, pos
    = 1] {$b$};
 \end{scope}  
  \draw (R1) -- (R1B);
  \draw (R2) -- (R2B);
  \draw (L1) -- (L1B);
  \draw (L2) -- (L2B);
  \draw[thick] (ML) .. controls +(0, -0.6) and +(0, -0.6) .. (MR);
\end{scope}

%%% Local Variables:
%%% mode: latex
%%% TeX-master: t
%%% End:
}}\right)\ =\
-ab(\overline{\tone} + \overline{\ttwo}) \cdot\ \NB{\tikz[font=\tiny]{}} 
 \\
\label{eq:h-act-dig-unzip} \dh\left( \NB{\tikz[font=\tiny]{\begin{scope}
  \begin{scope}
    \coordinate (L1) at (0.2,0.4);
    \coordinate (L2) at (0,0);
    \coordinate (R1) at (2.2,0.4);
    \coordinate (R2) at (2,0);
    \coordinate (ML) at (0.6, 0.2);
    \coordinate (MR) at (1.6, 0.2);
    \draw[->-] (ML) -- (MR) node[below, midway] {$a+b$};
    \draw (MR) .. controls +(0, 0) and +(-0.3,0) .. (R1) ;
    \draw (MR) .. controls +(0, 0) and +(-0.3,0) .. (R2);
    \draw (L1) .. controls +( 0.3, 0) and +(0,0) .. (ML);
    \draw (L2) .. controls +( 0.3, 0) and +(0,0) .. (ML);
  \end{scope}  
 \begin{scope}[yshift = 1cm]
    \coordinate (L1B) at (0.2,0.4);
    \coordinate (L2B) at (0,0);
    \coordinate (R1B) at (2.2,0.4);
    \coordinate (R2B) at (2,0);
    \draw[->-] (L1B) .. controls +( 0, 0) and +(0,0) .. (R1B) node [left, pos
    = 0] {$a$};
    \draw[->-] (L2B) .. controls +( 0, 0) and +(0,0) .. (R2B) node [left, pos
    = 0] {$b$};
 \end{scope}  
  \draw (R1) -- (R1B);
  \draw (R2) -- (R2B);
  \draw (L1) -- (L1B);
  \draw (L2) -- (L2B);
  \draw[thick] (ML) .. controls +(0, 0.6) and +(0, 0.6) .. (MR);
\end{scope}
}}\right)\ =\
 -ab({\tone} + {\ttwo}) \cdot\ \NB{\tikz[font=\tiny]{}} 
  \\
\label{eq:h-act-cup}  \dh\left( \NB{\tikz[font=\tiny, scale=1.2]{\begin{scope}
  \draw (0,0) arc (180 :0: 0.5cm and 0.2cm) node[above, pos =
  0.5] {$a$};
  \draw[very thin] (0,0) arc (180 :0: 0.5cm and -0.6cm) node[pos=0.5,
  above] {};
  \draw (0,0) arc (180 :0: 0.5cm and -0.2cm);
\end{scope}

%%% Local Variables:
%%% mode: latex
%%% TeX-master: t
%%% End:
}}\right)\ =\
     a(N-a) \cdot\ \NB{\tikz[font=\tiny, scale=1.2]{}}
  \\[3pt]
\label{eq:h-act-cap}  \dh\left( \NB{\tikz[font=\tiny, scale=1.2]{\begin{scope}[-]
  \draw (0,0) arc (180 :0: 0.5cm and 0.2cm);
  \draw[very thin] (0,0) arc (180 :0: 0.5cm and 0.6cm) node[pos=0.5,
  below] {};
  \draw (0,0) arc (180 :0: 0.5cm and -0.2cm)node[ below, pos =
  0.5] {$a$};
\end{scope}

%%% Local Variables:
%%% mode: latex
%%% TeX-master: t
%%% End:
}}\right)\ =\
  a(N-a) \cdot\ \NB{\tikz[font=\tiny,
    scale=1.2]{}} \\
\label{eq:h-act-saddle}  \dh\left( \NB{\tikz[font=\tiny, scale=1.2]{\begin{scope}[scale=0.6]
\tdplotsetmaincoords{70}{25}
\begin{scope}[scale = 1.5, tdplot_main_coords]
  \tikzset{yxplane/.style={canvas is xy plane at z=#1}}
  \begin{scope}[yxplane=1]
    \coordinate (AT) at ({cos(  45)}, {sin(  45)});
    \coordinate (BT) at ({cos(135)}, {sin(135)});
    \coordinate (CT) at ({cos(225)}, {sin(225)});
    \coordinate (DT) at ({cos(315)}, {sin(315)});
    \coordinate (aT) at (  0.3, 0);
    \coordinate (bT) at ( -0.3, 0);
    \draw (AT) .. controls (aT) and (bT) .. (BT)
    coordinate[pos=0.5] (eT) node [pos =0.5, above] {$a$};
    \draw (DT) .. controls (aT) and (bT) .. (CT) coordinate[pos=0.5] (fT);
  \end{scope}

  \begin{scope}[yxplane=0]
    \coordinate (AM) at ({cos(  45)}, {sin(  45)});
    \coordinate (BM) at ({cos(135)}, {sin(135)});
    \coordinate (CM) at ({cos(225)}, {sin(225)});
    \coordinate (DM) at ({cos(315)}, {sin(315)});
    \coordinate (aM) at (0, 0.3);
    \coordinate (bM) at (0, -0.3);
    \draw (AM) .. controls (aM) and (bM) .. (DM) coordinate[pos=0.5] (eM);
    \draw (BM) .. controls (aM) and (bM) .. (CM) coordinate[pos=0.5] (fM);
\end{scope}
  \coordinate (OT) at (0,0, 0.5);
\draw (AM) -- (AT);
  \draw (BM) -- (BT);
  \draw (CM) -- (CT);
  \draw (DM) -- (DT);
  \draw[densely dotted] (eM) ..controls +(0,0,0.2) and + (0.1,0,0).. (OT);
  \draw[densely dotted] (fM) ..controls +(0,0,0.2) and + (-0.1,0,0).. (OT);
  \draw[densely dotted] (eT) ..controls +(0,0,-0.2) and + (0,0.1,0).. (OT);
  \draw[densely dotted] (fT) ..controls +(0,0,-0.2) and + (0,-0.1,0).. (OT);
\end{scope}  
\end{scope}

%%% Local Variables:
%%% mode: latex
%%% TeX-master: t
%%% End:
}}\right)\ =\
    -a(N-a) \cdot\ \NB{\tikz[font=\tiny,
    scale=1.2]{}} \\ 
\label{eq:e-act-pol}  \df\left(\NB{\tikz[scale=1.5, font=\small]{}}\right)
  =\ \NB{\tikz[scale=1.5, font=\small]{\begin{scope}
  \draw (0,0) rectangle (1.5,1) coordinate [midway] (A);
  \fill (A) circle (0.5mm) node[below] {$\sum_i x_i^2 \frac{\partial}{\partial x_i}(R)$};
\end{scope}}} \\
\label{eq:e-act-assoc}   \df\left(\NB{\tikz[scale=0.6, font=\tiny]{}}\right)=
  \df\left(\NB{\tikz[scale=0.6,font=\tiny]{}} \right) =0 \\
\label{eq:e-act-dig-cup}  \df\left( \NB{\tikz[font=\tiny]{}}\right)\ =\
 - \tone\cdot\ \NB{\tikz[font=\tiny]{\begin{scope}[font=\tiny]
  \begin{scope}
    \coordinate (L) at (0,0);
    \coordinate (R) at (2,0);
    \coordinate (ML) at (0.5, 0);
    \coordinate (MR) at (1.5, 0);
    \draw[->-] (L) -- (ML);
    \draw[->-] (MR) -- (R) node[right] {$a+b$};
    \draw[->-] (ML).. controls + (0.4, 0.4) and +(-0.2, 0.4) .. (MR)
    node[above, pos=0.7 ] {$a$} node[below, pos =0.3] {$\dotnewtoni[1]$};
    \draw[->-] (ML).. controls +(0.2, -0.4) and +(-0.4, -0.4) .. (MR)
    node[below, pos =0.3] {$b$} node[below, pos =0.75] {$\dotnewtoni[0]$};
  \end{scope}  
 \begin{scope}[yshift = -1.3cm]
    \coordinate (LB) at (0,0);
    \coordinate (RB) at (2,0);
    \draw[->-] (LB) -- (RB);
  \end{scope}  
  \draw (R) -- (RB);
  \draw (L) -- (LB);
  \draw[thick] (ML) .. controls +(0, -1) and +(0, -1) .. (MR);
\end{scope}

%%% Local Variables:
%%% mode: latex
%%% TeX-master: t
%%% End:
}} 
  - \ttwo\cdot \ \NB{\tikz[font=\tiny]{\begin{scope}[font=\tiny]
  \begin{scope}
    \coordinate (L) at (0,0);
    \coordinate (R) at (2,0);
    \coordinate (ML) at (0.5, 0);
    \coordinate (MR) at (1.5, 0);
    \draw[->-] (L) -- (ML);
    \draw[->-] (MR) -- (R) node[right] {$a+b$};
    \draw[->-] (ML).. controls + (0.4, 0.4) and +(-0.2, 0.4) .. (MR)
    node[above, pos=0.7 ] {$a$} node[below, pos =0.3] {$\dotnewtoni[0]$};
    \draw[->-] (ML).. controls +(0.2, -0.4) and +(-0.4, -0.4) .. (MR)
    node[below, pos =0.3] {$b$} node[below, pos =0.75] {$\dotnewtoni[1]$};
  \end{scope}  
 \begin{scope}[yshift = -1.3cm]
    \coordinate (LB) at (0,0);
    \coordinate (RB) at (2,0);
    \draw[->-] (LB) -- (RB);
  \end{scope}  
  \draw (R) -- (RB);
  \draw (L) -- (LB);
  \draw[thick] (ML) .. controls +(0, -1) and +(0, -1) .. (MR);
\end{scope}

%%% Local Variables:
%%% mode: latex
%%% TeX-master: t
%%% End:
}}  \\
\label{eq:e-act-dig-cap}   \df\left( \NB{\tikz[font=\tiny]{}}\right)\ =\
   - \overline{\tone}
  \cdot\ \NB{\tikz[font=\tiny]{\begin{scope}[font =\tiny]
  \begin{scope}
    \coordinate (L) at (0,0);
    \coordinate (R) at (2,0);
    \coordinate (ML) at (0.5, 0);
    \coordinate (MR) at (1.5, 0);
    \draw[->-] (L) -- (ML);
    \draw[->-] (MR) -- (R) node[right] {$a+b$};
    \draw[->-] (ML).. controls + (0.4, 0.4) and +(-0.2, 0.4) .. (MR)
    node[above, pos =0.7] {$a$}     node[above, pos =0.3] {$\dotnewtoni[1]$};
    \draw[->-] (ML).. controls + (0.2, -0.4) and +(-0.4, -0.4) .. (MR)
    node[left, pos =0.3] {$b$} node[above, pos =0.7, yshift = -0.5mm] {$\dotnewtoni[0]$};
  \end{scope}  
 \begin{scope}[yshift = 1.3cm]
    \coordinate (LB) at (0,0);
    \coordinate (RB) at (2,0);
    \draw (LB) -- (RB);
  \end{scope}  
  \draw (R) -- (RB);
  \draw (L) -- (LB);
  \draw[thick] (ML) .. controls +(0, 1) and +(0, 1) .. (MR);
\end{scope}

%%% Local Variables:
%%% mode: latex
%%% TeX-master: t
%%% End:
}}  
  \!\!\!\! - \overline{\ttwo}
  \cdot \ \NB{\tikz[font=\tiny]{\begin{scope}[font =\tiny]
  \begin{scope}
    \coordinate (L) at (0,0);
    \coordinate (R) at (2,0);
    \coordinate (ML) at (0.5, 0);
    \coordinate (MR) at (1.5, 0);
    \draw[->-] (L) -- (ML);
    \draw[->-] (MR) -- (R) node[right] {$a+b$};
    \draw[->-] (ML).. controls + (0.4, 0.4) and +(-0.2, 0.4) .. (MR)
    node[above, pos =0.7] {$a$}     node[above, pos =0.3] {$\dotnewtoni[0]$};
    \draw[->-] (ML).. controls + (0.2, -0.4) and +(-0.4, -0.4) .. (MR)
    node[left, pos =0.3] {$b$} node[above, pos =0.7, yshift = -0.5mm] {$\dotnewtoni[1]$};
  \end{scope}  
 \begin{scope}[yshift = 1.3cm]
    \coordinate (LB) at (0,0);
    \coordinate (RB) at (2,0);
    \draw (LB) -- (RB);
  \end{scope}  
  \draw (R) -- (RB);
  \draw (L) -- (LB);
  \draw[thick] (ML) .. controls +(0, 1) and +(0, 1) .. (MR);
\end{scope}

%%% Local Variables:
%%% mode: latex
%%% TeX-master: t
%%% End:
}}  \\
\label{eq:e-act-zip}   \df\left( \NB{\tikz[font=\tiny]{}}\right)\ =\
    \overline{\tone} \cdot\ \NB{\tikz[font=\tiny]{\begin{scope}
  \begin{scope}
    \coordinate (L1) at (0.2,0.4);
    \coordinate (L2) at (0,0);
    \coordinate (R1) at (2.2,0.4);
    \coordinate (R2) at (2,0);
    \coordinate (ML) at (0.6, 0.2);
    \coordinate (MR) at (1.6, 0.2);
    \draw[->-] (ML) -- (MR) node[above, midway] {$a+b$};
    \draw (MR) .. controls +(0, 0) and +(-0.3,0) .. (R1) ;
    \draw (MR) .. controls +(0, 0) and +(-0.3,0) .. (R2);
    \draw (L1) .. controls +( 0.3, 0) and +(0,0) .. (ML);
    \draw (L2) .. controls +( 0.3, 0) and +(0,0) .. (ML);
  \end{scope}  
 \begin{scope}[yshift = -1cm]
    \coordinate (L1B) at (0.2,0.4);
    \coordinate (L2B) at (0,0);
    \coordinate (R1B) at (2.2,0.4);
    \coordinate (R2B) at (2,0);
    \draw[->-] (L1B) .. controls +( 0, 0) and +(0,0) .. (R1B) node [right, pos
    = 1] {$a$};
    \draw[->-] (L2B) .. controls +( 0, 0) and +(0,0) .. (R2B) node [right, pos
    = 1] {$b$}   node [pos = 0.2, above] {$\dotnewtoni[0]$};
 \end{scope}  
  \draw (R1) -- (R1B) node [pos = 0.2, left] {$\dotnewtoni[1]$};
  \draw (R2) -- (R2B);
  \draw (L1) -- (L1B);
  \draw (L2) -- (L2B);
  \draw[thick] (ML) .. controls +(0, -0.6) and +(0, -0.6) .. (MR);
\end{scope}

%%% Local Variables:
%%% mode: latex
%%% TeX-master: t
%%% End:
}} 
  +  \overline{\ttwo}\cdot \ \NB{\tikz[font=\tiny]{\begin{scope}
  \begin{scope}
    \coordinate (L1) at (0.2,0.4);
    \coordinate (L2) at (0,0);
    \coordinate (R1) at (2.2,0.4);
    \coordinate (R2) at (2,0);
    \coordinate (ML) at (0.6, 0.2);
    \coordinate (MR) at (1.6, 0.2);
    \draw[->-] (ML) -- (MR) node[above, midway] {$a+b$};
    \draw (MR) .. controls +(0, 0) and +(-0.3,0) .. (R1) ;
    \draw (MR) .. controls +(0, 0) and +(-0.3,0) .. (R2);
    \draw (L1) .. controls +( 0.3, 0) and +(0,0) .. (ML);
    \draw (L2) .. controls +( 0.3, 0) and +(0,0) .. (ML);
  \end{scope}  
 \begin{scope}[yshift = -1cm]
    \coordinate (L1B) at (0.2,0.4);
    \coordinate (L2B) at (0,0);
    \coordinate (R1B) at (2.2,0.4);
    \coordinate (R2B) at (2,0);
    \draw[->-] (L1B) .. controls +( 0, 0) and +(0,0) .. (R1B) node [right, pos
    = 1] {$a$};
    \draw[->-] (L2B) .. controls +( 0, 0) and +(0,0) .. (R2B) node [right, pos
    = 1] {$b$}   node [pos = 0.2, above] {$\dotnewtoni[1]$};
 \end{scope}  
  \draw (R1) -- (R1B) node [pos = 0.2, left] {$\dotnewtoni[0]$};
  \draw (R2) -- (R2B);
  \draw (L1) -- (L1B);
  \draw (L2) -- (L2B);
  \draw[thick] (ML) .. controls +(0, -0.6) and +(0, -0.6) .. (MR);
\end{scope}

%%% Local Variables:
%%% mode: latex
%%% TeX-master: t
%%% End:
}}  \\
\label{eq:e-act-unzip}   \df\left( \NB{\tikz[font=\tiny]{}}\right)\ =\
    {\tone}\cdot\ \NB{\tikz[font=\tiny]{\begin{scope}
  \begin{scope}
    \coordinate (L1) at (0.2,0.4);
    \coordinate (L2) at (0,0);
    \coordinate (R1) at (2.2,0.4);
    \coordinate (R2) at (2,0);
    \coordinate (ML) at (0.6, 0.2);
    \coordinate (MR) at (1.6, 0.2);
    \draw[->-] (ML) -- (MR) node[below, midway] {$a+b$};
    \draw (MR) .. controls +(0, 0) and +(-0.3,0) .. (R1) ;
    \draw (MR) .. controls +(0, 0) and +(-0.3,0) .. (R2);
    \draw (L1) .. controls +( 0.3, 0) and +(0,0) .. (ML);
    \draw (L2) .. controls +( 0.3, 0) and +(0,0) .. (ML) node [above,
    pos=0.2] {$\dotnewtoni[0]$};
  \end{scope}  
 \begin{scope}[yshift = 1cm]
    \coordinate (L1B) at (0.2,0.4);
    \coordinate (L2B) at (0,0);
    \coordinate (R1B) at (2.2,0.4);
    \coordinate (R2B) at (2,0);
    \draw[->-] (L1B) .. controls +( 0, 0) and +(0,0) .. (R1B) node [left, pos
    = 0] {$a$} node
  [pos=0.8, below] {$\dotnewtoni[1]$};
    \draw[->-] (L2B) .. controls +( 0, 0) and +(0,0) .. (R2B) node [left, pos
    = 0] {$b$};
 \end{scope}  
  \draw (R1) -- (R1B);
  \draw (R2) -- (R2B);
  \draw (L1) -- (L1B);
  \draw (L2) -- (L2B);
  \draw[thick] (ML) .. controls +(0, 0.6) and +(0, 0.6) .. (MR);
\end{scope}

%%% Local Variables:
%%% mode: latex
%%% TeX-master: t
%%% End:
}} 
   +  {\ttwo}\cdot \ \NB{\tikz[font=\tiny]{\begin{scope}
  \begin{scope}
    \coordinate (L1) at (0.2,0.4);
    \coordinate (L2) at (0,0);
    \coordinate (R1) at (2.2,0.4);
    \coordinate (R2) at (2,0);
    \coordinate (ML) at (0.6, 0.2);
    \coordinate (MR) at (1.6, 0.2);
    \draw[->-] (ML) -- (MR) node[below, midway] {$a+b$};
    \draw (MR) .. controls +(0, 0) and +(-0.3,0) .. (R1) ;
    \draw (MR) .. controls +(0, 0) and +(-0.3,0) .. (R2);
    \draw (L1) .. controls +( 0.3, 0) and +(0,0) .. (ML);
    \draw (L2) .. controls +( 0.3, 0) and +(0,0) .. (ML) node [above,
    pos=0.2] {$\dotnewtoni[1]$};
  \end{scope}  
 \begin{scope}[yshift = 1cm]
    \coordinate (L1B) at (0.2,0.4);
    \coordinate (L2B) at (0,0);
    \coordinate (R1B) at (2.2,0.4);
    \coordinate (R2B) at (2,0);
    \draw[->-] (L1B) .. controls +( 0, 0) and +(0,0) .. (R1B) node [left, pos
    = 0] {$a$} node
  [pos=0.8, below] {$\dotnewtoni[0]$};
    \draw[->-] (L2B) .. controls +( 0, 0) and +(0,0) .. (R2B) node [left, pos
    = 0] {$b$};
 \end{scope}  
  \draw (R1) -- (R1B);
  \draw (R2) -- (R2B);
  \draw (L1) -- (L1B);
  \draw (L2) -- (L2B);
  \draw[thick] (ML) .. controls +(0, 0.6) and +(0, 0.6) .. (MR);
\end{scope}

%%% Local Variables:
%%% mode: latex
%%% TeX-master: t
%%% End:
}}  
  \\
\label{eq:e-act-cup}   \df\left( \NB{\tikz[font=\tiny, scale=1.2]{}}\right)\ =\
     - \frac{1}{2}  \cdot\ \NB{\tikz[font=\tiny, scale=1.2]{\begin{scope}
  \draw (0,0) arc (180 :0: 0.5cm and 0.2cm) node[above, pos =
  0.5] {$a$};
  \draw[very thin] (0,0) arc (180 :0: 0.5cm and -0.6cm) node[pos=0.5,
  above] {$\dotnewtoni[0] \wdotnewtoni[1]$};
  \draw (0,0) arc (180 :0: 0.5cm and -0.2cm);
\end{scope}

%%% Local Variables:
%%% mode: latex
%%% TeX-master: t
%%% End:
}}
    \ 
   -\   \frac{1}{2}  \cdot\ \NB{\tikz[font=\tiny, scale=1.2]{\begin{scope}
  \draw (0,0) arc (180 :0: 0.5cm and 0.2cm) node[above, pos =
  0.5] {$a$};
  \draw[very thin] (0,0) arc (180 :0: 0.5cm and -0.6cm) node[pos=0.5,
  above] {$\dotnewtoni[1] \wdotnewtoni[0]$};
  \draw (0,0) arc (180 :0: 0.5cm and -0.2cm);
\end{scope}

%%% Local Variables:
%%% mode: latex
%%% TeX-master: t
%%% End:
}}  \\[3pt]
\label{eq:e-act-cap}   \df\left( \NB{\tikz[font=\tiny, scale=1.2]{}}\right)\ =\
    -\frac{1}{2} \cdot\ \NB{\tikz[font=\tiny,
    scale=1.2]{\begin{scope}
  \draw (0,0) arc (180 :0: 0.5cm and 0.2cm);
  \draw[very thin] (0,0) arc (180 :0: 0.5cm and 0.6cm) node[pos=0.5,
  below] {$\dotnewtoni[0] \wdotnewtoni[1]$};
  \draw (0,0) arc (180 :0: 0.5cm and -0.2cm)node[ below, pos =
  0.5] {$a$};
\end{scope}

%%% Local Variables:
%%% mode: latex
%%% TeX-master: t
%%% End:
}} \ 
  - \  \frac{1}{2}  \cdot\ \NB{\tikz[font=\tiny, scale=1.2]{\begin{scope}
  \draw (0,0) arc (180 :0: 0.5cm and 0.2cm);
  \draw[very thin] (0,0) arc (180 :0: 0.5cm and 0.6cm) node[pos=0.5,
  below] {$\dotnewtoni[1] \wdotnewtoni[0]$};
  \draw (0,0) arc (180 :0: 0.5cm and -0.2cm)node[ below, pos =
  0.5] {$a$};
\end{scope}

%%% Local Variables:
%%% mode: latex
%%% TeX-master: t
%%% End:
}} \\
  \label{eq:e-act-saddle}   \df\left( \NB{\tikz[font=\tiny, scale=1.2]{}}\right)\ =\
    \frac{1}{2} \cdot\ \NB{\tikz[font=\tiny,
    scale=1.2]{\begin{scope}[scale=0.6]
\tdplotsetmaincoords{70}{25}
\begin{scope}[scale = 1.5, tdplot_main_coords]
  \tikzset{yxplane/.style={canvas is xy plane at z=#1}}
  \begin{scope}[yxplane=1]
    \coordinate (AT) at ({cos(  45)}, {sin(  45)});
    \coordinate (BT) at ({cos(135)}, {sin(135)});
    \coordinate (CT) at ({cos(225)}, {sin(225)});
    \coordinate (DT) at ({cos(315)}, {sin(315)});
    \coordinate (aT) at (  0.3, 0);
    \coordinate (bT) at ( -0.3, 0);
    \draw (AT) .. controls (aT) and (bT) .. (BT)
    coordinate[pos=0.5] (eT) node [pos =0.5, above] {$a$};
    \draw (DT) .. controls (aT) and (bT) .. (CT) coordinate[pos=0.5] (fT);
  \end{scope}
  \node at (-0.75, -0.1, 0.3) {$\dotnewtoni[0]$};
    \node at (0.5, 0.5, 0.3) {$\wdotnewtoni[1]$};
  \begin{scope}[yxplane=0]
    \coordinate (AM) at ({cos(  45)}, {sin(  45)});
    \coordinate (BM) at ({cos(135)}, {sin(135)});
    \coordinate (CM) at ({cos(225)}, {sin(225)});
    \coordinate (DM) at ({cos(315)}, {sin(315)});
    \coordinate (aM) at (0, 0.3);
    \coordinate (bM) at (0, -0.3);
    \draw (AM) .. controls (aM) and (bM) .. (DM) coordinate[pos=0.5] (eM);
    \draw (BM) .. controls (aM) and (bM) .. (CM) coordinate[pos=0.5] (fM);
\end{scope}
  \coordinate (OT) at (0,0, 0.5);
\draw (AM) -- (AT);
  \draw (BM) -- (BT);
  \draw (CM) -- (CT);
  \draw (DM) -- (DT);
  \draw[densely dotted] (eM) ..controls +(0,0,0.2) and + (0.1,0,0).. (OT);
  \draw[densely dotted] (fM) ..controls +(0,0,0.2) and + (-0.1,0,0).. (OT);
  \draw[densely dotted] (eT) ..controls +(0,0,-0.2) and + (0,0.1,0).. (OT);
  \draw[densely dotted] (fT) ..controls +(0,0,-0.2) and + (0,-0.1,0).. (OT);
\end{scope}  
\end{scope}

%%% Local Variables:
%%% mode: latex
%%% TeX-master: t
%%% End:
}} \ 
  + \  \frac{1}{2} \cdot\ \NB{\tikz[font=\tiny, scale=1.2]{\begin{scope}[scale=0.6]
\tdplotsetmaincoords{70}{25}
\begin{scope}[scale = 1.5, tdplot_main_coords]
  \tikzset{yxplane/.style={canvas is xy plane at z=#1}}
  \begin{scope}[yxplane=1]
    \coordinate (AT) at ({cos(  45)}, {sin(  45)});
    \coordinate (BT) at ({cos(135)}, {sin(135)});
    \coordinate (CT) at ({cos(225)}, {sin(225)});
    \coordinate (DT) at ({cos(315)}, {sin(315)});
    \coordinate (aT) at (  0.3, 0);
    \coordinate (bT) at ( -0.3, 0);
    \draw (AT) .. controls (aT) and (bT) .. (BT)
    coordinate[pos=0.5] (eT) node [pos =0.5, above] {$a$};
    \draw (DT) .. controls (aT) and (bT) .. (CT) coordinate[pos=0.5] (fT);
  \end{scope}
  \node at (-0.75, -0.1, 0.3) {$\dotnewtoni[1]$};
    \node at (0.5, 0.5, 0.3) {$\wdotnewtoni[0]$};
  \begin{scope}[yxplane=0]
    \coordinate (AM) at ({cos(  45)}, {sin(  45)});
    \coordinate (BM) at ({cos(135)}, {sin(135)});
    \coordinate (CM) at ({cos(225)}, {sin(225)});
    \coordinate (DM) at ({cos(315)}, {sin(315)});
    \coordinate (aM) at (0, 0.3);
    \coordinate (bM) at (0, -0.3);
    \draw (AM) .. controls (aM) and (bM) .. (DM) coordinate[pos=0.5] (eM);
    \draw (BM) .. controls (aM) and (bM) .. (CM) coordinate[pos=0.5] (fM);
\end{scope}
  \coordinate (OT) at (0,0, 0.5);
\draw (AM) -- (AT);
  \draw (BM) -- (BT);
  \draw (CM) -- (CT);
  \draw (DM) -- (DT);
  \draw[densely dotted] (eM) ..controls +(0,0,0.2) and + (0.1,0,0).. (OT);
  \draw[densely dotted] (fM) ..controls +(0,0,0.2) and + (-0.1,0,0).. (OT);
  \draw[densely dotted] (eT) ..controls +(0,0,-0.2) and + (0,0.1,0).. (OT);
  \draw[densely dotted] (fT) ..controls +(0,0,-0.2) and + (0,-0.1,0).. (OT);
\end{scope}  
\end{scope}

%%% Local Variables:
%%% mode: latex
%%% TeX-master: t
%%% End:
}} 
\end{gather}
\allowdisplaybreaks[0]

Note that if $t_1+t_2=1$, then $\dh$ acts a negative degree operator.
We now record the fact that the formulas above do indeed give rise to an $\mathfrak{sl}_2$-action.
\begin{lem} \cite[Lemma 4.6]{QRSW2} \label{lem:sl2-acts-good-position}
  Mapping $\Le$ to $\de$, $\Lh$ to $\dh$ and $\Lf$ to $\df$ defines an
  action of $\sll_2$ on the $\scalars$-module generated by
  foams in good position.
\end{lem}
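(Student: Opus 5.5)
We work in the free $\scalars$-module spanned by foams in good position, i.e.\ formal words in basic foams (before any relations are imposed). The operators $\de,\dh,\df$ are defined on basic foams and extended by the Leibniz rule. On such words they are therefore well defined derivations of the composition product: for a word $F_k\circ\cdots\circ F_1$, we set $\mathbf{x}(F)=\sum_j F_k\circ\cdots\circ \mathbf{x}(F_j)\circ\cdots\circ F_1$. The one point to check first is that each $\mathbf{x}(F_j)$ is again a $\scalars$-combination of foams in good position. This holds because a power sum dot on a facet is a polynomial basic foam, and $\wdotnewtoni$ is $P_i$ times a foam minus a dotted foam. The commutator of two derivations is a derivation. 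So it suffices to verify the three relations $[\dh,\de]=2\de$, $[\dh,\df]=-2\df$ and $[\de,\df]=\dh$ on generators, namely on each basic foam. Traces of isotopies are killed by everything, so they pose no issue.

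\emph{Relations involving $\dh$.} On every basic foam $F$, $\dh(F)=c_F\,F$ for a scalar $c_F$: either $-\deg{R}$, or a constant depending on $a,b,N,t_1,t_2$. Since $\dh$ is a derivation, on any word it acts diagonally by the sum of the $c$'s of its letters; call this sum $c(F)$. For $[\dh,\de]$ we note that $\de$ is nonzero only on polynomial foams, where it replaces $R$ by $-\sum_i\partial_iR$. This lowers the degree by $2$, so $c$ increases by $2$, giving $[\dh,\de]=2\de$. For $[\dh,\df]$ we compute $c$ of each term of $\df(F)$ and check that it equals $c_F-2$. On a polynomial foam, $\sum x_i^2\partial_i$ raises the degree by $2$. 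For cup, cap, digon-cup and so on, $\df(F)$ is $F$ with extra dots of total degree $2$. Writing $\df(F)$ as the composite of a polynomial basic foam $D$ with $F$ itself, $\dh$ acts on $D\circ F$ by $-2+c_F$. Because $\dh$ of a composite is the sum over its letters, this is exactly what is needed. The presence of $P_1$ in $\wdotnewtoni[1]$ is harmless, since $\dh$ commutes with the $\RN$-action after one notes that $\dh$ acts on $E_1$ consistently, provided we treat the coefficient ring as graded. I would check this convention carefully.

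\emph{The relation $[\de,\df]=\dh$.} On a polynomial foam this is the classical computation $[-\sum\partial_i,\sum x_i^2\partial_i]=-\sum 2x_i\partial_i=-\deg$, which is $\dh$ by \eqref{eq:h-act-pol}. On any other basic foam $F$ we have $\de(F)=0$, so $[\de,\df](F)=\de(\df(F))$. Here $\df(F)$ equals $F$ with one degree-$2$ dot, and $\de$ lowers that dot to a degree-$0$ decoration. We have $\de(p_1)=-a$ on a facet of thickness $a$, and $\de(p_0)=0$. For $\wdotnewtoni[1]$ we have $\de(\wdotnewtoni[1])=-(N-a)$, because $\de(P_1)=-N$. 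Substituting these values, each case reduces to a scalar. For the digon-cup we get $-t_1(-a)b-t_2a(-b)=ab(t_1+t_2)$, matching \eqref{eq:h-act-dig-cup}. For the cup we get $-\tfrac12(a(-(N-a)))-\tfrac12(-a(N-a))=a(N-a)$, and the saddle gives $-a(N-a)$. The zip and unzip cases work similarly, with sign conventions from the orientation of $\df$'s dots.

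The main obstacle is bookkeeping rather than conceptual. It lies in the $[\dh,\df]$ check in the presence of the extended decorations $\wdotnewtoni$, and in confirming that these correctly expand into good-position foams with coefficients in $\scalars$ rather than $\RN$. Here I would first verify that $\de$ and $\dh$ act on $\RN$ compatibly: $\de(P_1)=-N$, and $\dh$ is $-\deg$. I would then redo the computation with $\wdotnewtoni[1]$ expanded as $P_1\cdot(\text{foam})-(\text{dotted foam})$.
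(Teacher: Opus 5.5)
Your proposal is correct and follows the same route as the cited \cite[Lemma 4.6]{QRSW2}: since $\de,\df,\dh$ are extended to compositions by the Leibniz rule, their commutators are again derivations, so the $\sll_2$ relations only need to be checked on basic foams. Your computations there are right, for example $ab(\tone+\ttwo)$ for the digon-cup and $\pm a(N-a)$ for the cup, cap and saddle.

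The one thing to state explicitly is the module you work in. It should be taken over $\RN$, with $\sll_2$ also acting on the coefficients $P_i$; you already use this when you write $\de(P_1)=-N$ and $\dh(P_1)=-2P_1$. Decorations should also be linear and multiplicative in their labels, rather than the module being a completely free span of words.
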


The actions of $\de, \df, \dh$ defined above descend to an action on symmetric polynomials.
They are compatible with the $\mathfrak{sl}_2$-action on symmetric polynomials via foam evaluation.

\begin{prop}[{\cite[Proposition 4.3]{QRSW2}}]\label{prop:sl2-acts-good-pos}
  Let $\foam$ be a closed foam in good position, then for 
  $\mathbf{x} \in \mathfrak{sl}_2$: \begin{equation} \bracketN{\mathbf{x} \cdot \foam}= \mathbf{x} \cdot \bracketN{\foam} .\end{equation}
\end{prop}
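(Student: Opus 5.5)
We plan to argue by induction on the number of basic foams in a decomposition of $\foam$, after first pinning down the $\sll_2$-action on $\RN$. On $\RN \subset \scalars[X_1,\dots,X_\myN]$ we let $\de$ act by $-\sum_i \partial/\partial X_i$, $\df$ by $\sum_i X_i^2\,\partial/\partial X_i$ and $\dh$ by minus the degree operator. These are derivations preserving symmetric polynomials, and they satisfy the $\sll_2$ relations. The evaluation $\bracketN{\foam}$ is computed by the Robert--Wagner formula as a sum over colorings $c$ of the facets by subsets of $\{1,\dots,\myN\}$. Each term is a rational function $P(\foam,c)/Q(\foam,c)$ in the $X_i$: the numerator is the product of the decorations evaluated on the colors, and the denominator is a product of factors $(X_i - X_j)$ indexed by the bicolored surfaces.

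The most direct route is to compute $\mathbf{x}$ on each summand using the Leibniz rule. The operators $\de$ and $\dh$ are easy. Since $\de$ kills every difference $X_i - X_j$, it only differentiates the decorations, which is exactly the foam-side definition. The operator $\dh$ returns minus the total degree. The foam-side $\dh$ produces the same number, since by Definition~\ref{dfn:deg-foam} the degree is additive over basic foams and each basic $\dh$-coefficient matches the table in Figure~\ref{fig:basic-foam}. One checks that the $t$-dependence cancels in total: for a closed foam, cups and caps and digon-cups and digon-caps pair up Euler-characteristically. The real content is $\df$. Applying $\df$ to $1/(X_i-X_j)$ gives $-(X_i+X_j)/(X_i-X_j)$. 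Summing over all denominator factors yields, for each bicolored surface $\Sigma_{ij}$, a multiple of $\chi(\Sigma_{ij})(X_i+X_j)$. These terms must be redistributed to local contributions at the critical points and bindings of a good-position decomposition; this is exactly what the $\dotnewtoni[0]\wdotnewtoni[1]$ and $\dotnewtoni[1]\dotnewtoni[0]$ decorations encode.

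The alternative plan, which we would actually follow, is to use locality. Both sides are derivation-like: the foam side obeys Leibniz over composition by construction, and the evaluation side obeys it over the product of terms in the coloring formula. It therefore suffices to show that the contribution of each basic foam to $\df\bigl(P/Q\bigr)$, coloring by coloring, equals the evaluation of the modified basic foam. Concretely, we fix a coloring $c$. A cup or cap on a facet of thickness $a$ changes $\chi$ of each $\Sigma_{ij}$ with $i\in c(f)$, $j\notin c(f)$ by one. Its contribution to $\df(Q^{-1})$ is then $-\frac12\sum_{i\in c(f),\,j\notin c(f)}(X_i+X_j)$ times the term, and this equals $-\frac12(\dotnewtoni[0]\wdotnewtoni[1]+\dotnewtoni[1]\wdotnewtoni[0])$ evaluated at $c$. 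The saddle is handled identically with the opposite sign.

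The main obstacle is the digon, zip and unzip moves with their asymmetric weights $\tone,\ttwo,\overline{\tone},\overline{\ttwo}$. These do not change $\chi(\Sigma_{ij})$ locally in a canonical way: a binding arc contributes half-boundaries to several bicolored surfaces. We would match them by pairing. In a closed foam every interval binding created by a digon-cup or unzip is destroyed by a digon-cap or zip. On a paired binding arc, the $\tone$-terms from its two endpoints combine with the $\overline{\tone}$-terms to produce exactly the $(X_i+X_j)$ contribution of the bicolored surfaces passing through that binding, since $\tone+\overline{\tone}=1$. The contribution is independent of $t_1,t_2$ after using that the $a$- and $b$-facets carry fixed colors along the arc. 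Verifying this pairing bookkeeping, including that orientation conventions produce the right signs, is where we expect the hardest work to lie.
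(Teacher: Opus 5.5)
The paper does not prove this statement; it quotes it from \cite{QRSW2}, so there is no in-paper proof to compare with, and I judge your argument on its own. Much of your framework is correct: the colouring formula, the fact that $\de$ kills every $X_i-X_j$, the identity $\df\bigl((X_i-X_j)^{-k}\bigr)=-k(X_i+X_j)(X_i-X_j)^{-k}$, a Morse-theoretic count of $\chi(F_{ij}(c))$ along the good-position decomposition, and the colouring-by-colouring match for polynomial pieces, cups, caps and saddles.

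The gap is exactly the step you flagged, and the pairing you propose does not close it. First, the roles are reversed. Digon-cups and zips are the \emph{minima} of the height function on bindings, since each creates two trivalent vertices. Digon-caps and unzips are the maxima. More seriously, bindings of a closed foam in good position are in general not circles with alternating extrema. The (co)associativity basic foams create singular vertices, each with two bindings arriving from below and two leaving upward, and the colours $A,B$ of the left and right thin facets change across such a vertex. So a binding minimum need not be matched by a maximum with the same colour data, and the discrepancies do not cancel pairwise.

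To see exactly what must cancel, take a binding extremum whose thin facets carry colours $A$ (left) and $B$ (right). Only the $F_{ij}$ with $\{i,j\}$ split between $A$ and $B$ change topology there: a minimum or maximum at digon-cups and digon-caps, a saddle at zips and unzips. So the $\chi$-dictated term is $-\tfrac12\bigl(|B|p_1(X_A)+|A|p_1(X_B)\bigr)$ at digon-cups and digon-caps and $+\tfrac12\bigl(|B|p_1(X_A)+|A|p_1(X_B)\bigr)$ at zips and unzips. The local rule differs from it by $-D_{A,B}$ at digon-cups and zips and by $+D_{A,B}$ at digon-caps and unzips, where
\[
D_{A,B}=\bigl(t_1-\tfrac12\bigr)|B|\,p_1(X_A)+\bigl(t_2-\tfrac12\bigr)|A|\,p_1(X_B),\qquad p_1(X_A)=\sum_{i\in A}X_i .
\]
For instance, a digon-cup/unzip pair cancels outright on both sides, so $t_1+\bar t_1=1$ is not the mechanism.

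The missing idea is a global bookkeeping that passes through singular vertices. To each slice, assign the sum of $D_{A_v,B_v}$ over its trivalent vertices $v$. This quantity behaves as follows:
\begin{itemize}
\item it increases by $2D_{A,B}$ at each binding minimum and decreases by $2D_{A,B}$ at each maximum;
\item it is unchanged by traces of isotopies, polynomial pieces, cups, caps and saddles;
\item it is unchanged by (co)associativity moves, thanks to the identity
\[
D_{A,B}+D_{A\sqcup B,C}=D_{B,C}+D_{A,B\sqcup C},
\]
which you check separately on the $t_1$- and $t_2$-parts.
\end{itemize}
It vanishes on the empty web at both ends, so $\sum_{\min}D=\sum_{\max}D$, and the total discrepancy is zero colouring by colouring. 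The associativity pieces are never critical for any $F_{ij}$, which is consistent with $\df$ killing them.

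Your $\dh$ paragraph has the same gap. The basic $\dh$-coefficients equal minus the degrees only when $t_1+t_2=1$. The excess $(t_1+t_2-1)ab$, with sign $+$ at binding minima and $-$ at maxima, cancels by the same argument with vertex weight $|A_v||B_v|$, using $ab+(a+b)c=bc+a(b+c)$. It does not cancel because cups and caps pair up, since those carry no $t$-dependence. Alternatively, you can get $\dh$ for free from $\dh=[\de,\df]$ on both sides, using Lemma~\ref{lem:sl2-acts-good-position}, once $\de$ and $\df$ are done.
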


\subsection{Green dotted webs and state spaces}
\label{subsec:greendottedwebs}
Using foam evaluation (as in \cite{Khsl3, RW1, QRSW2}), one can associate a
$\RN$-module, called a {\em $\gll_N$-state space $\statespaceN{\web}$} to each web
$\web$. Actually $\statespaceN{\mbox{-}}$ is a functor. It associates to foams, regarded a cobordisms between webs, $\RN$-module maps.
To prevent overloaded diagrams, we omit
$\statespaceN{\cdot}$ when dealing with actual webs or foams.

The $\mathfrak{sl}_2$-action on foams gives rise to an
$\mathfrak{sl}_2$-action on the state space $\statespaceN{\web}$.
Proposition \ref{prop:sl2-acts-good-pos} guarantees that this action
is well-defined.

For each edge $e$ with label $a$ in a web $\web$, the
  algebra 
  \begin{equation}
  D_e = \scalars[x_1, \dots, x_a,
  y_1,\dots,y_{N-a}]^{S_a\times S_{N-a}}
  \end{equation}
  acts on the $\gll_N$-state space $\statespaceN{\web}$ associated to $\web$ by adding a decoration on
  the facet bounding to the edge $e$. Consequently the algebra
  \begin{equation}
      D_\web:=\bigotimes_{e\in E(\web)} D_e 
  \end{equation} acts on
  $\statespaceN{\web}$. For each edge $e$, the algebra $D_e$ is
  endowed with a natural $\mathfrak{sl}_2$-module structure and is an
  $\mathfrak{sl}_2$-module algebra and thus so is $D_\web$. 

It is possible to twist the $\mathfrak{sl}_2$-action on
  $\statespaceN{\web}$.  This idea goes back to \cite{KRWitt} where
  the Witt-type action was twisted in order to make differentials
  occuring in link homology equivariant with respect to the Witt
  action.  Similar ideas were then used in \cite{QRSW1,QiSussanLink}
  in order to categorify the colored Jones polynomial in the context
  of a $p$-DG structure.  The twisting of the Witt-type action on
  foams was described in \cite{QRSW2, GR}.  Here we review this twisting
  in the simplified setting of an $\mathfrak{sl}_2$-action.

  Let $\foam\co \web_1 \to \web_2$ be a foam.  Then
  $\tqftfunc\left(\foam\right)$ induces
a $\RN$-linear map from $\statespaceN{\web_1}$ to
  $\statespaceN{\web_2}$. This map
  intertwines the action of $\mathfrak{sl}_2$ if and only if
  $\tqftfunc\left(g \cdot\foam\right) =0$ for all
  $g\in \mathfrak{sl}_2$. This prevents many foams from being
  morphisms in categories taking into account the
  $\mathfrak{sl}_2$-action. Having in mind the construction of a link
  homology with an $\mathfrak{sl}_2$-action, we must circumvent this
  problem.  We thus introduce twists on state spaces. 
We follow the diagrammatic formalism of \cite{QRSW2}.

  \begin{dfn}
    A \emph{green-dotted} web is a web $\web$ endowed with
a finite collection $D$ of \emph{green dots}, that are marked
    points with multiplicities (in $\scalars$) located in the interior
    of edges of $\web$. These green dots are of two types $\gdot$ and
$ \gsoliddot$.  If a given edge carries several green dots of the
    same type, they may be replaced by one green dot of that type on
    this edge with the sum of all multiplicities. See \cite[Example 3.12]{QRSW3} for an illustration of a green-dotted web.
      
\end{dfn}

  Let $(\web, D)$ be a green-dotted web. For each
  green dot $d$ of multiplicity $\lambda \in \scalars$, define
  $\web_d $ to be the foam $\web\times [0,1]$ with a twisted action of
  $\mathfrak{sl}_2$.  Each green dot lives on an edge, hence each
  foam bounding $\web$ has a neighborhood of that green dots
  homeomorphic to $]0,1[\times [0,1[$. The twists induced by green
  dots are local and we depict the modified $\sll_2$-action on these
  neighborhoods. Recall that to act on a concrete foam, one uses the
  Leibniz rule, so that the twists induced by various green-dot on a
  given web add up.
\allowdisplaybreaks
\begin{gather}
\label{eq:e-act-pol-twisthollow} \de\left(\NB{\tikz[scale=1.5, font=\small]{\begin{scope}
  \draw[thin] (0,0.5) rectangle (1,1);
  \draw[thick] (0,1) -- (1,1)  coordinate [pos=0.4] (A);
  \filldraw[draw= green!50!black, fill = white] (A) circle (.5mm)
  node[below,  green!50!black] {$\lambda$};
\end{scope}}}\right) 
  =0 \\
\label{eq:e-act-pol-twistsolid}  \de\left(\NB{\tikz[scale=1.5, font=\small]{\begin{scope}
  \draw[thin] (0,0.5) rectangle (1,1);
  \draw[thick] (0,1) -- (1,1)  coordinate [pos=0.4] (A);
  \filldraw[draw= green!50!black, fill = green!70!black] (A) circle (.5mm)
  node[below,  green!50!black] {$\lambda$};
\end{scope}
}}\right)=
0 \\
\label{eq:h-act-pol-twisthollow} \dh\left(\NB{\tikz[scale=1.5, font=\small]{}}\right) 
  =-\lambda\cdot\  \NB{\tikz[scale=1.5, font=\small]{\begin{scope}
  \draw[thin] (0,0.5) rectangle (1,1);
  \draw[thick] (0,1) -- (1,1)  coordinate [pos=0.4] (A);
  \filldraw[draw= green!50!black, fill = white] (A) circle (.5mm)
  node[below,  green!50!black] {$\lambda$};
  \node (B) at (.75,.75) {$\dotnewtoni[0]$};
\end{scope}

%\begin{scope}
%  \draw (0,0) rectangle (1,1) coordinate [midway] (A);
%  \fill (A) 
%  %circle (0.5mm) 
%  node[below] {$\dotnewtoni[0]$};
%\end{scope}
}} \\
\label{eq:h-act-pol-twistsolid}  \dh\left(\NB{\tikz[scale=1.5, font=\small]{}}\right)=
-\lambda \cdot\  \NB{\tikz[scale=1.5, font=\small]{\begin{scope}
  \draw[thin] (0,0.5) rectangle (1,1);
  \draw[thick] (0,1) -- (1,1)  coordinate [pos=0.4] (A);
  \filldraw[draw= green!50!black, fill = green!70!black] (A) circle (.5mm)
  node[below,  green!50!black] {$\lambda$};
  \node (B) at (.75,.75)  {$\wdotnewtoni[0]$};
\end{scope}
}} \\
\label{eq:f-act-pol-twisthollow} \df\left(\NB{\tikz[scale=1.5, font=\small]{}}\right) 
  =\lambda\cdot\  \NB{\tikz[scale=1.5, font=\small]{\begin{scope}
  \draw[thin] (0,0.5) rectangle (1,1);
  \draw[thick] (0,1) -- (1,1)  coordinate [pos=0.4] (A);
  \filldraw[draw= green!50!black, fill = white] (A) circle (.5mm)
  node[below,  green!50!black] {$\lambda$};
  \node (B) at (.75,.75) {$\dotnewtoni[1]$};
\end{scope}

%\begin{scope}
%  \draw (0,0) rectangle (1,1) coordinate [midway] (A);
%  \fill (A) 
%  %circle (0.5mm) 
%  node[below] {$\dotnewtoni[0]$};
%\end{scope}
}} \\
\label{eq:f-act-pol-twistsolid}  \df\left(\NB{\tikz[scale=1.5, font=\small]{}}\right)=
\lambda \cdot\  \NB{\tikz[scale=1.5, font=\small]{\begin{scope}
  \draw[thin] (0,0.5) rectangle (1,1);
  \draw[thick] (0,1) -- (1,1)  coordinate [pos=0.4] (A);
  \filldraw[draw= green!50!black, fill = green!70!black] (A) circle (.5mm)
  node[below,  green!50!black] {$\lambda$};
  \node (B) at (.75,.75)  {$\wdotnewtoni[1]$};
\end{scope}
}}  .
\end{gather}
\allowdisplaybreaks[0]

It is convenient to introduce floating green dots on the plane: one
can view them as being on edges of thickness $0$. A floating hollow
green dot $\gdot$ does not alter the Lie algebra action since in this context
$\dotnewtoni[0]=\dotnewtoni[1]=0$. However a solid green dot
$\gsoliddot$ twists the action of $\df$ by $E_1$ and the action of
$\dh$ by $-N$ (because $\wdotnewtoni[0]=N$ and $\wdotnewtoni[1]=E_1$)

With this new convention, one has the following local relation:

\[
\NB{\tikz[]{\begin{scope}
  \coordinate (m) at (  0,  0);
  \coordinate (t) at (  2, 0);
  \draw[->] (m) -- (t) node[pos = 1, right] {$a$} coordinate[pos=0.3]
  (ga) coordinate[pos=0.7] (gb) ;
  \filldraw[draw= green!50!black, fill = green!70!black] (ga) circle (1mm)
  node[yshift= 2mm, green!50!black] {$r$};
  \filldraw[draw= green!50!black, fill = white] (gb) circle (1mm)
  node[yshift=2mm, green!50!black] {$r$};
\end{scope}
%%% Local Variables:
%%% mode: latex
%%% TeX-master: t
%%% End:
}} = \NB{\tikz[]{\begin{scope}
  \coordinate (m) at (  0,  0);
  \coordinate (t) at (  2, 0);
  \draw[->] (m) -- (t) node[pos = 1, right] {$a$} coordinate [pos=0.5,
  yshift = 2mm]
  (ga);
  \filldraw[draw= green!50!black, fill = green!70!black, above] (ga) circle (1mm)
  node[left, green!50!black] {$r$};
  % \filldraw[draw= green!50!black, fill = white] (gb) circle (1mm)
  % node[above, green!50!black] {$r$};
\end{scope}
%%% Local Variables:
%%% mode: latex
%%% TeX-master: t
%%% End:
}}.
\]

\subsection{More about twists}
\renewcommand{\gg}{\ensuremath{\mathfrak{g}}}

The
$\gll_N$-state spaces are endowed with an $\sll_2$-action and a
$D_\web$-action. These two actions intertwine nicely. This allows us to
twist the first one by the second as we shall see. In order to show
what structures are at play, we  formulate this in a slightly more
general context.

Let $\gg$ be a Lie algebra, $A$ a commutative $\gg$-module algebra and $M$ an
$A\# \gg$-module\footnote{In the notation of Section \ref{homological:sec}, it should really be $A\# \mc{U}(\gg)$, where  $\mc{U}(\gg)$ denotes the universal enveloping algebra of $\gg$. However, we write this smash product algebra as $A\# \gg$ for simplicity.}, that is, a $\gg$-module with an $A$-module structure, which
satisfies the following identities for all $g$ in $\gg$, $a$ in $A$
and $m$ in $M$:
\[g\cdot_{\gg}(a\cdot_{A} m) = (g\cdot a)\cdot_A m +
  a\cdot_{A}(g\cdot_{\gg} m).\]
In our context, $\gg =\sll_2$, $M$ is a $\gll_N$-state space of a web $\web$
 and $A$ is $D_\web$, the polynomial algebra of decorations of $\Gamma$.

A linear map $\tau\co \gg \to A$ is \emph{flat} if for all $g_1, g_2$
in $\gg$, one has:
\begin{equation}
\tau([g_1,g_2]) = g_1\cdot \tau(g_2) - g_2\cdot \tau(g_1).
\end{equation}
If $\tau$ is flat, one can check that there is a new $A\# \gg$-module structure on the rank-one free module $A$ itself defined by
\begin{equation}
    g\cdot_{\gg^\tau} a:= g\cdot_\gg a+\tau(g)a.
\end{equation}
More generally, the following formula
defines a new $\gg$-action on any $A\# \gg$-module $M$ via:
\begin{equation}
g\cdot_{\gg^\tau} m := g\cdot_{\gg} m + \tau(g)\cdot_A m. 
\end{equation}
One also checks that the twisted action $\cdot_{\gg^\tau}$, similar to $\cdot_\gg$, is compatible with the $A$-action and gives $M$ a new $A\#\gg$-module
structure. If $\tau$ is flat, we denote by $M^{\tau}$ the module endowed with the
action $\cdot_{\gg^{\tau}}$.
One can readily see that, as $A\# \gg$-modules,
\begin{equation}
    M^\tau = A^\tau\otimes_A M.
\end{equation}
This means, in particular, that if
$\tau, \sigma \co \gg \to A$ are flat, then 
$(M^\tau)^\sigma \cong M^{\tau+\sigma}$.

  \begin{prop} \cite[Proposition 3.13]{QRSW3}\label{prop:classification-sl2-twists}
    For any green-dotted web $(\web, D)$,
    twisting the actions of $\de,\df,$ and $\dh$ as above
    endows $\statespaceN{\web}$ with an $\mathfrak{sl}_2$-module structure.
  \end{prop}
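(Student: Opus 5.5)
The plan is to reduce the statement to the flatness formalism set up just before it. The untwisted action of $\sll_2$ on $\statespaceN{\web}$ is well defined by Lemma~\ref{lem:sl2-acts-good-position} and Proposition~\ref{prop:sl2-acts-good-pos}. It makes $\statespaceN{\web}$ a $D_\web\#\sll_2$-module, since adding a decoration is composing with a polynomial foam and the action obeys the Leibniz rule. So it suffices to exhibit the green-dot twist as $g\mapsto g\cdot_{\sll_2} m+\tau(g)\cdot m$ for a linear map $\tau\co\sll_2\to D_\web$, and then to check that $\tau$ is flat. Once that is done, the general discussion shows that $\statespaceN{\web}^\tau=D_\web^\tau\otimes_{D_\web}\statespaceN{\web}$ is an $\sll_2$-module.

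I first read $\tau$ off the local rules \eqref{eq:e-act-pol-twisthollow}--\eqref{eq:f-act-pol-twistsolid}. A hollow dot of multiplicity $\lambda$ on an edge $e$ of label $a$ contributes $\tau(\Le)=0$, $\tau(\Lh)=-\lambda p_0(x)=-\lambda a$, and $\tau(\Lf)=\lambda p_1(x)$, all in $D_e$. A solid dot contributes the same expressions with $x$ replaced by the complementary variables $y$: $\tau(\Lh)=-\lambda(N-a)$ and $\tau(\Lf)=\lambda p_1(y)$. Floating dots fit the same pattern with $a=0$, and the twists of several dots add, so $\tau=\sum_d \tau_d$. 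Because flatness is a linear condition in $\tau$, it is enough to check it for a single dot.

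For a single dot, I check the three brackets using the action on decorations: $\Le$ acts on $D_e$ by $-\sum\partial/\partial x_i-\sum\partial/\partial y_j$, $\Lh$ acts by minus the degree, and $\Lf$ acts by $\sum x_i^2\partial_{x_i}+\sum y_j^2\partial_{y_j}$.

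\begin{itemize}
\item For $[\Lh,\Le]=2\Le$ we need $0=\Lh\cdot 0-\Le\cdot(\text{const})$, which holds because $\Le$ kills constants.
\item For $[\Lh,\Lf]=-2\Lf$ we need $-2\lambda p_1=\Lh\cdot(\lambda p_1)-\Lf\cdot(\text{const})=-2\lambda p_1$, which holds since $p_1$ has degree $2$.
\item For $[\Le,\Lf]=\Lh$ we need $\tau(\Lh)=\Le\cdot\tau(\Lf)-\Lf\cdot 0=-\lambda\sum_i\partial_{x_i}p_1(x)=-\lambda a$. This matches the hollow dot. For the solid dot the same computation in the $y$ variables gives $-\lambda(N-a)$.
\end{itemize}

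The main obstacle is not this computation but making precise that the twist is realised by a flat $\tau$ acting through the $D_\web$-module structure on the state space. This needs a consistency check. A decoration by $p_i(y)$ on the facet adjacent to $e$ must be identified with $\wdotnewtoni$, that is, $P_i$ times the facet minus the $x$-decoration, as an honest $\RN$-linear operator on $\statespaceN{\web}$. The $\sll_2$-action on these extended decorations must also agree with the action on $D_e$, which holds because the action on $\RN$ is compatible with foam evaluation by Proposition~\ref{prop:sl2-acts-good-pos}. With these identifications the local pictures are exactly $\tau(g)\cdot m$, and the proposition follows.
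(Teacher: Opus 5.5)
Your proposal is correct and follows the route the paper sets up just before the statement (the result itself is only cited from \cite{QRSW3}). You realise the green-dot twist as a flat $\tau\colon\sll_2\to D_\web$ acting through the $D_\web\#\sll_2$-module structure, so that $\statespaceN{\web}^\tau=D_\web^\tau\otimes_{D_\web}\statespaceN{\web}$. Your bracket checks, in particular $\de\cdot(\lambda\dotnewtoni[1])=-\lambda a$ and $\de\cdot(\lambda\wdotnewtoni[1])=-\lambda(N-a)$ matching the $\dh$-twists, are exactly what makes the rank-one modules $D_e\cdot v_{\alpha,\beta}$ of \eqref{eqn-sl2-twist-on-mod} equivariant.
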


Alternatively, if $e$ is an edge of thickness $a$ in a web $\web$, the
  algebra 
  $$D_e = \scalars[x_1, \dots, x_a,
  y_1,\dots,y_{N-a}]^{S_a\times S_{N-a}}$$ 
  is an $\sll_2$-module algebra with
   $\de$, $\df$ and $\dh$ acting by differential operators
  \begin{equation}
      \de  =-\sum_{i=1}^a \frac{\partial}{\partial x_i}-\sum_{j=1}^{N-a} \frac{\partial}{\partial y_j}, \quad \quad 
      \df  =\sum_{i=1}^a x_i^2\frac{\partial}{\partial x_i}+\sum_{j=1}^{N-a}y_j^2 \frac{\partial}{\partial y_j},
  \end{equation}
  \begin{equation}
 \dh =-2\sum_{i=1}^a x_i\frac{\partial}{\partial x_i}-2\sum_{j=1}^{N-a} y_j\frac{\partial}{\partial y_j}.
  \end{equation}
  Proposition \ref{prop:classification-sl2-twists} shows that there is a $2$-parameter $\sll_2$-equivariant $D_e$-bimodule structure on the rank-one module $D_e \cdot v_{\alpha,\beta}$, for some $\alpha,\beta\in \Bbbk$. Here $\de$, $\df$ and $\dh$ act on the module generator $v_{\alpha,\beta}$ by
 \begin{equation} \label{eqn-sl2-twist-on-mod}
     \de (v_{\alpha,\beta})=0, \quad \dh (v_{\alpha,\beta})=-(a\alpha+(N-a)\beta)v_{\alpha,\beta}, \quad \df(v_{\alpha,\beta})= (\alpha \dotnewtoni[1] + \beta \wdotnewtoni[1])v_{\alpha,\beta}.
   \end{equation}   
This equivariant bimodule gives rise to the twist endo-functor on $D_e\# \sll_2$-modules as the tensor product
$   (D_e\cdot v_{\alpha,\beta})\otimes_{D_e}(\mbox{-}) $.

Basic properties of symmetric functions allow green dots to migrate
along a web in the following ways, which we record for later use.  See also \cite[Lemma
3.11]{QRSW1}. These manipulation of green dots are referred to as
\emph{green dot migration}.
  \begin{equation}
    \NB{\tikz[scale=0.45,font=\tiny]{\begin{scope}
  \coordinate (m) at (  0,  0);
  \coordinate (t) at (  0, 1);
  \coordinate (br) at (+.5,  -1);
  \coordinate (bl) at (-.5,  -1);
  \draw[>-] (bl) .. controls +(0,0.5) and + (0, 0) .. (m) node[pos =
  0, below] {$a$} coordinate[pos = 0.3] (ga);
  \draw[>-] (br) .. controls +(0,0.5) and + (0, 0) .. (m) node[pos =
  0, below] {$b$} coordinate[pos = 0.3] (gb);
  \draw[->] (m) -- (t) node[pos = 1, above] {$a+b$};
  \filldraw[draw= green!50!black, fill = white] (ga) circle (1mm)
  node[left, green!50!black] {$r$};
  \filldraw[draw= green!50!black, fill = white] (gb) circle (1mm)
  node[right, green!50!black] {$r$};
\end{scope}}} =
    \NB{\tikz[scale=0.45,font=\tiny]{\begin{scope}
  \coordinate (m) at (  0,  0);
  \coordinate (t) at (  0, 1);
  \coordinate (br) at (+.5,  -1);
  \coordinate (bl) at (-.5,  -1);
  \draw[>-] (bl) .. controls +(0,0.5) and + (0, 0) .. (m) node[pos =
  0, below] {$a$} coordinate[pos = 0.3] (ga);
  \draw[>-] (br) .. controls +(0,0.5) and + (0, 0) .. (m) node[pos =
  0, below] {$b$} coordinate[pos = 0.3] (gb);
  \draw[->] (m) -- (t) node[pos = 1, above] {$a+b$} coordinate[pos = 0.7] (gc);
  %\filldraw[draw= green!50!black, fill = white] (ga) circle (1mm)
  %node[left, green!50!black] {$r$};
  %\filldraw[draw= green!50!black, fill = white] (gb) circle (1mm)
 % node[right, green!50!black] {$r$};
    \filldraw[draw= green!50!black, fill = white] (gc) circle (1mm)
  node[right, green!50!black] {$r$};
\end{scope}}} , \quad 
   \NB{\tikz[scale=0.45,font=\tiny]{\begin{scope}
  \coordinate (m) at (  0,  0);
  \coordinate (b) at (  0, -1);
  \coordinate (tr) at (+.5,  1);
  \coordinate (tl) at (-.5,  1);
  \draw[->] (m) .. controls +(0,0) and + (0, -0.5) .. (tl) node[pos =
  1, above] {$a$} coordinate[pos = 0.7] (ga);
  \draw[->] (m) .. controls +(0,0) and + (0, -0.5) .. (tr) node[pos =
  1, above] {$b$} coordinate[pos = 0.7] (gb);
  \draw[>-] (b) -- (m) node[pos = 0, below] {$a+b$} coordinate[pos = 0.3] (gc);
  %\filldraw[draw= green!50!black, fill = white] (ga) circle (1mm)
  %node[left, green!50!black] {$r$};
  %\filldraw[draw= green!50!black, fill = white] (gb) circle (1mm)
  %node[right, green!50!black] {$r$};
   \filldraw[draw= green!50!black, fill = white] (gc) circle (1mm)
  node[left, green!50!black] {$r$};
\end{scope}}} =
   \NB{\tikz[scale=0.45,font=\tiny]{\begin{scope}
  \coordinate (m) at (  0,  0);
  \coordinate (b) at (  0, -1);
  \coordinate (tr) at (+.5,  1);
  \coordinate (tl) at (-.5,  1);
  \draw[->] (m) .. controls +(0,0) and + (0, -0.5) .. (tl) node[pos =
  1, above] {$a$} coordinate[pos = 0.7] (ga);
  \draw[->] (m) .. controls +(0,0) and + (0, -0.5) .. (tr) node[pos =
  1, above] {$b$} coordinate[pos = 0.7] (gb);
  \draw[>-] (b) -- (m) node[pos = 0, below] {$a+b$};
  \filldraw[draw= green!50!black, fill = white] (ga) circle (1mm)
  node[left, green!50!black] {$r$};
  \filldraw[draw= green!50!black, fill = white] (gb) circle (1mm)
  node[right, green!50!black] {$r$};
\end{scope}}} ,\quad
   \NB{\tikz[scale=0.45,font=\tiny]{\begin{scope}
  \coordinate (m) at (  0,  0);
  \coordinate (t) at (  0, 1);
  \coordinate (br) at (+.5,  -1);
  \coordinate (bl) at (-.5,  -1);
  \draw[>-] (bl) .. controls +(0,0.5) and + (0, 0) .. (m) node[pos =
  0, below] {$a$} coordinate[pos = 0.3] (ga);
  \draw[>-] (br) .. controls +(0,0.5) and + (0, 0) .. (m) node[pos =
  0, below] {$b$} coordinate[pos = 0.3] (gb);
  \draw[->] (m) -- (t) node[pos = 1, above] {$a+b$};
  \filldraw[draw= green!50!black, fill = green] (ga) circle (1mm)
  node[left, green!50!black] {$r$};
  % \filldraw[draw= green!50!black, fill = green] (gb) circle (1mm)
  % node[right, green!50!black] {$r$};
\end{scope}}} =
   \NB{\tikz[scale=0.45,font=\tiny]{\begin{scope}
  \coordinate (m) at (  0,  0);
  \coordinate (t) at (  0, 1);
  \coordinate (br) at (+.5,  -1);
  \coordinate (bl) at (-.5,  -1);
  \draw[>-] (bl) .. controls +(0,0.5) and + (0, 0) .. (m) node[pos =
  0, below] {$a$} coordinate[pos = 0.3] (ga);
  \draw[>-] (br) .. controls +(0,0.5) and + (0, 0) .. (m) node[pos =
  0, below] {$b$} coordinate[pos = 0.3] (gb);
  \draw[->] (m) -- (t) node[pos = 1, above] {$a+b$} coordinate[pos = 0.7] (gc);
  %\filldraw[draw= green!50!black, fill = white] (ga) circle (1mm)
  %node[left, green!50!black] {$r$};
  \filldraw[draw= green!50!black, fill=white] (gb) circle (1mm)
  node[right, green!50!black] {$r$};
  \filldraw[draw= green!50!black, fill = green] (gc) circle (1mm)
  node[right, green!50!black] {$r$};
\end{scope}}} , \quad
  \NB{\tikz[scale=0.45,font=\tiny]{\begin{scope}
  \coordinate (m) at (  0,  0);
  \coordinate (b) at (  0, -1);
  \coordinate (tr) at (+.5,  1);
  \coordinate (tl) at (-.5,  1);
  \draw[->] (m) .. controls +(0,0) and + (0, -0.5) .. (tl) node[pos =
  1, above] {$a$} coordinate[pos = 0.7] (ga);
  \draw[->] (m) .. controls +(0,0) and + (0, -0.5) .. (tr) node[pos =
  1, above] {$b$} coordinate[pos = 0.7] (gb);
  \draw[>-] (b) -- (m) node[pos = 0, below] {$a+b$};
  \filldraw[draw= green!50!black, fill = green] (ga) circle (1mm)
  node[left, green!50!black] {$r$};
  % \filldraw[draw= green!50!black, fill = green] (gb) circle (1mm)
  % node[right, green!50!black] {$r$};
\end{scope}}} =
  \NB{\tikz[scale=0.45,font=\tiny]{\begin{scope}
  \coordinate (m) at (  0,  0);
  \coordinate (b) at (  0, -1);
  \coordinate (tr) at (+.5,  1);
  \coordinate (tl) at (-.5,  1);
  \draw[->] (m) .. controls +(0,0) and + (0, -0.5) .. (tl) node[pos =
  1, above] {$a$} coordinate[pos = 0.7] (ga);
  \draw[->] (m) .. controls +(0,0) and + (0, -0.5) .. (tr) node[pos =
  1, above] {$b$} coordinate[pos = 0.7] (gb);
  \draw[>-] (b) -- (m) node[pos = 0, below] {$a+b$} coordinate[pos = 0.3] (gc);
  %\filldraw[draw= green!50!black, fill = white] (ga) circle (1mm)
  %node[left, green!50!black] {$r$};
  \filldraw[draw= green!50!black, fill = white] (gb) circle (1mm)
  node[right, green!50!black] {$r$};
   \filldraw[draw= green!50!black, fill = green] (gc) circle (1mm)
  node[left, green!50!black] {$r$};
\end{scope}}} .
\end{equation}

 \subsection{Link homology}
As usual for Khovanov-like link homology theories, we associate a
hypercube shaped complex to any link diagram by specifying locally a
length 2 complex to any crossing. We define the following
cohomologically graded braiding complexes:

\begin{equation} \label{eqn:def-T}
  T= \NB{\tikz[xscale = 0.6]{\begin{scope}[font=\tiny]
  \draw[->] (0.5, -0.5) ..controls +(0,0.3) and +(0,-0.3) .. (-0.5,
  0.5);% node[pos=1, above] {} coordinate[pos =0.2] (t1);
  \fill[white] (0,0) circle (2mm);
  \draw[->] (-0.5, -0.5) ..controls +(0,0.3) and +(0,-0.3) .. (0.5,
  0.5);% node[pos=1, above] {} coordinate[pos =0.2] (t2);
  % \filldraw[draw= green!50!black, fill = white] (t2) circle (1mm)
  % node[left, green!50!black] {$c$};
  % \filldraw[draw= green!50!black, fill = white] (t1) circle (1mm)
  % node[right, green!50!black] {$d$};
\end{scope}}} :=
    \NB{\tikz[xscale = 3.5, yscale = 3]{
    \node (i0) at (0, 0) {$q^{-1}$ \NB{\tikz[font= \tiny,
  scale=0.6]{\begin{scope}
  \coordinate (bl) at (-0.5, -1);
  \coordinate (br) at ( 0.5, -1);
  \coordinate (tl) at (-0.5,  1);
  \coordinate (tr) at ( 0.5,  1);
    \coordinate (ml) at (-0.5,  .6);
 \coordinate (mr) at (0.5,  .6);
 \coordinate (G) at (0.8,-0.1);

   \draw[->] (bl) -- (tl);
        \filldraw[draw= green!50!black, fill = white] (ml) circle (1mm) 
  node[left, green!50!black] 
  {$-t_1$};
  
    \draw[->] (br) -- (tr);
        \filldraw[draw= green!50!black, fill = white] (mr) circle (1mm) 
  node[right, green!50!black] 
  {$-t_2$};

\end{scope}}} };
     \node (i1) at (-1, 0) {\NB{\tikz[font= \tiny,
  scale=0.6]{\begin{scope}
  \coordinate (bl) at (-0.5, -1);
  \coordinate (br) at ( 0.5, -1);
  \coordinate (bm) at (  0,-0.3);
  \coordinate (tl) at (-0.5,  1);
  \coordinate (tr) at ( 0.5,  1);
  \coordinate (tm) at (  0, 0.3);
  \draw[>-]  (bl) .. controls +( 0, 0.5) and +(0,0) .. (bm);
%  node[below, pos = 0] {$1$};
  \draw[>-]  (br) .. controls +( 0, 0.5) and +(0,0) .. (bm);
%  node[below, pos = 0] {$1$};
  \draw[<-]  (tl) .. controls +( 0, -0.5) and +(0,0) .. (tm);
%  node[above, pos = 0] {$1$} coordinate[pos = 0.25] (ga) ;
  %  \filldraw[draw= green!50!black, fill = white] (ga) circle (1mm)
  %node[left, green!50!black] {$t_1$};

  \draw[<-]  (tr) .. controls +( 0, -0.5) and +(0,0) .. (tm);
%  node[above, pos = 0] {$1$} coordinate[pos = 0.25] (gb) ;
 %   \filldraw[draw= green!50!black, fill = white] (gb) circle (1mm)
%  node[left, green!50!black] {$t_2$};
  \draw [double] (bm) -- (tm);% node[left, pos = 0.5] {$2$};
 
\end{scope}}} };
\draw[->] (i1) -- (i0) coordinate[pos=0.5] (a);
\node[above] at (a) {\NB{\tikz[font=\tiny, scale=.5]{\begin{scope}
  \begin{scope}
    \coordinate (L1) at (0.2,0.4);
    \coordinate (L2) at (0,0);
    \coordinate (R1) at (2.2,0.4);
    \coordinate (R2) at (2,0);
    \coordinate (ML) at (0.6, 0.2);
    \coordinate (MR) at (1.6, 0.2);
    \draw[double] (ML) -- (MR);% node[below, midway] {$2$};
    \draw (MR) .. controls +(0, 0) and +(-0.3,0) .. (R1) ;
    \draw (MR) .. controls +(0, 0) and +(-0.3,0) .. (R2);
    \draw (L1) .. controls +( 0.3, 0) and +(0,0) .. (ML);
    \draw (L2) .. controls +( 0.3, 0) and +(0,0) .. (ML);
  \end{scope}  
 \begin{scope}[yshift = 1cm]
    \coordinate (L1B) at (0.2,0.4);
    \coordinate (L2B) at (0,0);
    \coordinate (R1B) at (2.2,0.4);
    \coordinate (R2B) at (2,0);
    \draw (L1B) .. controls +( 0, 0) and +(0,0) .. (R1B); %node  [left, pos  = 0] {$1$};
    \draw (L2B) .. controls +( 0, 0) and +(0,0) .. (R2B);% node     [left, pos   = 0] {$1$};
 \end{scope}  
  \draw (R1) -- (R1B);
  \draw (R2) -- (R2B);
  \draw (L1) -- (L1B);
  \draw (L2) -- (L2B);
  \draw[thick] (ML) .. controls +(0, 0.6) and +(0, 0.6) .. (MR);
\end{scope}
}}};
  }}
    \end{equation}
    \begin{equation}\label{eqn:def-T-prime}
  T'= \NB{\tikz[xscale = 0.6]{\begin{scope}[font=\tiny]
  \draw[->] (-0.5, -0.5) ..controls +(0,0.3) and +(0,-0.3) .. (0.5,
  0.5);% node[pos=1, above] {} coordinate[pos =0.2] (t2);
  \fill[white] (0,0) circle (2mm);
  \draw[->] (0.5, -0.5) ..controls +(0,0.3) and +(0,-0.3) .. (-0.5,
  0.5);% node[pos=1, above] {} coordinate[pos =0.2] (t1);
  % \filldraw[draw= green!50!black, fill = white] (t2) circle (1mm)
  % node[left, green!50!black] {$c$};
  % \filldraw[draw= green!50!black, fill = white] (t1) circle (1mm)
  % node[right, green!50!black] {$d$};
\end{scope}}}:=
    \NB{\tikz[xscale = 3.5, yscale = 3]{
    \node (i0) at (-1, 0) { $q$\ \NB{\tikz[font= \tiny,
  scale=0.6]{\begin{scope}
  \coordinate (bl) at (-0.5, -1);
  \coordinate (br) at ( 0.5, -1);
  \coordinate (tl) at (-0.5,  1);
  \coordinate (tr) at ( 0.5,  1);
    \coordinate (ml) at (-0.5,  .6);
 \coordinate (mr) at (0.5,  .6);
 \coordinate (G) at (0.8,-0.1);

   \draw[->] (bl) -- (tl);
        \filldraw[draw= green!50!black, fill = white] (ml) circle (1mm) 
  node[left, green!50!black] 
  {$\bar{t}_1$};
  
    \draw[->] (br) -- (tr);
        \filldraw[draw= green!50!black, fill = white] (mr) circle (1mm) 
  node[right, green!50!black] 
  {$\bar{t}_2$};

\end{scope}}} };
     \node (i1) at (0, 0) { \NB{\tikz[font= \tiny,
  scale=0.6]{}} };
\draw[->] (i0) -- (i1) coordinate[pos=0.5] (b);
\node[above] at (b) {\NB{\tikz[font=\tiny, scale=.5]{\begin{scope}
  \begin{scope}
    \coordinate (L1) at (0.2,0.4);
    \coordinate (L2) at (0,0);
    \coordinate (R1) at (2.2,0.4);
    \coordinate (R2) at (2,0);
    \coordinate (ML) at (0.6, 0.2);
    \coordinate (MR) at (1.6, 0.2);
    \draw[double] (ML) -- (MR);% node[above, midway] {$2$};
    \draw (MR) .. controls +(0, 0) and +(-0.3,0) .. (R1) ;
    \draw (MR) .. controls +(0, 0) and +(-0.3,0) .. (R2);
    \draw (L1) .. controls +( 0.3, 0) and +(0,0) .. (ML);
    \draw (L2) .. controls +( 0.3, 0) and +(0,0) .. (ML);
  \end{scope}  
 \begin{scope}[yshift = -1cm]
    \coordinate (L1B) at (0.2,0.4);
    \coordinate (L2B) at (0,0);
    \coordinate (R1B) at (2.2,0.4);
    \coordinate (R2B) at (2,0);
    \draw (L1B) .. controls +( 0, 0) and +(0,0) .. (R1B);% node
                                % [right, pos     = 1] {$1$};
    \draw (L2B) .. controls +( 0, 0) and +(0,0) .. (R2B); % node [right, pos    = 1] {$1$};
 \end{scope}  
  \draw (R1) -- (R1B);
  \draw (R2) -- (R2B);
  \draw (L1) -- (L1B);
  \draw (L2) -- (L2B);
  \draw[thick] (ML) .. controls +(0, -0.6) and +(0, -0.6) .. (MR);
\end{scope}

%%% Local Variables:
%%% mode: latex
%%% TeX-master: t
%%% End:
}}}; 
      }} 
  \,
    \end{equation}
where in both complexes we assume (as in \cite{QRSW1}) that the terms
\[
  \NB{\tikz[font= \tiny,
  scale=0.6]{}} 
\]
sit in cohomological degree $0$. In these diagrams $\statespaceN{\cdot}$ has
been omitted to maintain readability.

For a link $L$, define
$\KR_N^{\mathfrak{sl}_2}(L;R) :=
\KR_{N;t_1,t_2}^{\mathfrak{sl}_2}(L;R) $ to be the Khovanov--Rozansky
$\gll_N$-homology of $L$ with coefficients in a ring $R$,
equipped with the action of the Hopf algebra
$\mathcal{U}(\mathfrak{sl}_2)$. When the coefficient ring $R$ is clear from context we will also 
write $\KR_N^{\mathfrak{sl}_2}(L)$ for simplicity.

In what follows, we will simply draw (pieces of) link diagrams (with green dots) 
to represent the $\sll_2$-equivariant Khovanov--Rozansky
$\gll_N$-homology of these diagrams. In other words, we will mostly drop
$\KR_N^{\mathfrak{sl}_2}(\cdot\,;R)$ around diagrams to prevent
overloaded figures.

\begin{thm} \cite[Theorem 4.3]{QRSW3} \label{thm:sl2inv} The homology
$\KR_{N;t_1,t_2}^{\mathfrak{sl}_2}(L;\RN)$ is an invariant of framed
  oriented links.
\end{thm}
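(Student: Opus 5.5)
The plan is to show that the complex $\KR_{N;t_1,t_2}^{\mathfrak{sl}_2}(L)$, assembled from the local complexes $T$ and $T'$ of \eqref{eqn:def-T} and \eqref{eqn:def-T-prime}, is a complex of $\sll_2$-equivariant state spaces. Its isomorphism class in the relative homotopy category $\mc{C}^{\sll_2}(\RN)$ should depend only on the framed oriented link. Homology then inherits an $\RN\#\mathcal{U}(\sll_2)$-action. By Definition~\ref{def-relative-homotopy-category}, it suffices to compare complexes modulo relatively null-homotopic ones.

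\textbf{Step 1: the complex is equivariant.} For each crossing, I check that the zip and unzip foams appearing as differentials in $T$ and $T'$ intertwine the twisted $\sll_2$-actions on source and target. By Proposition~\ref{prop:classification-sl2-twists}, each green-dotted web carries an $\sll_2$-module structure. I then compute $\mathbf{x}\cdot$(unzip) using \eqref{eq:h-act-dig-unzip} and \eqref{eq:e-act-unzip}. The result is $-2(t_1+t_2)$ times the foam for $\dh$, and $t_1$ times a $p_1$-decorated foam plus $t_2$ times a $p_1$-decorated foam for $\df$. These terms should be cancelled exactly by the twists \eqref{eq:h-act-pol-twisthollow} and \eqref{eq:f-act-pol-twisthollow} coming from the green dots $-t_1,-t_2$ on the target. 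The zip in $T'$ is handled the same way, with dots $\bar t_1,\bar t_2$. Tensoring the local pieces along the cube, using Leibniz and green-dot migration, gives an object of $\mc{C}(\RN\#\sll_2)$.

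\textbf{Step 2: Reidemeister moves.} For R2 and R3, I take the usual Gaussian-elimination homotopy equivalences from the non-equivariant theory. These consist of zip/unzip, digon-cup/cap and associativity foams. For each such chain map between equivariant complexes, its cone is null-homotopic after applying $\mathrm{For}$, so it is an isomorphism in $\mc{C}^{\sll_2}(\RN)$, as noted after Lemma~\ref{lem-construction-of-triangle}. The remaining task is to show these chain maps are themselves $\sll_2$-equivariant once green dots are matched on both sides, again by the local formulas \eqref{eq:h-act-dig-cup}--\eqref{eq:e-act-dig-cap} together with migration. The homotopies need not be equivariant, which is exactly the advantage of the relative category.

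\textbf{Step 3: framed R1, the main obstacle.} For a curl, the twists do not cancel by themselves. A residual green dot (hollow and solid, with multiplicities built from $t_1+t_2$ and $\bar t_1+\bar t_2$) remains on the strand. The framed version of R1 (the Reidemeister move for framed links) should make these match; I expect this to be the main obstacle. I would handle it by writing the curl complex explicitly, cancelling the digon by a cap/cup foam, and tracking the $\dh,\df$ twists through \eqref{eq:h-act-cup}--\eqref{eq:e-act-cap}. I would then verify that the twist acquired from a positive kink cancels that from a negative kink, so that the framing-preserving double-kink move holds. Here the relation between solid and hollow green dots (the local relation after \eqref{eq:f-act-pol-twistsolid}) should absorb the discrepancy. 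Finally, the isomorphism in $\mc{C}^{\sll_2}(\RN)$ descends to homology, giving invariance as $\sll_2$-modules.
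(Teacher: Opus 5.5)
Your Step 2 is where all the content sits, and the mechanism it relies on fails. Take braid-like R2, i.e.\ $TT'$. Its degree-$0$ term is $\Gamma_{\mathrm{id}}^{(1-2t_1,\,1-2t_2)}\oplus\Gamma_{dd}$. Here the dots combine $-t_i$ from $T$ with $\bar t_i$ from $T'$, and $\Gamma_{dd}$ (two stacked thick webs) is $\cong q\Gamma_{\mathrm{thick}}\oplus q^{-1}\Gamma_{\mathrm{thick}}$ only non-equivariantly.

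\begin{itemize}
\item In any degree-zero chain map between the plain identity web and $TT'$, in either direction, the $\Gamma_{\mathrm{id}}$-component is a scalar multiple of the identity foam, by degree.
\item That foam has $\df$-defect $\pm\bigl((1-2t_1)x_1+(1-2t_2)x_2\bigr)$, with $x_i$ the $p_1$-dot on the $i$th strand. So equivariance forces the scalar to be $0$ unless $t_1=t_2=\tfrac12$.
\item The remaining component reaches $H^0$ only through a foam of degree $-1$ between $\Gamma_{\mathrm{id}}$ and $\Gamma_{\mathrm{thick}}$, which is zero.
\end{itemize}

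So for all other parameters there is \emph{no} equivariant chain map realizing R2. The Gaussian-elimination maps are not equivariant: only the image of the zip is an $\sll_2$-stable summand of $\Gamma_{dd}$. What is needed, and what the roofs $\alpha_i,\alpha_i'$ before Theorem~\ref{thm:functorFN} reflect, is the following. Split off $\sll_2$-stable subcomplexes and quotients, built from images and kernels of the equivariant differentials, that are contractible after forgetting. Lemma~\ref{lem-construction-of-triangle} then gives zigzags of isomorphisms in $\mc{C}^{\sll_2}$. You must still identify the twist on the surviving subquotient $\ker d^0/\operatorname{im}d^{-1}$, which picks up corrections from the off-diagonal part of the action on $\Gamma_{dd}$.

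That bookkeeping also exposes a constraint your Steps 2--3 never confront. By \eqref{eq:h-act-pol}--\eqref{eq:h-act-saddle}, every basic foam has $\dh=-\deg$ except two pairs: digon-cup and zip deviate by $+ab(t_1+t_2-1)$, and digon-cap and unzip by $-ab(t_1+t_2-1)$. With the dots and $q$-shifts of $T$ and $T'$, this gives $\dh=-D+(t_1+t_2-1)$ in degree $D$ on every term of both complexes. That is the same shift for both crossing signs. A diagram with $k$ crossings therefore has $\dh=-D+k(t_1+t_2-1)$ on its homology. R2 and your double kink change $k$ by $2$ while leaving the graded homology unchanged.

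For $N=1$ this is fully explicit. Thick webs vanish, and the homology is $q^{n_--n_+}\RN$ twisted by total hollow multiplicity $\Lambda=-(t_1+t_2)n_++(\bar t_1+\bar t_2)n_-$. Its generator is a highest-weight vector of weight $-\Lambda$, and R2 shifts $\Lambda$ by $2-2(t_1+t_2)$.

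So the positive- and negative-kink twists do not cancel unless $t_1+t_2=1$. In that case $\Lambda=-\mathrm{writhe}$, which is where the framing dependence enters. Otherwise the crossing complexes must be renormalized by extra twists (e.g.\ on the thick webs) so that the two crossing signs shift in opposite directions. Your proposal supplies neither this restriction nor such a renormalization.

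Minor: by \eqref{eq:h-act-dig-unzip} with $a=b=1$, $\dh(\text{unzip})=-(t_1+t_2)$, not $-2(t_1+t_2)$. With your value, the dots $-t_1,-t_2$ would not cancel it.
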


\subsection{Foam category}

Let 
$\gFm$ be the category whose objects are green-dotted webs and whose
morphisms are (linear combinations of) foams up to isotopy and equivalence relations induced
by foam evaluation \cite{RW1}.

The \emph{graded hom space} in $\gFm$, between two green-dotted webs $\Gamma_1$ and $\Gamma_2$ is given by foams whose incoming boundary is $\Gamma_1$ and outgoing boundary is $\Gamma_2$. We denote this hom space by $\Hom_{\gFm}(\Gamma_1,\Gamma_2)$. For an element $F\in \Hom_{\gFm}(\Gamma_1,\Gamma_2)$ in this category, and $g \in \mathfrak{sl}_2$, 
\begin{equation} \label{enriched0}
g * F = g(F) - g(\Gamma_1) F + g(\Gamma_2) F \ ,    
\end{equation} 
where $g(\Gamma_j) F$, for $j=1,2$, is the foam $F$ with extra
decorations coming from $g(\Gamma_j)$ (given by formulas
\eqref{eq:e-act-pol-twisthollow}--\eqref{eq:f-act-pol-twistsolid}). 

The flatness of the twists associated with green dots implies that
formula \eqref{enriched0} defines an $\sll_2$-action on the hom spaces
of $\gFm$. From the definition of $*$, one in turn obtains that $\gFm$
is enriched in the monoidal category of $\mathcal{U}(\sll_2)$-modules.
Note also that a morphism $F$ is $\sll_2$-equivariant if and only if
$g * F =0$ for all $g\in \sll_2$. Using this action, we may identify
the state space $\mc{F}_N(\Gamma)$ associated with a web $\Gamma$ as the graded hom space $\Hom_{\gFm}(\emptyset, \Gamma)$, with the induced
$\sll_2$-action $*$. Thus $\mc{F}_N(\Gamma)$ is a graded equivariant module over the commutative $\sll_2$-module algebra $D_\Gamma$ (see Section \ref{subsec:greendottedwebs}).

Define $\gFm\#\sll_2$ to be the category that has the same objects of $\gFm$, but with morphisms that commute with the $\sll_2$-action.  It is equipped with an enriched structure (c.f. equation \eqref{eqn-H-action}) on its \emph{inner hom space}, which is equal to $\Hom_{\gFm}(\Gamma_1,\Gamma_2)$ for any green dotted webs $\Gamma_1$ and $\Gamma_2$. By equation~\eqref{eqn-H-inv-in-HOM},
\begin{equation}
    \Hom_{\gFm\# \sll_2}(\Gamma_1,\Gamma_2)=\Hom_{\gFm}(\Gamma_1,\Gamma_2)^{\sll_2}.
\end{equation}
Also denote by $\Com(\gFm \# \sll_2 )$ the abelian category of complexes in $\gFm\# \sll_2$. The inner hom space extends naturally: given two complexes of green-dotted webs $C$ and $D$
\begin{equation}
    \HOM_{\gFm }(C,D)=\oplus_{i\in \ZZ}\Hom_{\gFm }(C,t^iD).
\end{equation}
Let $\mc{C}(\gFm \# \sll_2)$ be the corresponding homotopy category, with the enriched structure.

Let $\Com(\gFm)$ denote the usual abelian category of complexes of graded foams, $\mc{C}(\gFm)$ be its homotopy category. As in Section \ref{sec:homol-non-sense}, we define $\mc{C}^{\mathfrak{sl}_2}(\gFm)$ to be the corresponding
relative homotopy category, obtained as the Verdier localization of $\mc{C}(\gFm\#\sll_2)$ along the exact forgetful functor
\begin{equation}
    \mathrm{For}: \mc{C}(\gFm\#\sll_2) \lra \mc{C}(\gFm).
\end{equation}
Translating the general construction of Section \ref{homological:sec} into our context, we have the following.

\begin{prop} \label{prop:comenriched}
The category $\mc{C}^{\sll_2}(\gFm)$ is enriched in the category of $\mathcal{U}(\mathfrak{sl}_2)$-modules, with the inner hom space between two green-dotted webs acted on by $\sll_2$ according to equation \eqref{enriched0}. \hfill $\square$
\end{prop}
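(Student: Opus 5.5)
The plan is to transport the general enrichment of Section~\ref{homological:sec} to the additive category $\gFm$, playing the role of the module category over the $H$-module algebra $A$ with $H=\mathcal{U}(\sll_2)$. I will check three things in turn: that $*$ is a Lie action on each hom space compatible with composition, that this extends to the complexes $\HOM_{\gFm}(C,D)$, and that it descends through the Verdier quotient defining $\mc{C}^{\sll_2}(\gFm)$.

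\textbf{Step 1 (hom spaces of $\gFm$).} By Lemma~\ref{lem:sl2-acts-good-position} and Proposition~\ref{prop:sl2-acts-good-pos}, the untwisted operators $g(F)$ give an $\sll_2$-action on foams that descends modulo the evaluation relations. The correction terms $-g(\Gamma_1)F+g(\Gamma_2)F$ are multiplication by the values of the flat maps $\tau_{\Gamma_j}\colon \sll_2\to D_{\Gamma_j}$ determined by the green dots (Proposition~\ref{prop:classification-sl2-twists}). To see that $*$ satisfies $[g_1,g_2]*F=g_1*(g_2*F)-g_2*(g_1*F)$, I will expand the right-hand side. The terms quadratic in $\tau$ cancel because decorations commute. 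The cross terms $g_1(\tau(g_2)F)$ produce $(g_1\cdot\tau(g_2))F$ by the Leibniz rule. Flatness then matches these against $\tau([g_1,g_2])F$. I will also verify the Leibniz identity $g*(G\circ F)=(g*G)\circ F+G\circ(g*F)$; it holds because the middle web's twist terms appear once with each sign and cancel. This makes $\gFm$ a $\mathcal{U}(\sll_2)$-module category, with composition equivariant for the diagonal (cocommutative) action.

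\textbf{Step 2 (complexes).} For complexes $C,D$, I will act componentwise on $\Hom_{\gFm}(C,t^iD)$ via $*$, which is the analogue of \eqref{eqn-H-action}. The differentials of $C$ and $D$ are $\sll_2$-invariant morphisms, as they live in $\gFm\#\sll_2$. By the Leibniz rule of Step~1, the action therefore commutes with the hom differential $f\mapsto d_D f\mp f d_C$. So $\HOM_{\gFm}(C,D)$ is a complex of $\sll_2$-modules, and composition is an equivariant chain map. By \eqref{eqn-H-inv-in-HOM}, its invariants are exactly the morphisms in $\Com(\gFm\#\sll_2)$. The action also preserves null-homotopic maps: if $f=dh+hd$, then $g*f=d(g*h)+(g*h)d$. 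Hence the enrichment passes to $\mc{C}(\gFm\#\sll_2)$.

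\textbf{Step 3 (relative homotopy category), the main obstacle.} Here I must show that the enrichment survives localization along $\mathrm{Ker}(\mathrm{For})$. I would follow the proof of Theorem~\ref{thm-tensor-hom-adj} and Corollary~\ref{cor-hom-as-derived-invariants} from \cite{QRSW3}. The key check is that if $K$ is relatively null-homotopic, then $\HOM_{\gFm}(K,D)$ and $\HOM_{\gFm}(D,K)$ are null-homotopic complexes of $\sll_2$-modules after forgetting the $\sll_2$-action. This follows because a foam null-homotopy of $K$ induces a contracting homotopy on hom complexes by pre- or post-composition, and this contraction need not be equivariant. Consequently, the assignment $(C,D)\mapsto\HOM_{\gFm}(C,D)$ sends cones of morphisms inverted in the Verdier quotient to relatively acyclic complexes. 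It therefore descends to a bifunctor valued in $\mc{C}^{\sll_2}(\Bbbk)$, and the inner hom between two green-dotted webs in degree zero is just $\Hom_{\gFm}(\Gamma_1,\Gamma_2)$ with the action \eqref{enriched0}.

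The delicate point is the bookkeeping of the Verdier localization, namely making sure roofs compose compatibly. I expect this to follow formally once the relative-acyclicity statement above is established, exactly as in Section~\ref{homological:sec}.
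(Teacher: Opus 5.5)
Your proposal is correct and follows essentially the same route as the paper. The paper gives no separate proof: it notes that flatness of the green-dot twists makes \eqref{enriched0} an $\sll_2$-action on the hom spaces of $\gFm$ compatible with composition, and then transports the enriched $\HOM$ construction and its descent to the relative homotopy category from Section~\ref{homological:sec}. Your Steps 1--3 make these checks explicit: the flatness computation, equivariance of the hom differential and of null-homotopies (as in the proof of Theorem~\ref{thm:functorFN}), and relative contractibility of $\HOM$ against relatively null-homotopic complexes.
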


\subsection{Link cobordisms}

Full functoriality of Khovanov--Rozansky $\mathfrak{gl}_N$-link homologies was proved by Ehrig, Tubbenhauer, and Wedrich \cite{ETW}. This extended earlier work of functoriality for the $\mathfrak{sl}_2$ case in \cite{Cap, CMW,blan1, EST1, Vog, BHPW}.  Functoriality for the $\mathfrak{sl}_2$ case was first investigated by Jacobsson \cite{Jac} and Khovanov \cite{Khcob} who proved it up to a sign.  

In this section we review an extension of \cite{ETW} which states that the target of their functor $\mathcal{F}_N$ lands in the relative homotopy category of foams, with inner hom spaces carrying $\mathcal{U}(\mathfrak{sl}_2)$-module structures \cite[Section 5.2]{QRSW3}.  See Section \ref{homological:sec} for the general framework. We remind the reader that this means that inner morphism sets are objects in the monoidal category and composition of morphisms corresponds to tensor product of objects.
We extend this result to link cobordisms in $S^3 \times [0,1]$.

\begin{dfn}
Let $ \Links$ be the category whose objects are oriented framed
links in $S^3$ and whose morphisms are two-dimensional oriented framed cobordisms between oriented links in $S^3 \times [0,1]$.
\end{dfn}

For links in $\mathbb{R}^3$,
a presentation of this category is given by Beliakova and Wehrli
\cite{BW} building upon work of Carter and Saito \cite{CarSai} in the unframed case.

Following the notation of \cite[Figure 9]{ETW}, we denote some of the
generating movie moves by $\MGH$, $\MGS$, $\MGTW$, and $\MGTH$ when reading the movies from left to right.
Reading the movies in the reverse direction, we denote the generating
moves by $\MGH'$, $\MGS'$, $\MGTW'$, and $\MGTH'$ respectively.  We omit writing down the other oriented variations of these generators.

\[ \MGH = 
  \mymovie[yscale = 0.6, xscale =0.35]{}{\NB{\tikz[]{\begin{scope}
  \coordinate (bl) at (0, -1);
  \coordinate (br) at (2, -1);
  \coordinate (tl) at (0,  1);
  \coordinate (tr) at (2,  1);
    \coordinate (ml) at (-0.5,  -.8);
        \coordinate (Ml) at (-0.5,  .8);
 \coordinate (mr) at (0.5,  -.6);
\coordinate (Mr) at (0.5,  .6);

%   \draw[<-<] (bl) -- (tl) node[pos = 0, below] {$1$} node[pos = 1,
%  above] {$1$};
%     \draw[>->] (br) -- (tr) node[pos = 0, below] {$1$} node[pos = 1,
%  above] {$1$} coordinate[pos = 0.25] (A) coordinate[pos = 0.75] (B);
    \draw [-<] (.5, 0) arc (180:0:0.5) ;
    \draw (.5, 0) arc (-180:0:0.5) coordinate[pos = 0.2] (X) coordinate[pos = 0.8] (Y);
%    \filldraw[draw= green!50!black, fill = green] (X) circle (1mm)
 % node[below, green!50!black] {$\frac{1}{2}$};
 %   \filldraw[draw= green!50!black, fill = white] (Y) circle (1mm)
  %node[below, green!50!black] {$\frac{N-1}{2}$};
%\filldraw[draw= green!50!black, fill = white] (A) circle (1mm)
 % node[right, green!50!black] {$-\frac{N-1}{2}$}; 
%  \filldraw[draw= green!50!black, fill = green] (B) circle (1mm)
 % node[right, green!50!black] {$\frac{-1}{2}$};
\end{scope}}}}, \qquad \MGS = 
  \mymovie[yscale = 0.6, xscale =0.35]{\NB{\tikz[scale=0.6]{\begin{scope}[xshift = 2.5cm]
  \draw [->](0, 0) .. controls +(1,.5) 
  .. (0, 1);
    \draw [<-](2, 0) .. controls +(-1,.5) 
  .. (2, 1);
  %%%%%%
 % \draw [->](0, 0) .. controls +(1,.5) 
%  .. (2, 0);
%    \draw [<-](0, 1) .. controls +(1,-.5) 
%  .. (2, 1);  

\end{scope}}}}{\NB{\tikz[scale=0.6]{\input{\imagesfolder/pdg_horres2}}}},
\qquad \MGO = 
   \mymovie[yscale = 0.6, xscale =0.35]{\NB{\tikz[scale=0.55]{\begin{scope}
  \coordinate (bm) at ( 0, -1);
  \coordinate (tm) at ( 0, 1);
  \draw[->] (bm) -- (tm) node[above, pos =1] {} node[below, pos =0] {};
\end{scope}}}}{\NB{\tikz[scale=0.55]{%\begin{scope}
%  \draw (0, -1) -- +(0,2);
%\end{scope}
%\node at (1.25, 0) {$\rightsquigarrow$};
\begin{scope}[xshift = 2.5cm]
  \draw [<-] (0, 1) -- (0, 0.5) .. controls +(0,-0.5) and +(0, 0.5)
  .. (1, -0.5) arc (180:360:0.5) -- (2,0);  
    \fill[white] (0.5, 0) circle (1mm);
  \draw (0, -1) -- (0, -0.5) .. controls +(0,0.5) and +(0, -0.5)
  .. (1, 0.5) arc (180:0:0.5) -- (2,0);
 %%%%%%
\end{scope}}}}
   \]   
\[ \MGTW = 
  \mymovie[yscale = 0.6, xscale
  =0.35]{\NB{\tikz[scale=0.6]{\begin{scope}
  \coordinate (bl) at (-0.5, -1);
  \coordinate (br) at ( 0.5, -1);
  \coordinate (tl) at (-0.5,  1);
  \coordinate (tr) at ( 0.5,  1);
    \coordinate (ml) at (-0.5,  -.8);
        \coordinate (Ml) at (-0.5,  .8);
 \coordinate (mr) at (0.5,  -.6);
\coordinate (Mr) at (0.5,  .6);

 %\draw[>->] (bl) -- (tl) node[pos = 0, below] {$2$} node[pos = 1,
  %above] {$1$} coordinate[pos = 0.4] (ml);
   \draw[->] (bl) -- (tl) node[pos = 0, below] {} node[pos = 1,
  above] {};
  %\draw[>->] (br) -- (tr) node[pos = 0, below] {$1$} node[pos = 1, above] {$2$} coordinate[pos = 0.6] (mr);
  
    \draw[->] (br) -- (tr) node[pos = 0, below] {} node[pos = 1, above] {};
  
  %\draw[->-] (ml) -- (mr) node [pos= 0.5, above] {$1$};
  %  \draw[->-] (Ml) -- (Mr) node [pos= 0.5, above] {$1$};

\end{scope}}}}{\NB{\tikz[scale=0.6]{\begin{scope}
  \draw[] (3, 0) .. controls +(0, 0.2) and +(0, -0.2) ..  +(-1,1);
  \fill[white] (2.5, 0.5) circle (2mm);
  \draw (2, 0) .. controls +(0, 0.2) and +(0, -0.2) ..  +(1,1);
  %%%
    \draw[->] (2, 1) .. controls +(0, .2) and +(0, -0.2) ..  +(1,1); 
       \fill[white] (2.5, 1.5) circle (2mm);

   \draw[->] (3, 1) .. controls +(0, .2) and +(0, -0.2) ..  +(-1,1);
  % \draw (2, 0) arc (180:0:-0.5) -- (1,2) arc (180:0:0.5) coordinate[pos = 0.25] (X) coordinate[pos = 0.75] (Y);
%    \draw (3, 0) arc (-180:0:0.5) -- (4,2) arc (-180:0:-0.5) coordinate[pos = 0.25] (X) coordinate[pos = 0.75] (Y);
\end{scope}}}},\quad \qquad
\MGTH = 
  \mymovie[yscale = 0.6, xscale =0.35]{\NB{\tikz[font= \tiny,
  scale=0.5]{\begin{scope}
  \draw[] (3, 0) .. controls +(0, 0.2) and +(0, -0.2) ..  +(-1,1);
  \fill[white] (2.5, 0.5) circle (2mm);
  \draw (2, 0) .. controls +(0, 0.2) and +(0, -0.2) ..  +(1,1);
  \draw[] (4,0) -- (4,1);
  %%%
   \draw[] (2,1) -- (2,2);
   \draw[] (4, 1) .. controls +(0, 0.2) and +(0, -0.2) ..  +(-1,1);
  \fill[white] (3.5, 1.5) circle (2mm);
  \draw (3, 1) .. controls +(0, 0.2) and +(0, -0.2) ..  +(1,1);
  
 %%%% 
   \draw[->] (3, 2) .. controls +(0, .2) and +(0, -0.2) ..  +(-1,1);

       \fill[white] (2.5, 2.5) circle (2mm);

     \draw[->] (2, 2) .. controls +(0, .2) and +(0, -0.2) ..  +(1,1); 
  \draw[->] (4,2) -- (4,3);

\end{scope}}}}{\NB{\tikz[font= \tiny,
  scale=0.5]{\begin{scope}
  \draw[] (4, 0) .. controls +(0, 0.2) and +(0, -0.2) ..  +(-1,1);
  \fill[white] (3.5, 0.5) circle (2mm);
  \draw (3, 0) .. controls +(0, 0.2) and +(0, -0.2) ..  +(1,1);
  \draw[] (2,0) -- (2,1);
  %%%
   \draw[] (4,1) -- (4,2);
   \draw[] (3, 1) .. controls +(0, 0.2) and +(0, -0.2) ..  +(-1,1);
  \fill[white] (2.5, 1.5) circle (2mm);
  \draw (2, 1) .. controls +(0, 0.2) and +(0, -0.2) ..  +(1,1);
  
 %%%% 
    \draw[->] (4, 2) .. controls +(0, .2) and +(0, -0.2) ..  +(-1,1);
 
       \fill[white] (3.5, 2.5) circle (2mm);

   \draw[->] (3, 2) .. controls +(0, .2) and +(0, -0.2) ..  +(1,1); 
  \draw[->] (2,2) -- (2,3);

\end{scope}}}}.
  \]    
These generators satisfy certain relations described in \cite{BW, CarSai} and other references (such as \cite[Section 4]{ETW}).  
  
For each movie generator $G \colon X \rightarrow Y$, there is an associated foam or morphism of complexes of green-dotted webs $\mc{F}_N(G) \colon \mc{F}_N(X) \rightarrow \mc{F}_N(Y)$.
If $G=\MGO, \MGTW, \MGTH, \MGTH$, the morphism $\mc{F}_N(G)$ is a relative homotopy equivalence.  That is, if $G \colon X \rightarrow Y$, then one could write $\mc{F}_N(G)= (\alpha_{n-1}')^{-1} \circ \alpha_{n-1} \circ \cdots \circ (\alpha_{0}')^{-1} \circ \alpha_{0}$ as a sequence of ``roofs''
\begin{gather}
  \NB{
    \tikz[xscale=1, yscale =1]{
\node (A1) at (0,0) {$X_0$};
      \node[anchor = east] at (A1) {$\mc{F}_N(X)=\,\,\,\,$};
    \node (A2) at (2,0) {$X_1$};
    \node (B1) at (1,-1) {$X_0'$};
    \node (C1) at (4,-.5) {$\cdots$};
 \node (A3) at (6,0) {$X_{n-1}$};
\node (A4) at (8,0) {$X_{n}$};
     \node[anchor=west] at (A4) {$\,\,\,=\mc{F}_N(Y)$};
        \node (B2) at (7,-1) {$X_{n-1}'$};
              \node (B3) at (3,-1) {};
                \node (B4) at (5,-1) {};
 \draw[-to] (A1) -- (B1) node[pos =0.7, left] {$\alpha_0$};
      \draw[-to] (A2) -- (B1) node[pos =0.7, right] {$\alpha_0'$}; 
     \draw[-to] (A3) -- (B2) node[pos =0.7, left] {$\alpha_{n-1}$};
      \draw[-to] (A4) -- (B2) node[pos =0.7, right] {$\alpha_{n-1}'$};      
          \draw[-to] (A2) -- (B3) node[pos =0.7, left] {};
     \draw[-to] (A3) -- (B4) node[pos =0.7, left] {};
}
  }  \ ,
\end{gather}
where the $\alpha_i$ and $ \alpha_i'$ are relative homotopy equivalences such that $g * \alpha_i=0$ for $g \in \mathfrak{sl}_2$ and for all $i$.  For degree reasons (see \cite[Lemma 4.6]{ETW}), if $G=\MGO, \MGTW, \MGTH$, the map
$\mc{F}_N(G)$ must agree with the map in \cite{ETW} up to a scalar.  We choose the $\alpha_i$ and $\alpha_i'$ so that $\mc{F}_N(G)$ precisely agrees with the map in \cite{ETW}.  We can extend the functoriality result of $\mathfrak{gl}_N$-link homology \cite[Theorem 4.5]{ETW} and \cite[Section 4]{MWW1} to our setting.

\begin{thm}[{\cite[Theorem 5.3]{QRSW3}}] \label{thm:functorFN}
  The $\sll_2$-enhanced Khovanov--Rozanksy homology extends to a functor $\KR_N^{\sll_2} \colon \Links \rightarrow \mc{C}^{\mathfrak{sl}_2}(\gFm)$ in which the target category of the functor is enriched in the category of $\mathfrak{sl}_2$-modules. 

Furthermore, the $\mathfrak{sl}_2$-structure on the morphism space is an invariant of the framed links. \end{thm}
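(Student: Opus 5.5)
The plan follows the strategy of \cite{ETW}, made $\sll_2$-equivariant. Represent a morphism of $\Links$ by a movie, i.e.\ a composite of elementary movie generators: Reidemeister moves, Morse moves ($\MGH$, $\MGS$ and their reverses), and the framed variants from \cite{BW}. To each generator $G\colon X\to Y$ I assign $\mc{F}_N(G)$.
\begin{itemize}
\item For Morse generators, $\mc{F}_N(G)$ is the corresponding cup, cap or saddle foam. Green dots are placed on the source and target diagrams so that, by \eqref{enriched0}, $g*\mc{F}_N(G)=0$ for all $g\in\sll_2$.
\item For Reidemeister generators, $\mc{F}_N(G)$ is the zigzag of roofs described above. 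Every $\alpha_i,\alpha_i'$ is $\sll_2$-equivariant, and its cone is relatively null-homotopic, so it becomes invertible in $\mc{C}^{\sll_2}(\gFm)$.
\end{itemize}
The resulting morphisms are therefore morphisms of $\mc{C}^{\sll_2}(\gFm)$.

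\textbf{Step 1: equivariance of the Morse maps.} For each Morse generator, I check $g*\mc{F}_N(G)=0$ for $g\in\{\de,\df,\dh\}$ by a local computation.
\begin{itemize}
\item For $\dh$, the foam contribution \eqref{eq:h-act-cup}--\eqref{eq:h-act-saddle} is a constant. The green-dot twists contribute multiples of $\dotnewtoni[0]$ and $\wdotnewtoni[0]$, and the multiplicities can be chosen so that these cancel.
\item For $\df$, the foam contribution \eqref{eq:e-act-cup}--\eqref{eq:e-act-saddle} consists of decorations by $\dotnewtoni[1]$ and $\wdotnewtoni[1]$. These must cancel against the twist decorations \eqref{eq:f-act-pol-twisthollow}--\eqref{eq:f-act-pol-twistsolid}, using green dot migration and the relation identifying a solid plus hollow dot with a solid dot.
\item For $\de$, equivariance is automatic, since $\de$ kills both the basic foams and the twists.
\end{itemize}
This step fixes the green-dot conventions of the skein setting, e.g.\ floating dots of multiplicities like $\tfrac12$ and $\tfrac{N-1}{2}$ attached to circles.

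\textbf{Step 2: the movie move relations.} I check that the assignment respects the movie move relations of \cite{CarSai,BW}.
\begin{itemize}
\item Apply the forgetful functor $\mc{C}^{\sll_2}(\gFm)\to\mc{C}(\gFm)$. There the images agree with the maps of \cite{ETW}, by the normalization chosen above and \cite[Lemma 4.6]{ETW}. Hence both sides of each movie move relation agree in $\mc{C}(\gFm)$, by \cite[Theorem 4.5]{ETW}.
\item Lift this to $\mc{C}^{\sll_2}(\gFm)$. Both sides are $\sll_2$-invariant elements of the enriched hom complex. By Corollary~\ref{cor-hom-as-derived-invariants} and Theorem~\ref{thm-tensor-hom-adj}, relative morphisms are the zeroth hypercohomology of the complex of $\sll_2$-modules $\HOM_{\gFm}(C,D)$.
\item Argue as in \cite{ETW}, using the degree rigidity of these hom spaces: in the relevant $q$-degree, $\HOM$ between the complexes of the two sides is concentrated so that its hypercohomology is rank one and detected by the forgetful map. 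Then equality after forgetting implies equality in the relative category.
\end{itemize}

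\textbf{Step 3: extension to $\sphere^3\times[0,1]$.} Following \cite[Section 4]{MWW1}, extending from $\RR^3$ to $\sphere^3$ needs one extra check: the sweep-around move, sending a strand around the point at infinity, acts by the identity. I would verify this for the equivariant maps exactly as in \cite{MWW1}. Since the maps coincide with those of \cite{MWW1} after forgetting, the rank-one argument of Step 2 applies again.

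\textbf{Step 4: enrichment and invariance.} The enrichment of the target is Proposition~\ref{prop:comenriched}. Invariance of the $\sll_2$-structure on morphism spaces follows because Reidemeister moves act by equivariant relative isomorphisms: conjugating by them gives $\mathcal U(\sll_2)$-module isomorphisms of the inner HOMs, using \eqref{eqn-H-action}.

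\textbf{Main obstacle.} I expect Step 2 to be hardest, namely making sure that rigidity survives the passage from $\mc{C}(\gFm)$ to the Verdier quotient. My first attempt would be to show directly that the relevant morphism space in $\mc{C}^{\sll_2}(\gFm)$ maps injectively to that in $\mc{C}(\gFm)$. I would do this by computing the hypercohomology spectral sequence for the $\sll_2$-module $\HOM$ in the lowest degree, where only invariants can contribute.
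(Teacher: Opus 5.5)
Your plan proves a different statement from the one asked. You try to make every cobordism map an $\sll_2$-invariant morphism of the Verdier quotient. But objects of $\Links$ carry no green dots, so each link goes to one fixed complex, and Step~1 cannot be carried out.

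By \eqref{enriched0} and \eqref{eq:h-act-cup}--\eqref{eq:h-act-saddle}, consider a cobordism $\Sigma$ built from invariant Reidemeister roofs and thickness-one Morse moves. Its map has $\dh$-weight $(N-1)\chi(\Sigma)$. The $\dh$-twists of green dots are constants, so they cancel whenever source and target carry the same dots. Hence for $N\geq 2$ the identity of the unknot (weight $0$) and the nonzero map of a genus-one tube from the unknot to itself (weight $-2(N-1)$) can never both be invariant, whatever green dots you fix on the unknot.

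Choosing green dots subject to \eqref{eq:gdlinks} so that Morse maps become equivariant is the content of Theorem~\ref{thm:functorFN-2nd-version} for $\dLinks_N$, not of this theorem. Here the cobordism maps are just those of \cite{ETW}, with the Reidemeister roofs normalized to agree with them. They are viewed as classes in the inner hom, which is an $\sll_2$-module whose elements need not be invariant. What has to be shown is that this enriched structure is compatible with the functor.

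Consequently your Step~2 attacks a problem that does not arise, using a tool that is not available. There is no reason for the forgetful map from morphisms in $\mc{C}^{\sll_2}(\gFm)$ to morphisms in $\mc{C}(\gFm)$ to be injective. In degree zero, the hypercohomology of $\HOM_{\gFm}(C,D)$ also receives contributions $\mathrm{Ext}^p_{\sll_2}(\Bbbk,H^{-p}\HOM_{\gFm}(C,D))$ with $p>0$. These need not vanish: already $\mathrm{Ext}^1_{\sll_2}(L(0),M(-2))\neq 0$ by \eqref{eq:sesvermas2}. Moreover, the identification with hypercohomology in Corollary~\ref{cor-hom-as-derived-invariants} is only available over a field, not over $\RN$.

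The idea you are missing is elementary. For $g\in\sll_2$, the operation $g*$ is a derivation for composition on the inner hom. It kills the differentials because they are equivariant. So if $\alpha-\beta=Hd+dH$ with $H$ an arbitrary non-equivariant homotopy, then $g*\alpha-g*\beta=(g*H)d+d(g*H)$. Thus the $\sll_2$-action descends to homotopy classes.

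The movie-move homotopies of \cite{ETW} and the sweep-around property of \cite{MWW1} then give the enriched functor directly. The sweep-around property holds as an equality of chain maps, so Step~3 needs no rigidity either. Your Step~4 matches the paper's conclusion via Theorem~\ref{thm:sl2inv} and Proposition~\ref{prop:comenriched}.
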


\begin{proof}
The functor $\mathcal{F}_N \colon \Links \rightarrow \mc{C}(\gFm)$ is constructed in \cite[Theorem 4.5]{ETW}. 
That is, to every movie generator $M$, there is an associated map of complexes $\mathcal{F}_N(M)$ of (green-dotted) webs.  If there is a movie relation $M_1 = M_2$, then the maps $\mathcal{F}_N(M_1)$ and
$\mathcal{F}_N(M_2)$ are homotopic.
For links in $S^3$, there is an additional relation on movie generators called the sweep-around property \cite[Theorem 4.9]{MWW1}.  It was shown that the two movies involved in this relation induce equal maps on chain complexes \cite[Theorem 3.3]{MWW1}.

Let $\alpha, \beta \colon \mathcal{F}_N(L_1) \rightarrow \mathcal{F}_N(L_2)$ be two relative homotopic maps. Thus in the homotopy category $\mc{C}^{\sll_2}(\gFm)$, their classes $[\alpha]$ and $[\beta]$ are equal.  Let us show that for any $g \in \mathcal{U}(\mathfrak{sl}_2)$, $g*[\alpha]=g*[\beta]$ in the relative homotopy category. Since $[\alpha]=[\beta]$, there is a homotopy $H$, when forgetting the $\sll_2$-actions, such that
\begin{equation} \label{eq:gactonhom}
\alpha-\beta= H d + d H .    
\end{equation}
Acting by $g$ on both sides of \eqref{eq:gactonhom}, using the Leibniz rule and noting that $\sll_2$-actions commute with differentials, we obtain
\[
g * \alpha - g * \beta = (g*H) d + d (g*H),
\]
and therefore $g*[\alpha]=g*[\beta]$.

The last part follows from Theorem \ref{thm:sl2inv} and Proposition \ref{prop:comenriched}.
\end{proof}

As for foams, one can consider link cobordisms which are $\sll_2$-invariant and therefore draw attention on the $\sll_2$ structure of the homology spaces associated with links rather than on the hom-spaces. As for foams, in order to have a non-trivial theory, it is important to decorate links with green-dots.

Hence we consider the category $\dLinks_N$ whose objects are links decorated with green dots (of solid and hollow types) and whose morphisms are link cobordisms with compatibility conditions that we explain below.

If $\Sigma\colon L_0 \to L_1$, then for each connected component $S$ of the surface $\Sigma$, let $s_{0}$ (resp.{} $h_{0}$) be the sum of labels of solid (resp.{} hollow) green dots located on $L_0$  and by $s_1$ (resp.{} $h_1$) the sum of
labels of solid (resp.{} hollow) green dots located on $L_1$. The compatibility conditions read:\begin{equation}
      \label{eq:gdlinks}
      \begin{cases}
        s_{{1}} - s_{{0}} &= \frac{\chi(S)}{2}\qquad \text{and},\\
        h_{{1}} - h_{{0}} &= \frac{(N-1)\chi(S)}{2}.
      \end{cases}
    \end{equation}
This condition implies that maps induced by link cobordisms are $\sll_2$-equivariant, so similarly to \cite[Corollary 5.4]{QRSW3}, one obtains:
\begin{thm} \label{thm:functorFN-2nd-version} 
Khovanov--Rozansky homology induces a functor $\KR_N^{\sll_2} \colon \dLinks_N \rightarrow \mc{C}^{\mathfrak{sl}_2}(\Bbbk_N)$. 
\end{thm}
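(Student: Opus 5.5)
The plan is to deduce the statement from Theorem~\ref{thm:functorFN}, following the pattern of \cite[Corollary 5.4]{QRSW3}. Objects are clear: a green-dotted link $L$ goes to its Khovanov--Rozansky complex, built from the braiding complexes \eqref{eqn:def-T} and \eqref{eqn:def-T-prime}. The green dots of $L$ are placed on the corresponding edges of every resolution and twist the $\sll_2$-action. Proposition~\ref{prop:classification-sl2-twists} makes each chain group an $\sll_2$-module. The differentials (zip/unzip foams, with the green dots $-t_i$, $\bar t_i$ built into the complexes) are equivariant by \cite[Theorem 4.3]{QRSW3}, recorded here as Theorem~\ref{thm:sl2inv}. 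Evaluating against $\emptyset$ gives a complex of $\KN\#\sll_2$-modules, hence an object of $\mc{C}^{\sll_2}(\KN)$.

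For morphisms, let $\Sigma\colon L_0\to L_1$. Theorem~\ref{thm:functorFN} gives a class $\mc{F}_N(\Sigma)$ in the inner hom space $\HOM(\mc{F}_N(L_0),\mc{F}_N(L_1))$, which is acted on by $\sll_2$ via $*$. I need to show that $\mc{F}_N(\Sigma)$ is $\sll_2$-invariant once the green dots of $L_0$ and $L_1$ are taken into account. By \eqref{eqn-H-inv-in-HOM} and Corollary~\ref{cor-hom-as-derived-invariants}, this makes it a genuine morphism in $\mc{C}^{\sll_2}(\KN)$.

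Since $\Sigma$ is a composite of movie generators, and $*$ obeys the Leibniz rule under composition, it suffices to check each generator, with intermediate green dots chosen freely. Green dots can be slid along link strands and across crossings: green dot migration plus the compatibility of the twisted braiding complexes shows that isotopy generators (Reidemeister moves, $\MGO$, $\MGTW$, $\MGTH$) are $*$-invariant when dots are merely transported. This is because their maps are built from equivariant roofs with $g*\alpha_i=0$.

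The remaining generators are births, deaths and saddles, which change the Euler characteristic. Here I compute $g*\mc{F}_N(G)$ from \eqref{enriched0}. The only nonzero contributions are:
\begin{itemize}
\item the foam terms \eqref{eq:h-act-cup}--\eqref{eq:e-act-saddle} with $a=1$, namely $\dh$ giving $\pm(N-1)$ and $\df$ giving $-\tfrac12(\dotnewtoni[0]\wdotnewtoni[1]+\dotnewtoni[1]\wdotnewtoni[0])$ or its negative;
\item the difference of green-dot twists $g(\Gamma_1)-g(\Gamma_0)$.
\end{itemize}
Using \eqref{eq:h-act-pol-twisthollow}--\eqref{eq:f-act-pol-twistsolid} on a thickness-one facet, where $\dotnewtoni[0]=1$ and $\wdotnewtoni[0]=N-1$, a solid dot of label $s$ contributes $-s(N-1)$ to $\dh$ and $s\wdotnewtoni[1]$ to $\df$. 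A hollow dot of label $h$ contributes $-h$ and $h\dotnewtoni[1]$.

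Cancellation then requires
\[
\Delta s=\frac{\chi}{2},\qquad \Delta h=\frac{(N-1)\chi}{2}
\]
for the cobordism pieces of Euler characteristic $\pm1$, with signs matching. This is exactly \eqref{eq:gdlinks}. For a general cobordism the conditions are additive over the generators of each connected component $S$, since $\chi$ is additive and green dots can be redistributed within a component by migration. Hence any $\Sigma$ satisfying \eqref{eq:gdlinks} factors through equivariant generators.

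Finally, functoriality (the movie relations and the sweep-around move in $S^3$) is inherited directly from Theorem~\ref{thm:functorFN}: the underlying maps agree in $\mc{C}^{\sll_2}(\gFm)$, so they agree after passing to invariants.

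The main obstacle is the sign and normalization bookkeeping in the cup/cap/saddle check. In particular, I need to verify that the $\df$-terms $\tfrac12\dotnewtoni[0]\wdotnewtoni[1]$ and $\tfrac12\dotnewtoni[1]\wdotnewtoni[0]$ are cancelled exactly by the solid and hollow twists with labels $\tfrac12$ and $\tfrac{N-1}{2}$. I would do this carefully on the cup first, and then get cap and saddle by duality.
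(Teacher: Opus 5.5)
Your proposal is correct and takes the same route as the paper. The paper simply asserts that the compatibility conditions \eqref{eq:gdlinks} make the cobordism maps $\sll_2$-equivariant and then argues as in \cite[Corollary 5.4]{QRSW3}; your cup/cap/saddle computation, which forces $\Delta s=\chi/2$ and $\Delta h=(N-1)\chi/2$ (the twists of labels $\tfrac12$ and $\tfrac{N-1}{2}$ on a birth), is exactly the check behind that assertion, and the one step you and the paper both pass over quickly is the last one: equivariant maps that are only homotopic after forgetting the action need not coincide in $\mc{C}^{\sll_2}(\KN)$, whose morphisms are derived invariants by Corollary~\ref{cor-hom-as-derived-invariants}, so the movie-move relations really need to be rechecked relatively (e.g.\ via the local, degree-based arguments of \cite{ETW}) rather than obtained by passing to naive invariants.
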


\section{Skein lasagna module}
\label{skein:sec}

The skein lasagna module was defined by Morrison, Walker, and Wedrich \cite{MWW1} in the context of Khovanov--Rozansky homology over the base ring $\mathbb{Z}$.  This was  extended by the same authors \cite{MWW2} to equivariant Khovanov--Rozansky homology (which is defined over $R=\mathbb{Z}[E_1, \ldots, E_N]$).

The goal of this section is to equip the skein lasagna module over $R=\mathbb{Z}[E_1, \ldots, E_N]$ with an $\mathfrak{sl}_2$-action.

Let $W$ be an oriented smooth four-manifold and $L \subset \partial W$ a framed green-dotted link.  A lasagna filling $F=(\Sigma,\{(B_i,L_i,v_i)\})$ of $W$ with a link $L$ in the boundary consists of
\begin{itemize}
    \item a finite collection of disjoint 4-balls $B_i$ embedded in the interior of $W$;
    \item a framed oriented surface $\Sigma$ properly embedded in $W- int(\bigcup_i B_i)$ intersecting $\partial W$ at $L$ and intersecting each $\partial B_i$ at a green-dotted link $L_i$;
    \item a homogeneous label $v_i \in \KR_N(L_i;R)$ for each $i$.
    \end{itemize}
    Similarly to what happens for morphisms in $\dLinks$, for each connected component $S$ of the surface $\Sigma$, the green dots located on the boundary of $S$ should be compatible with the Euler characteristic of $S$. To explain what this means, let us denote by $s_{\mathrm{out}}$ (resp.{} $h_{\mathrm{out}}$) the sum of labels of solid (resp.{} hollow) green dots located on $\partial W \cap S$ and by $s_{\mathrm{in}}$ (resp.{} $h_{\mathrm{in}}$ ) the sum of labels of solid (resp.{} hollow) green dots located on $\bigcup \partial B_i \cap S$. The compatibility conditions read:
    \begin{equation}
      \label{eq:gd-lasagana}
      \begin{cases}
        s_{\mathrm{out}} - s_{\mathrm{in}} &= \frac{\chi(S)}{2}\quad \text{and},\\
        h_{\mathrm{out}} - h_{\mathrm{in}} &= \frac{(N-1)\chi(S)}{2}.
      \end{cases}
    \end{equation}

    Note that these are the same compatibility conditions as \eqref{eq:gdlinks} provided one sees $\Sigma$ as a cobordism from $\bigcup L_i$ to $L$.

If $W$ is a 4-ball, then there is a cobordism map
\[
\KR^{\sll_2}_N(\Sigma) \colon \otimes_i \KR_N^{\sll_2}(\partial B_i, L_i) \rightarrow \KR^{\sll_2}_N(\partial W,L) \ ,
\]
giving rise to an evaluation 
\[
\KR^{\sll_2}_N(F) = \KR^{\sll_2}_N(\Sigma)(\otimes_i v_i) \in \KR_N^{\sll_2}(\partial W,L) \ .
\]
The skein lasagna module is defined as the bigraded $R$-module 
\begin{equation}
  S_0^N(W,L;R) = R\{\text{lasagna fillings F of W with boundary L}\} / \sim
\end{equation}
where $\sim$ is the transitive and linear closure of the following relations:
\begin{itemize}
    \item linear combinations of lasagna fillings are set to be multilinear in the labels $v_i$;
    \item two lasagna fillings $F_1$ and $F_2$ are set to be equivalent if $F_1$ has an input ball $B_i$ with label $v_i$, and $F_2$ is obtained from $F_1$ by replacing $B_i$ with another lasagna filling $F_3$ of a 4-ball such that $v_i = \KR_N^{\sll_2}(F_3)$, followed by an isotopy relative to $\partial W$, where the isotopy is allowed to move the input balls.
\end{itemize}

\begin{figure}[ht]
\centering
\NB{\tikz[scale = 0.8]{\input{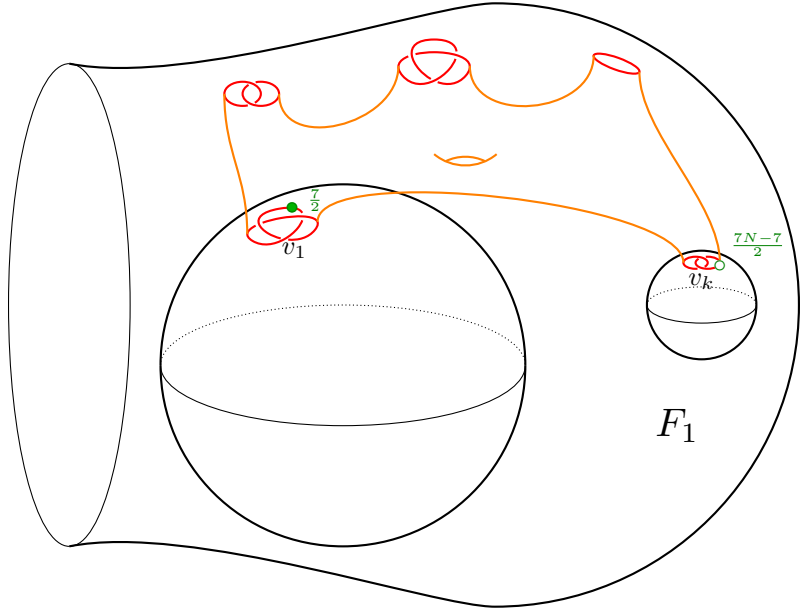}}}
\caption{A filling $F_1$ of $W$. }\end{figure}

        \begin{figure}[ht]
\centering
\NB{\tikz[scale = 0.8]{\input{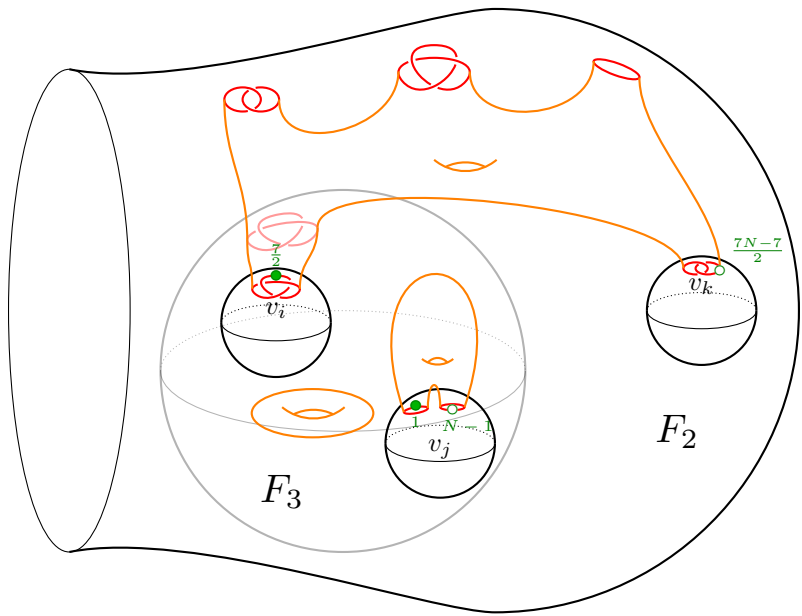}}}
\caption{A filling $F_2$ of $W$ which is equivalent to the filling $F_1$ (for good choices of $v_1$, $v_i$ and $v_j$). Note that the various green dots satisfy conditions given in \eqref{eq:gd-lasagana}.}\end{figure}

There is a natural bigrading on the skein lasagna module coming from the bigrading on Khovanov--Rozansky homology.  There is a third grading called the homology degree coming from fillings represented by classes in $H_2(W,L; \mathbb{Z})$.

\begin{thm} \label{thm:sl2las}
    For $R=\mathbb{Z}[E_1,\ldots,E_N]$, the $\sll_2$-structure on $\KR_N^{\sll_2}$ induces a structure of $\sll_2$-module on  the skein lasagna module $S_0^N(W,L;R)$ 
\end{thm}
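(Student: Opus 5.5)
The plan is to define the action on generators and then check that it respects the relations. On a lasagna filling $F=(\Sigma,\{(B_i,L_i,v_i)\})$ and for $g\in\sll_2$, we set
\[
g\cdot F := \sum_i (\Sigma,\{(B_j,L_j,v_j)\}_{j\neq i}\cup\{(B_i,L_i,g\cdot v_i)\}),
\]
where $g\cdot v_i$ is the $\sll_2$-action on $\KR_N^{\sll_2}(L_i;R)$ coming from the green dots on $L_i$ (Theorem \ref{thm:sl2inv}). This is the Leibniz rule for the tensor product action on $\otimes_i \KR_N^{\sll_2}(L_i)$. Since the action on each label is $\sll_2$-linear, hence $\scalars$-linear, and the tensor product action satisfies the bracket relations, $g\mapsto (F\mapsto g\cdot F)$ defines an $\sll_2$-action on the free module $R\{\text{fillings}\}$. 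It is also compatible with multilinearity in the labels, so it preserves the first family of relations. $R$-linearity needs a comment. The $\sll_2$-action acts on $R=\RN$ by derivations, so the correct statement is that $S_0^N$ is an $R\#\sll_2$-module. We therefore extend by $g(rF)=(g\cdot r)F+r(g\cdot F)$ and check that this is consistent with the $R$-module structure on labels; this holds because each $\KR_N^{\sll_2}(L_i)$ is an equivariant $R$-module.

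The key step is compatibility with the second relation: replacing $B_i$ (labelled $v_i$) by a filling $F_3$ of a $4$-ball with $\KR_N^{\sll_2}(F_3)=v_i$, followed by an isotopy. I will show that $g\cdot F_1\sim g\cdot F_2$. The terms of $g\cdot F_1$ where $g$ hits some $v_j$ with $j\ne i$ match the corresponding terms of $g\cdot F_2$ directly, using the same replacement. It remains to prove
\[
\KR_N^{\sll_2}(\Sigma_3)\Big(\sum_k \cdots\otimes g\cdot w_k\otimes\cdots\Big)=g\cdot \KR_N^{\sll_2}(\Sigma_3)(\otimes_k w_k),
\]
where $F_3=(\Sigma_3,\{(B'_k,L'_k,w_k)\})$. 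In other words, the cobordism map $\KR_N^{\sll_2}(\Sigma_3)\colon\otimes_k\KR_N^{\sll_2}(L'_k)\to\KR_N^{\sll_2}(L_i)$ must be $\sll_2$-equivariant. This follows because $\Sigma_3$, viewed as a cobordism from $\bigcup L'_k$ to $L_i$, satisfies the green-dot conditions \eqref{eq:gd-lasagana}, which coincide with \eqref{eq:gdlinks}. Hence $\Sigma_3$ is a morphism of $\dLinks_N$, and Theorem \ref{thm:functorFN-2nd-version} says that its induced map is a morphism in $\mc{C}^{\sll_2}(\Bbbk_N)$, so it is equivariant on homology. Two further inputs are needed here. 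First, the tensor product of several input links corresponds to their disjoint union, with the action given by the coproduct, as in Section \ref{homological:sec}. Second, the isotopy relative to $\partial W$ induces equivariant isomorphisms on labels, which is functoriality once more, including the sweep-around property in $S^3$ used in Theorem \ref{thm:functorFN}.

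The main obstacle I expect is exactly this equivariance of the cobordism maps for general surfaces in $\sphere^3\times[0,1]$, or in $\ball^4$ minus balls. One has to make sure that the local green-dot bookkeeping forced by the twists in the crossing complexes $T,T'$ and in the cup, cap and saddle formulas (e.g. the $\dh$ eigenvalues $\pm a(N-a)$ and the $\df$ terms involving $\dotnewtoni[0]\wdotnewtoni[1]$) really reduces, component by component, to the global Euler characteristic conditions. My approach is to decompose each component into movie generators. For each generator I would verify that the compatible green-dot shift, namely $\tfrac12$ solid and $\tfrac{N-1}2$ hollow per unit of $\chi$ (the values that appear in $\MGH$), makes the generator's foam satisfy $g*F=0$ via \eqref{enriched0}. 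Green dot migration then moves dots along the components, so that only the totals $s$ and $h$ on each component matter. Once this is in place, the action descends to $S_0^N(W,L;R)$, and the $\sll_2$ relations hold because they already hold on the free module.
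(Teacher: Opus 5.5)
Your proposal is correct and follows the paper's argument. The action is the tensor-product (Leibniz) action on labels, multilinearity is preserved, and compatibility with the replacement relation comes from Theorem \ref{thm:functorFN-2nd-version} applied to $\Sigma_3$, which is a morphism of $\dLinks_N$ because \eqref{eq:gd-lasagana} coincides with \eqref{eq:gdlinks}. Your extra remarks (the $R\#\sll_2$ refinement, the coproduct/disjoint-union map, equivariance of the isotopy maps) spell out what the paper leaves implicit. The generator-by-generator check in your last paragraph is unnecessary, since that equivariance is exactly what Theorem \ref{thm:functorFN-2nd-version} already provides.
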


\begin{proof}
It suffices to show that the relations of the skein lasagna are compatible by this $\mathfrak{sl}_2$-action.

    Clearly multilinearity of the labels $v_i$ in Khovanov--Rozansky homology are preserved by this action.
    The fact that the second equivalence relation is preserved by the $\mathfrak{sl}_2$-action follows from Theorem \ref{thm:functorFN-2nd-version} and from the compatibility conditions imposed on green dots.
\end{proof}

\begin{rmk}
    In principle, this representation depends upon parameters $t_1,t_2$ built into the action of $\mathfrak{sl}_2$ on link homology in Theorem \ref{thm:sl2inv}.
\end{rmk}

\begin{rmk} More generally, skein lasagna modules can be constructed from link homology functors valued in any symmetric monoidal cocomplete target 1-category $\mathcal{V}$, whose tensor product preserves colimits separately in each variable, provided they satisfy certain conditions, see \cite[Theorem 2.1]{wedrich2025linkhomologytopologicalquantum}. Here we take $\mathcal{V}$ to be the relative homotopy category of bigraded $\Bbbk_N\# \mathfrak{sl}_2 $-modules (see equation \eqref{eqn-H-inv-in-HOM} and Theorem \ref{thm-tensor-hom-adj}). Beyond the description given above, \cite[Section 2.4]{wedrich2025linkhomologytopologicalquantum} lists two alternative definitions of the skein lasagna module, as a colimit of the link homology functor, valued in $\mathcal{V}$, over an indexing category with objects given by lasagna diagrams and morphisms given by enclosement of input balls by a larger input ball. When adopting this perspective, an important subtlety has to be addressed: $\mathcal{V}$ is usually the self-enrichment over an underlying closed symmetric monoidal category $\mathcal{V}_0$ (e.g. bigraded abelian groups with homogeneous homomorphisms versus grading-preserving homomorphisms). Morphisms associated to link cobordisms, e.g. from the enclosement relation, are typically morphisms in $\mathcal{V}$, but not in $\mathcal{V}_0$. Nevertheless, it is sometimes possible to compute the colimit in $\mathcal{V}_0$ by twisting the values of the link homology functor by autoequivalences of $\mathcal{V}_0$, depending on lasagna filling. In the classical case, these are the grading shifts depending on the Euler characteristic of the lasagna surface, here they are given by the green dots.
\end{rmk}

\section{Dotted Temperley--Lieb category}
\label{dTL:sec}
Following \cite{HRW}, we study an equivariant version of the dotted Temperley--Lieb category.
This gives a reformulation of the equivariant skein lasagna module for equivariant Khovanov homology for 2-handlebodies.
We study an $\mathfrak{sl}_2$-action on this category.

\subsection{Definitions}
The equivariant dotted Temperley--Lieb category $\dTL$ is the $\mathbb{Z}$-graded pivotal category generated by an object $c$.
Morphisms are generated by a degree $2$ endomorphism of $c$:
\[
  \NB{\tikz[scale = 0.6]{\begin{scope}
  \draw [] (0, -1) -- +(0,2);
 %   \coordinate (A) at (0, 0);
  %      \filldraw[draw= green!50!black, fill = white] (A) circle (1mm)
  %node[left, green!50!black] 
  %%
     \coordinate (A) at (0, 0);
        \filldraw[draw= black!50!black, fill = black] (A) circle (1mm);
%  node[left, green!50!black] 
 % {$-\frac{1}{2}(N-1)$};
 
\end{scope}}} \colon c \rightarrow c
\]
along with cups and caps of degree 0 from the pivotal structure:
\[
  \NB{\tikz[scale = 0.6]{\begin{scope}
  \coordinate (bl) at (0, -1);
  \coordinate (br) at (2, -1);
  \coordinate (tl) at (0,  1);
  \coordinate (tr) at (2,  1);
    \coordinate (ml) at (-0.5,  -.8);
        \coordinate (Ml) at (-0.5,  .8);
 \coordinate (mr) at (0.5,  -.6);
\coordinate (Mr) at (0.5,  .6);

%   \draw[<-<] (bl) -- (tl) node[pos = 0, below] {$1$} node[pos = 1,
%  above] {$1$};
%     \draw[>->] (br) -- (tr) node[pos = 0, below] {$1$} node[pos = 1,
%  above] {$1$} coordinate[pos = 0.25] (A) coordinate[pos = 0.75] (B);
  %  \draw [] (.5, 0) arc (180:0:0.5) ;
   \draw (.5, 0) arc (-180:0:0.5) coordinate[pos = 0.2] (X) coordinate[pos = 0.8] (Y);
%    \filldraw[draw= green!50!black, fill = green] (X) circle (1mm)
 % node[below, green!50!black] {$\frac{1}{2}$};
 %   \filldraw[draw= green!50!black, fill = white] (Y) circle (1mm)
  %node[below, green!50!black] {$\frac{N-1}{2}$};
%\filldraw[draw= green!50!black, fill = white] (A) circle (1mm)
 % node[right, green!50!black] {$-\frac{N-1}{2}$}; 
%  \filldraw[draw= green!50!black, fill = green] (B) circle (1mm)
 % node[right, green!50!black] {$\frac{-1}{2}$};
\end{scope}}} \colon \Id \rightarrow c^2
  \quad \quad 
  \NB{\tikz[scale = 0.6]{\begin{scope}
  \coordinate (bl) at (0, -1);
  \coordinate (br) at (2, -1);
  \coordinate (tl) at (0,  1);
  \coordinate (tr) at (2,  1);
    \coordinate (ml) at (-0.5,  -.8);
        \coordinate (Ml) at (-0.5,  .8);
 \coordinate (mr) at (0.5,  -.6);
\coordinate (Mr) at (0.5,  .6);

%   \draw[<-<] (bl) -- (tl) node[pos = 0, below] {$1$} node[pos = 1,
%  above] {$1$};
%     \draw[>->] (br) -- (tr) node[pos = 0, below] {$1$} node[pos = 1,
%  above] {$1$} coordinate[pos = 0.25] (A) coordinate[pos = 0.75] (B);
   \draw [] (.5, 0) arc (180:0:0.5) ;
  % \draw (.5, 0) arc (-180:0:0.5) coordinate[pos = 0.2] (X) coordinate[pos = 0.8] (Y);
%    \filldraw[draw= green!50!black, fill = green] (X) circle (1mm)
 % node[below, green!50!black] {$\frac{1}{2}$};
 %   \filldraw[draw= green!50!black, fill = white] (Y) circle (1mm)
  %node[below, green!50!black] {$\frac{N-1}{2}$};
%\filldraw[draw= green!50!black, fill = white] (A) circle (1mm)
 % node[right, green!50!black] {$-\frac{N-1}{2}$}; 
%  \filldraw[draw= green!50!black, fill = green] (B) circle (1mm)
 % node[right, green!50!black] {$\frac{-1}{2}$};
\end{scope}}} \colon c^2 \rightarrow \Id.
\]
They satisfy the relations
\begin{gather*}
  \NB{\tikz[scale = 0.6]{\begin{scope}
  \coordinate (bl) at (0, -1);
  \coordinate (br) at (2, -1);
  \coordinate (tl) at (0,  1);
  \coordinate (tr) at (2,  1);
    \coordinate (ml) at (-0.5,  -.8);
        \coordinate (Ml) at (-0.5,  .8);
 \coordinate (mr) at (0.5,  -.6);
\coordinate (Mr) at (0.5,  .6);

%   \draw[<-<] (bl) -- (tl) node[pos = 0, below] {$1$} node[pos = 1,
%  above] {$1$};
%     \draw[>->] (br) -- (tr) node[pos = 0, below] {$1$} node[pos = 1,
%  above] {$1$} coordinate[pos = 0.25] (A) coordinate[pos = 0.75] (B);
    \draw [] (.5, 0) arc (180:0:0.5) ;
    \draw (.5, 0) arc (-180:0:0.5) coordinate[pos = 0.2] (X) coordinate[pos = 0.8] (Y);
%    \filldraw[draw= green!50!black, fill = green] (X) circle (1mm)
 % node[below, green!50!black] {$\frac{1}{2}$};
 %   \filldraw[draw= green!50!black, fill = white] (Y) circle (1mm)
  %node[below, green!50!black] {$\frac{N-1}{2}$};
%\filldraw[draw= green!50!black, fill = white] (A) circle (1mm)
 % node[right, green!50!black] {$-\frac{N-1}{2}$}; 
%  \filldraw[draw= green!50!black, fill = green] (B) circle (1mm)
 % node[right, green!50!black] {$\frac{-1}{2}$};
\end{scope}}} = 2,\qquad
   \NB{\tikz[scale = 0.6]{\begin{scope}
  \draw [] (0, -1) -- +(0,2);
 %   \coordinate (A) at (0, 0);
  %      \filldraw[draw= green!50!black, fill = white] (A) circle (1mm)
  %node[left, green!50!black] 
  %%
     \coordinate (A) at (0, -0.5);
        \filldraw[draw= black!50!black, fill = black] (A) circle (1mm);
    \coordinate (B) at (0, 0.5);
        \filldraw[draw= black!50!black, fill = black] (B) circle (1mm);
\end{scope}}}
 ~=~
 E_1  \NB{\tikz[scale = 0.6]{}}
 - E_2
   \NB{\tikz[scale = 0.6]{\begin{scope}
  \coordinate (bm) at ( 0, -1);
  \coordinate (tm) at ( 0, 1);
  \draw[] (bm) -- (tm) node[above, pos =1] {} node[below, pos =0] {};
\end{scope}}}\ , \qquad \text{and}
\qquad
\NB{\tikz[scale = 0.6]{\begin{scope}
  \coordinate (bl) at (-0.5, -1);
  \coordinate (br) at ( 0.5, -1);
  \coordinate (tl) at (-0.5,  1);
  \coordinate (tr) at ( 0.5,  1);
    \coordinate (ml) at (-0.5,  0);
        \coordinate (Ml) at (-0.5,  .8);
 \coordinate (mr) at (0.5,  -.6);
\coordinate (Mr) at (0.5,  .6);

 %\draw[>->] (bl) -- (tl) node[pos = 0, below] {$2$} node[pos = 1,
  %above] {$1$} coordinate[pos = 0.4] (ml);
   \draw[] (bl) -- (tl);% node[pos = 0, below] {$1$} node[pos = 1,above] {$1$};
  %\draw[>->] (br) -- (tr) node[pos = 0, below] {$1$} node[pos = 1, above] {$2$} coordinate[pos = 0.6] (mr);
  
    \draw[] (br) -- (tr);% node[pos = 0, below] {$1$} node[pos = 1, above] {$1$};
  
  %\draw[->-] (ml) -- (mr) node [pos= 0.5, above] {$1$};
  %  \draw[->-] (Ml) -- (Mr) node [pos= 0.5, above] {$1$};

  \filldraw[draw= black!50!black, fill = black] (ml) circle (1mm);

\end{scope}}} +
\NB{\tikz[scale = 0.6]{\begin{scope}
  \coordinate (bl) at (-0.5, -1);
  \coordinate (br) at ( 0.5, -1);
  \coordinate (tl) at (-0.5,  1);
  \coordinate (tr) at ( 0.5,  1);
    \coordinate (ml) at (-0.5,  0);
        \coordinate (Ml) at (-0.5,  .8);
 \coordinate (mr) at (0.5,  0);
\coordinate (Mr) at (0.5,  .6);

 %\draw[>->] (bl) -- (tl) node[pos = 0, below] {$2$} node[pos = 1,
  %above] {$1$} coordinate[pos = 0.4] (ml);
   \draw[] (bl) -- (tl);% node[pos = 0, below] {$1$} node[pos = 1,above] {$1$};
  %\draw[>->] (br) -- (tr) node[pos = 0, below] {$1$} node[pos = 1, above] {$2$} coordinate[pos = 0.6] (mr);
  
    \draw[] (br) -- (tr);% node[pos = 0, below] {$1$} node[pos = 1, above] {$1$};
  
  %\draw[->-] (ml) -- (mr) node [pos= 0.5, above] {$1$};
  %  \draw[->-] (Ml) -- (Mr) node [pos= 0.5, above] {$1$};

  \filldraw[draw= black!50!black, fill = black] (mr) circle (1mm);

\end{scope}}}
=
E_1
 \NB{\tikz[scale = 0.6]{\begin{scope}
  \coordinate (bl) at (-0.5, -1);
  \coordinate (br) at ( 0.5, -1);
  \coordinate (tl) at (-0.5,  1);
  \coordinate (tr) at ( 0.5,  1);
    \coordinate (ml) at (-0.5,  -.8);
        \coordinate (Ml) at (-0.5,  .8);
 \coordinate (mr) at (0.5,  -.6);
\coordinate (Mr) at (0.5,  .6);

 %\draw[>->] (bl) -- (tl) node[pos = 0, below] {$2$} node[pos = 1,
  %above] {$1$} coordinate[pos = 0.4] (ml);
   \draw[] (bl) -- (tl);% node[pos = 0, below] {$1$} node[pos = 1,above] {$1$};
  %\draw[>->] (br) -- (tr) node[pos = 0, below] {$1$} node[pos = 1, above] {$2$} coordinate[pos = 0.6] (mr);
  
    \draw[] (br) -- (tr);% node[pos = 0, below] {$1$} node[pos = 1, above] {$1$};
  
  %\draw[->-] (ml) -- (mr) node [pos= 0.5, above] {$1$};
  %  \draw[->-] (Ml) -- (Mr) node [pos= 0.5, above] {$1$};

\end{scope}}}
 -E_1
  \NB{\tikz[scale = 0.6]{\begin{scope}
  \coordinate (bl) at (0, -1);
  \coordinate (br) at (2, -1);
  \coordinate (tl) at (0,  1);
  \coordinate (tr) at (2,  1);
    \coordinate (ml) at (-0.5,  -.8);
        \coordinate (Ml) at (-0.5,  .8);
 \coordinate (mr) at (0.5,  -.6);
\coordinate (Mr) at (0.5,  .6);

%   \draw[<-<] (bl) -- (tl) node[pos = 0, below] {$1$} node[pos = 1,
%  above] {$1$};
%     \draw[>->] (br) -- (tr) node[pos = 0, below] {$1$} node[pos = 1,
%  above] {$1$} coordinate[pos = 0.25] (A) coordinate[pos = 0.75] (B);
   \draw [] (.5, 1) arc (-180:0:0.5) ;
   \draw (.5, -1) arc (180:0:0.5) coordinate[pos = 0.2] (X) coordinate[pos = 0.8] (Y);
%    \filldraw[draw= green!50!black, fill = green] (X) circle (1mm)
 % node[below, green!50!black] {$\frac{1}{2}$};
 %   \filldraw[draw= green!50!black, fill = white] (Y) circle (1mm)
  %node[below, green!50!black] {$\frac{N-1}{2}$};
%\filldraw[draw= green!50!black, fill = white] (A) circle (1mm)
 % node[right, green!50!black] {$-\frac{N-1}{2}$}; 
%  \filldraw[draw= green!50!black, fill = green] (B) circle (1mm)
 % node[right, green!50!black] {$\frac{-1}{2}$};
\end{scope}}}
  + 
    \NB{\tikz[scale = 0.6]{\begin{scope}
  \coordinate (bl) at (0, -1);
  \coordinate (br) at (2, -1);
  \coordinate (tl) at (0,  1);
  \coordinate (tr) at (2,  1);
    \coordinate (ml) at (-0.5,  -.8);
        \coordinate (Ml) at (-0.5,  .8);
 \coordinate (mr) at (0.5,  -.6);
\coordinate (Mr) at (0.5,  .6);

%   \draw[<-<] (bl) -- (tl) node[pos = 0, below] {$1$} node[pos = 1,
%  above] {$1$};
%     \draw[>->] (br) -- (tr) node[pos = 0, below] {$1$} node[pos = 1,
%  above] {$1$} coordinate[pos = 0.25] (A) coordinate[pos = 0.75] (B);
   \draw [] (.5, 1) arc (-180:0:0.5) coordinate[pos = 0.5] (Z) ;
   \draw (.5, -1) arc (180:0:0.5) coordinate[pos = 0.2] (X) coordinate[pos = 0.8] (Y);
    \filldraw[draw= black!50!black, fill = black] (Z) circle (1mm);
 % node[below, green!50!black] {$\frac{1}{2}$};
 %   \filldraw[draw= green!50!black, fill = white] (Y) circle (1mm)
  %node[below, green!50!black] {$\frac{N-1}{2}$};
%\filldraw[draw= green!50!black, fill = white] (A) circle (1mm)
 % node[right, green!50!black] {$-\frac{N-1}{2}$}; 
%  \filldraw[draw= green!50!black, fill = green] (B) circle (1mm)
 % node[right, green!50!black] {$\frac{-1}{2}$};
\end{scope}}}
    +
     \NB{\tikz[scale = 0.6]{\begin{scope}
  \coordinate (bl) at (0, -1);
  \coordinate (br) at (2, -1);
  \coordinate (tl) at (0,  1);
  \coordinate (tr) at (2,  1);
    \coordinate (ml) at (-0.5,  -.8);
        \coordinate (Ml) at (-0.5,  .8);
 \coordinate (mr) at (0.5,  -.6);
\coordinate (Mr) at (0.5,  .6);

%   \draw[<-<] (bl) -- (tl) node[pos = 0, below] {$1$} node[pos = 1,
%  above] {$1$};
%     \draw[>->] (br) -- (tr) node[pos = 0, below] {$1$} node[pos = 1,
%  above] {$1$} coordinate[pos = 0.25] (A) coordinate[pos = 0.75] (B);
   \draw [] (.5, 1) arc (-180:0:0.5) coordinate[pos = 0.5] (Z) ;
   \draw (.5, -1) arc (180:0:0.5) coordinate[pos = 0.5] (X);
    \filldraw[draw= black!50!black, fill = black] (X) circle (1mm);
 % node[below, green!50!black] {$\frac{1}{2}$};
 %   \filldraw[draw= green!50!black, fill = white] (Y) circle (1mm)
  %node[below, green!50!black] {$\frac{N-1}{2}$};
%\filldraw[draw= green!50!black, fill = white] (A) circle (1mm)
 % node[right, green!50!black] {$-\frac{N-1}{2}$}; 
%  \filldraw[draw= green!50!black, fill = green] (B) circle (1mm)
 % node[right, green!50!black] {$\frac{-1}{2}$};
\end{scope}}}.
\end{gather*}

\begin{prop}
For parameters $a_1, a_2$, there is an action of 
$\mathfrak{sl}_2$ on $\dTL$ defined by
\begin{align*}
&\de(E_1)=-2 \ , 
&&\df(E_1)=E_1^2-2E_2 \ , 
&&\dh(E_1)=-2E_1 \ ,
\\
&\de(E_2)=-E_1 \ , 
&&\df(E_2)=E_1 E_2 \ , 
&&\dh(E_2)=-4E_2 \ ,
\\
&\de
\left( \NB{\tikz[scale = 0.6]{}} \right)
=
- \NB{\tikz[scale = 0.6]{}} \ , 
&&\df
\left( \NB{\tikz[scale = 0.6]{}} \right)
=
 \NB{\tikz[scale = 0.6]{}} \ , 
&& \dh
\left( \NB{\tikz[scale = 0.6]{}} \right)
=
(-2)  \NB{\tikz[scale = 0.6]{}} \ ,
\\
&\de
\left( \NB{\tikz[scale = 0.6]{}} \right)
  = 0
&& \df
\left( \NB{\tikz[scale = 0.6]{}} \right)
=
a_1 \NB{\tikz[scale = 0.6]{\begin{scope}
  \coordinate (bl) at (0, -1);
  \coordinate (br) at (2, -1);
  \coordinate (tl) at (0,  1);
  \coordinate (tr) at (2,  1);
    \coordinate (ml) at (-0.5,  -.8);
        \coordinate (Ml) at (-0.5,  .8);
 \coordinate (mr) at (0.5,  -.6);
\coordinate (Mr) at (0.5,  .6);

%   \draw[<-<] (bl) -- (tl) node[pos = 0, below] {$1$} node[pos = 1,
%  above] {$1$};
%     \draw[>->] (br) -- (tr) node[pos = 0, below] {$1$} node[pos = 1,
%  above] {$1$} coordinate[pos = 0.25] (A) coordinate[pos = 0.75] (B);
  % \draw [] (.5, 1) arc (-180:0:0.5) coordinate[pos = 0.5] (Z) ;
   \draw (.5, -1) arc (180:0:0.5) coordinate[pos = 0.5] (X) coordinate[pos = 0.8] (Y);
    \filldraw[draw= black!50!black, fill = black] (X) circle (1mm);
 % node[below, green!50!black] {$\frac{1}{2}$};
 %   \filldraw[draw= green!50!black, fill = white] (Y) circle (1mm)
  %node[below, green!50!black] {$\frac{N-1}{2}$};
%\filldraw[draw= green!50!black, fill = white] (A) circle (1mm)
 % node[right, green!50!black] {$-\frac{N-1}{2}$}; 
%  \filldraw[draw= green!50!black, fill = green] (B) circle (1mm)
 % node[right, green!50!black] {$\frac{-1}{2}$};
\end{scope}}}
+
  a_2 E_1 \NB{\tikz[scale = 0.6]{}} \ ,
&&
\dh
\left( \NB{\tikz[scale = 0.6]{}} \right)
=
(-a_1-2a_2) \left( \NB{\tikz[scale = 0.6]{}} \right) \ ,
\\
&\de
\left( \NB{\tikz[scale = 0.6]{}} \right)
=0 \ ,\qquad
&&\df
\left( \NB{\tikz[scale = 0.6]{}} \right)
=
-a_1
\NB{\tikz[scale = 0.6]{\begin{scope}
  \coordinate (bl) at (0, -1);
  \coordinate (br) at (2, -1);
  \coordinate (tl) at (0,  1);
  \coordinate (tr) at (2,  1);
    \coordinate (ml) at (-0.5,  -.8);
        \coordinate (Ml) at (-0.5,  .8);
 \coordinate (mr) at (0.5,  -.6);
\coordinate (Mr) at (0.5,  .6);

%   \draw[<-<] (bl) -- (tl) node[pos = 0, below] {$1$} node[pos = 1,
%  above] {$1$};
%     \draw[>->] (br) -- (tr) node[pos = 0, below] {$1$} node[pos = 1,
%  above] {$1$} coordinate[pos = 0.25] (A) coordinate[pos = 0.75] (B);
  %  \draw [] (.5, 0) arc (180:0:0.5) ;
  % \draw (.5, 0) arc (-180:0:0.5) coordinate[pos = 0.2] (X) coordinate[pos = 0.8] (Y);

     \draw [] (.5, 1) arc (-180:0:0.5) coordinate[pos = 0.5] (Z) ;
  % \draw (.5, -1) arc (180:0:0.5) coordinate[pos = 0.2] (X) coordinate[pos = 0.8] (Y);
    \filldraw[draw= black!50!black, fill = black] (Z) circle (1mm);
%    \filldraw[draw= green!50!black, fill = green] (X) circle (1mm)
 % node[below, green!50!black] {$\frac{1}{2}$};
 %   \filldraw[draw= green!50!black, fill = white] (Y) circle (1mm)
  %node[below, green!50!black] {$\frac{N-1}{2}$};
%\filldraw[draw= green!50!black, fill = white] (A) circle (1mm)
 % node[right, green!50!black] {$-\frac{N-1}{2}$}; 
%  \filldraw[draw= green!50!black, fill = green] (B) circle (1mm)
 % node[right, green!50!black] {$\frac{-1}{2}$};
\end{scope}}}
-a_2 E_1
    \NB{\tikz[scale = 0.6]{}} \ ,\qquad
&&\dh
\left( \NB{\tikz[scale = 0.6]{}} \right)
=
(a_1 + 2a_2) \left( \NB{\tikz[scale = 0.6]{}} \right) \ .
\end{align*}

Furthermore, $\dh$ is a negative degree operator if $a_1+2 a_2=0$.
\end{prop}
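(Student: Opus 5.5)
The plan is to build the action on the free pivotal category generated by the dot, the cup and the cap, over the ground ring $\Bbbk[E_1,E_2]$, and then show that it descends to $\dTL$.

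\emph{Action on scalars.} The formulas on $E_1,E_2$ are the restriction to symmetric polynomials in two variables $x_1,x_2$ of the operators
\[
\de=-\textstyle\sum_i \partial/\partial x_i,\qquad
\df=\textstyle\sum_i x_i^2\,\partial/\partial x_i,\qquad
\dh=-2\textstyle\sum_i x_i\,\partial/\partial x_i,
\]
as in Section~\ref{sec:sl2action}. For instance, $\df(E_1)=x_1^2+x_2^2=E_1^2-2E_2$ and $\df(E_2)=x_1^2x_2+x_1x_2^2=E_1E_2$. Hence $\Bbbk[E_1,E_2]$ is an $\sll_2$-module algebra, and the bracket relations on it follow from those of the differential operators.

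\emph{Extension to diagrams.} We extend $\de,\df,\dh$ to all diagrams by the Leibniz rule with respect to both vertical composition and horizontal juxtaposition, acting on coefficients through the action above. This is well defined on the free pivotal category (free modulo planar isotopy) once we check that it respects the pivotal relations. These are the zigzag identities, and the rule that a dot slides through a cup or a cap to the other strand. For the zigzag, $\df$ gives
\[
(\df(\text{cap})\otimes\id)\circ(\id\otimes\text{cup})\;+\;(\id\otimes\text{cap})\circ(\df(\text{cup})\otimes\id)
\;=\;(-a_1\,\text{dot}-a_2E_1)+(a_1\,\text{dot}+a_2E_1)\;=\;0,
\]
after straightening and sliding the dots. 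The $\dh$-contributions $\mp(a_1+2a_2)$ cancel in the same way, and $\de$ kills cup and cap. Dot sliding is compatible because the images of the dot and of cup/cap are themselves expressed via dots and scalars.

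\emph{Bracket relations.} A commutator of two derivations is again a derivation, so $[\dh,\de]=2\de$, $[\dh,\df]=-2\df$ and $[\de,\df]=\dh$ need only be checked on the generators. On the dot this is the $\sll_2$-triple of Section~\ref{sec:sl2action} restricted to a single variable $x$, with $\de(x)=-1$, $\df(x)=x^2$, $\dh(x)=-2x$. On cup and cap it is a direct computation. For example,
\[
[\de,\df](\text{cup})=\de\bigl(a_1\,\text{dotted cup}+a_2E_1\,\text{cup}\bigr)=-a_1\,\text{cup}-2a_2\,\text{cup}=\dh(\text{cup}).
\]
Also $[\dh,\df](\text{cup})=-2\df(\text{cup})$, because the dotted cup and $E_1\,\text{cup}$ both have $\dh$-weight $-2$ relative to $\text{cup}$. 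In other words, $\tau(\df)=a_1\,\text{dot}+a_2E_1$ and $\tau(\dh)=-(a_1+2a_2)$ define a flat twist in the sense of the discussion preceding Proposition~\ref{prop:classification-sl2-twists}.

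\emph{The defining relations of $\dTL$.} Finally, each of $\de,\df,\dh$ must map the three defining relations into the ideal they generate.
\begin{itemize}
\item Circle $=2$: the twist contributions from cup and cap cancel, and $2$ is killed.
\item Dot relation $\text{dot}^2=E_1\,\text{dot}-E_2$: applying $\df$ gives $\text{dot}^3+\text{dot}^3=2\,\text{dot}^3$ on the left and $(E_1^2-2E_2)\,\text{dot}+E_1\,\text{dot}^2-E_1E_2$ on the right. Reducing $\text{dot}^3$ with the relation shows that both sides agree. The $\de$ and $\dh$ checks are similar and easier.
\item Neck-cutting relation: this is the step I expect to be the main obstacle. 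It involves cups and caps with dots, so $\df$ produces the $a_1,a_2$ twist terms together with $\df$ of the dots and of $E_1$. After applying $\df$, I would reduce every term to the basis of two vertical strands and cap-over-cup with at most one dot on each component, using dot sliding and the dot relation. Collecting by $a_1$, $a_2$ and the untwisted part, each group should cancel separately; the $a_2$-terms are immediate since they only multiply by $E_1$. The same procedure for $\de$ and $\dh$ is routine.
\end{itemize}

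\emph{Degree statement.} On all generators, $\dh$ equals $-(\text{degree})$ plus the scalar twist $\mp(a_1+2a_2)$ on cup and cap. When $a_1+2a_2=0$, $\dh$ therefore acts as minus the degree, which gives the last claim.
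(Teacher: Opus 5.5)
Your proposal is correct and is the same direct verification the paper intends; the paper's proof only asserts that the $\dTL$ relations are preserved and that the $\mathfrak{sl}_2$ relations hold. Your generator-level checks spell that argument out, and the neck-cutting check you defer does go through, with the $a_1$-terms and the $a_2$-terms cancelling separately.

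One cosmetic slip: you attach the statement's cap formulas to the cup and vice versa. The statement has $\df(\text{cap})=a_1\,\text{dotted cap}+a_2E_1\,\text{cap}$ and $\dh(\text{cap})=-(a_1+2a_2)\,\text{cap}$ for the cap $c^{2}\to\Id$. This is harmless, since $(a_1,a_2)\mapsto(-a_1,-a_2)$ exchanges the two conventions.
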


\begin{proof}
    It is straightforward to check that the relations of $\dTL$ are preserved under the actions of $\de, \df, \dh$ and that $\de, \df, \dh$ satisfy $\mathfrak{sl}_2$-relations.
\end{proof}

Define
\[
  \NB{\tikz[scale = 0.6]{\begin{scope}
  \coordinate (lb) at ( 0, -1);
  \coordinate (lt) at ( 0, 1);
   \coordinate (rb) at ( 2, -1);
      \coordinate (rt) at ( 2, 1);
  \draw[] (lb) -- (rt) node[above, pos =1] {} node[below, pos =0] {};
    \draw[] (lt) -- (rb) node[above, pos =1] {} node[below, pos =0] {};
\end{scope}}}
 :=
    \NB{\tikz[scale = 0.6]{}}
    -
   \NB{\tikz[scale = 0.6]{}} 
\]
and 
more generally, for $i=1,\ldots, n-1$, define the endomorphism $s_i$ by
\[
s_i =: 
\NB{\tikz[scale = 0.6]{\begin{scope}
 \coordinate (llb) at ( -2, -1);
     \coordinate(B) at ( 1, 0) node[right] {};        
  \coordinate (llt) at ( -2, 1);
   \coordinate (rrb) at ( 4, -1);
  \coordinate (rrt) at ( 4, 1);
  \coordinate (lb) at ( 0, -1);
  \coordinate (lt) at ( 0, 1);
   \coordinate (rb) at ( 2, -1);
      \coordinate (rt) at ( 2, 1);
  \draw[] (lb) -- (rt) node[above, pos =1] {} node[below, pos =0] {};
    \draw[] (lt) -- (rb) node[above, pos =1] {} node[below, pos =0] {};
      \draw[] (llb) -- (llt) node[right, pos =.5] {$\cdots$} node[below, pos =0] {};
          \draw[] (rrb) -- (rrt) node[left, pos =.5] {$\cdots$}  node[below, pos =0] {};
 % \coordinate(A) at ( -1, 0) node[left] {$\cdots$};  

  %   \draw[] (B1) -- (B2) node[above, pos =1] {} node[right, pos =.5] {$\cdots$};
\end{scope}}}\ .
\]
This gives rise to a morphism from the symmetric group $S_n$ on $\End(c^{\otimes n})$.
When $\Bbbk=\mathbb{Q}$, this leads to an endomorphism $p_n \colon c^{\otimes n} \rightarrow c^{\otimes n}$ defined by
\begin{equation} \label{eq:defpn}
p_n = \frac{1}{n!} \sum_{n \in S_n} w \ .
\end{equation}
Graphically we depict this morphism by:
\[ p_n =
  \NB{\tikz[scale = 0.6]{\begin{scope}
%  \draw (0,0) rectangle (4,1) coordinate [midway] (A) node[below] {$p_n$};
  \draw (0.1, -0.2) -- +(0,1.4);
  \draw (0.6, -0.2) -- +(0,1.4);
  \draw (1.1,  -0.2) -- +(0,1.4);
  \node at (2.25, -0.2) {$\dots$};
  \node at (2.25, 1.2) {$\dots$};
  \draw (3.4, -0.2) -- +(0,1.4);
  \draw (3.9, -0.2) -- +(0,1.4);
  \filldraw[fill=white] (0,0) rectangle (4,1) node[pos=.5] {$p_n$};
  
\end{scope}
%%% Local Variables:
%%% mode: latex
%%% TeX-master: t
%%% End:
}} \ .
\]
We also will make use of the following endomorphism of $c^{\otimes n}$:
\begin{equation}\label{eq:defzn} z_n =
  \NB{\tikz[scale = 0.6]{\begin{scope}
%\coordinate (A1) at (0,0);
%\coordinate (A2) at (0,2);
\coordinate (B1) at (1,0);
\coordinate (B2) at (1,2);
\coordinate (A1) at (0,0);
\coordinate (A2) at (0,2);
\coordinate (C1) at (3,0);
\coordinate (C2) at (3,2);

    \draw[] (A1) -- (A2) coordinate[pos =.5] (A3) node[below, pos =0] {};
    \filldraw[draw= black!50!black, fill = black] (A3) circle (1mm);
     \draw[] (B1) -- (B2) node[above, pos =1] {} node[right, pos =.5] {$\cdots$};
       \draw[] (C1) -- (C2) node[above, pos =1] {} node[below, pos =0] {};

\end{scope}}}
  -
  \NB{\tikz[scale = 0.6]{\begin{scope}
\coordinate (A1) at (0,0);
\coordinate (A2) at (0,2);
\coordinate (B1) at (1,0);
\coordinate (B2) at (1,2);
\coordinate (A1) at (0,0);
\coordinate (A2) at (0,2);
\coordinate (C1) at (3,0);
\coordinate (C2) at (3,2);

    \draw[] (A1) -- (A2) coordinate[pos =.5] (A3) node[below, pos =0] {};
   % \filldraw[draw= black!50!black, fill = black] (A3) circle (1mm);
     \draw[] (B1) -- (B2) coordinate[pos =.5] (B3)
     node[above, pos =1] {} node[right, pos =.5] {$\cdots$};
         \filldraw[draw= black!50!black, fill = black] (B3) circle (1mm);
       \draw[] (C1) -- (C2) node[above, pos =1] {} node[below, pos =0] {};

\end{scope}}}
  + \cdots + (-1)^{n-1}
    \NB{\tikz[scale = 0.6]{\begin{scope}
\coordinate (A1) at (0,0);
\coordinate (A2) at (0,2);
\coordinate (B1) at (1,0);
\coordinate (B2) at (1,2);
\coordinate (A1) at (0,0);
\coordinate (A2) at (0,2);
\coordinate (C1) at (3,0);
\coordinate (C2) at (3,2);

    \draw[] (A1) -- (A2) coordinate[pos =.5] (A3) node[below, pos =0] {};
   % \filldraw[draw= black!50!black, fill = black] (A3) circle (1mm);
     \draw[] (B1) -- (B2) coordinate[pos =.5] (B3)
     node[above, pos =1] {} node[right, pos =.5] {$\cdots$};
       %  \filldraw[draw= black!50!black, fill = black] (B3) circle (1mm);
       \draw[] (C1) -- (C2) coordinate[pos =.5] (C3) node[above, pos =1] {} node[below, pos =0] {};
         \filldraw[draw= black!50!black, fill = black] (C3) circle (1mm);
\end{scope}}} \ .
\end{equation}

\begin{lem} \label{lem:fonp_2}
One has
\[
\de(p_2)=\dh(p_2)=0 \ , \quad 
\df(p_2)=\frac{1}{2}a_1
\left(~
 \NB{\tikz[scale = 0.6]{}}
    -
     \NB{\tikz[scale = 0.6]{}}
~\right) \ .
\]

As a left (resp.{} right) $\dTL$-module, $\dTL p_2$  
(resp.{} $p_2 \dTL$) is not stable under $\df$ unless $a_1=0$.
\end{lem}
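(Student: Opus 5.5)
The plan is to compute directly from the definitions, using that $p_2=\tfrac12(\mathrm{id}+s)$, where $s$ is the crossing defined just before the statement. Since $\de,\df,\dh$ act on morphisms of $\dTL$ as derivations that are compatible with composition (Leibniz rule) and kill identities, I have $\mathbf{x}(p_2)=\tfrac12\mathbf{x}(s)$ for each generator $\mathbf{x}$. So everything reduces to applying $\de,\dh,\df$ to the two diagrams in the definition of the crossing. I expect these to be a ``dotted identity'' type term and a cup-cap term carrying a dot on one side. I will expand each term as a composite of cups, caps, dots and scalars $E_1,E_2$, and apply the Leibniz rule generator by generator.

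For $\dh$, the cup and cap contribute $\pm(a_1+2a_2)$. These cancel in any composite containing exactly one cup and one cap. Each dot contributes $-2$, and $E_1$ also contributes $-2$. So $\dh$ acts on each term by minus its degree, and the defining difference of the crossing is homogeneous of degree $0$, which gives $\dh(p_2)=0$. For $\de$, cups and caps are killed, a dot goes to $-\mathrm{id}$, and $\de(E_1)=-2$. I will check that the two resulting undotted diagrams cancel, using $\mathrm{cap}\circ\mathrm{cup}=2$ if a bubble appears.

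For $\df$, three kinds of contribution must be collected.
\begin{itemize}
\item The cup contributes $a_1\cdot(\text{dotted cup})+a_2E_1\cdot\mathrm{cup}$, and the cap contributes $-a_1\cdot(\text{dotted cap})-a_2E_1\cdot\mathrm{cap}$. In a cup-cap composite the $a_2$ pieces cancel, leaving $a_1$ times the difference between the dot sitting on the cap and the dot sitting on the cup.
\item Each dot goes to the dot squared. I will reduce this with the relation $\text{dot}^2=E_1\,\text{dot}-E_2$.
\item The scalar $E_1$ goes to $E_1^2-2E_2$.
\end{itemize}
I expect the non-$a_1$ contributions to cancel, using the dot-sliding relation (the third defining relation) to move dots across cup-caps. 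What should remain is $\tfrac12a_1$ times the difference of the two pictures in the statement. This bookkeeping is routine but sign-sensitive.

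The main obstacle is the non-stability claim. Suppose $\dTL p_2$ is stable under $\df$. Then $\df(p_2)=\df(p_2\cdot p_2)$ lies in $\dTL p_2$, so $\df(p_2)=\df(p_2)p_2$. On the other hand, differentiating $p_2^2=p_2$ gives $\df(p_2)=\df(p_2)p_2+p_2\df(p_2)$. Combining the two yields $p_2\df(p_2)=0$.

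So it suffices to show that $p_2\df(p_2)\neq 0$, or equivalently that $\df(p_2)(1-p_2)\neq 0$, when $a_1\neq 0$. To do this I will use the standard $R$-basis of $\End(c^{\otimes 2})$: undotted and dotted identities, and cup-caps with at most one dot on each of the cup and the cap. I will expand $p_2$ times the difference $D$ of the two pictures in this basis and exhibit a nonzero coefficient, probably the coefficient of the cup-cap with a dot on the cap. This uses $\mathrm{cap}\circ\mathrm{cup}=2$, $\mathrm{cap}\circ(\text{dotted cup})=E_1$ after the bubble relations, and the fact that $p_2$ does not absorb cup-caps. Since $2$ is invertible, this shows $\df(p_2)\notin\dTL p_2$ unless $a_1=0$.

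The right-module statement $p_2\dTL$ follows by the mirror-image argument, or by applying the vertical reflection anti-involution, which exchanges cups and caps and sends $a_1\mapsto -a_1$ in $\df$. Conversely, when $a_1=0$ we have $\df(p_2)=0$, so stability is immediate from the Leibniz rule.
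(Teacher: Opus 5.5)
Your proposal is correct and follows the paper's route: a direct Leibniz computation, then showing $\df(p_2)(1-p_2)\neq 0$, and the reflected argument for $p_2\dTL$. Your identity $\df(p_2)=\df(p_2)p_2+p_2\df(p_2)$ usefully shows that $\df(p_2)(1-p_2)$ equals $p_2\df(p_2)$ and is exactly the obstruction to stability, so the paper's extra check that $\df(p_2)p_2\neq 0$ is not needed.

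Two small corrections. First, the crossing has degree $0$: it is the identity minus the undotted cup-cap $U$, with no dotted terms. So $p_2=\mathrm{id}-\tfrac12U$, only the cup and cap contribute, and $\df(p_2)=\tfrac12a_1(U_t-U_b)$, where $U_t$ (resp.\ $U_b$) is the cup-cap dotted on its upper (resp.\ lower) arc. Second, the third relation makes the two dotted identities dependent, so the basis of $\End(c^{\otimes 2})$ is the identity, one dotted identity, and the four cup-caps with at most one dot per arc; independence can be checked in $\mathcal{F}_L$ for $L$ the unknot. In this basis $p_2\df(p_2)=\tfrac12a_1\bigl(U_t-\tfrac12E_1U\bigr)\neq0$.
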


\begin{proof}
    The actions of $\de, \dh, \df$ are calculated in a straightforward manner.
    
    Using the relations in the category, one could check that
$\df(p_2) p_2$ and $\df(p_2)(1-p_2)$ are both non-zero unless $a_1=0$.  Similarly 
$p_2 \df(p_2) $ and $(1-p_2)\df(p_2)$ are both non-zero unless $a_1=0$.

\end{proof}

\subsection{Karoubi envelope}
In this section we let $\Bbbk=\mathbb{Q}$.
In the Karoubi envelope of $\dTL$, let $P_n=(c^{\otimes n},p_n)$ where $p_n$ was defined in \eqref{eq:defpn}.
Furthermore, we  assume $a_1=0$ in order to use Lemma \ref{lem:fonp_2}.

Define maps $ U_n \colon P_n \rightarrow P_{n+2}$ and $ D_n \colon P_n \rightarrow P_{n-2}$ by
\begin{equation}
    U_n =
     \NB{\tikz[scale = 0.6]{\begin{scope}

  \draw[-] (.5,-0.3) -- (.5,3.3);
    \draw[-] (3.5,-0.3) -- (3.5,3.3);
        \draw[-] (6,3) -- (6,3.3);
         \draw[-] (5,3) -- (5,3.3);
  \node at (2, -0.2) {$\dots$};
  \node at (2, 3.2) {$\dots$};
    \node at (2, 1.5) {$\dots$};
  \filldraw[fill=white] (0,0) rectangle (4,1) node[pos=.5] {$p_n$};
 % \fill (A) circle (0.5mm) node[below] {$p_n$};
   \filldraw[fill=white] (0,2) rectangle (6.5,3) node[pos=.5] {$p_{n+2}$};
      \draw  (5, 2) arc (-180:0:0.5) coordinate[pos = 0.5] (Z) ;
   %\draw (.5, -1) arc (180:0:0.5) coordinate[pos = 0.2] (X) coordinate[pos = 0.8] (Y);
    \filldraw[draw= black!50!black, fill = black] (Z) circle (1mm);

       % \draw (0,0) rectangle (4,1) node[pos=.5] {$p_n$};

\end{scope}}}
    \end{equation}

    \begin{equation}
    D_n = n(n-1)
     \NB{\tikz[scale = 0.6]{\begin{scope}
    \filldraw[draw= black!50!black, fill = black] (Z) circle (1mm);
  \draw[-] (.5,-0.3) -- (.5,3.3) ;
    \draw[-] (3.5,-0.3) -- (3.5,3.3) ;
        \draw[-] (6,-0.3) -- (6,0) ;
          \draw[-] (5,-0.3) -- (5,0) ;
  \node at (2, -0.2) {$\dots$};
  \node at (2, 3.2) {$\dots$};
    \node at (2, 1.5) {$\dots$};
     \filldraw[fill=white] (0,0) rectangle (6.5,1) node[pos=.5] {$p_n$};
 % \fill (A) circle (0.5mm) node[below] {$p_n$};
 \filldraw[fill=white] (0,2) rectangle (4,3) node[pos=.5] {$p_{n-2}$};
      \draw [] (5, 1) arc (180:0:0.5) coordinate[pos = 0.5] (Z) ;
   %\draw (.5, -1) arc (180:0:0.5) coordinate[pos = 0.2] (X) coordinate[pos = 0.8] (Y);
\end{scope}}}
    \end{equation}

\begin{lem} \label{lem:sl2onUDz}
Assume $a_1=0$.  Then
$\mathfrak{sl}_2$ acts on $U_n$, $D_n$, and $z_n$ as follows:    
\begin{equation}
    \de(U_n)=0 \ , \quad \quad \df(U_n) = (1-a_2) E_1 U_n \ , \quad \quad
    \dh(U_n) = (2a_2-2) U_n \ ,
\end{equation}
\begin{equation}
    \de(D_n)=0 \ , \quad \quad \df(D_n) = (1+a_2) E_1 D_n \ , \quad \quad
    \dh(D_n) = (-2a_2-2) D_n \ ,
\end{equation}
\begin{equation}
    \de(z_n)=\frac{(-1)^{n}-1}{2} \ , \quad \quad 
    \df(z_n) = E_1 z_n + (\frac{(-1)^{n}-1}{2}) E_2 \ , \quad \quad
    \dh(z_n) = -2 z_n \ .
\end{equation}
\end{lem}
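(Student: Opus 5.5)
The plan is to compute everything from the Leibniz rule, using the action on the generators given in the preceding Proposition (with $a_1=0$) together with Lemma~\ref{lem:fonp_2}. Write $U_n = p_{n+2}\circ(p_n\otimes \mathrm{cup}^\bullet)\circ p_n$, where $\mathrm{cup}^\bullet$ is the dotted cup. Since $p_n$ is built only from identities and crossings, and the crossing is a difference of two undotted diagrams, the first step is to show $\de(p_n)=\dh(p_n)=0$ and, when $a_1=0$, $\df(p_n)=0$.

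\textbf{Step 1: the symmetrizer is invariant.}
\begin{itemize}
\item With $a_1=0$, the cup and cap satisfy $\df(\mathrm{cup})=a_2E_1\,\mathrm{cup}$ and $\df(\mathrm{cap})=-a_2E_1\,\mathrm{cap}$.
\item Hence in any diagram with one cup and one cap (the "cup-cap" term in the crossing) these scalars cancel under Leibniz.
\item The same holds for $\dh$, and $\de$ kills cups and caps.
\item So the crossing $s_i$ is killed by all three operators, hence so is $p_n$. This is consistent with Lemma~\ref{lem:fonp_2} for $n=2$.
\end{itemize}

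\textbf{Step 2: $U_n$ and $D_n$.} Only the dotted cup (resp.\ dotted cap) contributes.
\begin{itemize}
\item For $U_n$: $\de(\bullet)=-\mathrm{id}$, so $\de(\mathrm{cup}^\bullet)=-\mathrm{cup}$. I then need that $p_{n+2}\circ(\mathrm{id}\otimes\mathrm{cup})=0$ on the image of $p_n$. This should follow because the symmetrizer absorbs a crossing on the two cup strands, and the crossing composed with a cup equals $\mathrm{cup}-2\,\mathrm{cup}=-\mathrm{cup}$ by the circle value $2$. Thus $p\circ \mathrm{cup}=-p\circ\mathrm{cup}$, which vanishes in characteristic $0$. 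So $\de(U_n)=0$.
\item $\df(\mathrm{cup}^\bullet)=\mathrm{cup}\circ\bullet^2 + a_2E_1\mathrm{cup}^\bullet$, with $\bullet^2=E_1\bullet-E_2$. The $E_2\,\mathrm{cup}$ term dies by the same vanishing, leaving $(1+a_2)E_1U_n$.
\item There is a sign issue: the statement says $(1-a_2)$. I expect the dot-slide relation to matter here. Moving dots through a cup introduces $E_1\,\mathrm{cup}-\mathrm{cup}^\bullet$, i.e.\ a dot on the other strand of the cup. After symmetrization, one should be able to average the dot placement, which modifies the coefficient.
\item I will therefore carefully track whether the convention for $\df$ on cups in the Proposition (the pictures) matches the cup or cap of $U_n$. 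The $\dh$ value $2a_2-2$ suggests the cup in $U_n$ carries $-a_2$, i.e.\ it is the opposite generator, and I will follow that.
\item $D_n$ is analogous with the dotted cap, giving $(1+a_2)$ and $-2a_2-2$.
\end{itemize}

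\textbf{Step 3: $z_n$.} Here $z_n$ is a signed sum of single dots, so $\de(z_n)=-\sum_{k}(-1)^{k-1}\mathrm{id}$. This gives $-1$ for odd $n$ and $0$ for even $n$, i.e.\ $\frac{(-1)^n-1}{2}$. Next, $\df(\bullet)=\bullet^2=E_1\bullet-E_2$, so $\df(z_n)=E_1z_n-E_2\sum(-1)^{k-1}$, matching the claim. Finally $\dh(\bullet)=-2\bullet$ gives $\dh(z_n)=-2z_n$.

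\textbf{Main obstacle.} The main obstacle is Step 2: pinning down conventions and proving the vanishing $p_{n+2}(\mathrm{id}\otimes \mathrm{cup})=0$. The exact cup/cap coefficients that produce $(1\mp a_2)$ also need to be settled there.
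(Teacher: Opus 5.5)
Your route is the paper's: a direct Leibniz-rule computation from the generator formulas and the relations of $\dTL$. Most of it is right:
\begin{itemize}
\item $p_n$ is killed by $\de,\df,\dh$ when $a_1=0$;
\item the vanishing $p_{n+2}\circ(\mathrm{id}_n\otimes\cup)=0$ follows from $s\circ\cup=-\cup$, and the analogous $\cap\circ p_n=0$ handles $D_n$;
\item Step 3 for $z_n$ is correct.
\end{itemize}
The genuine error is in Step 2. It comes from a misreading, not a missing relation: you have interchanged the formulas for the cup and the cap. In the preceding Proposition, the row with $\df=a_1(\text{dotted diagram})+a_2E_1(\cdot)$ and $\dh=(-a_1-2a_2)$ is the \emph{cap}; its dotted diagram is an arc with both endpoints at the bottom. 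The other row says $\df(\cup)=-a_1\cup^{\bullet}-a_2E_1\cup$ and $\dh(\cup)=(a_1+2a_2)\cup$. For $a_1=0$ your own computation then closes at once:
\[
\df(U_n)=E_1U_n-E_2\,p_{n+2}(p_n\otimes\cup)-a_2E_1U_n=(1-a_2)E_1U_n,\qquad \dh(U_n)=(-2+2a_2)U_n .
\]
Since $D_n$ contains a dotted cap with $\df(\cap)=a_2E_1\cap$ and $\dh(\cap)=-2a_2\cap$, one likewise gets $\df(D_n)=(1+a_2)E_1D_n$ and $\dh(D_n)=(-2-2a_2)D_n$. As written, your Step 2 is also internally inconsistent. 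Step 1 assigns $-a_2E_1$ to the cap, yet you then assert $(1+a_2)$ for $D_n$, which requires $+a_2E_1$ on the cap.

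Drop the dot-sliding/averaging idea; it cannot produce the missing sign. In $\dTL$ dots slide freely around cups and caps, which is why the dot in $U_n$ and in the defining relations is drawn at the apex of a cup or cap. Even under your hypothetical rule $(\text{dot on the other strand})=E_1\cup-\cup^{\bullet}$, moving the dot would only replace $U_n$ by $E_1\,p_{n+2}(p_n\otimes\cup)-U_n=-U_n$, so ``averaging'' would give $0$. Since $\df(U_n)=cE_1U_n$ is linear in $U_n$, no such manipulation can change $c$ from $1+a_2$ to $1-a_2$. Your fallback is the correct resolution: reading off from $\dh(U_n)=(2a_2-2)U_n$ that the cup must carry $-a_2$. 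It is not a guess, though; it is exactly what the Proposition states. With it, your argument is the paper's straightforward calculation.
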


\begin{proof}
    This is a straightforward calculation using relations in the category.
\end{proof}

We now enhance our category by adding twisted objects $P_n^{a E_1}$ for each object $P_n$ and each $a \in \frac{1}{2} \mathbb{Z}$.
Viewing $\End(P_n^{a E_1})$ as a rank 1 module over itself generated by a vector $1_a$, the $\mathfrak{sl}_2$-action is twisted by
\[
f \cdot 1_a = aE_1 1_{a} \ , \quad \quad
h \cdot 1_a = -2a 1_a \ .
\]

Following \cite{HRW}, define the Kirby $k$-color  $\omega_k$ as a colimit of the sequence of objects 
\begin{equation} \label{def:kirbycolor}
    \omega_k = \colim \left(
\NB{\tikz[xscale =3, yscale=2]{
  \node (A) at (-1,2) {$q^{-k} P_k^{-\frac{k}{2}(1-a_2)E_1}$};
   \node (B) at (.5,2) {$q^{-k-2}P^{-\frac{k+2}{2}(1-a_2)E_1}_{k+2}$};
    \node (C) at (1.5,2) {$\cdots$};
\draw[-to] (A) -- (B) node[pos=0.5, above, scale = 0.7] {$U_k$};
         \draw[-to] (B) -- (C) node[pos=0.5, above, scale = 0.7] {$U_{k+2}$};
}}
    \right).
\end{equation}
Note that with the twisting, each $U_{k+2j}$ is annihilated by the $\mathfrak{sl}_2$-action, so that in the directed system defining $\omega_k$, the morphims are $\sll_2$-equivariant.

We could now describe the Karoubi envelope of $\dTL$ over $\mathbb{Q}$ as the module category of an algebra with 2 blocks $C_0$ and $C_1$.  

\begin{lem} \label{lem:quiver}
The graded algebras $C_0$ and $C_1$ are quotients of the path algebras of the quivers in \eqref{eq:A0} and \eqref{eq:A1} respectively where the degree of each edge is 2, subject to relations
\[
D_{n+2} U_{n} = -z_{n}^2 \hspace{.1in} \Mod ~(E_1,E_2) \ , \quad 
U_{n-2} D_{n} = -z_{n}^2 \hspace{.1in} \Mod ~(E_1,E_2) \ , \quad 
\]
\[
z_n^{n+1} = 0~ \Mod~ (E_1,E_2) \ , \quad 
z_n U_{n-2} = U_{n-2} z_{n-2} \ , \quad
z_n D_{n+2} = D_{n+2} z_{n+2} \ .
\]

\begin{equation} \label{eq:A0}
     \Gamma_0 = 
     \NB{\tikz[xscale =2, yscale=2]{
       \node (A) at (-1,2) {$\bullet$};
              \node (A1) at (-1,2.3) {$z_0$};
         \node (B1) at (0,2.3) {$z_2$};
                  \node (C1) at (1,2.3) {$z_4$};
                 \node (X1) at (-.5,2.5) {$U_0$};
                    \node (X2) at (-.5,1.5) {$D_2$};
              \node (Y1) at (.5,2.5) {$U_2$};
                    \node (Y2) at (.5,1.5) {$D_4$}; 
                                      \node (B) at (0,2) {$\bullet$};
       \node (C) at (1,2) {$\bullet$};
        \node (D) at (2,2) {$\cdots$};
        \draw[->] (A) to[out=45, in=135, looseness=1.5] (B);
         \draw[->] (A) to[out=60, in=120, looseness=3] (A);
 \draw[<-] (A) to[out=-45, in=-135, looseness=1.5] (B);
     \draw[->] (B) to[out=45, in=135, looseness=1.5] (C);
             \draw[<-] (B) to[out=-45, in=-135, looseness=1.5] (C);
               \draw[->] (B) to[out=60, in=120, looseness=3] (B);
        \draw[->] (C) to[out=45, in=135, looseness=1.5] (D);
                 \draw[<-] (C) to[out=-45, in=-135, looseness=1.5] (D);
                          \draw[->] (C) to[out=60, in=120, looseness=3] (C);
 }}
     \end{equation}

     \begin{equation} \label{eq:A1}
     \Gamma_1 = 
     \NB{\tikz[xscale =2, yscale=2]{
       \node (A) at (-1,2) {$\bullet$};
              \node (A1) at (-1,2.3) {$z_1$};
         \node (B1) at (0,2.3) {$z_3$};
                  \node (C1) at (1,2.3) {$z_5$};
                 \node (X1) at (-.5,2.5) {$U_1$};
                    \node (X2) at (-.5,1.5) {$D_3$};
              \node (Y1) at (.5,2.5) {$U_3$};
                    \node (Y2) at (.5,1.5) {$D_5$}; 
                                      \node (B) at (0,2) {$\bullet$};
       \node (C) at (1,2) {$\bullet$};
        \node (D) at (2,2) {$\cdots$};
        \draw[->] (A) to[out=45, in=135, looseness=1.5] (B);
         \draw[->] (A) to[out=60, in=120, looseness=3] (A);
 \draw[<-] (A) to[out=-45, in=-135, looseness=1.5] (B);
     \draw[->] (B) to[out=45, in=135, looseness=1.5] (C);
             \draw[<-] (B) to[out=-45, in=-135, looseness=1.5] (C);
               \draw[->] (B) to[out=60, in=120, looseness=3] (B);
        \draw[->] (C) to[out=45, in=135, looseness=1.5] (D);
                 \draw[<-] (C) to[out=-45, in=-135, looseness=1.5] (D);
                          \draw[->] (C) to[out=60, in=120, looseness=3] (C);
 }}
     \end{equation}

It inherits an $\mathfrak{sl}_2$-action from Lemma \ref{lem:sl2onUDz}.
\end{lem}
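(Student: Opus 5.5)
The plan is to reduce the description of the Karoubi envelope to the known non-equivariant description of Hogancamp--Rose--Wedrich, and then check the listed relations by direct diagrammatic computation in $\dTL$.

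\emph{Objects and blocks.} Over $\mathbb{Q}$, the idempotent completion of $\dTL$ is generated, up to grading shifts, by the objects $P_n$. Indeed, $c^{\otimes n}$ decomposes into shifted copies of $P_m$ with $m\equiv n \pmod 2$. This follows from the standard argument that the Jones--Wenzl-type symmetrizers $p_n$ split off the identity of $c^{\otimes n}$ modulo morphisms factoring through $c^{\otimes(n-2)}$; it uses that the loop value is $2$, and that the crossing defined above gives an $S_n$-action on $c^{\otimes n}$. Every morphism preserves the parity of the number of strands, so the category splits into two blocks, indexed by $n$ even and $n$ odd. The module category is therefore that of $C_0\oplus C_1$, where $C_\epsilon=\bigoplus_{m,n\equiv\epsilon}\Hom(P_m,P_n)$.

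\emph{Generators.} To show that $U_n$, $D_n$, $z_n$ generate, I would take an arbitrary morphism $p_{n'}\,x\,p_n$, with $x$ a dotted crossingless matching. The symmetrizers absorb crossings, so any through-strand configuration can be normalized. A cap adjacent to $p_n$ can be moved to the rightmost position, and the same holds for cups. Dots on through strands can then be rewritten, using the dot-slide relation for the crossing, in terms of $z$ and $E_1$. A bare cap becomes $D$ up to a dot correction, and the relation $\bigcirc=2$ together with dotted circles absorbs closed components into scalars in $E_1,E_2$. Since each of $U$, $D$, $z$ has degree $2$, the grading of the paths matches.

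\emph{Relations.} The commutation relations $z_nU_{n-2}=U_{n-2}z_{n-2}$ and $z_nD_{n+2}=D_{n+2}z_{n+2}$ follow by sliding the alternating sum of dots through the cup or cap. A dot on one end of a cup equals minus a dot on the other end plus $E_1$ times the cup; this comes from the relation for the two dotted through strands, capped off. Hence the two new terms in $z_{n+2}$, which carry opposite signs, cancel against the cup exactly. The relations $D_{n+2}U_n=-z_n^2$ and $U_{n-2}D_n=-z_n^2$ modulo $(E_1,E_2)$, together with $z_n^{n+1}=0$ modulo $(E_1,E_2)$, are checked after setting $E_1=E_2=0$. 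There they reduce to the non-equivariant computation of \cite{HRW}, which also identifies the quotient with the quiver algebra.

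\emph{Equivariance.} The induced $\mathfrak{sl}_2$-action is simply the action from Lemma \ref{lem:sl2onUDz}, extended by the Leibniz rule. It is well defined because the relations of $\dTL$ are preserved, and because $\de,\dh$ kill $p_n$ while $a_1=0$ makes $\df(p_n)$ compatible with $p_n$, by Lemma \ref{lem:fonp_2} applied adjacent pair by adjacent pair.

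\emph{Main obstacle.} The hard step is establishing completeness: that $U$, $D$, $z$ generate everything and that the listed relations suffice (no further relations). For this I would compare graded dimensions of $\Hom(P_m,P_n)$ over $\mathbb{Q}[E_1,E_2]$ with those of the quiver algebra, using freeness and a basis of dotted matchings.
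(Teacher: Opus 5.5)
Your overall route is sound, and it goes further than the paper. The paper's proof only identifies the vertices of $\Gamma_0$ and $\Gamma_1$ with $P_0,P_2,\dots$ and $P_1,P_3,\dots$, and otherwise relies on \cite{HRW}. Three of your steps are fine:
\begin{itemize}
\item the parity splitting;
\item the reduction of the relations stated modulo $(E_1,E_2)$ to the non-equivariant computation;
\item the inheritance of the action: for $a_1=0$ one has $\df(p_2)=0$, hence $\de,\df,\dh$ kill every $s_i$ and every $p_n$.
\end{itemize}

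\textbf{The commutation relations.} Your verification of $z_nU_{n-2}=U_{n-2}z_{n-2}$ and $z_nD_{n+2}=D_{n+2}z_{n+2}$ rests on a local identity that is false in $\dTL$. Write $x_i$ for a dot on the $i$th strand. Dots slide freely along cups and caps:
\begin{itemize}
\item this is part of the pivotal (isotopy) structure;
\item it is how the third defining relation and the $\sll_2$-formulas are drawn, with dots in the middle of cups and caps;
\item under $\mathcal{F}_L$ a cap is a connected annulus, on which a dot may sit anywhere.
\end{itemize}
Capping off the third relation does not give your identity; it only gives that a dotted circle equals $E_1$. Moreover, your identity constrains the sum $x_{n-1}+x_n$ on the cup. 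The two new terms of $z_n$ contribute the difference $(-1)^n(x_{n-1}-x_n)$, so your identity would not produce the claimed cancellation even if it held. The correct argument is one line: $x_{n-1}\circ(\mathrm{id}\otimes\mathrm{cup}^{\bullet})=x_n\circ(\mathrm{id}\otimes\mathrm{cup}^{\bullet})$, so the two terms of opposite sign cancel each other. The same holds for the dotted cap in $D_{n+2}$.

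\textbf{Where the twist $x\mapsto E_1-x$ actually lives.} It belongs to the crossing, not the cup. Let $e$ be the cap followed by the cup, so the crossing is $s=\mathrm{id}-e$. The third relation and the fact that the dotted circle equals $E_1$ give $s\,x_1\,s=E_1-x_2$. This is what makes the alternating sum $z_n$ commute with $p_n$. You need this to even state $z_nU_{n-2}=U_{n-2}z_{n-2}$ in the Karoubi envelope, and you use it silently. It is also what reduces a dot on a through strand of $p_n$ to $z_n$ and $E_1$; products of dots also bring in $E_2$.

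\textbf{A correction in the generation step.} An undotted cup or cap adjacent to $p_n$ is zero. Indeed, $\mathrm{cap}\circ\mathrm{cup}=2$ gives $\mathrm{cap}_i=\tfrac12\,\mathrm{cap}_i\,e_i$, and $e_ip_n=0$ because $s_ip_n=p_n$. This is why only dotted cups and caps survive and why $U_n$, $D_n$ have degree $2$. It should replace your phrase ``a bare cap becomes $D$ up to a dot correction''.

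\textbf{Completeness.} Your final check by comparing graded ranks is reasonable. The paper does not attempt it, and since the relations are given only modulo $(E_1,E_2)$, such a comparison with \cite{HRW} is only meaningful after specialization.
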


\begin{proof}
For the algebra $C_0$, the vertices from left to right correspond to objects $P_0, P_2, \cdots$.
For the algebra $C_1$ the vertices from left to right correspond to objects $P_1, P_3, \cdots$.
\end{proof}

\subsection{Module category}
Let $\MdTL$ be the $\mathbb{Z}$-graded $\Bbbk[E_1,E_2]$-linear category with objects $(n,Z) \in \mathbb{N} \times \{L,R\}$ generated as a module category over $\dTL$ by morphisms
\[
  \NB{\tikz[scale = 0.47, font=\small]{\begin{scope}
  \draw [blue] (0, -1) -- +(0,2) node[above, pos
    = 1] {$Z$} node[below, pos
    = 0] {$Z$};
 %   \coordinate (A) at (0, 0);
  %      \filldraw[draw= green!50!black, fill = white] (A) circle (1mm)
  %node[left, green!50!black] 
  %%
     \coordinate (A) at (0, 0);
        \filldraw[draw= black!50!black, fill = black] (A) circle (1mm);
%  node[left, green!50!black] 
 % {$-\frac{1}{2}(N-1)$};

 %\draw[->-] (ML) -- (MR) node[above, midway] {$a+b$};
 
\end{scope}}} \ , \quad \quad
    \NB{\tikz[scale = 0.47, font=\small]{\begin{scope}
  \draw [blue] (0, -1) -- +(0,2) node[above, pos
    = 1] {$Y$} node[below, pos
    = 0] {$Z$};
     \coordinate (A) at (0, 0);
          \coordinate (B) at (-1, -1);
\draw [] (B) -- (A);
     %   \filldraw[draw= black!50!black, fill = black] (A) circle (1mm);

\end{scope}}} \ , \quad \quad
    \NB{\tikz[scale = 0.47, font=\small]{\begin{scope}
  \draw [blue] (0, -1) -- +(0,2) node[above, pos
    = 1] {$Z$} node[below, pos
    = 0] {$Y$};
     \coordinate (A) at (0, 0);
          \coordinate (B) at (-1, 1);
\draw [] (B) -- (A);
     %   \filldraw[draw= black!50!black, fill = black] (A) circle (1mm);

\end{scope}}} \ ,
\]
where $Y \neq Z$ subject to relations that
the last generator 
could be obtained from the other by adjunction in $\dTL$, and

\begin{equation} \label{eq:mdtlrel1}
  \NB{\tikz[scale = 0.47, font=\small]{\begin{scope}
  \draw [blue] (0, -1) -- +(0,2) node[above, pos
    = 1] {$Z$} node[below, pos
    = 0] {$Z$};
     \coordinate (A) at (0, -.5);
          \coordinate (C) at (0, .5);
 \coordinate (B) at (-1, -1);
 \coordinate (D) at (-1, 1);
\draw [] (B) -- (A);
\draw [] (D) -- (C);
     %   \filldraw[draw= black!50!black, fill = black] (A) circle (1mm);

\end{scope}}} = 
    \NB{\tikz[scale = 0.47, font=\small]{\begin{scope}
  \draw (-1,-1) -- +(0,2);
  \draw [blue] (0, -1) -- +(0,2) node[above, pos
    = 1] {$Z$} node[below, pos
    = 0] {$Z$};
 %   \coordinate (A) at (0, 0);
  %      \filldraw[draw= green!50!black, fill = white] (A) circle (1mm)
  %node[left, green!50!black] 
  %%
     \coordinate (A) at (0, 0);
        \filldraw[draw= black!50!black, fill = black] (A) circle (1mm);
%  node[left, green!50!black] 
 % {$-\frac{1}{2}(N-1)$};

 %\draw[->-] (ML) -- (MR) node[above, midway] {$a+b$};
 
\end{scope}}} +
    \NB{\tikz[scale = 0.47, font=\small]{\begin{scope}
  \draw (-1,-1) -- +(0,2);
  \draw [blue] (0, -1) -- +(0,2) node[above, pos
    = 1] {$Z$} node[below, pos
    = 0] {$Z$};
 %   \coordinate (A) at (0, 0);
  %      \filldraw[draw= green!50!black, fill = white] (A) circle (1mm)
  %node[left, green!50!black] 
  %%
     \coordinate (A) at (-1, 0);
        \filldraw[draw= black!50!black, fill = black] (A) circle (1mm);
%  node[left, green!50!black] 
 % {$-\frac{1}{2}(N-1)$};

 %\draw[->-] (ML) -- (MR) node[above, midway] {$a+b$};
 
\end{scope}}} - E_1
    \NB{\tikz[scale = 0.47, font=\small]{\begin{scope}
  \draw (-1,-1) -- +(0,2);
  \draw [blue] (0, -1) -- +(0,2) node[above, pos
    = 1] {$Z$} node[below, pos
    = 0] {$Z$};
 %    \coordinate (A) at (-1, 0);
  %      \filldraw[draw= black!50!black, fill = black] (A) circle (1mm);
%  node[left, green!50!black] 
 % {$-\frac{1}{2}(N-1)$};

 %\draw[->-] (ML) -- (MR) node[above, midway] {$a+b$};
 
\end{scope}}} \ , 
    \quad \quad
    \NB{\tikz[scale = 0.47, font=\small]{\begin{scope}
  \draw [blue] (0, -1) -- +(0,2) node[above, pos = 1] {$Z$} node[below, pos= 0] {$Z$};
 %   \coordinate (A) at (0, 0);
  %      \filldraw[draw= green!50!black, fill = white] (A) circle (1mm)
  %node[left, green!50!black] 
  %%
  \coordinate (A) at (0, 0.3);
  \coordinate (A1) at (0, -0.3);
        \filldraw[draw= black!50!black, fill = black] (A) circle (1mm); % node[left, green!50!black] {$2$};
        \filldraw[draw= black!50!black, fill = black] (A1) circle (1mm);  %node[left, green!50!black] {$2$};

 %\draw[->-] (ML) -- (MR) node[above, midway] {$a+b$};
 
\end{scope}}} =
    E_1 \NB{\tikz[scale = 0.47, font=\small]{}}
    - E_2 \NB{\tikz[scale = 0.47, font=\small]{\begin{scope}
  \draw [blue] (0, -1) -- +(0,2) node[above, pos
    = 1] {$Z$} node[below, pos
    = 0] {$Z$};
 %   \coordinate (A) at (0, 0);
  %      \filldraw[draw= green!50!black, fill = white] (A) circle (1mm)
  %node[left, green!50!black] 
  %%
  %   \coordinate (A) at (0, 0);
   %     \filldraw[draw= black!50!black, fill = black] (A) circle (1mm);
%  node[left, green!50!black] 
 % {$-\frac{1}{2}(N-1)$};

 %\draw[->-] (ML) -- (MR) node[above, midway] {$a+b$};
 
\end{scope}}} \ ,
\end{equation}

\begin{equation} \label{eq:mdtlrel2}
  \NB{\tikz[scale = 0.47, font=\small]{\begin{scope}
  \draw [blue] (0, -1) -- +(0,2) node[above, pos
    = 1] {$Y$} node[below, pos
    = 0] {$Z$};
          \coordinate (B) at (-1, -1);
\draw [] (B) -- (A) coordinate[pos = 0.5] (X);
       \filldraw[draw= black!50!black, fill = black] (X) circle (1mm);
 
\end{scope}}} = 
   \NB{\tikz[scale = 0.47, font=\small]{\begin{scope}
  \draw [blue] (0, -1) -- +(0,2) node[above, pos
    = 1] {$Y$} node[below, pos
    = 0] {$Z$} coordinate[pos = 0.25] (X);
          \coordinate (B) at (-1, -1);
\draw [] (B) -- (A);
       \filldraw[draw= black!50!black, fill = black] (X) circle (1mm);
 
\end{scope}}} =
    \NB{\tikz[scale = 0.47, font=\small]{\begin{scope}
  \draw [blue] (0, -1) -- +(0,2) node[above, pos
    = 1] {$Y$} node[below, pos
    = 0] {$Z$} coordinate[pos = 0.75] (X);
          \coordinate (B) at (-1, -1);
\draw [] (B) -- (A);
       \filldraw[draw= black!50!black, fill = black] (X) circle (1mm);
 
\end{scope}}} \ .
\end{equation}

\begin{lem}
    For any $\alpha_1, \alpha_2 \in \Bbbk$,
    there is an action of $\mathfrak{sl}_2$ on $\MdTL$ determined by
\allowdisplaybreaks
\begin{align*}  
&\de
\left( \NB{\tikz[scale = 0.47, font=\small]{}} \right)
  =0,
  &&
\de
\left( \NB{\tikz[scale = 0.47, font=\small]{}} \right)
=
0 ,\\
&\dh
\left( \NB{\tikz[scale = 0.47, font=\small]{\begin{scope}
  \draw [blue] (0, -1) -- +(0,2) node[above, pos
    = 1] {$R$} node[below, pos
    = 0] {$L$};
     \coordinate (A) at (0, 0);
          \coordinate (B) at (-1, -1);
\draw [] (B) -- (A);
     %   \filldraw[draw= black!50!black, fill = black] (A) circle (1mm);

\end{scope}}} \right)
=
(-\alpha_1-2\alpha_2) \NB{\tikz[scale = 0.47, font=\small]{}} ,
&\quad&
\dh
\left( \NB{\tikz[scale = 0.47, font=\small]{\begin{scope}
  \draw [blue] (0, -1) -- +(0,2) node[above, pos
    = 1] {$L$} node[below, pos
    = 0] {$R$};
     \coordinate (A) at (0, 0);
          \coordinate (B) at (-1, -1);
\draw [] (B) -- (A);
     %   \filldraw[draw= black!50!black, fill = black] (A) circle (1mm);

\end{scope}}} \right)
=
(-2+\alpha_1+2\alpha_2-a_1-2a_2) \NB{\tikz[scale = 0.47, font=\small]{}} ,
\\
&\dh
\left( \NB{\tikz[scale = 0.47, font=\small]{\begin{scope}
  \draw [blue] (0, -1) -- +(0,2) node[above, pos
    = 1] {$R$} node[below, pos
    = 0] {$L$};
     \coordinate (A) at (0, 0);
          \coordinate (B) at (-1, 1);
\draw [] (B) -- (A);
     %   \filldraw[draw= black!50!black, fill = black] (A) circle (1mm);

\end{scope}}} \right)
=
(-\alpha_1-2\alpha_2+a_1+2a_2) \NB{\tikz[scale = 0.47, font=\small]{}}
,&\quad&
\dh
\left( \NB{\tikz[scale = 0.47, font=\small]{\begin{scope}
  \draw [blue] (0, -1) -- +(0,2) node[above, pos
    = 1] {$L$} node[below, pos
    = 0] {$R$};
     \coordinate (A) at (0, 0);
          \coordinate (B) at (-1, 1);
\draw [] (B) -- (A);
     %   \filldraw[draw= black!50!black, fill = black] (A) circle (1mm);

\end{scope}}} \right)
=
(-2+\alpha_1+2\alpha_2) \NB{\tikz[scale = 0.47, font=\small]{}} ,
\\
&\df
\left( \NB{\tikz[scale = 0.47, font=\small]{}} \right)
=
\alpha_1 \NB{\tikz[scale = 0.47, font=\small]{\begin{scope}
  \draw [blue] (0, -1) -- +(0,2) node[above, pos
    = 1] {$R$} node[below, pos
    = 0] {$L$};
          \coordinate (B) at (-1, -1);
\draw [] (B) -- (A) coordinate[pos = 0.5] (X);
       \filldraw[draw= black!50!black, fill = black] (X) circle (1mm);
 
\end{scope}}}
 +
\alpha_{2} E_1 \NB{\tikz[scale = 0.47, font=\small]{}} ,
&\quad&
\df
\left( \NB{\tikz[scale = 0.47, font=\small]{}} \right)
=
(a_1-\alpha_1) \NB{\tikz[scale = 0.47, font=\small]{\begin{scope}
  \draw [blue] (0, -1) -- +(0,2) node[above, pos
    = 1] {$L$} node[below, pos
    = 0] {$R$};
          \coordinate (B) at (-1, -1);
\draw [] (B) -- (A) coordinate[pos = 0.5] (X);
       \filldraw[draw= black!50!black, fill = black] (X) circle (1mm);
 
\end{scope}}}
 +
(a_2+1-\alpha_{2}) E_1 \NB{\tikz[scale = 0.47, font=\small]{}} ,
\\
&\df
\left( \NB{\tikz[scale = 0.47, font=\small]{}} \right)
=
-\alpha_1 \NB{\tikz[scale = 0.47, font=\small]{\begin{scope}
  \draw [blue] (0, -1) -- +(0,2) node[above, pos
    = 1] {$L$} node[below, pos
    = 0] {$R$};
     \coordinate (A) at (0, 0);
          \coordinate (B) at (-1, 1);
\draw [] (B) -- (A) coordinate[pos = 0.5] (X);
       \filldraw[draw= black!50!black, fill = black] (X) circle (1mm);

\end{scope}}}
 +
(1-\alpha_{2}) E_1 \NB{\tikz[scale = 0.47, font=\small]{}} ,
&\quad&
\df
\left( \NB{\tikz[scale = 0.47, font=\small]{}} \right)
=
(\alpha_1-a_1) \NB{\tikz[scale = 0.47, font=\small]{\begin{scope}
  \draw [blue] (0, -1) -- +(0,2) node[above, pos
    = 1] {$R$} node[below, pos
    = 0] {$L$};
     \coordinate (A) at (0, 0);
          \coordinate (B) at (-1, 1);
\draw [] (B) -- (A) coordinate[pos = 0.5] (X);
       \filldraw[draw= black!50!black, fill = black] (X) circle (1mm);

\end{scope}}}
 +
(\alpha_{2}-a_2) E_1 \NB{\tikz[scale = 0.47, font=\small]{}} .
\end{align*}
\allowdisplaybreaks[0]
Furthermore $\dh$ is a negative degree operator if
$a_1+2a_2=0 $ and $\alpha_1+2\alpha_2=1$.
\end{lem}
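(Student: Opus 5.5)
The plan is to define the action on all of $\MdTL$ by declaring that $\de,\df,\dh$ act as derivations. Concretely, they satisfy the Leibniz rule with respect to both vertical composition and horizontal juxtaposition with $\dTL$-morphisms. On $\dTL$-pieces, including the coefficients $E_1,E_2$ and the black dots on black or blue strands, they act by the formulas of the preceding Proposition. On the three new boundary generators they act by the values prescribed in the statement. As in the proof for $\dTL$, there are then two things to verify:
\begin{enumerate}
\item the assignments are compatible with the defining relations of $\MdTL$, so that they descend to well-defined operators on morphism spaces;
\item the operators satisfy the $\sll_2$ relations.
\end{enumerate}
Since all operators are derivations, both checks reduce to checks on generators.

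For the $\sll_2$ relations, I would check $[\dh,\de]=2\de$, $[\dh,\df]=-2\df$ and $[\de,\df]=\dh$ on each boundary generator $g$; on $\dTL$ they already hold. Because $\de(g)=0$ and $\dh(g)$ is a scalar multiple of $g$, the first relation is immediate. The second follows because $\df(g)$ is $g$ with one extra dot or one extra $E_1$, and both lower the $\dh$-eigenvalue by $2$. For the third, for instance for the $L\to R$ generator, one computes
\[
\de\df(g)=-\alpha_1 g-2\alpha_2 g=\dh(g),
\]
using $\de(\text{dot})=-\mathrm{id}$ and $\de(E_1)=-2$. The other three generators are handled in the same way.

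For the relations, I would treat them in the following order.
\begin{enumerate}
\item \emph{Adjunction.} The cap-type generators are obtained from the cup-type ones by composing with a $\dTL$ cap or cup. Applying $\df$ and $\dh$ to this adjunction formula and using the $\dTL$ values of $\df$ and $\dh$ on cups and caps (which are $\pm a_1$ dot and $\pm a_2E_1$, resp.\ $\pm(a_1+2a_2)$) should reproduce the stated values. This explains the $a_1,a_2$ shifts in the formulas.
\item \emph{Dot sliding \eqref{eq:mdtlrel2}.} This is compatible because the dot terms are moved consistently by the derivation rule.
\item \emph{Dot squared on the blue strand.} This is identical to the $\dTL$ computation.
\item \emph{Relation \eqref{eq:mdtlrel1}.} This is where two boundary generators are composed ($Z\to Y\to Z$), and I expect it to be the main obstacle.
\end{enumerate}
For \eqref{eq:mdtlrel1}, applying $\df$ to the left side produces the sum of the two generators' dot coefficients, for instance $\alpha_1+(a_1-\alpha_1)=a_1$. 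It also produces the sum of their $E_1$-coefficients, $\alpha_2+(a_2+1-\alpha_2)=a_2+1$. These must be matched against $\df$ applied to the right side, namely the dot on the blue strand, the black dot, and $E_1$ times the identity. That requires rewriting products such as (dot)$\cdot$(left side) using \eqref{eq:mdtlrel1} and \eqref{eq:mdtlrel2} again, together with the dot-squared relation. This bookkeeping is what fixes the complementary coefficients. I would carry it out for $Z=L$ and $Z=R$ separately; the $\dh$-check there is just additivity of the scalars.

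Finally, for the degree statement, each boundary generator has degree $1$, since their composite equals a sum of degree-$2$ dots. Substituting $a_1+2a_2=0$ and $\alpha_1+2\alpha_2=1$, the four $\dh$-values all become $-1$. Combined with the $\dTL$ case, this shows that $\dh$ acts by $-\deg$ on every generator, and hence is the negative degree operator.
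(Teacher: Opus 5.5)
Your strategy (extend $\de,\df,\dh$ as derivations, then check the $\sll_2$ relations on generators and compatibility with each defining relation) is exactly what the paper's one-line proof asserts. Your generator-level checks, such as $\de\df(g)=\dh(g)$, and your adjunction computation are also correct. The problem is in the step you flag as the crux. Write $\mathrm{ab}_{Z\to Y}$ for the junction where a black strand enters from below (blue label $Z$ at the bottom, $Y$ at the top), and $\mathrm{em}_{Z\to Y}$ for the one where it exits upward (same labelling). The left side of the first relation in \eqref{eq:mdtlrel1} is $\mathrm{em}_{Y\to Z}\circ\mathrm{ab}_{Z\to Y}$, an absorbing junction followed by an emitting one. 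Your sums $\alpha_1+(a_1-\alpha_1)=a_1$ and $\alpha_2+(a_2+1-\alpha_2)=a_2+1$ instead pair $\mathrm{ab}_{L\to R}$ with $\mathrm{ab}_{R\to L}$, a composite that occurs in no relation. Carried out literally, your check would force $a_1=a_2=0$. It would also make the $\dh$-eigenvalues add up to $-2-a_1-2a_2$ rather than the $-2$ of the right side.

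The correct pairs, with (dot, $E_1$) coefficients, are:
\begin{itemize}
\item for $Z=L$: $\mathrm{ab}_{L\to R}$ with $(\alpha_1,\alpha_2)$ and $\mathrm{em}_{R\to L}$ with $(-\alpha_1,1-\alpha_2)$;
\item for $Z=R$: $\mathrm{ab}_{R\to L}$ with $(a_1-\alpha_1,a_2+1-\alpha_2)$ and $\mathrm{em}_{L\to R}$ with $(\alpha_1-a_1,\alpha_2-a_2)$.
\end{itemize}
In both cases the dot coefficients sum to $0$, the $E_1$-coefficients to $1$, and the $\dh$-eigenvalues to $-2$.

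The check then needs no rewriting via \eqref{eq:mdtlrel1}. By \eqref{eq:mdtlrel2} and its adjoint, a dot on the incoming black strand and a dot on the outgoing one both equal a dot on the middle blue segment. So the two dot terms in $\df(\mathrm{LHS})$ are the same morphism and cancel, leaving $\df(\mathrm{LHS})=E_1\cdot\mathrm{LHS}$. On the right side, $\df(\text{dot})=\text{dot}^2=E_1\,\text{dot}-E_2$ on either strand, together with $\df(E_1)=E_1^2-2E_2$, gives $\df(\mathrm{RHS})=E_1\cdot\mathrm{RHS}$. Meanwhile $\de$ kills both sides.

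This also corrects your claim that adjunction explains the $a$-shifts. It explains the shifts for the emitting junctions only. The $a$-dependence of $\mathrm{ab}_{R\to L}$ is forced by the $Z=R$ relation, once $\mathrm{em}_{L\to R}$ has been fixed by adjunction from the free choice $(\alpha_1,\alpha_2)$. The $Z=L$ relation is then just a consistency check.

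Finally, the relations only give $\deg\mathrm{ab}_{L\to R}+\deg\mathrm{ab}_{R\to L}=2$. That each junction has degree $1$ is a grading convention, not stated in the paper, which you must assume for the final claim about $\dh$.
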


\begin{proof}
The coefficients are uniquely determined by checking that relations in the category are preserved and that $\mathfrak{sl}_2$-relations hold.
\end{proof}

Recall that the Kirby $k$-color $\omega_k$ was defined by the directed system \eqref{def:kirbycolor}.
\begin{prop} \label{prop:handleslide}
For all non-negative $k$, one has:
\[
\omega_k L \cong \omega_{k+1} R \qquad \textrm{and,} \qquad
\omega_k R \cong \omega_{k+1} L
\]
and these isomorphisms intertwine the the $\mathfrak{sl}_2$-action
if $a_1=\alpha_1=0$ and $2\alpha_2-a_2=1$. 
\end{prop}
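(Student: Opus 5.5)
The plan is to follow the handle-slide argument of \cite{HRW} for the non-equivariant (resp.\ equivariant, non-$\sll_2$) statement. It constructs mutually inverse maps between the colimits by maps at the level of the directed systems. Then it checks that, with the twists built into the Kirby colors, all maps involved are annihilated by $\de,\df,\dh$.

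\textbf{Step 1: the maps.} The term $q^{-k-2j}P_{k+2j}^{-\frac{k+2j}{2}(1-a_2)E_1}L$ of $\omega_k L$ is sent to $\omega_{k+1}R$ by composing with the generator of $\MdTL$ that bends the boundary strand into the blue line. This turns $c^{\otimes (k+2j)}$ next to $L$ into $c^{\otimes(k+2j+1)}$ next to $R$: use the cap/cup-type generator changing $L$ to $R$, then precompose/postcompose with $p_{k+2j+1}$. In the other direction, $c^{\otimes(k+1+2j)}R$ is sent to $c^{\otimes (k+2j+2)}L$ in the analogous way.

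\textbf{Step 2: the colimit isomorphism.} Check that these maps commute with the $U_n$'s up to the structure maps, using relation \eqref{eq:mdtlrel2} (dots slide across the blue line) and \eqref{eq:mdtlrel1}. Both composites are then equal to the structure map $U$ of the directed system, modulo lower terms killed by $p_n$. Since a colimit is unchanged by composing with a cofinal shift of structure maps, the two induced maps on colimits are mutually inverse. This is essentially \cite{HRW}'s argument, with $E_1,E_2$ carried along. The argument for $\omega_kR\cong\omega_{k+1}L$ is symmetric.

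\textbf{Step 3: equivariance.} For each component map from $q^{-n}P_n^{-\frac n2(1-a_2)E_1}Z$ to $q^{-n-1}P_{n+1}^{-\frac{n+1}{2}(1-a_2)E_1}Y$, compute its $\sll_2$-action with the formulas in the preceding Lemma for $\MdTL$. Add the difference of twists as in \eqref{enriched0}:
\[
\df\mapsto \df+\Bigl(-\tfrac{n+1}{2}+\tfrac n2\Bigr)(1-a_2)E_1=\df-\tfrac12(1-a_2)E_1, \qquad \dh\mapsto \dh+(1-a_2).
\]
With $a_1=\alpha_1=0$, the dotted terms in $\df$ of the $\MdTL$ generators vanish. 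The $E_1$-coefficient of the $L\to R$ generator is $\alpha_2$ (resp.\ $1-\alpha_2$, $a_2+1-\alpha_2$, $\alpha_2-a_2$ for the others). Requiring these to cancel against $\pm\frac12(1-a_2)$ gives $\alpha_2-\frac12(1-a_2)$-type equations. For the used generators these reduce exactly to $2\alpha_2-a_2=1$; $\dh$ is handled the same way, and $\de$ is automatic. The step relating $c^{\otimes n}$ to $c^{\otimes(n+1)}$ also involves $p_{n+1}$, which is invariant by Lemma \ref{lem:fonp_2} (as $a_1=0$).

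\textbf{Main obstacle.} The delicate point is the bookkeeping in Step 3. One must decide which of the four generators occurs in each direction, and keep signs of $E_1$ consistent with the $\frac12\ZZ$ twists. One must also check that a single condition handles both directions simultaneously. I would first verify the $L\to R$ direction, where the coefficient $\alpha_2$ must equal $\frac12(1-a_2)$ up to the sign convention, i.e.\ $2\alpha_2-a_2=1$. Then I would check the $R\to L$ direction, where $a_2+1-\alpha_2$ should match $\frac12(1+a_2)$, again equivalent.
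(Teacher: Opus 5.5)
Your Steps 1 and 2 are essentially the paper's proof. The ladder maps are $p_{n+1}$ composed with the $\MdTL$ generator that switches the blue label and adds one strand. Their two-fold composite is the structure map $U_n$ by the first relation of \eqref{eq:mdtlrel1}, so the two directed systems interleave and have the same colimit.

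The paper leaves the equivariance check implicit, and that is where your Step 3 goes wrong as written. The generator in $P_nL\to P_{n+1}R$ is the one in which a strand \emph{emanates} from the blue line (bottom label $L$, top label $R$), not the one that bends a strand into it. By the adjunction relation, it is the cup followed by the absorbing generator. With $a_1=0$, the cup contributes $-a_2E_1$ to $\df$ and $2a_2$ to $\dh$. So the relevant $E_1$-coefficients are $\alpha_2-a_2$ for $L\to R$ and $1-\alpha_2$ for $R\to L$, not $\alpha_2$ and $a_2+1-\alpha_2$. With your coefficient $\alpha_2$, the equation $\alpha_2=\frac12(1-a_2)$ is equivalent to $2\alpha_2+a_2=1$, not to $2\alpha_2-a_2=1$. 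Your ``i.e.'' is an algebra slip that happens to produce the stated condition. Your $R\to L$ equation $a_2+1-\alpha_2=\frac12(1+a_2)$ is correct only because it silently includes the cup's $-a_2$, which you did not do for $L\to R$. There is also no sign ambiguity: both ladder maps raise the number of strands by one, so the correction is always $-\frac12(1-a_2)E_1$.

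Your twist corrections $\df\mapsto\df-\frac12(1-a_2)E_1$ and $\dh\mapsto\dh+(1-a_2)$ are right; with them the check goes as follows. The $L\to R$ component is annihilated iff $\alpha_1=a_1$ and $\alpha_2-a_2=\frac12(1-a_2)$, with $\dh$ giving $1+a_2-2\alpha_2=0$. The $R\to L$ component is annihilated iff $\alpha_1=0$ and $1-\alpha_2=\frac12(1-a_2)$, with $\dh$ giving $2\alpha_2-a_2-1=0$. Given $a_1=\alpha_1=0$, all of these reduce to the single condition $2\alpha_2-a_2=1$, as the proposition asserts.

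A minor point: Lemma \ref{lem:fonp_2} only treats $p_2$. Invariance of $p_{n+1}$ when $a_1=0$ follows because $\df$ of the cup--cap composite, and hence of each $s_i$, is proportional to $a_1$.
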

\begin{rmk}
The element $\dh$ acts as a negative degree operator if and only if
$\alpha_1=a_1=a_2=0$ and $\alpha_2=\frac{1}{2}$.
\end{rmk}
\begin{proof}
  Both isomorphisms are similar and so we give the details only for the first case.
  The proof is similar to that of \cite[Lemma 4.13]{HRW}.
For $k \in \NN$, consider the map $f_k \colon \omega_k L \rightarrow \omega_{k+1}R$ given by:
\begin{equation}
\NB{\tikz[xscale =3, yscale=2]{
  \node (A) at (-1,0) {$q^{-k} P_k^{-\frac{k}{2}(1-a_2)E_1} L$};
   \node (B) at (1,0) {$q^{-k-2}P^{-\frac{k+2}{2}(1-a_2)E_1}_{k+2} L$};
    \node (C) at (-1,1) {$q^{-k-1}P^{-\frac{k+1}{2}(1-a_2)E_1}_{k+1} R$};
        \node (D) at (1,1) {$q^{-k-3}P^{-\frac{k+3}{2}(1-a_2)E_1}_{k+3} R$};
              \node (E) at (2,0) {$\cdots$};
                            \node (F) at (2,1) {$\cdots$};
\draw[-to] (A) -- (B) node[pos=0.5, above, scale = 0.7] {$U_k$};
         \draw[-to] (C) -- (D) node[pos=0.5, above, scale = 0.7] {$U_{k+1}$};
          \draw[-to] (A) -- (C) node[pos=0.5, left, scale = 0.7] {\NB{\tikz[scale = 0.6]{\begin{scope}
  \draw (0,0) rectangle (4,1) node[pos=.5] {$p_{k+1}$};
 % \fill (A) circle (0.5mm) node[below] {$p_n$};
%   \draw (0,2) rectangle (6,3) coordinate [midway] (A) node[below] {$p_{n+2}$};
    %  \draw [] (5, 2) arc (-180:0:0.5) coordinate[pos = 0.5] (Z) ;
   %\draw (.5, -1) arc (180:0:0.5) coordinate[pos = 0.2] (X) coordinate[pos = 0.8] (Y);
   % \filldraw[draw= black!50!black, fill = black] (Z) circle (1mm);
\draw[-] (.25,1) -- (.25,2) node[above, pos =1] {} node[below, pos =0] {};
\draw[-] (.25,-1) -- (.25,0) node[above, pos =1] {} node[below, pos =0] {};
    \draw[-] (3,1) -- (3,2) node[above, pos =1] {} node[below, pos =0] {};
        \draw[-] (3,-1) -- (3,0) node[above, pos =1] {} node[below, pos =0] {};
             \draw[-] (4.5,-1) -- (4.5,2) node[above, pos =1] {} node[below, pos =0] {};
               \draw[-] (4.5,-.5) -- (3.8,0) node[above, pos =1] {} node[below, pos =0] {};
                \draw[-] (3.8,1) -- (3.8,2) node[above, pos =1] {} node[below, pos =0] {};
    %  \coordinate(D) at (3, .5) node[left] {$\cdots$}; 
\end{scope}}}};
           \draw[-to] (B) -- (D) node[pos=0.5, right, scale = 0.7] {\NB{\tikz[scale = 0.6]{\begin{scope}
  \draw (0,0) rectangle (4,1) node[pos=.5] {$p_{k+3}$};
 % \fill (A) circle (0.5mm) node[below] {$p_n$};
%   \draw (0,2) rectangle (6,3) coordinate [midway] (A) node[below] {$p_{n+2}$};
    %  \draw [] (5, 2) arc (-180:0:0.5) coordinate[pos = 0.5] (Z) ;
   %\draw (.5, -1) arc (180:0:0.5) coordinate[pos = 0.2] (X) coordinate[pos = 0.8] (Y);
   % \filldraw[draw= black!50!black, fill = black] (Z) circle (1mm);
\draw[-] (.25,1) -- (.25,2) node[above, pos =1] {} node[below, pos =0] {};
\draw[-] (.25,-1) -- (.25,0) node[above, pos =1] {} node[below, pos =0] {};
    \draw[-] (3,1) -- (3,2) node[above, pos =1] {} node[below, pos =0] {};
        \draw[-] (3,-1) -- (3,0) node[above, pos =1] {} node[below, pos =0] {};
             \draw[-] (4.5,-1) -- (4.5,2) node[above, pos =1] {} node[below, pos =0] {};
               \draw[-] (4.5,-.5) -- (3.8,0) node[above, pos =1] {} node[below, pos =0] {};
                \draw[-] (3.8,1) -- (3.8,2) node[above, pos =1] {} node[below, pos =0] {};
    %  \coordinate(D) at (3, .5) node[left] {$\cdots$}; 
\end{scope}}}};
           \draw[-to] (B) -- (E) node[pos=0.5, above, scale = 0.7] {};
             \draw[-to] (D) -- (F) node[pos=0.5, above, scale = 0.7] {};
}}
\end{equation}
The composition of two such maps is 
\[
\NB{\tikz[scale = 0.5]{\begin{scope}
  \draw (0,0) rectangle (4,1) node[pos=.5] {$p_{k+1}$};
 % \fill (A) circle (0.5mm) node[below] {$p_n$};
%   \draw (0,2) rectangle (6,3) coordinate [midway] (A) node[below] {$p_{n+2}$};
    %  \draw [] (5, 2) arc (-180:0:0.5) coordinate[pos = 0.5] (Z) ;
   %\draw (.5, -1) arc (180:0:0.5) coordinate[pos = 0.2] (X) coordinate[pos = 0.8] (Y);
   % \filldraw[draw= black!50!black, fill = black] (Z) circle (1mm);
\draw[-] (.25,3) -- (.25,4) node[above, pos =1] {} node[below, pos =0] {};
\draw[-] (.25,1) -- (.25,2) node[above, pos =1] {} node[below, pos =0] {};
\draw[-] (.25,-1) -- (.25,0) node[above, pos =1] {} node[below, pos =0] {};
  \draw[-] (2,3) -- (2,4) node[above, pos =1] {} node[below, pos =0] {};
    \draw[-] (2,1) -- (2,2) node[above, pos =1] {} node[below, pos =0] {};
        \draw[-] (2,-1) -- (2,0) node[above, pos =1] {} node[below, pos =0] {};
             \draw[-] (4.5,-1) -- (4.5,4) node[above, pos =1] {} node[below, pos =0] {};
               \draw[-] (4.5,-.5) -- (3,0) node[above, pos =1] {} node[below, pos =0] {};
                    \draw[-] (4.5,1.5) -- (3.5,2) node[above, pos =1] {} node[below, pos =0] {};
                \draw[-] (3,1) -- (3,2) node[above, pos =1] {} node[below, pos =0] {};
                 \draw[-] (3,3) -- (3,4) node[above, pos =1] {} node[below, pos =0] {};
    %  \coordinate(D) at (3, .5) node[left] {$\cdots$}; 
     \draw (0,2) rectangle (4,3) node[pos=.5] {$p_{k+2}$};
\end{scope}}} = U_k
\]
using the first relation of \eqref{eq:mdtlrel1}.
This is the map in the directed system defining $\omega_k$, so we get the desired isomorphism.
\end{proof}

A formula for the skein lasagna module for 2-handlebodies was given by Manolescu and Neithalath \cite{ManNeith}.
We now recall the reformulation of the skein lasagna module of 2-handlebodies for $N=2$ due to Hogancamp, Rose, and Wedrich \cite{HRW} in the equivariant setting.
A connection between the dotted Temperley--Lieb category and the Bar-Natan category was constructed by Russell \cite{Russell} and studied further by Heyman \cite{Heyman}.

For a framed link $L \subset S^3$, there is a functor $\mathcal{F}_L \colon \dTL \rightarrow \Bbbk[E_1,E_2] \gmod$ (bigraded vector spaces) defined by:
\allowdisplaybreaks
\begin{align*}
c^m & \mapsto \KR_{2;t_1,t_2}^{\mathfrak{sl}_2}(\cab^m(L);\Bbbk[E_1,E_2]) \\
\NB{\tikz[scale = 0.6]{\begin{scope}
  \coordinate (bl) at (-0.5, -1);
  \coordinate (br) at ( 0.5, -1);
  \coordinate (tl) at (-0.5,  1);
  \coordinate (tr) at ( 0.5,  1);
    \coordinate (ml) at (-0.5,  -.8);
        \coordinate (Ml) at (-0.5,  .8);
 \coordinate (mr) at (0.5,  -.6);
\coordinate (Mr) at (0.5,  .6);
  \coordinate (brr) at ( .8, -1);
    \coordinate (trr) at ( .8, 1);

 %\draw[>->] (bl) -- (tl) node[pos = 0, below] {$2$} node[pos = 1,
  %above] {$1$} coordinate[pos = 0.4] (ml);
   \draw[] (bl) -- (tl)
   node[right, pos =.5] {$\cdots$}; 
   % node[pos = 0, below] {$1$} node[pos = 1,  above] {$1$};
  %\draw[>->] (br) -- (tr) node[pos = 0, below] {$1$} node[pos = 1, above] {$2$} coordinate[pos = 0.6] (mr);
  
  %  \draw[->-] (br) -- (tr); %node[pos = 0, below] {$1$} node[pos = 1, above] {$1$};
       \draw[] (brr) -- (trr);% node[pos = 0, below] {$1$} node[pos = 1, above] {$1$};
  
  %\draw[->-] (ml) -- (mr) node [pos= 0.5, above] {$1$};
  %  \draw[->-] (Ml) -- (Mr) node [pos= 0.5, above] {$1$};

\end{scope}}} & \mapsto
\Id \colon \KR_{2;t_1,t_2}^{\mathfrak{sl}_2}(\cab^m(L);\Bbbk[E_1,E_2]) \rightarrow
\KR_{2;t_1,t_2}^{\mathfrak{sl}_2}(\cab^m(L);\Bbbk[E_1,E_2]) \\
\NB{\tikz[scale = 0.6]{}} & \mapsto 
\NB{\tikz[scale = 0.5]{\begin{scope}
       % Parameters for the cylinder
    \def\R{3} % Radius of the cylinder
    \def\H{2} % Height of the cylinder

    % Draw the back-facing ellipse (bottom half) as a dashed line
    \draw[dashed] (0, 0) ellipse (\R cm and 0.5cm);

    % Draw the front and side vertical lines
    \draw (-\R, 0) -- (-\R, \H);
    \draw (\R, 0) -- (\R, \H);

    % Draw the front-facing ellipse (top half) as a solid line
    \draw (0, \H) ellipse (\R cm and 0.5cm);

    % Draw the front-facing ellipse (bottom half) as a solid line
    \draw (-\R, 0) arc (180:360:\R cm and 0.5cm);
    
    % Draw a dot on the front of the cylinder
    \filldraw[draw= black!50!black, fill = black] (-1.5, 0.75) circle (1mm);
\end{scope}}}
\colon
\KR_{2;t_1,t_2}^{\mathfrak{sl}_2}(L;\Bbbk[E_1,E_2])
\rightarrow
\KR_{2;t_1,t_2}^{\mathfrak{sl}_2}(L;\Bbbk[E_1,E_2])
\\
\NB{\tikz[scale = 0.6]{}} & \mapsto 
\NB{\tikz[scale = 0.5]{\begin{scope}[scale=1.5]
  \draw  (2, 0) arc (0:-180:2cm and 0.5cm);
  \draw[densely dashed] (2,0) arc (0:180:2cm and 0.5cm);
%  \draw[densely dashed] (0,0) circle (1.7cm and 0.425cm);
 % \node[font=\small] at (1.3, 0) {$\dots$};
 % \node[font =\small] at (-1.3, 0) {$\dots$};
 % \node[font=\tiny] at (1.95,0) {$1$};
 % \node[font=\tiny] at (1.65,0) {$1$};
 % \node[font=\tiny] at (0.95,0) {$1$};
 % \node[font=\tiny] at (-1.95,0) {$1$};
 % \node[font=\tiny] at (-1.65,0) {$1$};
%  \node[font=\tiny] at (-0.95,0) {$1$};
 % \node[font=\normalsize] at (1.6,1.5) {$k$};
 % \node[font=\normalsize] at (-1.6,1.5) {$k$};
  \draw[densely dashed] (0,0) circle (1cm and 0.25cm);
  \draw (1.5,1) arc (0:-180: 1.5cm and 0.375cm);
  \draw (1.5,1) arc (0:180: 1.5cm and 0.375cm);
 % \draw (0,2) circle (1.5cm and 0.375cm);
  \begin{scope}[very thin]
  \draw (1.5, 1) .. controls +(0.3, 0) and +(0, 0.3) .. (2, 0)
  coordinate[pos=0.6] (a) node[pos=0.35, sloped, font=\small, below] {}; 
%  \draw[densely dashed] (1.7,0) .. controls +(0,0.2) and +(-0.1, -0) .. (a);
  \draw (-1.5, 1) .. controls +(-0.3, 0) and +(0, 0.3) .. (-2, 0)
 coordinate[pos=0.6] (b) node[pos=0.35, sloped, font=\small, below] {}; 
 % \draw[densely dashed] (-1.7,0) .. controls +(0,0.2) and +(0.1, -0) .. (b); 
  \draw[densely dashed] (1.5, 1) .. controls +(-0.3, 0) and +(0, 0.3) .. (1, 0);
  \draw[densely dashed] (-1.5, 1) .. controls +(0.3, 0) and
  +(0, 0.3) .. (-1, 0); 
% \draw (1.5, 1) -- +(0,1);
%  \draw (-1.5, 1) -- +(0,1);
  \end{scope}
\end{scope}}}
\colon
\KR_{2;t_1,t_2}^{\mathfrak{sl}_2}(\cab^2(L_;\Bbbk[E_1,E_2])
\rightarrow
\KR_{2;t_1,t_2}^{\mathfrak{sl}_2}(\cab^0(L);\Bbbk[E_1,E_2])
\\
\NB{\tikz[scale = 0.6]{}} & \mapsto 
\NB{\tikz[scale = 0.5]{\begin{scope}[scale=1.5]
  \draw  (2, 0) arc (0:-180:2cm and 0.5cm);
  \draw (2,0) arc (0:180:2cm and 0.5cm);
%  \draw[densely dashed] (0,0) circle (1.7cm and 0.425cm);
 % \node[font=\small] at (1.3, 0) {$\dots$};
 % \node[font =\small] at (-1.3, 0) {$\dots$};
 % \node[font=\tiny] at (1.95,0) {$1$};
 % \node[font=\tiny] at (1.65,0) {$1$};
 % \node[font=\tiny] at (0.95,0) {$1$};
 % \node[font=\tiny] at (-1.95,0) {$1$};
 % \node[font=\tiny] at (-1.65,0) {$1$};
%  \node[font=\tiny] at (-0.95,0) {$1$};
 % \node[font=\normalsize] at (1.6,1.5) {$k$};
 % \node[font=\normalsize] at (-1.6,1.5) {$k$};
  \draw (0,0) circle (1cm and 0.25cm);
  \draw (1.5,-1) arc (0:-180: 1.5cm and 0.375cm);
  \draw (1.5,-1) arc (0:180: 1.5cm and 0.375cm);
 % \draw (0,2) circle (1.5cm and 0.375cm);
  \begin{scope}[very thin]
  \draw (1.5, -1) .. controls +(0.3, 0) and +(0, 0.3) .. (2, 0)
  coordinate[pos=0.6] (a) node[pos=0.35, sloped, font=\small, below] {}; 
%  \draw[densely dashed] (1.7,0) .. controls +(0,0.2) and +(-0.1, -0) .. (a);
  \draw (-1.5, -1) .. controls +(-0.3, 0) and +(0, 0.3) .. (-2, 0)
 coordinate[pos=0.6] (b) node[pos=0.35, sloped, font=\small, below] {}; 
 % \draw[densely dashed] (-1.7,0) .. controls +(0,0.2) and +(0.1, -0) .. (b); 
  \draw[densely dashed] (1.5, -1) .. controls +(-0.3, 0) and +(0, 0.3) .. (1, 0);
  \draw[densely dashed] (-1.5, -1) .. controls +(0.3, 0) and
  +(0, 0.3) .. (-1, 0); 
% \draw (1.5, 1) -- +(0,1);
%  \draw (-1.5, 1) -- +(0,1);
  \end{scope}
\end{scope}}}
\colon
\KR_{2;t_1,t_2}^{\mathfrak{sl}_2}(\cab^0(L_;\Bbbk[E_1,E_2])
\rightarrow
\KR_{2;t_1,t_2}^{\mathfrak{sl}_2}(\cab^2(L);\Bbbk[E_1,E_2])
\end{align*}
Define $\KR_{2;t_1,t_2}^{\mathfrak{sl}_2}(\Sym^m(L);\Bbbk[E_1,E_2])$ to be $\mathcal{F}_L(P_m)$ and
\begin{align*}
\mathcal{F}_L(\omega_k) = \colim_j  \KR_{2;t_1,t_2}^{\mathfrak{sl}_2}(\Sym^{k+2j}(L);\Bbbk[E_1,E_2]) := 
\KR_{2;t_1,t_2}^{\mathfrak{sl}_2}
(L^{\omega_k};\Bbbk[E_1,E_2])
\end{align*}

\allowdisplaybreaks[0]

Let $W$ be a 2-handlebody where the attaching 2-handles are determined by a framed oriented link $L$.  Let $L_0$ be an oriented link in the boundary $\partial W$.
Let $\omega=\oplus_k \omega_k$.

\begin{prop}
One has
\[
\KR_{2;t_1,t_2}^{\mathfrak{sl}_2}
(L^{\omega} \cup L_0;\Bbbk[E_1,E_2])
\cong
S_0^2(W,L_0;\Bbbk[E_1,E_2])
\] as $\mathfrak{sl}_2$-representations.
\end{prop}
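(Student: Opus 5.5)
The plan is to take the non-equivariant/$\sll_2$-free isomorphism as known, namely the Manolescu--Neithalath formula as reformulated by Hogancamp--Rose--Wedrich (\cite{ManNeith}, \cite{HRW}) and its equivariant extension \cite{MWW2}, and then check that every map in that identification intertwines the $\sll_2$-actions. Recall the HRW argument. Every lasagna filling of $W$ can be isotoped so that its surface meets each 2-handle core in parallel copies; these are cabled strands of the attaching link $L$, joined to $L_0$ by a surface in $\ball^4$ with a single input ball. The relations coming from sliding sheets over the cocores and permuting parallel sheets then impose exactly the symmetrization by $p_m$ and the stabilization maps $U_m$ (adding a dotted cap pair). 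So $S_0^2(W,L_0)$ is presented as $\bigoplus_{k}\colim_j \KR(\Sym^{k+2j}(L)\cup L_0)$, with $k$ recording the homology class. This is precisely $\mathcal{F}_L(\omega)$ evaluated with $L_0$ added.

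First I will write down the comparison map $\Phi\colon \KR^{\sll_2}_{2;t_1,t_2}(L^{\omega}\cup L_0)\to S_0^2(W,L_0)$. An element in the $j$-th term of the colimit is a class $v\in\KR(\cab^{m}(L)\cup L_0)$ with $m=k+2j$. It is sent to the lasagna filling consisting of one input ball labelled $v$, together with the surface given by the $m$ parallel cores of the 2-handles. I will choose the green dots on $\cab^m(L)$ so that the compatibility condition \eqref{eq:gd-lasagana} holds for this surface. Each core disk has $\chi=1$, so each strand needs solid label $-\tfrac12$ and hollow label $-\tfrac{N-1}{2}$ relative to $L_0$. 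The point to verify is that this prescription matches the twist $P_m^{-\frac m2(1-a_2)E_1}$ built into $\omega_k$, for the parameter values of Proposition \ref{prop:handleslide}, once solid dots are converted into $E_1$-twists via the floating green dot convention. With the twists matched, $\Phi$ on each term is $\sll_2$-equivariant by Theorem \ref{thm:sl2las}: the action on a one-ball filling is the action on its label.

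Second, I will check that $\Phi$ is compatible with the colimit, i.e.\ that $\Phi\circ \mathcal{F}_L(U_m)=\Phi$. The image of $U_m$ is a dotted annulus cobordism between cables. Inserting it into the input ball and reabsorbing it into the surface merges two new parallel cores with opposite orientation through a dotted tube. Since the dot relation and the neck-cutting relation hold in $\KR_2$, this is isotopic to the original filling up to exactly the local relations of the lasagna module. Equivariance of each $U_m$ with the chosen twists was already noted after \eqref{def:kirbycolor}, so the colimit is a colimit of $\sll_2$-modules and $\Phi$ descends equivariantly. The same reasoning shows that the $p_m$-projection corresponds to the sweep-around and permutation relations. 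Here Lemma \ref{lem:fonp_2} with $a_1=0$ guarantees that $p_m$ is $\sll_2$-invariant, so passing to $\Sym^m$ commutes with the action.

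Finally, bijectivity of $\Phi$ will be imported from the underlying module statement \cite{ManNeith, HRW, MWW2}. Since a bijective equivariant map is an isomorphism of representations, this finishes the argument. I expect the main obstacle to be the bookkeeping of green dots and twists. Specifically, I must show that the green-dot labels forced by \eqref{eq:gd-lasagana} on cores, and on the surfaces realizing $U_m$ and the handle-slide isomorphisms of Proposition \ref{prop:handleslide}, agree with the $E_1$-twists in $\omega_k$ and with the conditions $a_1=\alpha_1=0$, $2\alpha_2-a_2=1$. My first attempt will be to compute $\df$ and $\dh$ on a single dotted cap/cup using \eqref{eq:e-act-cup}--\eqref{eq:e-act-cap} for $N=2$, compare with Lemma \ref{lem:sl2onUDz}, and solve for the twist parameter, expecting $a_2$ to be determined by $t_1,t_2$.
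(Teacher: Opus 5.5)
Your proposal follows the same route as the paper. The paper also takes the underlying isomorphism from Hogancamp--Rose--Wedrich's reformulation of the Manolescu--Neithalath formula and then simply asserts that ``an inspection of the $\sll_2$-action'' shows the two module structures agree. Your construction of $\Phi$, the green-dot bookkeeping on the core disks, and the check that the twisted $U_m$ is equivariant are exactly the content of that inspection; for $N=2$, the solid and hollow dots of label $-\tfrac12$ on each strand combine into a floating solid dot, i.e.\ a $\df$-twist by $-\tfrac12E_1$ and a $\dh$-twist by $+1$.

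One small correction: matching this with the twist $-\tfrac m2(1-a_2)E_1$ in $\omega_k$ just forces $a_2=0$, together with $a_1=0$. This is consistent with the undotted annulus having $\chi=0$, so that $\df$ and $\dh$ annihilate it. The value of $a_2$ is therefore independent of $t_1,t_2$, contrary to your expectation; the parameters $t_1,t_2$ enter only through the $\sll_2$-action on $\KR(\cab^m(L))$ itself.
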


\begin{proof}
    This formulation of the skein lasagna is due to Hogancamp, Rose, and Wedrich's \cite[Theorem 6.8]{HRW} interpretation of Manolescu and Neithalath's formula for 2-handlebodies \cite[Theorem 1.1]{ManNeith}.

    An inspection of the $\mathfrak{sl}_2$-action shows that the module in Theorem \ref{thm:sl2las} is the same as that on
   $ \KR_{2;t_1,t_2}^{\mathfrak{sl}_2}
(L^{\omega} \cup L_0;\Bbbk[E_1,E_2]) $.
\end{proof}

\begin{rmk}
    Proposition \ref{prop:handleslide} provides a proof of the compatibility of sliding a 2-handle over a 2-handle with the $\mathfrak{sl}_2$-action.  Note that this does not give an independent proof of invariance since it does not account for other handle moves.
\end{rmk}

\section{Examples}
\label{example:sec}
\newcommand{\mydiscriminant}{\ensuremath{\delta}}

In this section, we compute the $\mathfrak{sl}_2$-action on the skein lasagna for $\ball^4$ and $\ball^2 \times \sphere^2$ for the case $N=2$ and with ground ring equal to $\CC$.
As we shall see, in these two instances, the iso-classes of the obtained $\sll_2$-modules do not depend on the choice of $t_1$ and $t_2$.

\subsection{Preliminaries}

For $\lambda \in \ZZ$, $L(\lambda)$ denotes the irreducible $\sll_2$-module whose highest
weight is $\lambda$. If $\lambda \geq 0$, the module $L(\lambda)$ is finite-dimensional. Denote by $P(\lambda)$ the projective cover of $L(\lambda)$ and by $M(\lambda)$ the Verma module of highest weight $\lambda$.

The only extensions between Verma modules are given by:
\begin{equation} \label{eq:sesvermas}
 0 \rightarrow   M(\lambda) \rightarrow P(-\lambda-2) \rightarrow M(-\lambda-2) \rightarrow 0 
\end{equation}
for $\lambda \geq 0$.
Note also, that for $\lambda \geq 0$, there is a short exact sequence
\begin{equation} \label{eq:sesvermas2}
 0 \rightarrow   M(-\lambda-2) \rightarrow M(\lambda) \rightarrow L(\lambda) \rightarrow 0.
\end{equation}

Recall that the Zuckerman functor $\Gamma$ extracts from an $\mathfrak{sl}_2$-module $M$ its maximal locally finite submodule.  The dual Zuckerman functor $\mathcal{Z}$ (also known as Bernstein functor) maps an $\sll_2$-module to its maximal locally finite quotient.

Recall that the ring of symmetric polynomials $\mathbb{C}[E_1,E_2]$ has an $\mathfrak{sl}_2$-action determined by
\begin{nalign}
\dh(E_1)&=-2E_1, &&&&& \de(E_1)&=-2, &&&&& \df(E_1)&=E_1^2-2E_2, \\
  \dh(E_2)&=-4E_2, &&&&& \de(E_2)&=-E_1, &&&&& \df(E_2)&=E_1 E_2 \ .
\end{nalign}
This will be further studied in the next subsection.
 Throughout this section, let 
 \begin{equation} \label{eq:defalpha}
 \mydiscriminant=4E_2-E_1^2.
 \end{equation}
From straightforward computations, one obtains the following lemma.
 \begin{lem} \label{lem:alphajformulas}
One has the identities
\begin{nalign}
\dh(\mydiscriminant^j)&=-4j \mydiscriminant^j,&&&&& \de(\mydiscriminant^j)&=0,&&&&&\df(\mydiscriminant^j)&=2j E_1 \mydiscriminant^j \ . \qedhere
\end{nalign} 
\end{lem}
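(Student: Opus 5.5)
The plan is a direct computation. First obtain the three identities for $\mydiscriminant$ itself ($j=1$), then extend to all $j$ using that each of $\dh,\de,\df$ acts on $\mathbb{C}[E_1,E_2]$ as a derivation. This holds because $\mathbb{C}[E_1,E_2]$ is an $\sll_2$-module algebra, with the Lie algebra elements primitive.

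For $j=1$, apply the displayed formulas for the action on $E_1,E_2$ together with the Leibniz rule.
\begin{itemize}
\item For $\dh$: $\dh(4E_2)=-16E_2$ and $\dh(E_1^2)=2E_1\dh(E_1)=-4E_1^2$. Hence $\dh(\mydiscriminant)=-16E_2+4E_1^2=-4\mydiscriminant$.
\item For $\de$: $\de(4E_2)=-4E_1$ and $\de(E_1^2)=2E_1(-2)=-4E_1$. Hence $\de(\mydiscriminant)=0$.
\item For $\df$: $\df(4E_2)=4E_1E_2$ and $\df(E_1^2)=2E_1(E_1^2-2E_2)=2E_1^3-4E_1E_2$. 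Hence
\[
\df(\mydiscriminant)=4E_1E_2-2E_1^3+4E_1E_2=8E_1E_2-2E_1^3=2E_1(4E_2-E_1^2)=2E_1\mydiscriminant .
\]
\end{itemize}

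For general $j\geq 0$, since each operator $\mathbf{x}$ is a derivation, $\mathbf{x}(\mydiscriminant^j)=j\mydiscriminant^{j-1}\mathbf{x}(\mydiscriminant)$. One can also argue by induction on $j$ using $\mathbf{x}(\mydiscriminant^{j})=\mathbf{x}(\mydiscriminant^{j-1})\mydiscriminant+\mydiscriminant^{j-1}\mathbf{x}(\mydiscriminant)$, with trivial base case $j=0$. Substituting the $j=1$ results gives $-4j\mydiscriminant^j$, $0$ and $2jE_1\mydiscriminant^j$ respectively.

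There is no real obstacle. The only point to be careful about is that the Leibniz rule applies: the action on $\mathbb{C}[E_1,E_2]$ is by the differential operators $-\sum\partial_{x_i}$, $\sum x_i^2\partial_{x_i}$ and $-2\sum x_i\partial_{x_i}$ restricted to symmetric polynomials, which are derivations. One can double-check the $j=1$ case in variables: $\mydiscriminant=-(x_1-x_2)^2$ is translation invariant, so $\de$ kills it, and $\df$ applied to $(x_1-x_2)^2$ gives $2(x_1-x_2)(x_1^2-x_2^2)=2(x_1+x_2)(x_1-x_2)^2$, consistent with the formula $\df(\mydiscriminant)=2E_1\mydiscriminant$.
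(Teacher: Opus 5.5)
Your proposal is correct and is the same direct computation the paper has in mind; the paper only says the lemma follows ``from straightforward computations.'' You check the three identities for $\mydiscriminant=4E_2-E_1^2$ using the formulas for $E_1,E_2$ and the Leibniz rule, then get the factor $j$ because $\dh,\de,\df$ act as derivations on $\mathbb{C}[E_1,E_2]$.
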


\begin{lem} \label{lem:fralphap}
    For any non-negative integers $r$ and $j$, $\df^r(\mydiscriminant^j) \neq 0$.
\end{lem}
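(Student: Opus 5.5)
The plan is to exploit the fact that $\df$ acts on $\CC[E_1,E_2]$ by a derivation, and to change generators from $(E_1,E_2)$ to $(E_1,\mydiscriminant)$. Since $\CC[E_1,E_2]=\CC[E_1,\mydiscriminant]$ with $E_2=\frac{1}{4}(\mydiscriminant+E_1^2)$, we have
\[
\df(E_1)=E_1^2-2E_2=\tfrac{1}{2}(E_1^2-\mydiscriminant), \qquad \df(\mydiscriminant)=2E_1\mydiscriminant,
\]
where the second identity is Lemma \ref{lem:alphajformulas} with $j=1$. Hence, on $\CC[E_1,\mydiscriminant]$,
\[
\df=\tfrac12(E_1^2-\mydiscriminant)\,\partial_{E_1}+2E_1\mydiscriminant\,\partial_{\mydiscriminant}.
\]

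\textbf{Step 1: factor out $\mydiscriminant^j$.} By the Leibniz rule and Lemma \ref{lem:alphajformulas}, for any polynomial $g$,
\[
\df(\mydiscriminant^j g)=\mydiscriminant^j\bigl(2jE_1 g+\df(g)\bigr).
\]
By induction on $r$ this gives $\df^r(\mydiscriminant^j)=\mydiscriminant^j g_r$, where $g_0=1$ and $g_{r+1}=2jE_1g_r+\df(g_r)$. Since $\CC[E_1,\mydiscriminant]$ is a domain, it suffices to show $g_r\neq 0$.

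\textbf{Step 2: reduce modulo $\mydiscriminant$.} The ideal $(\mydiscriminant)$ is stable under $\df$, because $\df(\mydiscriminant)=2E_1\mydiscriminant$. So the recursion descends to $\CC[E_1]=\CC[E_1,\mydiscriminant]/(\mydiscriminant)$. There it reads $\bar g_{r+1}=T(\bar g_r)$ with
\[
T(g)=2jE_1g+\tfrac12E_1^2g'.
\]
On monomials, $T(E_1^m)=(2j+\tfrac m2)E_1^{m+1}$. Therefore
\[
\bar g_r=\prod_{m=0}^{r-1}\Bigl(2j+\tfrac m2\Bigr)E_1^r .
\]

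\textbf{Step 3: conclude.} For $j\ge 1$, every factor $2j+\frac{m}{2}$ is positive, so $\bar g_r\neq 0$. Hence $g_r\neq0$ and $\df^r(\mydiscriminant^j)\neq 0$. The case $r=0$ is trivial.

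\textbf{The main obstacle: the edge case $j=0$, $r\ge1$.} Here $\mydiscriminant^0=1$ and $\df(1)=0$, matching the vanishing factor $m=0$ in the product. So the statement must be read with $j\ge 1$, or with $r=0$ when $j=0$, and I would flag this in the write-up. Apart from this, the only substantive point is noticing that the $(E_1,\mydiscriminant)$ coordinates make reduction modulo $\mydiscriminant$ legitimate and turn the computation into a one-variable monomial recursion.
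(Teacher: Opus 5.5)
Your proof is correct, and it takes a different route from the paper's. Your flag on the edge case is also right: for $j=0$ and $r\ge 1$ one has $\df^r(1)=0$. The paper's own formula shows the same vanishing through its leading factor $2j$, so the lemma should be read for $j\ge 1$, which is all that is used afterwards.

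The difference lies in which $\df$-stable ideal you reduce by.

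\emph{The paper's route.} It works modulo $(E_2)$, which is stable because $\df(E_2)=E_1E_2$. There $\df$ becomes $E_1^2\partial_{E_1}$ and $\mydiscriminant^j\equiv(-1)^jE_1^{2j}$. One then reads off
$\df^r(\mydiscriminant^j)\equiv(-1)^j(2j)(2j+1)\cdots(2j+r-1)E_1^{2j+r}$
in a single step, with no preliminary factorization.

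\emph{Your route.} You reduce modulo $(\mydiscriminant)$ instead. Since $\mydiscriminant^j$ itself lies in that ideal, you must first peel it off via $\df(\mydiscriminant^j g)=\mydiscriminant^j(2jE_1g+\df g)$. This extra step costs a little length. In return it shows that $\df^r(\mydiscriminant^j)$ is divisible by $\mydiscriminant^j$, which matches the shape of Lemma~\ref{lem:genfrcomp}. Your coefficients $\prod_{m=0}^{r-1}(2j+\tfrac m2)$ differ from the paper's only because a different quotient is taken.

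The paper's argument is shorter; yours carries slightly more information. Both rest on the same idea: pass to a one-variable quotient where $\df$ is, up to a multiplication term, a multiple of $E_1^2\partial_{E_1}$.
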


\begin{proof}
    It suffices to track terms which are only powers of $E_1$.  One obtains, for $r>0$,
    \begin{equation}
    \df^r(\mydiscriminant^j) - (-1)^j
    (2j)(2j+1)\cdots (2j+r-1) E_1^{2j+r} \in  E_2 \mathbb{C}[E_1,E_2].    \qedhere \end{equation}
\end{proof}

\subsection{Computation for $\ball^4$}
The 4-ball is a connected 0-handlebody, so that the empty link is a Kirby diagram for $\ball^4$ and its skein lasagna module $S_0^2(\ball^4, \emptyset; \mathbb{C}[E_1,E_2])$ is canonically isomorphic to $\mathbb{C}[E_1,E_2]$.

\begin{lem}
For $j>1$, $\mydiscriminant^j$ generates a Verma module isomorphic to $M(-4j)$.
\end{lem}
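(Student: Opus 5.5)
By Lemma \ref{lem:alphajformulas}, the element $\mydiscriminant^j$ is a highest weight vector: it satisfies $\de(\mydiscriminant^j)=0$ and $\dh(\mydiscriminant^j)=-4j\,\mydiscriminant^j$. By the universal property of Verma modules there is therefore a unique $\sll_2$-module map
\[
\phi\colon M(-4j)\lra \mathbb{C}[E_1,E_2], \qquad v_{-4j}\mapsto \mydiscriminant^j ,
\]
whose image is the submodule $\mathcal{U}(\sll_2)\cdot\mydiscriminant^j$ generated by $\mydiscriminant^j$. The plan is to show that $\phi$ is injective.

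Since $M(-4j)$ has basis $\{\df^r v_{-4j}\}_{r\geq 0}$, the image is spanned by the vectors $\df^r(\mydiscriminant^j)$. The vector $\df^r(\mydiscriminant^j)$ is an $\dh$-eigenvector of weight $-4j-2r$, because $\df$ lowers $\dh$-weights by $2$. These weights are pairwise distinct, so nonzero vectors among them are linearly independent. Injectivity is therefore equivalent to $\df^r(\mydiscriminant^j)\neq 0$ for every $r\geq 0$, and this is exactly Lemma \ref{lem:fralphap}. So $\phi$ is an isomorphism onto its image, which gives $\mathcal{U}(\sll_2)\cdot \mydiscriminant^j\cong M(-4j)$.

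The following alternative argument is worth noting. The weight $-4j$ is negative, so $M(-4j)$ is irreducible; hence any nonzero quotient of it is isomorphic to it, and injectivity follows from $\phi\neq0$ alone.

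The only substantive input is the non-vanishing in Lemma \ref{lem:fralphap}, which is already available; everything else is formal. The hypothesis $j>1$ is not used in this argument. The same reasoning works for every $j\geq 1$, and even for $j=0$ if the Verma module is allowed to be $M(0)$, although in that case $\df(1)=0$ and one only obtains the trivial quotient $L(0)$.
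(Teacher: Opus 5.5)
Your main argument is the paper's proof: $\mydiscriminant^j$ is a highest weight vector by Lemma \ref{lem:alphajformulas}, and Lemma \ref{lem:fralphap} gives $\df^r(\mydiscriminant^j)\neq 0$ for all $r$; you make explicit, via the universal map from $M(-4j)$ and the linear independence of nonzero vectors of distinct weights, what the paper leaves implicit. Your alternative argument, using irreducibility of $M(-4j)$ for negative highest weight, is also valid and avoids Lemma \ref{lem:fralphap} altogether. You are also right that only $j\geq 1$ is needed, which is how the lemma is used in the subsequent proposition.
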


\begin{proof}
    By Lemma \ref{lem:fralphap}, $\df^r$ never annihilates $\mydiscriminant^j$, and by Lemma \ref{lem:alphajformulas}, $\mydiscriminant^j$ is a highest weight vector.  Thus $\mydiscriminant^j$ generates a Verma module of highest weight $-4j$.
\end{proof}

\begin{lem}
The elements $1$ and 
$E_1$ generate a dual Verma module $M^*(0)$.
\end{lem}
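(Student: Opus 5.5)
We compute directly with the action on $\mathbb{C}[E_1,E_2]$ recorded above. Let $V$ be the $\mathbb{C}$-span of the $\sll_2$-module generated by $1$ and $E_1$. The plan is to exhibit a weight basis of $V$ with one vector in each weight $0,-2,-4,\dots$, and to check that the $\de$- and $\df$-matrix coefficients match those of the dual Verma module $M^*(0)$.

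First we record the generators. The element $1$ has weight $0$, satisfies $\de(1)=0$ and $\df(1)=0$, and so spans a copy of $L(0)$. The element $E_1$ has weight $-2$ and satisfies $\de(E_1)=-2\cdot 1$. This is the characteristic feature of $M^*(0)$: the trivial module is the socle, and $\de$ maps the weight $-2$ space onto it. This is the dual of the sequence \eqref{eq:sesvermas2} for $\lambda=0$, namely $0\to L(0)\to M^*(0)\to M(-2)\to 0$.

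Next we set $v_r=\df^r(E_1)$ for $r\ge 0$, which has weight $-2-2r$. We show two things.

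\begin{enumerate}
\item No $v_r$ vanishes. Arguing as in Lemma \ref{lem:fralphap}, we track the pure $E_1$-power term. Since $\df(E_1^m)=mE_1^{m-1}(E_1^2-2E_2)\equiv mE_1^{m+1}$ modulo $E_2$, we get $v_r\equiv r!\,E_1^{r+1}\neq 0$ modulo $E_2\mathbb{C}[E_1,E_2]$.
\item The action of $\de$ on these vectors. From $[\de,\df]=\dh$ we obtain by induction
\[\de(v_r)=\df^r(\de E_1)+\sum_{i=0}^{r-1}\df^{i}\dh\,\df^{r-1-i}(E_1)=-2\df^r(1)+c_r v_{r-1},\]
with $c_r=\sum_{i=0}^{r-1}(-2-2(r-1-i))=-r(r+1)$. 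For $r\ge1$ the first term vanishes, because $\df(1)=0$.
\end{enumerate}

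Hence $V$ has basis $\{1,v_0,v_1,\dots\}$. Here $\df$ shifts $v_r\mapsto v_{r+1}$ and kills $1$, while $\de$ acts by $v_0\mapsto -2\cdot 1$ and $v_r\mapsto -r(r+1)v_{r-1}$. The quotient $V/\mathbb{C}1$ is therefore generated by the image of $E_1$. That image is a highest weight vector of weight $-2$ on which $\df$ acts freely, so the quotient is $M(-2)$. The submodule $\mathbb{C}1$ is $L(0)$, and it is the unique simple submodule: any nonzero submodule contains a vector of some weight $-2-2r$, and repeated application of $\de$ sends it to a nonzero multiple of $1$, because all the $\de$-coefficients above are nonzero.

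So $V$ is a non-split extension of $M(-2)$ by $L(0)$, with simple socle $L(0)$ and character equal to that of $M(0)$. We conclude by identifying it with $M^*(0)$. The cleanest route is to pair $V$ with $M(0)=\langle w_0,w_r=\df^r w_0\rangle$ via the contravariant form. Equivalently, we check directly that the basis coefficients agree with those of $M^*(0)$, namely $\de\df^{r+1}=\df^{r+1}\de+(\text{scalars})\df^r$ with all scalars nonzero, after rescaling the $v_r$. Alternatively, a module with character $\mathrm{ch}\,M(0)$ and simple socle $L(0)$ is forced to be $M^*(0)$ in category $\mathcal{O}$, by duality with the statement that a highest weight module of character $\mathrm{ch}\,M(0)$ is $M(0)$.

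The main obstacle is this final identification. In particular one must be careful that $V$ lies in category $\mathcal{O}$, which requires local $\de$-nilpotence. That holds here because $\de$ lowers polynomial degree, and it makes the duality argument legitimate.
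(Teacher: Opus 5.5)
Your proposal is correct and takes the same route as the paper. In both, $1$ spans a copy of $L(0)$, the image of $E_1$ in the quotient generates $M(-2)$, and the resulting extension is identified with $M^*(0)$ by dualizing \eqref{eq:sesvermas2}. Your explicit formula $\de(\df^r E_1)=-r(r+1)\df^{r-1}E_1$ and the simple-socle argument built on it supply the non-splitting that the paper's one-line ``dualizing'' step leaves implicit.
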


\begin{proof}
The element $1$ spans a submodule isomorphic to $L(0)$ since it is annihilated by $\df, \de$ and has weight $0$.

Note that $\de(E_1)=-2 $ and $\df^r(E_1) \neq 0$.
Thus, modulo the submodule spanned by $1$, the element $E_1$ generates a Verma module $M(-2)$. Dualizing the short exact sequence \eqref{eq:sesvermas2} yields the lemma.
\end{proof}

\begin{prop}
As an $\mathfrak{sl}_2$-module, the skein lasagna module $S_0^2(\ball^4, \emptyset; \mathbb{C}[E_1,E_2])$  is isomorphic to \begin{equation}
\mathbb{C}[E_1,E_2] \cong 
    M^*(0) \oplus \bigoplus_{j=1}^{\infty} M(-4j) \ .
\end{equation}
\end{prop}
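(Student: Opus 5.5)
Since the skein lasagna module of $\ball^4$ is canonically $\mathbb{C}[E_1,E_2]$ with the action recalled above, the plan is a character comparison. We prove that the sum $M^*(0)+\sum_{j\geq 1} U(\sll_2)\mydiscriminant^j$ is direct and exhausts $\mathbb{C}[E_1,E_2]$. Note that the preceding lemma is stated for $j>1$, but its proof (Lemmas \ref{lem:alphajformulas} and \ref{lem:fralphap}) works verbatim for $j=1$. So $\mydiscriminant^j$ generates $M(-4j)$ for every $j\geq1$, and $1,E_1$ generate $M^*(0)$.

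\emph{Step 1 (weights and characters).} The operator $\dh$ acts on a monomial $E_1^aE_2^b$ by $-2a-4b$, i.e.\ by minus the polynomial degree. Hence the weight $-2m$ space is the degree $2m$ piece, of dimension $\lfloor m/2\rfloor+1$. The character of $\mathbb{C}[E_1,E_2]$ is therefore $\sum_{m\ge0}(\lfloor m/2\rfloor+1)x^{-2m}$. On the other side, $M^*(0)$ has dimension $1$ in each weight $-2m$, $m\geq 0$, and $M(-4j)$ has dimension $1$ in each weight $-4j-2r$, $r\ge0$. In weight $-2m$ the total is $1+\#\{j\ge1: 2j\le m\}=1+\lfloor m/2\rfloor$. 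The characters agree.

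\emph{Step 2 (directness).} It suffices to show the submodules intersect trivially, so that the sum is direct and, by Step 1, everything. Suppose a nonzero element $v$ of $U\mydiscriminant^{j}$ lies in the sum of the others. We may take $v$ to be a weight vector. Since $M(-4j)$ is a Verma module over $\CC$ whose highest weight $-4j$ is negative, hence antidominant, it is simple. So any nonzero submodule of it contains $\mydiscriminant^j$, and in fact $U\mydiscriminant^j\cap(\text{others})$, if nonzero, equals all of $U\mydiscriminant^j$. Comparing dimensions in weight $-4j$ then gives a contradiction once we know the weight vectors $\mydiscriminant^j, \df(\mydiscriminant^{j-1}),\df^2(\mydiscriminant^{j-2}),\dots$ together with the $M^*(0)$ vector in that weight are linearly independent.

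This independence is the main obstacle, and we plan a concrete handle for it. Each space $\ker\de$ in weight $-4j$ is spanned by $\mydiscriminant^j$. Indeed, $\ker\de\cap\mathbb{C}[E_1,E_2]$ is the translation-invariant symmetric polynomials, which is $\mathbb{C}[\mydiscriminant]$. We then argue by induction on degree. The vector $\df^{r}\mydiscriminant^{i}$ with $r\ge1$ is not killed by $\de$, since $\de\df^r\mydiscriminant^i=-r(-4i-r+1)\df^{r-1}\mydiscriminant^i\neq0$. Inductively, $\de$ maps the span of the lower-$\df$-images (plus the $M^*(0)$ part) injectively onto the corresponding span in the previous weight. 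Here one must handle the $M^*(0)$ vector $\df^{m-1}E_1$ separately, using $\de\df^{m-1}E_1$ ending in the constant $1$. Hence a linear relation involving $\mydiscriminant^j$ would force an element of $\ker\de$ in the span of images, whose $\de$-image vanishes, so by injectivity its coefficients vanish, and then $\mydiscriminant^j=0$, which is absurd.

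Combining directness with the character equality gives the isomorphism.
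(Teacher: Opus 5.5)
Your proposal is correct and follows essentially the paper's route: you compare the $\dh$-character of $\CC[E_1,E_2]$ with that of $M^*(0)\oplus\bigoplus_{j\geq1}M(-4j)$, using the highest weight vectors $\delta^j$ and the submodule generated by $1,E_1$. You also justify the directness of the sum, which the paper leaves implicit. Your induction via $\de$ works, though it is quicker to note that the $U\delta^j\cong M(-4j)$ are pairwise non-isomorphic simple modules and that $\CC\cdot 1\cong L(0)$, the socle of $M^*(0)$, cannot embed in their sum.

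Two harmless slips:
\begin{itemize}
\item The other vectors of weight $-4j$ are $\df^{2(j-i)}(\delta^i)$, not $\df^{j-i}(\delta^i)$.
\item With $[\de,\df]=\dh$ one gets $\de\df^r(\delta^i)=r(-4i-r+1)\df^{r-1}(\delta^i)$, without your extra minus sign.
\end{itemize}
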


\begin{proof}
    The operator $\dh$ acts diagonally and its character is given by
\[
\left(\frac{1}{1-\lambda^{-2}}\right)\left( \frac{1}{1-\lambda^{-4}} \right) \ .
\]
Thus at every negative multiple of 4, the weight space grows by an extra dimension.  Since $\mydiscriminant^j$ is a highest weight vector, we get the desired decomposition.
\end{proof}

\begin{rmk}
    The dual Zuckerman functor $\mathcal{Z}$ annihilates $S_0^2(\ball^4, \emptyset; \mathbb{C}[E_1,E_2]) \cong \mathbb{C}[E_1,E_2]$ and the Zuckerman functor $\Gamma$ applied to the module yields ${S_0^2(\ball^4, \emptyset; \mathbb{C}[E_1,E_2])} \cong \Gamma \mathbb{C}[E_1,E_2] \cong L(0)$.
\end{rmk}

\subsection{Computation for $\ball^2 \times \sphere^2$}
\subsubsection{Basic facts for $\ball^2 \times \sphere^2$}
The $4$-manifold $\ball^2 \times \sphere^2$ could be realized by attaching a $2$-handle to the $4$-ball $\ball^4$ along the $0$-framed unknot $U$.
The equivariant Khovanov homology of the unknot is spanned over $\mathbb{Z}[E_1,E_2]$ by
\[
A_1=  \NB{\tikz[scale = 1]{\begin{scope}
  \draw (0,0) arc (180 :0: 0.5cm and 0.2cm) node[above, pos =
  0.5] {};
  \draw[very thin] (0,0) arc (180 :0: 0.5cm and -0.6cm) node[pos=0.5,
  above] {};
  \draw (0,0) arc (180 :0: 0.5cm and -0.2cm);
\end{scope}

%%% Local Variables:
%%% mode: latex
%%% TeX-master: t
%%% End:
}} \ , \quad \quad
 A_0= \NB{\tikz[scale = 1]{\begin{scope}
\coordinate (mr) at (.5, -.5);
  \draw (0,0) arc (180 :0: 0.5cm and 0.2cm) node[above, pos =
  0.5] {};
  \draw[very thin] (0,0) arc (180 :0: 0.5cm and -0.6cm);
%  node[pos=0.5, above] \filldraw circle (0.5mm);
  \draw (0,0) arc (180 :0: 0.5cm and -0.2cm);

  %  \draw (0,0) rectangle (1,1) coordinate [midway] (A); %node[pos=0.2, font=\tiny] {$a$};
  %\fill (A) circle (0.5mm) node[below] {$R$};
    \filldraw[draw= black!50!black, fill = black] (mr) circle (.5mm);

\end{scope}}}
\]
Manolescu and Neithelath \cite{ManNeith} computed the skein lasagna module of $\ball^2 \times \sphere^2$  in the integral and non-equivariant setting. They showed that it is isomorphic to the ring $\mathbb{Z}[A_0^{\pm 1}, A_1]$. Generator $A_1$ has quantum grading $-2$ while generators $A_0^{\pm 1}$ have quantum grading $0$.
Polynomials in these generators of degree $\alpha$ are in homology grading $\alpha$.
In order to study the $\mathfrak{sl}_2$-action, we will work over $\mathbb{C}[E_1,E_2]$, in that case the skein lasagna module $\mathcal{M}:= S_0^2(\ball^2\times \sphere^2, \emptyset; \mathbb{C}[E_1,E_2])$ is canonically isomorphic to $\mathcal{A}=\mathbb{C}[E_1,E_2][A_0^{\pm 1}, A_1]$.

For $i \geq 0$, one should think of $A_0^i A_1^j \in \mathcal{A}$ as a symmetric tensor of $i$ $A_0$'s and $j$ $A_1$'s. The action of $\mathfrak{sl}_2$ on the generators is given by
\begin{nalign}
  \label{eq:actononall}
    &\de(1)=0, &&&&& &\df(1)=0, &&&&& &\dh(1)=0, \\ 
    &\de(A_1)=0 , &&&&& &\df(A_1)=-\frac{1}{2} E_1 A_1 ,&&&&& &\dh(A_1) = A_1 , \\
    &\de(A_0) = - A_1 , &&&&& &\df(A_0) = \frac{1}{2}E_1 A_0 - E_2 A_1 , &&&&& &\dh(A_0) = -A_0 , \\
    &\de(A_0^{-1}) = A_0^{-2} A_1 , &&&&& &\df(A_0^{-1}) = E_2 A_1 A_0^{-2} - \frac{1}{2} E_1 A_0^{-1} , &&&&& &\dh(A_0^{-1}) = A_0^{-1},
  \end{nalign}
The action comes from \eqref{eq:h-act-pol},
\eqref{eq:h-act-cup}, \eqref{eq:e-act-pol}, \eqref{eq:e-act-cup}, and the fact that $\de$ acts via 
$-\sum_i \frac{\partial}{\partial x_i} $ on
polynomials and by $0$ on any other basic foam. Note that this action is compatible with the algebra structure of $\mathcal{A}.$

From \eqref{eq:actononall}, one obtains that $\mathcal{M}$ splits as a direct sum of $\sll_2$-modules:  $\mathcal{M}\cong \mathcal{M}^+ \oplus \mathcal{M}^-$ with
\[
  \mathcal{M}^+:= \mathbb{C}[E_1,E_2][A_0,A_1], \qquad\text{and}\qquad
  \mathcal{M}^-:= A_0^{-1}\mathbb{C}[E_1,E_2][A_0^{-1},A_1].
\]

\subsubsection{The $\sll_2$-module $\mathcal{M}^+$} Define 
$v=A_0-\frac{1}{2}E_1 A_1$,   
direct calculation shows that 
$\de(v)=0$ and $\df(v)=\frac{1}{2}E_1 v$.  Since $\de(A_1^m)=0$, one has more generally
$\de(A_1^m v^n)=0$.
Notice that
\begin{equation}
    A_1^m v^n - A_1^m A_0^n \in E_1 \mathbb{C}[E_1,E_2][A_0, A_1] \ .
\end{equation}

\begin{lem} \label{lem:fkilln=1}
    If $m \geq 1$ and $m$ is odd, then $\df^m(A_1^m v)=0$.
\end{lem}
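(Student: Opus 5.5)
The plan is to reduce the statement to a computation in a polynomial ring, using that $\df$ acts on $\mathcal{A}$ as a derivation compatible with the algebra structure, as noted after \eqref{eq:actononall}. Set $x=A_1^m v$. From $\df(A_1)=-\frac12 E_1A_1$ and $\df(v)=\frac12 E_1 v$, the Leibniz rule gives
\[
\df(x)=\tfrac{1-m}{2}E_1\,x .
\]
Write $m=2k+1$ with $k\ge 0$, so that $c:=\frac{1-m}{2}=-k$ is an integer. For any $g\in\mathbb{C}[E_1,E_2]$ we then have $\df(gx)=(\df(g)-kE_1g)\,x$. Hence
\[
\df^m(x)=D^m(1)\,x,\qquad D:=\df-kE_1 ,
\]
where $D$ is regarded as an operator on $\mathbb{C}[E_1,E_2]$. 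It therefore suffices to show $D^m(1)=0$.

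Next I embed $\mathbb{C}[E_1,E_2]$ into $\mathbb{C}[x_1^{\pm1},x_2^{\pm1}]$ with $E_1=x_1+x_2$ and $E_2=x_1x_2$. There $\df=x_1^2\partial_1+x_2^2\partial_2$, which is consistent with \eqref{eq:e-act-pol} and with the formulas $\df(E_1)=E_1^2-2E_2$, $\df(E_2)=E_1E_2$. The key observation is that $\df(E_2^s)=sE_1E_2^s$ for every integer $s$. Consequently $D$ is a conjugate of $\df$:
\[
D=E_2^{k}\circ \df\circ E_2^{-k},\qquad\text{so}\qquad D^m(1)=E_2^{k}\,\df^m\!\left(E_2^{-k}\right).
\]

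It remains to show $\df^m\big((x_1x_2)^{-k}\big)=0$ when $m=2k+1$. I would do this with the exponential of the derivation $\df$. Its flow is $x_i\mapsto x_i/(1-tx_i)$, so as a formal power series in $t$,
\[
\exp(t\df)\big((x_1x_2)^{-k}\big)=\frac{(1-tx_1)^k(1-tx_2)^k}{(x_1x_2)^k}.
\]
This is a polynomial in $t$ of degree $2k=m-1$. Its $t^m$ coefficient, which is $\df^m\big((x_1x_2)^{-k}\big)/m!$, therefore vanishes.

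The main point needing care is justifying the exponential identity rigorously. I would check it directly: for a derivation $\df$ with $\df(x_i)=x_i^2$, one has $\df^n(x_i^{-1})=0$ for $n\ge2$, since $\df(x_i^{-1})=-1$. So $\exp(t\df)(x_i^{-1})=x_i^{-1}-t$, and multiplicativity of $\exp(t\df)$ (a derivation in characteristic $0$) gives the displayed formula. Alternatively, one can avoid exponentials: $\df(y_i)=-1$ with $y_i=x_i^{-1}$, so $\df$ acts as $-(\partial_{y_1}+\partial_{y_2})$ on $\mathbb{C}[y_1,y_2]$, and $(y_1y_2)^k$ has degree $2k<m$. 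Either way, $\df^m(A_1^mv)=0$.

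As a consistency check, $x$ satisfies $\de(x)=0$ and has $\dh$-weight $m-1=2k\ge0$, so $\df^m x$ lying in the kernel is exactly what finite-dimensional $\sll_2$ theory predicts.
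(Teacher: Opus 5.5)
Your proof is correct, and it takes a genuinely different route from the paper's.

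The paper inducts on odd $m$ in steps of two. It writes $A_1^mv=A_1^2\cdot A_1^{m-2}v$, expands with $\Delta(\df^m)=\sum_k\binom{m}{k}\df^k\otimes\df^{m-k}$, and uses $\df(A_1^2)=-E_1A_1^2$, $\df^2(A_1^2)=2E_2A_1^2$ and $\df^3(A_1^2)=0$. Every surviving term then contains $\df^{i}(A_1^{m-2}v)$ with $i\ge m-2$, and so vanishes by the inductive hypothesis.

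You instead use the eigen-relation $\df(A_1^mv)=-kE_1A_1^mv$ to reduce to the scalar identity $(\df-kE_1)^m(1)=0$ in $\mathbb{C}[E_1,E_2]$. You then remove the twist by conjugating with $E_2^{k}$ (via $\df(E_2)=E_1E_2$). Finally you linearize $\df$ in the inverse variables $y_i=x_i^{-1}$, where it becomes $-(\partial_{y_1}+\partial_{y_2})$ and $(y_1y_2)^k$ has degree $2k<m$. Each step checks out, including the extension of $\df$ to $\mathbb{C}[x_1^{\pm1},x_2^{\pm1}]$: a derivation of the polynomial ring $\mathbb{C}[E_1,E_2]$ is determined by its values on $E_1$ and $E_2$.

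The paper's argument is more elementary: it stays inside $\mathcal{A}$ and uses only three explicit identities. Yours explains why the vanishing happens and proves more. Any $w$ with $\df(w)=-kE_1w$, $k\in\ZZ_{\ge0}$, satisfies $\df^r(w)=(-1)^rE_2^{k}(\partial_{y_1}+\partial_{y_2})^r\big((y_1y_2)^k\big)\,w$, hence $\df^{2k+1}(w)=0$. Since $\df(\mydiscriminant^jA_1^mv^n)=\big(2j+\frac{n-m}{2}\big)E_1\,\mydiscriminant^jA_1^mv^n$, this one principle also gives $\df^{m-n+1}(A_1^mv^n)=0$ for $m-n\ge 0$ even, and Lemma~\ref{lem:frkill}. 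The paper obtains these through a separate induction and the explicit expansion in Lemma~\ref{lem:genfrcomp}.

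Minor point: you use $x$ both for $A_1^mv$ and for the variables $x_i$; rename one.
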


\begin{proof}
    First note that
    \[
    \Delta(\df^m)=\sum_{k=0}^m \binom{m}{k} \df^k \otimes \df^{m-k} \ . 
\]
We will also use the identities
\begin{equation} \label{eq:A1squared}
    \df(A_1^2)=-E_1 A_1^2, \quad \df^2(A_1^2)=2E_2 A_1^2, \quad
    \df^3(A_1^2)=0 \ .
  \end{equation}
\allowdisplaybreaks[0]

We proceed by induction where the base case $m=1$ is a quick calculation.
    By induction, assume $\df^{m-2}(A_1^{m-2} v)=0$.
    Then we have 
    \[\df^{m}(A_1^{m} v) = \df^2 \df^{m-2}(A_1^2 A_1^{m-2} v), \] so that $\df^{m}(A_1^{m} v)=$
    \begin{align*}
&   \df^2 \left[ \binom{m-2}{2} \df^2(A_1^2) \df^{m-4}(A_1^{m-2}v)
        + \binom{m-2}{1} \df(A_1^2) \df^{m-3} (A_1^{m-2}v)\right] \\
        &=(\df^2 \otimes 1 + 2 \df \otimes \df + 1 \otimes \df^2) 
        \left[\binom{m-2}{2} \df^2(A_1^2) \df^{m-4}(A_1^{m-2}v)
        + \binom{m-2}{1} \df(A_1^2) \df^{m-3} (A_1^{m-2}v)\right]  
        =0
    \end{align*}
    by the induction hypothesis and \eqref{eq:A1squared}.
    \end{proof}
    \begin{lem}
        If $m \geq n$ and $m-n$ is even, then $\df^{m-n+1}(A_1^m v^n)=0$.
    \end{lem}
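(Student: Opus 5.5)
The plan is to reduce the statement to a Leibniz-rule computation by noticing that the product $A_1 v$ is annihilated by $\df$. From \eqref{eq:actononall} we have $\df(A_1)=-\frac{1}{2}E_1A_1$, and the computation preceding Lemma \ref{lem:fkilln=1} gives $\df(v)=\frac{1}{2}E_1 v$. Since $\df$ acts as a derivation on $\mathcal{A}$ (the action is compatible with the algebra structure), this yields
\[
\df(A_1 v)=\df(A_1)\,v+A_1\,\df(v)=-\tfrac12 E_1A_1v+\tfrac12 E_1 A_1 v=0 .
\]
Hence $\df\bigl((A_1v)^n\bigr)=0$ for every $n\geq 0$.

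Since $m\geq n$, we can write $A_1^m v^n = A_1^{m-n}\,(A_1v)^n$. We then apply the binomial Leibniz formula
\[
\df^k(xy)=\sum_{j=0}^k\binom{k}{j}\df^j(x)\,\df^{k-j}(y),
\]
which is the coproduct formula $\Delta(\df^k)$ already used in the proof of Lemma \ref{lem:fkilln=1}. Taking $y=(A_1v)^n$, every term with $k-j>0$ vanishes, so
\[
\df^{k}(A_1^mv^n)=\df^{k}(A_1^{m-n})\,(A_1v)^n .
\]
It therefore suffices to show that $\df^{m-n+1}(A_1^{m-n})=0$.

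Put $m-n=2k$, which is possible because $m-n$ is even. Then $A_1^{2k}=(A_1^2)^k$. By \eqref{eq:A1squared} we have $\df^3(A_1^2)=0$. Applying the iterated Leibniz formula to the $k$-fold product $(A_1^2)^k$, the operator $\df^{2k+1}$ expands as a sum over distributions of $2k+1$ applications of $\df$ among the $k$ factors. By pigeonhole, some factor receives at least $3$ applications, so every term vanishes. For the edge case $k=0$ the claim is simply $\df(1)=0$. This gives $\df^{2k+1}(A_1^{2k})=0$, which is exactly the statement.

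The only step that needs care is the derivation property of $\df$ on $\mathcal{A}$, together with the multinomial form of the Leibniz rule for a $k$-fold product. Both are immediate consequences of $\Delta(\df)=\df\otimes 1+1\otimes\df$ and the stated compatibility with the algebra structure. There is therefore no real obstacle beyond the observation that $\df(A_1v)=0$. As a consistency check, the case $n=1$ recovers Lemma \ref{lem:fkilln=1}, and the argument above gives an alternative proof of it.
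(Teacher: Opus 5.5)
Your proof is correct and follows essentially the paper's route. The paper inducts on $n$, peeling off one factor $A_1 v$ at a time using exactly the fact that $\df$ annihilates $A_1 v$, and it proves the base case $\df^{m+1}(A_1^m)=0$ by induction on $m$ from $\df^3(A_1^2)=0$; you unroll both inductions (pulling out $(A_1v)^n$ at once, then pigeonhole), which also shows that the paper's separate base case, Lemma~\ref{lem:fkilln=1}, is not needed.
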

\begin{proof}
    We prove this by induction on $n$.  The base case $n=1$ is Lemma \ref{lem:fkilln=1}.  For the other base case we must show
    $\df^{m+1}(A_1^m)=0$ for $m$ even.  We prove this by induction on $m$ where the base case $m=0$ is $\df(1)=0$.  For the inductive step, one has
    \begin{align*}
        \df^{m+1} A_1^m &= \df^2 \df^{m-1}(A_1^2 A_1^{m-2}) \\
        &=(\df^2 \otimes 1 + 2 \df \otimes \df + 1 \otimes \df^2)\left[
        \binom{m-1}{2} \df^2(A_1^2) \df^{m-3}(A_1^{m-2})
        \right. \\
        & \hspace{3cm} \left. +
        \binom{m-1}{1} \df(A_1^2) \df^{m-2}(A_1^{m-2}) +
        \binom{m-1}{0} (A_1^2) \df^{m-1}(A_1^{m-2}) 
        \right] =0
          \end{align*}
          by induction and \eqref{eq:A1squared}. This finishes the verification of the base cases.

          For the inductive step, 
          \begin{align*}
              \df^{m-n+1}(A_1^m v^n) 
              &= \df^{m-n+1}\big((A_1^{m-1}v^{n-1}) A_1 v\big) \\
              &=(\df^{m-n+1} \otimes 1)\big((A_1^{m-1}v^{n-1}) (A_1 v) \big) =0
          \end{align*}
          where the second equality follows from the fact that the other terms in $\Delta(\df^{m-n+1})$ annihilate $A_1v$, and the last equality follows from the induction hypothesis.
\end{proof}
\begin{lem} \label{lem:genfrcomp}
Let $\gamma=\gamma_{j,m,n}=2j+\frac{n-m}{2}$.  Then for all $r \geq 1$, 
    \begin{align*}
    \df^r(\mydiscriminant^j A_1^m v^n) = 
    \sum_{k+2a=r} (-1)^a \frac{(k+2a) \cdots (k+1)}{a!}
    (\gamma)
\cdots (\gamma+r-a-1)
    E_1^k E_2^a \mydiscriminant^j A_1^m v^n \ ,
    \end{align*}
    where the expression $(k+2a) \cdots (k+1)=1$ if $a=0$.
\end{lem}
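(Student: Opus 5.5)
The proof reduces to a scalar recursion. The first step is to show that $w:=\mydiscriminant^j A_1^m v^n$ is an eigenvector of $\df$ up to multiplication by $E_1$. By Lemma \ref{lem:alphajformulas}, $\df(\mydiscriminant^j)=2jE_1\mydiscriminant^j$. By \eqref{eq:actononall}, $\df(A_1)=-\frac12 E_1A_1$, and we computed above that $\df(v)=\frac12E_1 v$. The action of $\sll_2$ is by derivations compatible with the algebra structure of $\mathcal{A}$, so the Leibniz rule gives
\[
\df(w)=\Bigl(2j-\tfrac{m}{2}+\tfrac{n}{2}\Bigr)E_1\,w=\gamma E_1 w .
\]
It follows that $\df^r(w)=P_r\,w$ for a polynomial $P_r\in\mathbb{C}[E_1,E_2]$. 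These polynomials are determined by $P_0=1$ and
\[
P_{r+1}=\df(P_r)+\gamma E_1 P_r ,
\]
where $\df$ acts on $\mathbb{C}[E_1,E_2]$ as the derivation with $\df(E_1)=E_1^2-2E_2$ and $\df(E_2)=E_1E_2$.

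Next I expand the recursion in the monomial basis. Write $P_r=\sum_{k+2a=r}c^{(r)}_{k,a}E_1^kE_2^a$; $P_r$ is homogeneous in this weighted sense because $\df$ raises the weighted degree by one. From
\[
\df(E_1^kE_2^a)=(k+a)E_1^{k+1}E_2^a-2kE_1^{k-1}E_2^{a+1},
\]
the recursion becomes
\[
c^{(r+1)}_{k,a}=(\gamma+k+a-1)\,c^{(r)}_{k-1,a}-2(k+1)\,c^{(r)}_{k+1,a-1},
\]
with the convention that coefficients with a negative index vanish.

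Finally, I prove by induction on $r$ that
\[
c^{(r)}_{k,a}=(-1)^a\frac{(k+2a)!}{k!\,a!}\,(\gamma)_{r-a},
\]
where $(\gamma)_s=\gamma(\gamma+1)\cdots(\gamma+s-1)$. This is exactly the stated formula, since $\frac{(k+2a)!}{k!}=(k+2a)\cdots(k+1)$. The base case $r=1$ is $c^{(1)}_{1,0}=\gamma$, which holds. For the inductive step, substitute the formula into the recursion.

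The first term uses the index pair $(k-1,a)$, for which $r-a=k+a-1$. Here $(\gamma)_{k+a-1}(\gamma+k+a-1)=(\gamma)_{k+a}$, and $k+a=(r+1)-a$, so this term equals
\[
(-1)^a\frac{(k+2a-1)!}{(k-1)!\,a!}\,(\gamma)_{k+a}.
\]
The second term uses $(k+1,a-1)$, which again carries the factor $(\gamma)_{k+a}$ and gives
\[
-2(k+1)(-1)^{a-1}\frac{(k+2a-1)!}{(k+1)!\,(a-1)!}\,(\gamma)_{k+a}
=(-1)^a\frac{(k+2a-1)!}{k!\,a!}\,2a\,(\gamma)_{k+a}.
\]
Adding the two terms gives
\[
(-1)^a\frac{(k+2a-1)!}{k!\,a!}\,(k+2a)\,(\gamma)_{k+a},
\]
which is the claimed coefficient for $r+1$.

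The only point requiring care is this bookkeeping of indices and of the length of the rising factorial. The edge cases $k=0$ and $a=0$ are covered by the convention that coefficients with a negative index vanish.
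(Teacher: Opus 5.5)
Your proposal is correct and is essentially the paper's argument. The paper only says the lemma follows by a lengthy but straightforward induction on $r$, and you carry out that induction with correct bookkeeping: the reduction $\df(w)=\gamma E_1 w$ gives $\df^r(w)=P_r w$ with $P_{r+1}=\df(P_r)+\gamma E_1P_r$, which yields the recursion $c^{(r+1)}_{k,a}=(\gamma+k+a-1)\,c^{(r)}_{k-1,a}-2(k+1)\,c^{(r)}_{k+1,a-1}$ for the coefficients, and this recursion is then checked inductively.
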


\begin{proof}
    This is a lengthy, but straightforward induction on $r$.
\end{proof}

\begin{lem} \label{lem:frkill}
If $m-n-4j \geq 0$ and $m-n-4j$ is even, then 
\[\df^{m-n-4j+1}(\mydiscriminant^j A_1^m v^n)=0 .\]
\end{lem}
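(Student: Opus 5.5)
The plan is to apply the closed formula of Lemma~\ref{lem:genfrcomp} directly with $r=m-n-4j+1$, and to show that every summand contains a vanishing scalar factor. Write $s=\frac{m-n-4j}{2}$. By hypothesis $s$ is a non-negative integer, so $r=2s+1\geq 1$ and Lemma~\ref{lem:genfrcomp} applies. The relevant parameter is
\[
\gamma=\gamma_{j,m,n}=2j+\frac{n-m}{2}=-s ,
\]
which is a non-positive integer.

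In the formula of Lemma~\ref{lem:genfrcomp}, the summand indexed by $(k,a)$ with $k+2a=r$ carries the rising product
\[
(\gamma)(\gamma+1)\cdots(\gamma+r-a-1).
\]
It has $r-a=k+a$ factors. I will check that this count is at least $1$: since $2a\le r=2s+1$, we get $a\le s$ and hence $r-a\ge s+1\geq 1$.

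The product consists of the consecutive integers from $-s$ up to $-s+r-a-1=s-a$. Because $a\le s$, the top value $s-a$ is non-negative, while the bottom value $-s$ is non-positive. This range of consecutive integers therefore contains $0$, so every summand in the expansion of $\df^{r}(\mydiscriminant^jA_1^mv^n)$ vanishes. This gives $\df^{m-n-4j+1}(\mydiscriminant^j A_1^m v^n)=0$, as claimed.

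The real content lies in Lemma~\ref{lem:genfrcomp}, which we take as given. The only points needing care here are that the parity hypothesis makes $\gamma$ an integer, and that $r=2s+1$ being odd forces $a\le s$ for every summand, so that the zero factor is always reached.
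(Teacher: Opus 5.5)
Your proof is correct and is the paper's argument: the paper's proof just cites Lemma~\ref{lem:genfrcomp} and notes that the coefficients vanish. You supply the details behind that remark, namely that with $\gamma=-s$ and $r=2s+1$ one has $a\le s$ in every summand, so each rising product runs over the integers from $-s$ to $s-a\geq 0$ and therefore contains the factor $0$.
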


\begin{proof}
This follows from Lemma \ref{lem:genfrcomp} upon noticing that the coefficients must be zero.  
\end{proof}

\begin{prop} \label{prop:resultforplus}
    As an $\mathfrak{sl}_2$-module, 
    \[
\mathcal{M}^+ \cong
      \bigoplus_{(j,m,n) \in S} M(m-n-4j)
    \oplus 
     \bigoplus_{(j,m,n) \in S^*} M^*(m-n-4j) \ ,
    \]
    where $S^*=\{(j,m,n) \in \mathbb{Z}_{\geq 0}^3\}$ such that 
    $m-n-4j \geq 0$ and $m-n-4j$ is even, and
    $S=\mathbb{Z}_{\geq 0}^3 - S^*$.
\end{prop}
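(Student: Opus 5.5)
The plan is to decompose $\mathcal{M}^+$ along a basis of highest weight vectors and then check the character. First, $\mathcal{M}^+=\mathbb{C}[E_1,E_2][A_0,A_1]$ is also a free module over $\mathbb{C}[E_1,E_2]$ with basis $\{A_1^m v^n\}_{m,n\ge 0}$. This holds because $v=A_0-\frac12E_1A_1$ is a unitriangular change of variables. Moreover, $\mathbb{C}[E_1,E_2]=\mathbb{C}[E_1,\mydiscriminant]$ since $\mydiscriminant=4E_2-E_1^2$. Hence $\{E_1^k\mydiscriminant^jA_1^mv^n\}$ is a $\mathbb{C}$-basis.

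The vectors $w_{j,m,n}:=\mydiscriminant^jA_1^mv^n$ are annihilated by $\de$, since $\de(\mydiscriminant)=\de(A_1)=\de(v)=0$ and $\de$ is a derivation. Their $\dh$-weight is $m-n-4j$, by Lemma \ref{lem:alphajformulas}, $\dh(A_1)=A_1$ and $\dh(v)=-v$. So each $w_{j,m,n}$ is a highest weight vector of weight $\lambda=m-n-4j$. By Lemma \ref{lem:genfrcomp}, the orbit $\df^r w$ is $w$ times a polynomial in $E_1,E_2$ whose top $E_1^r$ coefficient is $\gamma(\gamma+1)\cdots(\gamma+r-1)$ with $\gamma=-\lambda/2$. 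This coefficient vanishes for some $r\ge1$ exactly when $\lambda\ge 0$ and $\lambda$ is even, i.e. $(j,m,n)\in S^*$, and then precisely for $r\ge \lambda/2+1$. Lemma \ref{lem:frkill} gives $\df^{\lambda+1}w=0$ in that case, and in fact the whole expression vanishes by the stated formula.

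For $(j,m,n)\in S$ the leading coefficient is nonzero for all $r$. The nonzero vectors $\df^r w$, $r\ge 0$, have distinct weights, so they are linearly independent and $w$ generates a Verma module $M(\lambda)$. The character of $\mathcal{M}^+$ is $\prod(1-\lambda^{-2})^{-1}(1-\lambda^{-4})^{-1}(1-\lambda)^{-1}(1-\lambda^{-1})^{-1}$ weight by weight over the free generators. It should match $\sum_{(j,m,n)}\mathrm{ch}\,M(m-n-4j)$, using $\mathrm{ch}\,M^*=\mathrm{ch}\,M$, because $\mathbb{C}[E_1]$ contributes exactly the character of a Verma module over each highest weight vector. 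This suggests the following structure. For $(j,m,n)\in S$, the summand is $M(\lambda)=\mathcal{U}(\mathfrak{sl}_2)\,w$. For $(j,m,n)\in S^*$, the summand is $\mathbb{C}[E_1]w$, modelled on the $\ball^4$ computation where $\mathbb{C}[E_1]\cdot1\cong M^*(0)$. There, $w$ generates the finite-dimensional $L(\lambda)$, and $E_1^{\lambda+1}w$, taken modulo $L(\lambda)$, generates $M(-\lambda-2)$. Since $\de(E_1^{\lambda+1}w)\neq0$ lies in $L(\lambda)$, the extension is nonsplit, and dualizing \eqref{eq:sesvermas2} identifies it with $M^*(\lambda)$.

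The key steps, in order, are as follows.
\begin{enumerate}
\item Show that each $\mathbb{C}[E_1]w_{j,m,n}$ is $\mathfrak{sl}_2$-stable. It is $\dh$- and $\de$-stable because $\de(E_1)=-2$. For $\df$, I expect to rewrite $\df(w)=c\,E_1 w$ modulo $\mydiscriminant$-terms, using $E_2=\frac14(\mydiscriminant+E_1^2)$. This is the main obstacle, since $\df(E_1)=E_1^2-2E_2$ produces $\mydiscriminant$. The fix I would try first is to use the filtration by $j$ instead of stability: let $F_{\le j}$ be the span of $\mydiscriminant^{j'}\mathbb{C}[E_1]A_1^mv^n$ with $j'\le j$. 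A short check shows that $\df$ raises $j$ by at most one, which gives a filtration on which $\df$ acts in a controlled way.
\item On the associated graded, identify each piece $\mathbb{C}[E_1]w$ as $M(\lambda)$ or $M^*(\lambda)$, as above.
\item Lift the associated-graded decomposition to a direct sum decomposition. Each graded piece lies in a distinct generalized central character block, or has the same weights only when the Ext vanishes, so the decomposition lifts. For this I would use the fact, stated in the Preliminaries, that the only extensions between Verma modules are \eqref{eq:sesvermas}. One also needs to rule out $P(\cdot)$ gluing $M(\lambda)$ with $M(-\lambda-2)$. That follows because the vectors $w$ are honest highest weight vectors in $\mathcal{M}^+$ and the $M^*$ pieces already have socle $L(\lambda)$. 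Finally, comparing characters in each weight space confirms that the sum is all of $\mathcal{M}^+$.
\end{enumerate}
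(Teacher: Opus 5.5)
The first part of your proposal is correct and is exactly what the paper's proof does. This covers the highest weight vectors $w_{j,m,n}=\delta^jA_1^mv^n$ (with $\delta=4E_2-E_1^2$), their weights $\lambda=m-n-4j$, the split into $M(\lambda)$ for $S$ versus $L(\lambda)$ for $S^*$, and the character count. The genuine gap is step~3. It cannot be closed, because the claimed isomorphism is false.

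Take odd $\lambda\geq 0$. The vector $\df^{\lambda+1}w_{j,m,n}$ is nonzero: its $E_1^{\lambda+1}$-coefficient in Lemma~\ref{lem:genfrcomp} is $\gamma(\gamma+1)\cdots(\gamma+\lambda)$ with $\gamma=-\lambda/2$ a half-integer. It is also killed by $\de$, since $\de\df^{r}w=r(\lambda-r+1)\df^{r-1}w$. It lies in the $\mathfrak{sl}_2$-stable summand $\mathbb{C}[E_1,E_2]A_1^mv^n$, whose $\de$-kernel is $\mathbb{C}[\delta]A_1^mv^n$. Comparing weights, it is therefore a nonzero multiple of $w_{j+(\lambda+1)/2,m,n}$, whose index again lies in $S$.

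The smallest instance is explicit. One has $\df(A_1)=-\tfrac12E_1A_1$ and $\df^2(A_1)=\tfrac14(4E_2-E_1^2)A_1=\tfrac14\delta A_1$. So the Verma module $M(1)$ generated by $w_{0,1,0}=A_1$ contains the $M(-3)$ generated by $w_{1,1,0}=\delta A_1$.

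No other choice of summands rescues the statement. Put $\Omega=\dh^2+2\dh+4\df\de$. Then $(\Omega-3)(E_1^2A_1)=4\df(-4E_1A_1)=8\delta A_1\neq 0$, while $(\Omega-3)(\delta A_1)=0$. So $\Omega$ is not semisimple on $\mathcal{M}^+$. On the other hand, $\Omega$ acts by the scalar $\lambda(\lambda+2)$ on each $M(\lambda)$ and $M^*(\lambda)$, hence semisimply on any direct sum of Verma and dual Verma modules.

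In fact, the generalized $3$-eigenspace of $\Omega$ on $\mathbb{C}[E_1,E_2]A_1$ is a nonsplit extension of $M(-3)$ by $M(1)$. It is nonsplit because $\ker\de$ in weight $-3$ is spanned by $\delta A_1$ alone. This extension is $P(-3)$, which is precisely the gluing your step~3 claims to exclude. Your justification (honest highest weight vectors, socles of the $M^*$ pieces) does not touch this case. The two glued pieces are both of $S$-type and lie in the same block, where $\mathrm{Ext}^1(M(-\lambda-2),M(\lambda))\neq 0$, as \eqref{eq:sesvermas} shows.

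The paper's proof has the same hole: it forms the direct sum ``for character reasons'' without checking that the submodules generated by the $w_{j,m,n}$ are independent. The correct statement must replace, for each odd $\lambda=m-n-4j\geq 0$, the pair $M(\lambda)\oplus M(-\lambda-2)$ indexed by $(j,m,n)$ and $(j+\frac{\lambda+1}{2},m,n)$ with $P(-\lambda-2)$.

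Steps 1 and 2 have further problems. The spaces $F_{\le j}$ are not submodules, because $\df$ raises $j$; the filtration by submodules is $F_{\geq j}=\delta^j\mathcal{M}^+$. Since $\df(E_1)=\tfrac12E_1^2-\tfrac12\delta$, its associated graded satisfies $\df(E_1^kw)\equiv\frac{k-\lambda}{2}E_1^{k+1}w$ modulo $\delta^{j+1}\mathcal{M}^+$. Hence $\df^{\lambda+1}w$ vanishes in the associated graded for every $\lambda\geq 0$, odd values included. The graded piece $\mathbb{C}[E_1]w$ is then $M^*(\lambda)$ for all $\lambda\geq 0$.

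So the associated graded does not have the shape you intend to lift: it has $M^*(\lambda)$ where you want $M(\lambda)$ for odd $\lambda$. The discrepancy is exactly the element $\df^{\lambda+1}w\in\delta^{j+(\lambda+1)/2}\mathcal{M}^+$, which is what produces the $P(-\lambda-2)$.
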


\begin{proof}
First note that the operator $\dh$ acts diagonally on $\mathbb{C}[E_1,E_2]A_1^m v^n$ and its character is given by
\[
\frac{\lambda^{m-n}}{(1-\lambda^{-2})(1-\lambda^{-4})} \ .
\]

    The elements $\mydiscriminant^j A_1^m v^n$ are highest weight vectors for all $(j,m,n) \in \mathbb{Z}_{\geq 0}^3$.
    For $(j,m,n) \in \sphere^*$, let $v_{j,m,n}=\mydiscriminant^j A_1^m v^n$. By Lemma \ref{lem:frkill}, one has $f^r(v_{j,m,n})=0$ for some $r$ and thus generates an irreducible representation $L(m-n-4j)$.  
    We claim that there exists a non-highest weight vector
    $\omega_{j,m,n}$ of weight $n-m+4j-2$ which generates a Verma module $M(n-m+4j-2)$ modulo $L(m-n-4j)$.
    Consider a vector $P \mydiscriminant^j A_1^m v^n $ where $P$ is a polynomial in $E_1, E_2$. If $P \mydiscriminant^j A_1^m v^n $ is a highest weight vector, then $\de(P \mydiscriminant^j A_1^m v^n)=
\de(P) \mydiscriminant^j A_1^m v^n =0 $.
    But from our analysis of the $\mathfrak{sl}_2$-module $\mathbb{C}[E_1,E_2]$, we know that $P$ must be of the form $\mydiscriminant^j$.
    Thus for character reasons, there exists a vector $\omega_{j,m,n}$, such that $v_{j,m,n}$ and $\omega_{j,m,n}$ generate a dual Verma module
    $M^*(m-n-4j)$.

When $(j,m,n) \in S$, $\mydiscriminant^j A_1^m v^n$ generates a Verma module $M(m-n-4j)$.
    
\end{proof}

\subsubsection{The $\sll_2$-module $\mathcal{M}^-$}

As an $\mathfrak{sl}_2$-representation, $\mathcal{M}^-=A_0^{-1}\mathbb{C}[E_1,E_2][A_0^{-1},A_1]$ is neither highest nor lowest weight.  The Casimir $
\Omega=\dh^2+2\dh + 4 \df \de $ 
does not act by a scalar on indecomposables summands. Indeed, one easily computes:
\begin{gather*}
\Omega(A_0^{-1})=3A_0^{-1}-6E_1A_0^{-2}A_1 + 8 E_2 A_0^{-3} A_1^2 \ , \qquad\qquad
\de(A_0^{-1})=
A_1 A_0^{-2} \ , \\
\Omega(A_1 A_0^{-2})
=
15 A_1 A_0^{-2}
-
20 E_1 A_1^2 A_0^{-3}
+
24 E_2 A_1^3 A_0^{-4} \ .
\end{gather*}
Consider a decomposition of $\mathcal{M}^-$: 
\[
\mathcal{M}^- =
\bigoplus_{\ell \in \mathbb{Z}}
\mathcal{M}^-_{\ell} \ ,
\qquad \textrm{with} \qquad
\mathcal{M}^-_{\ell} =
\bigoplus_{m-n=\ell, m \in \mathbb{Z}_{\geq 0}, n \in \mathbb{Z}_{> 0}} \mathbb{C}[E_1,E_2] \langle A_1^m A_0^{-n} \rangle \ .
\]

\begin{lem} \label{lem:Aminusdecom}
The decomposition 
$\mathcal{M}^- =
\bigoplus_{\ell \in \mathbb{Z}}
\mathcal{M}^-_{\ell}$ is a decomposition of $\mathfrak{sl}_2$-representations.
\end{lem}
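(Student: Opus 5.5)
We must show that each summand $\mathcal{M}^-_\ell$ is stable under $\de$, $\df$ and $\dh$. Once this is done, the decomposition is one of $\sll_2$-modules, because $\mathcal{M}^-$ is the direct sum of the $\mathcal{M}^-_\ell$ as $\mathbb{C}$-vector spaces.

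The summands are spanned over $\mathbb{C}$ by the elements $P\,A_1^mA_0^{-n}$, where $P\in\mathbb{C}[E_1,E_2]$ is a monomial, $m\ge 0$, $n>0$ and $m-n=\ell$. So it suffices to compute each operator on such an element and check that the exponent difference $m-n$ is unchanged.

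All three operators act as derivations with respect to the product on $\mathcal{A}$; this is the compatibility with the algebra structure noted after \eqref{eq:actononall}. Hence for $g\in\{\de,\df,\dh\}$ we have
\[
g(P A_1^m A_0^{-n}) = g(P)\,A_1^mA_0^{-n} + m\,P A_1^{m-1}g(A_1)A_0^{-n} + n\,P A_1^m A_0^{-n+1}g(A_0^{-1}).
\]
We treat the three terms in turn.

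\textbf{First term.} The element $g(P)$ lies again in $\mathbb{C}[E_1,E_2]$, by the action on symmetric polynomials recalled above. So this term stays in $\mathcal{M}^-_\ell$.

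\textbf{Second term.} By \eqref{eq:actononall}, $g(A_1)$ is a $\mathbb{C}[E_1,E_2]$-multiple of $A_1$: it is $0$, $-\tfrac12E_1A_1$ or $A_1$. So this term is a polynomial multiple of $A_1^mA_0^{-n}$ itself.

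\textbf{Third term.} Again by \eqref{eq:actononall}, $g(A_0^{-1})$ is a $\mathbb{C}[E_1,E_2]$-linear combination of $A_0^{-1}$ and $A_1A_0^{-2}$. Both of these have exponent difference $-1$, the same as $A_0^{-1}$. Multiplying by $A_1^mA_0^{-n+1}$ therefore produces terms of the form $A_1^mA_0^{-n}$ and $A_1^{m+1}A_0^{-n-1}$. Both have exponent difference $\ell$, and both still satisfy $n>0$ and $m\ge 0$.

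Hence each of $\de,\df,\dh$ maps $\mathcal{M}^-_\ell$ into itself, which proves the lemma.

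The only point needing care is the third term. There we must be sure that $g(A_0^{-1})$ contains no pure polynomial term (exponent difference $0$) and no positive power of $A_0$. This is read off directly from the last line of \eqref{eq:actononall}, which was itself derived from $\de(A_0)=-A_1$ and $\df(A_0)=\tfrac12E_1A_0-E_2A_1$ via $g(A_0^{-1})=-A_0^{-2}g(A_0)$. Conceptually, $\ell$ is the weight of $\dh$ on the generators $A_1^mA_0^{-n}$, and $\de$, $\df$ only shift the weight through polynomial coefficients.
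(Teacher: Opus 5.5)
Your argument is correct and is essentially the paper's proof: both check that each $\mathcal{M}^-_\ell$ is stable under $\de,\df,\dh$ by computing on the monomials $A_1^mA_0^{-n}$. You use the Leibniz rule and also treat the $\mathbb{C}[E_1,E_2]$ coefficients explicitly, whereas the paper just writes out the three resulting formulas.

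One correction to your closing heuristic, which the proof does not rely on. The $\dh$-weight of $A_1^mA_0^{-n}$ is $m+n$, not $\ell=m-n$. The real invariant is the total degree in the $A$'s, i.e.\ the homological degree: it is preserved because $g(A_0)$ and $g(A_1)$ are linear in $A_0,A_1$ for every $g$.
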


\begin{proof}
It suffices to check that $\mathcal{M}^-_{\ell}$ is stable under $\de, \df, \dh$ which follows from the computations:
\begin{align}
    \de(A_1^m A_0^{-n}) &= n A_1^{m+1} A_0^{-n-1} \\
    \df(A_1^m A_0^{-n}) &= -(\frac{m+n}{2}) E_1 A_1^m A_0^{-n} +n E_2 A_1^{m+1} A_0^{-n-1} \\
    \dh(A_1^m A_0^{-n}) &= (m+n) A_1^m A_0^{-n} \ .\qedhere
\end{align}
\end{proof}

\begin{lem}
    The representation $\mathcal{M}^-_{\ell}$ is non-Artinian.
\end{lem}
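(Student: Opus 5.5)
To show $\mathcal{M}^-_{\ell}$ is non-Artinian, I will exhibit an infinite strictly descending chain of $\sll_2$-submodules. The natural candidates come from the $\de$-action computed in Lemma~\ref{lem:Aminusdecom}: since $\de(A_1^m A_0^{-n}) = n A_1^{m+1}A_0^{-n-1}$ raises $n$, the filtration by lowest allowed power of $A_0^{-1}$ is promising. For $k \geq 1$, set
\[
\mathcal{N}_k := \bigoplus_{m-n=\ell,\ n \geq k,\ m\geq 0} \mathbb{C}[E_1,E_2]\langle A_1^m A_0^{-n}\rangle \subseteq \mathcal{M}^-_{\ell}.
\]

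First I check that each $\mathcal{N}_k$ is an $\sll_2$-submodule. The formulas of Lemma~\ref{lem:Aminusdecom} show that $\de,\df,\dh$ send $A_1^mA_0^{-n}$ into the span of $A_1^mA_0^{-n}$ and $A_1^{m+1}A_0^{-n-1}$, so they never decrease $n$. On polynomial coefficients the action is by the derivation-type formulas on $\mathbb{C}[E_1,E_2]$, and the Leibniz rule gives
\[
\mathbf{x}(P\,A_1^mA_0^{-n}) = \mathbf{x}(P)\,A_1^mA_0^{-n} + P\,\mathbf{x}(A_1^mA_0^{-n}),
\]
which again does not lower $n$. So $\mathcal{N}_k$ is stable.

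Second, the chain $\mathcal{N}_1 = \mathcal{M}^-_\ell \supseteq \mathcal{N}_2 \supseteq \cdots$ is strict. The element $A_1^{m}A_0^{-k}$ with $m = \ell+k$ lies in $\mathcal{N}_k$ but not in $\mathcal{N}_{k+1}$, because the monomials $A_1^mA_0^{-n}$ form a free $\mathbb{C}[E_1,E_2]$-basis of $\mathcal{A}$. We need $m \geq 0$, so only $k \geq \max(1,-\ell)$ qualifies; that still gives infinitely many $k$, and I start the chain at $k_0=\max(1,-\ell)$.

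The only real care needed is the bookkeeping in the first step, namely that coefficients in $\mathbb{C}[E_1,E_2]$ do not interfere with the $n$-filtration. I expect this to be routine, since the filtration is by $A_0$-degree alone and $\mathbb{C}[E_1,E_2]$ sits in $A_0$-degree zero. An infinite strictly descending chain of submodules then shows $\mathcal{M}^-_\ell$ is non-Artinian.
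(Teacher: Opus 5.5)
Your proof is correct and is essentially the paper's. Since $m-n=\ell$ is fixed, your $\mathcal{N}_k$ is exactly the paper's $\mathcal{S}_{\ell,r}$ (cut out by $m\geq r$) with $r=k+\ell$, and the paper likewise gets stability and strictness directly from the formulas in Lemma~\ref{lem:Aminusdecom}. Your attention to the starting index $k_0=\max(1,-\ell)$ and to the Leibniz-rule bookkeeping just fills in details the paper leaves as ``easy to see''.
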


\begin{proof}
    From the formulas in the proof of Lemma \ref{lem:Aminusdecom}, it is easy to see that there is a descending chain of proper submodules of $\mathcal{S}_\ell$
    \begin{equation} \label{eq:defSj}
    \mathcal{S}_{\ell,r}= \bigoplus_{\substack{ 
    m-n=\ell, m \in \mathbb{Z}_{\geq 0}, n \in \mathbb{Z}_{> 0} \\
    m \geq r} }
    \mathbb{C}[E_1,E_2] \langle A_1^m A_0^{-n} \rangle \ . \qedhere
    \end{equation}
\end{proof}

\begin{lem}
    One has the following isomorphisms of $\sll_2$-modules: \[
    \mathcal{S}_{\ell,r} /  \mathcal{S}_{\ell,r+1} \cong 
    \begin{cases}
        \oplus_{j=0}^{\infty} M^*(2r-\ell-4j) & \text{ if } 2r-\ell-4j \geq 0 \text{ and even} \\
       \oplus_{j=0}^{\infty} M(2r-\ell-4j) & \text{ otherwise} 
    \end{cases} \ .
    \]
\end{lem}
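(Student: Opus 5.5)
The plan is to identify each subquotient with a rank-one twisted module over $\mathbb{C}[E_1,E_2]$ and then re-run the argument of Proposition~\ref{prop:resultforplus}. Fix $\ell$ and $r$, and set $n=r-\ell$; the quotient is zero unless $n>0$. By definition \eqref{eq:defSj}, $\mathcal{S}_{\ell,r}/\mathcal{S}_{\ell,r+1}$ is a free $\mathbb{C}[E_1,E_2]$-module of rank one, generated by the class $w$ of $A_1^rA_0^{-n}$.

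First I compute the action on $w$ using the formulas in the proof of Lemma~\ref{lem:Aminusdecom}. The term $nA_1^{r+1}A_0^{-n-1}$ lies in $\mathcal{S}_{\ell,r+1}$, so it vanishes in the quotient. This gives
\[
\de(w)=0,\qquad \df(w)=-\tfrac{2r-\ell}{2}E_1\,w,\qquad \dh(w)=(2r-\ell)\,w .
\]
By the Leibniz rule, for $P\in\mathbb{C}[E_1,E_2]$ we get $\de(Pw)=\de(P)w$ and $\df(Pw)=\bigl(\df(P)-\tfrac{2r-\ell}{2}E_1P\bigr)w$. So the quotient is $\mathbb{C}[E_1,E_2]$ twisted by the flat twist $\df\mapsto -\tfrac{2r-\ell}{2}E_1$, $\dh\mapsto 2r-\ell$. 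This is exactly the structure of $\mathbb{C}[E_1,E_2]A_1^mv^n$ in $\mathcal{M}^+$, where $\de(A_1^mv^n)=0$, $\df(A_1^mv^n)=\tfrac{n-m}{2}E_1A_1^mv^n$ and the weight is $m-n$. The only change is that $m-n$ is replaced by $2r-\ell$.

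Next I transport the earlier computations. By Lemma~\ref{lem:alphajformulas}, each $\mydiscriminant^jw$ is a highest weight vector of weight $2r-\ell-4j$, with $\df(\mydiscriminant^jw)=\gamma E_1\mydiscriminant^jw$ for $\gamma=2j-\tfrac{2r-\ell}{2}$. The proof of Lemma~\ref{lem:genfrcomp} used only these two facts and the action on $E_1,E_2$. It therefore gives the same closed formula for $\df^s(\mydiscriminant^jw)$ with this value of $\gamma$. From that formula I read off the following. When $2r-\ell-4j\ge0$ is even, $-\gamma$ is a non-negative integer, so the falling products vanish and $\df^{2r-\ell-4j+1}(\mydiscriminant^jw)=0$, as in Lemma~\ref{lem:frkill}. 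Otherwise the $E_1^s$ coefficient is nonzero, as in Lemma~\ref{lem:fralphap}, so $\df^s(\mydiscriminant^jw)\neq0$ for all $s$ and $\mydiscriminant^jw$ generates a Verma module $M(2r-\ell-4j)$.

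Finally I assemble the decomposition by a character count, as in the proof of Proposition~\ref{prop:resultforplus}. The $\dh$-character of the quotient is $\lambda^{2r-\ell}/((1-\lambda^{-2})(1-\lambda^{-4}))$. A highest weight vector has the form $Pw$ with $\de(P)=0$, and by the analysis of $\mathbb{C}[E_1,E_2]$ over $\ball^4$ this forces $P\in\mathbb{C}\mydiscriminant^j$. So the highest weight vectors are exactly the $\mydiscriminant^jw$, one for each $j\geq0$. In the finite case, $\mydiscriminant^jw$ spans $L(2r-\ell-4j)$. Counting dimensions then produces a vector of weight $-(2r-\ell-4j)-2$ generating $M(-(2r-\ell-4j)-2)$ modulo that $L$, and dualizing \eqref{eq:sesvermas2} gives a summand $M^*(2r-\ell-4j)$.

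I expect the main obstacle to be showing that this sum of submodules is direct and exhausts the quotient. I would first try an induction on weight using the character formula, checking that the Verma and dual Verma pieces account for each weight space exactly once. Note that the case distinction in the statement is made summand by summand in $j$: the quotient contains $M^*$ summands for the finitely many $j$ with $2r-\ell-4j\geq 0$ even, and Verma modules $M$ for all other $j$.
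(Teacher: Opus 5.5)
Your reduction is the same as the paper's. The quotient is $\mathbb{C}[E_1,E_2]w$ with $\de w=0$, $\df w=-\tfrac{\mu}{2}E_1w$, $\dh w=\mu w$ for $\mu:=2r-\ell$, i.e.\ a copy of $\mathbb{C}[E_1,E_2]A_1^mv^n$ with $m-n=\mu$. Your value of $\gamma$ and your criterion for $\df^s(\mydiscriminant^jw)=0$ are correct.

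The gap is exactly the step you flagged, and it cannot be closed when $\mu$ is odd: the submodules generated by the $\mydiscriminant^jw$ are then nested rather than independent. Already for $\mu=1$ ($\ell=-1$, $r=0$, $w=[A_0^{-1}]$) one has
\[
\df^2 w=-\tfrac12\,\df(E_1w)=-\tfrac12\bigl(\tfrac12E_1^2-2E_2\bigr)w=\tfrac14\,\mydiscriminant\, w .
\]
So $U(\sll_2)\mydiscriminant w\cong M(-3)$ is the singular Verma submodule of $U(\sll_2)w\cong M(1)$. The singular vectors of weight $-3$ form the line $\mathbb{C}\mydiscriminant w$ in the quotient, but a plane in $M(1)\oplus M(-3)$. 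Hence the quotient is not $\bigoplus_j M(1-4j)$; its block containing $L(1)$ is the non-split extension $P(-3)$ of \eqref{eq:sesvermas}.

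The same happens for every odd $\mu$. Take $\nu_j:=\mu-4j\ge0$. The linked weight $-\nu_j-2$ equals $\nu_{j'}$ with $j'=j+\tfrac{\nu_j+1}{2}$. The singular vectors of that weight are spanned by $\mydiscriminant^{j'}w$, so the singular vector $\df^{\nu_j+1}\mydiscriminant^jw$ is a nonzero multiple of $\mydiscriminant^{j'}w$. The correct answer therefore has $P(-\nu_j-2)$ in place of $M(\nu_j)\oplus M(\nu_{j'})$.

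The paper's proof, which simply defers to Proposition~\ref{prop:resultforplus}, has the same hole; for instance $\df^2(A_1)=\tfrac14\mydiscriminant A_1$ in $\mathcal{M}^+$. So the statement as written only holds for $2r-\ell$ even.

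For $\mu$ even your plan works, but you should get directness from the Casimir $\Omega$ rather than from a weight-by-weight count.
\begin{itemize}
\item Weights $\lambda,\lambda'$ are linked iff $\lambda+\lambda'=-2$. The equation $\nu_j+\nu_{j'}=-2$ would force $\mu+1=2(j+j')$, which is impossible for $\mu$ even.
\item Hence the submodules $U(\sll_2)\mydiscriminant^jw$ lie in distinct generalized eigenspaces of $\Omega$ (which acts on the finite-dimensional weight spaces), and their sum is direct.
\item Each nonzero block component has a singular vector, so it contains some $\mydiscriminant^jw$.
\item Compare composition multiplicities with $\sum_j\mathrm{ch}\,M(\nu_j)$. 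For $\nu_j\ge0$ the block has exactly two factors, $L(\nu_j)$ (as a submodule) and $L(-\nu_j-2)$. The extension is non-split because no singular vector has weight $-\nu_j-2\equiv\mu+2\pmod 4$, so the block is $M^*(\nu_j)$.
\item For $\nu_j<0$ the block is the simple Verma module $M(\nu_j)$.
\end{itemize}
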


\begin{proof}
    This quotient representation has a basis consisting of the images of $P A_1^r A_0^{-({r-\ell})}$ where $P \in \mathbb{C}[E_1,E_2]$. The formulas from Lemma \ref{lem:Aminusdecom} yield that in the quotient:
    \begin{align*}
    \de(A_1^r A_0^{-(r-\ell)}) = 0, \hspace{.1in}
    \df(A_1^r A_0^{-(r-\ell)}) = \left(\frac{\ell-2r}{2}\right)E_1 A_1^rA_0^{-(r-\ell)}, \hspace{.1in}
    \dh(A_1^r A_0^{-(r-\ell)}) = (2r-\ell) A_1^r A_0^{-(r-\ell)} \ .\end{align*}
    Comparing formulas, one sees that this quotient is isomorphic to the representation generated by $A_1^m v^n$ when $m-n=2r-\ell$.
    The lemma now follows from the earlier analysis of that module in the proof of Proposition \ref{prop:resultforplus}.
\end{proof}

\begin{lem}
The Zuckerman functor annihilates $\mathcal{M}^-_{\ell}$.  That is,
   $\Gamma \mathcal{M}^-_{\ell}=0$. 
\end{lem}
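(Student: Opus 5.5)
We need to show $\mathcal{M}^-_\ell$ has no nonzero locally finite submodule. The plan is to show that every nonzero vector generates an infinite-dimensional submodule. Since $\dh$ acts diagonally on $\mathcal{M}^-_\ell$, it suffices to treat a nonzero weight vector $x$, because the $\Gamma$-part is a sum of its weight components.

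\textbf{Key observation.} By the formulas in the proof of Lemma \ref{lem:Aminusdecom}, $\de$ is a "shift" operator on the monomial part:
\[
\de(P A_1^m A_0^{-n}) = \de(P) A_1^m A_0^{-n} + nP A_1^{m+1}A_0^{-n-1}.
\]
Filter by the minimal $m$ appearing, i.e.\ use the filtration $\mathcal{S}_{\ell,r}$ from \eqref{eq:defSj}. Write
\[
x=\sum_{m\ge r} P_m A_1^m A_0^{-(m-\ell)}
\]
with $P_r\neq 0$, and look at the \emph{top} end instead: let $M$ be the maximal $m$ with $P_M\neq 0$. The $A_1^{M+1}$ coefficient of $\de(x)$ is $(M-\ell)P_M$. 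This is nonzero because $n=M-\ell>0$ in $\mathcal{M}^-$ and we are over $\CC$. Thus $\de(x)\neq 0$, and its maximal $m$-index is $M+1$.

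\textbf{Induction.} Iterating, the $A_1^{M+k}$ coefficient of $\de^k(x)$ is
\[
(M-\ell)(M-\ell+1)\cdots(M-\ell+k-1)\,P_M \neq 0 .
\]
So $\de^k(x)\neq 0$ for all $k$. The vectors $\de^k(x)$ have distinct $\dh$-weights, namely the weight of $x$ shifted by $2k$, using $\dh(A_1^mA_0^{-n})=(m+n)A_1^mA_0^{-n}$. Hence they are linearly independent, and the submodule generated by $x$ is infinite-dimensional. Therefore no nonzero vector lies in a finite-dimensional submodule, and $\Gamma\mathcal{M}^-_\ell=0$.

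\textbf{Main obstacle.} The only delicate point is making sure the lower-order term $\de(P)$ cannot cancel the leading $A_1^{M+k}$ term. It cannot, since $\de(P)$ multiplies the same monomial and so never raises the $A_1$-exponent. This relies on treating the $\CC[E_1,E_2]$-coefficients carefully, which the top-degree argument in $m$ handles.
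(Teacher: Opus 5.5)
Your proof is correct and takes essentially the same approach as the paper. Both arguments use the fact that $\de$ strictly raises the top $A_1$-exponent, with nonzero leading coefficient $(M-\ell)P_M$ because $n=M-\ell>0$, so no nonzero finite-dimensional submodule can exist; the paper derives a contradiction from a basis vector of maximal $A_1$-exponent, whereas you iterate $\de$ to get infinitely many independent vectors and make the nonvanishing of the leading coefficient explicit.
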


\begin{proof}
It is suffices to show that there are no finite-dimensional submodules of $\mathcal{M}^-_{\ell}$.
Assume that there is a finite-dimensional submodule $M$
with basis $\{v_1, \ldots, v_k \}$ where
\[
v_i=a_{1i} A_1^{m_{1i}} A_0^{-{n_{1i}}}
+ \cdots +
a_{ji} A_1^{m_{ji}} A_0^{-{n_{ji}}}
\]
where each of the $a_{ri} \in \mathbb{C}[E_1,E_2]$ and
$m_{1i} < m_{2i} < \cdots < m_{ji}$.

Without loss of generality, assume that $v_k$ has the term with the largest exponent of $A_1$.  Then $\de(v_k)$ cannot be in the submodule.
\end{proof}

\begin{cor}
The Zuckerman functor annihilates the entire module $\mathcal{M}^-$.  That is,
    $\Gamma \mathcal{M}^- =0$.
\end{cor}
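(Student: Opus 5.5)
The plan is to reduce the statement to the previous lemma, using the decomposition of Lemma \ref{lem:Aminusdecom}. By that lemma, $\mathcal{M}^- = \bigoplus_{\ell\in\ZZ}\mathcal{M}^-_\ell$ is a direct sum of $\sll_2$-subrepresentations. The Zuckerman functor $\Gamma$, which extracts the maximal locally finite submodule, commutes with arbitrary direct sums. Hence $\Gamma\mathcal{M}^- = \bigoplus_\ell \Gamma\mathcal{M}^-_\ell$, and each summand vanishes by the preceding lemma.

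To justify the commutation with direct sums directly, I would argue as follows. Let $x\in\mathcal{M}^-$ be a locally finite vector, meaning the submodule $U(\sll_2)x$ is finite-dimensional.

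1. Write $x=\sum_\ell x_\ell$ with finitely many nonzero components $x_\ell\in\mathcal{M}^-_\ell$.
2. Let $\pi_\ell$ denote the projection onto $\mathcal{M}^-_\ell$. Since the decomposition is one of $\sll_2$-modules, $\pi_\ell$ is $\sll_2$-equivariant.
3. Therefore $U(\sll_2)x_\ell=\pi_\ell(U(\sll_2)x)$ is finite-dimensional, so $x_\ell\in\Gamma\mathcal{M}^-_\ell=0$.
4. Each $x_\ell$ vanishes, so $x=0$.

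Consequently $\mathcal{M}^-$ has no nonzero finite-dimensional submodule, which is exactly $\Gamma\mathcal{M}^-=0$.

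The only point needing care is that the decomposition is genuinely $\sll_2$-stable, so that the projections are equivariant. This is exactly the content of Lemma \ref{lem:Aminusdecom}, via the explicit formulas for $\de$, $\df$, $\dh$ on $A_1^mA_0^{-n}$, which preserve $m-n$. Note also that these formulas involve multiplication by $E_1,E_2$, which does not change $\ell$. Thus no real obstacle remains beyond invoking the previous lemma.
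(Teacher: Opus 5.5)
Your proposal is correct and follows the same route as the paper, which states the corollary without proof as an immediate consequence of the $\sll_2$-stable decomposition $\mathcal{M}^-=\bigoplus_\ell\mathcal{M}^-_\ell$ (Lemma \ref{lem:Aminusdecom}) and the preceding lemma $\Gamma\mathcal{M}^-_\ell=0$. Your argument with the equivariant projections $\pi_\ell$ simply makes explicit why $\Gamma$ commutes with this direct sum.
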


\subsubsection{Summary}
As an $\sll_2$-module, the skein lasagna module $\mathcal{M}:=S_0^2(\ball^2\times \sphere^2, \emptyset; \mathbb{C}[E_1,E_2])$ is isomorphic to \[
\mathbb{C}[E_1,E_2][A_0^{\pm 1},A_1]=\mathcal{M}^+ \oplus \mathcal{M}^-
\] 
where
$\mathcal{M}^+=\mathbb{C}[E_1,E_2][A_0^{},A_1]$ and
$\mathcal{M}^-=A_0^{-1} \mathbb{C}[E_1,E_2][A_0^{-1},A_1]$.

As $\mathfrak{sl}_2$-modules,
\[
\mathcal{M}^+ \cong 
\bigoplus_{(j,m,n) \in S} M(m-n-4j)
    \oplus 
     \bigoplus_{(j,m,n) \in S^*} M^*(m-n-4j) \ ,
    \]
where $S^*=\{(j,m,n) \in \mathbb{Z}_{\geq 0}^3\}$ such that 
    $m-n-4j \geq 0$ and $m-n-4j$ is even, and
    $S=\mathbb{Z}_{\geq 0}^3 - S^*$,
and
    \[
    \mathcal{M}^- =
\bigoplus_{\ell \in \mathbb{Z}}
\mathcal{M}^-_{\ell}
    \]
    where $\mathcal{M}^-_{\ell}$ has a filtration
    \[
    \mathcal{M}^-_{\ell}=\mathcal{S}_{\ell,0} \supset 
    \mathcal{S}_{\ell,1} \supset \cdots 
    \]
and the successive quotients are:
 \[
    \mathcal{S}_{\ell,r} /  \mathcal{S}_{\ell,r+1} \cong 
    \begin{cases}
        \oplus_{j=0}^{\infty} M^*(2r-\ell-4j) & \text{ if } 2r-\ell-4j \geq 0 \text{ and even,} \\
       \oplus_{j=0}^{\infty} M(2r-\ell-4j) & \text{ otherwise.} 
    \end{cases} 
    \]
    Finally,
    \[\Gamma \mathcal{M}= \Gamma \mathcal{M}^+ =
    \bigoplus_{(j,m,n) \in S^*} L(m-n-4j) \ .
    \]

\bibliographystyle{alphaurl}
\bibliography{biblio}

\end{document}